\theoremstyle{definition}
\newtheorem{theorem}{Theorem}
\newtheorem{definition}[theorem]{Definition}
\newtheorem{proposition}[theorem]{Proposition}
\newtheorem{corollary}[theorem]{Corollary}
\newtheorem{lemma}[theorem]{Lemma}
\newtheorem{claim}[theorem]{Claim}
\newtheorem{assumption}[theorem]{Assumption}
\newtheorem{conjecture}[theorem]{Conjecture}
\newtheorem{question}[theorem]{Question}
\renewcommand{\thetheorem}{%
 \ifnum\value{subsection}=0
  \thesection
 \else
  \thesubsection
 \fi
  .\arabic{theorem}%
}
\DeclareFontFamily{U}{mathx}{\hyphenchar\font45}
\DeclareFontShape{U}{mathx}{m}{n}{<5> <6> <7> <8> <9> <10> <10.95> <12> <14.4> <17.28> <20.74> <24.88> mathx10}{}
\DeclareSymbolFont{mathx}{U}{mathx}{m}{n}
\DeclareMathAccent{\widecheck}{0}{mathx}{"71}
 \def\@makefnmark{\leavevmode \raise.9ex\hbox{\fontsize\sf@size\z@\normalfont\tiny\@thefnmark}}
 \newcommand{\xleftrightarrow}[2][]{\ext@arrow 3359\leftrightarrowfill@{#1}{#2}}
\tikzset{paint/.style={draw=#1!50!black, fill=#1!50}, decorate with/.style = {decorate, decoration={shape backgrounds, shape=#1, shape size = 3pt, shape sep = 5pt}}}
\title{Color-avoiding percolation in edge-colored Erd\H{o}s--R\'{e}nyi graphs}
\author{Bal\'{a}zs R\'{a}th\thanks{Afr\'{e}d R\'{e}nyi Institute of Mathematics.} \textsuperscript{,}\thanks{Department of Stochastics, Institute of Mathematics, Budapest University of Technology and Economics.} \textsuperscript{,}\thanks{ELKH-BME Stochastics Research Group.} \and Kitti Varga\protect\footnotemark[1] \textsuperscript{,}\thanks{Department of Computer Science and Information Theory, Faculty of Electrical Engineering and Informatics, Budapest University of Technology and Economics.} \textsuperscript{,}\thanks{ELKH-ELTE Egerv\'{a}ry Research Group.} \and Panna T\'{i}mea Fekete\protect\footnotemark[1] \textsuperscript{,}\thanks{Department of Applied Analysis and Computational Mathematics, E\"{o}tv\"{o}s Lor\'{a}nd University.} \and Roland Molontay\protect\footnotemark[2] \textsuperscript{,}\protect\footnotemark[3]}
\begin{document}

\maketitle

\begin{abstract}

 We study a variant of the color-avoiding percolation model introduced by Krause et al., namely we investigate the color-avoiding bond percolation setup on (not necessarily properly) edge-colored Erd\H{o}s--R\'{e}nyi random graphs. We say that two vertices are color-avoiding connected in an edge-colored graph if after the removal of the edges of any color, they are in the same component in the remaining graph. The color-avoiding connected components of an edge-colored graph are maximal sets of vertices such that any two of them are color-avoiding connected.
 
 We consider the  fraction of vertices contained in color-avoiding connected components of a given size as well as the fraction of vertices contained in the giant color-avoiding connected component. Under some mild assumptions on the color-densities, we prove that these quantities converge and the limits can be expressed in terms of probabilities associated to edge-colored branching process trees. We provide explicit formulas for the limit of the normalized size of the giant color-avoiding connected component, and in the two-colored case we also provide explicit formulas for the limit of the fraction of vertices contained in color-avoiding connected components of a given size.

 \medskip
 
 \noindent \textbf{Keywords:} Erd\H{o}s--R\'{e}nyi random graph, branching process, percolation, giant component
 
 \noindent \textsc{AMS MSC 2020: 05C80, 60J80}
\end{abstract}

\section{Introduction} \label{section:intro}

The mathematical analysis of the robustness of random graphs and networks has been in the focus of research interest in the last two decades~\cites{albert2000error, callaway2000network, li2021percolation, shao2015percolation}. The robustness of a network refers to the ability to maintain the overall connectivity against random error or targeted attack, i.e., how its structure varies when a fraction of its vertices or edges are removed. This question can be investigated using the tools of an established field of statistical physics called percolation theory.

Most of the works have focused on classical percolation on networks, meaning that each vertex (site) or edge (bond) is kept (occupied) with probability $p$ and removed (vacant) with probability $1-p$, see, e.g.,~\cite{li2021percolation}. This way of keeping or removing edges models how the network performs under random failures. One fundamental question is whether there is a giant connected component of macroscopic size in the remaining graph. Percolation theory studies the emergence of such a component when we increase the probability $p$. Usually there is a well-defined critical point $p_c$ above which there exists a unique giant component. This critical point is also called the percolation threshold and this phenomenon is referred to as percolation phase transition.

Besides classical network percolation, another well-studied model is when the vertices or edges are removed with preference, for example the nodes can be targeted preferentially according to a structural property such as the degree or betweenness centrality. Scale-free networks were found to be robust under random failures, and extremely vulnerable for targeted attacks~\cite{albert2000error}.

Several other network percolation frameworks have been proposed in the literature throughout the years including $k$-core percolation~\cite{dorogovtsev2006k}, bootstrap percolation~\cite{baxter2010bootstrap}, $l$-hop percolation~\cite{shang2011hop} and limited path percolation~\cite{lopez2007limited}. For an extensive review on network percolation, we refer to Li et al.~\cite{li2021percolation}.

In this paper, we focus on another recent network percolation framework developed by Krause et al., called color-avoiding percolation~\cites{kadovic2018bond, krause2016hidden, krause2017color}. In this framework, each vertex or edge is assigned a color from a color set; a color may represent a shared eavesdropper, a controlling entity, or a correlated failure. Two vertices are color-avoiding connected if they can be reached from each other on paths that avoid any color from the color set. Therefore, while in the traditional percolation framework a single path provides connectivity, here connectivity refers to the ability to avoid all vulnerable sets of vertices or edges. The main rationale behind this percolation model is that in real-world networks some nodes or links may mutually share a vulnerability and it is interesting to see whether the network is robustly connected in the sense that its overall connectivity is maintained in every possible attack scenario.

The color-avoiding percolation framework differs substantially from $k$-core percolation where any $k$ paths are sufficient between two nodes in the percolating cluster~\cites{dorogovtsev2006k, yuan2016k} and it also differs from $k$-connectivity, where $k$ pairwise disjoint paths are required~\cite{penrose1999k}. Another relevant framework is percolation on multiplex networks~\cites{hackett2016bond, mahabadi2021network, son2012percolation} where the layers can be considered as colors. However, this approach also differs in the definition of connectivity as it is discussed in detail by Krause et al. in~\cite{krause2016hidden}. Kryven also investigated percolation on colored networks, however, in that setup, colors do not indicate shared vulnerabilities, but the removal probability depends on the color~\cite{kryven2019bond}.

In~\cite{krause2016hidden}, Krause et al.\ investigated networks with colored vertices. The authors not only presented a heuristic analytical framework for the configuration model together with a numerical algorithm to determine the color-avoiding connected pair of nodes, but they also demonstrated the applicability of the color-avoiding percolation in cybersecurity. The authors extended the theory of color-avoiding site percolation by studying the phase transition for Erd\H{o}s--R\'{e}nyi random graphs and by introducing novel node functions to generalize the concept~\cite{krause2017color}. Shekhtman et al.\ further developed this framework to study secure message-passing in networks with a given community structure~\cite{shekhtman2018critical}. Giusfredi and Bagnoli showed that color-avoiding site percolation can be mapped into a self-organized critical problem~\cite{giusfredi2020color}. 

In~\cite{kadovic2018bond}, Kadovi\'{c} et al.\ studied (not necessarily properly) edge-colored networks. The authors presented both analytical and numerical results about the size of the giant color-avoiding connected component for Erd\H{o}s--R\'{e}nyi and for scale-free graphs.

Molontay and Varga investigated the computational complexity of finding the color-avoiding connected components both in the vertex- and edge-colored setup~\cite{molontay2019complexity}. The authors found that the complexity of color-avoiding site percolation highly depends on the exact formulation: a strong version of the problem is NP-hard, while a weaker notion makes it possible to find the components in polynomial time. However, in the bond percolation setup, the color-avoiding connected components can be found in polynomial time.

In this paper, we study the color-avoiding bond percolation setup on Erd\H{o}s--R\'{e}nyi random graphs with mathematical rigor. We explicitly calculate the empirical density of the giant color-avoiding connected component (if it exists) with an arbitrary number of randomly distributed colors under some minor assumptions on the color frequencies. Moreover, we also give an explicit formula for the empirical density of the sizes of  color-avoiding connected components in the case of two-colored Erd\H{o}s--R\'{e}nyi random graphs.

The paper is organized as follows. In Section~\ref{section:main_results}, we collect the basic definitions needed for this paper and state our main results. In Section~\ref{section:notation}, we introduce some further notation. In Section~\ref{section:connection_between_ECER_and_ECBP}, we show how the giant color-avoiding connected components of edge-colored Erd\H{o}s--R\'{e}nyi random (multi)graphs relate to the survival of certain edge-colored branching processes. In Section~\ref{section:conv_of_f_ell_G_n}, we prove one of our main results about the convergence of empirical color-avoiding connected component size densities. In Section~\ref{section:explicit_formulas}, we give formulas for the asymptotic size of the giant color-avoiding connected component (if it exists) and we also study its near-critical behaviour. In Section~\ref{section:two_colors}, we give a formula for the distribution of the sizes of the small components when there are only two colors. Finally, in Section \ref{section_open_questions}, we propose some open questions.

Let us note that a few months after we posted the first version of this paper on arXiv, a new paper~\cite{lichevschapira} on color-avoiding percolation on edge-colored Erd\H{o}s--R\'{e}nyi graphs also appeared on arXiv. The results of~\cite{lichevschapira} include simplification and generalization of the results that we prove in Section~\ref{section:conv_of_f_ell_G_n}, the answer to some of the open questions that we pose in Section~\ref{section_open_questions} as well as some finer results on the size of the largest color-avoiding component, see Section~\ref{section:main_results} for more details.

\section{Statements of main results} \label{section:main_results}

We denote the sets of real, positive real, non-negative integer, and positive integer numbers by $\mathbb{R}$, $\mathbb{R}_+$, $\mathbb{N}$, and $\mathbb{N}_+$, respectively. Let us use the notation $[n] = \{1, 2, \ldots, n \}$ for any $n \in \mathbb{N}_+$. If $A$ is an event, we denote by $\mathds{1}[A]$ the indicator variable of the event: $\mathds{1}[A] = 1$ if $A$ occurs, $\mathds{1}[A] = 0$ if the complement of $A$, i.e., $A^c$ occurs. Throughout the article, by edge-colorings we always mean not necessarily proper ones and by paths we always mean simple paths (and not walks).

\begin{definition}[Edge-colored (multi)graph] \label{def:edgecolored_graph}
 We say that $G=(V,\underline{E})$ is an \emph{edge-colored (multi)graph} with $k \in \mathbb{N}_+$ colors if $\underline{E} = (E_1, \ldots, E_k)$ and $E_i \subseteq \binom{V}{2}$ for every $i \in [k]$. We say that $V$ is the vertex set of $G$ and $E_i$ is the set of edges of color $i$ for every $i \in [k]$.
\end{definition}

Note that the edge sets $E_1, \ldots, E_k$ are not necessarily disjoint and the edges of $E_i$ are not necessarily independent in the graph theoretical sense for any $i \in [k]$ (i.e., the graph $G$ is not necessarily properly edge-colored).

\begin{definition}[Uncolored color-subgraphs] \label{def:colorsubgraphs}
 Given an edge-colored graph $G = (V, \underline{E})$ with $k \in \mathbb{N}_+$ colors, where $\underline{E} = (E_1, \ldots, E_k)$, let $G^I$ denote the simple uncolored graph $\left( V, \bigcup_{i \in I} E_i \right)$.
 
 In addition, let us use the notation $G^{\text{uc}} \colonequals G^{[k]}$ and $G^{\setminus i} \colonequals G^{[k] \setminus \{ i \}}$ for any $i \in [k]$.
\end{definition}

For an example, see Figure~\ref{fig:colored_graph}.

\begin{figure}[H]
 \centering
 \begin{tikzpicture}
  \tikzstyle{vertex}=[draw,circle,fill,minimum size=6,inner sep=0]
  \tikzstyle{vertex2}=[draw,circle,fill,minimum size=5.5,inner sep=0]
  \tikzset{paint/.style={draw=#1!50!black, fill=#1!50}, decorate with/.style = {decorate, decoration={shape backgrounds, shape=#1, shape size = 2.5pt, shape sep = 4pt}}}
  
  \begin{scope}[shift={(0,0)}, scale=0.9]
  \coordinate (v1) at (-1.75,0) {};
  \coordinate (v2) at (0.75,-1.25) {};
  \coordinate (v3) at (2.25,0.25) {};
  \coordinate (v4) at (-2.75,0.75) {};
  \coordinate (v5) at (2,-0.75) {};
  \coordinate (v6) at (0,0) {};
  \coordinate (v7) at (0.5,-1.75) {};
  \coordinate (v8) at (1.5,0.6) {};
  \coordinate (v9) at (-1.25,-1.25) {};
  \coordinate (v10) at (1.5,-1.75) {};
  \coordinate (v11) at (1.1,0.1) {};
  \coordinate (v12) at (-1,-0.5) {};
  \coordinate (v13) at (1,-0.75) {};
  \coordinate (v14) at (0.5,1) {};
  \coordinate (v15) at (-1.5,0.8) {};
  \coordinate (v16) at (-3,-1.5) {};
  \coordinate (v17) at (-0.75,0.75) {};
  \coordinate (v18) at (-2.5,-0.25) {};
  \coordinate (v19) at (-2.5,-1) {};
  \coordinate (v20) at (-0.75,-1.75) {};
  
  \draw[decorate with = rectangle, paint=blue] (v1) -- (v4);
  \draw[decorate with = star, paint=green] (v1) -- (v6);
  \draw[decorate with = rectangle, paint=blue] (v1) -- (v9);
  \draw[decorate with = isosceles triangle, paint=red] (v1) -- (v17);
  \draw[decorate with = rectangle, paint=blue] (v1) -- (v18);
  \draw[decorate with = isosceles triangle, paint=red] (v1) -- (v19);
  
  \draw[decorate with = star, paint=green] (v2) -- (v5);
  \draw[decorate with = star, paint=green] (v2) -- (v6);
  \draw[decorate with = isosceles triangle, paint=red] (v2) -- (v7);
  \draw[decorate with = rectangle, paint=blue] (v2) -- (v13);
  
  \draw[decorate with = isosceles triangle, paint=red] (v3) -- (v5);
  \draw[decorate with = isosceles triangle, paint=red] (v3) -- (v8);
  
  \draw[decorate with = star, paint=green] (v4) -- (v18);
  
  \draw[decorate with = rectangle, paint=blue] (v5) -- (v10);
  \draw[decorate with = isosceles triangle, paint=red] (v5) -- (v11);
  \draw[decorate with = isosceles triangle, paint=red] (v5) -- (v13);
  
  \draw[decorate with = rectangle, paint=blue] (v6) -- (v8);
  \draw[decorate with = isosceles triangle, paint=red] (v6) -- (v11);
  \draw[decorate with = isosceles triangle, paint=red] (v6) -- (v12);
  \draw[decorate with = star, paint=green] (v6) -- (v13);
  \draw[decorate with = rectangle, paint=blue] (v6) -- (v15);
  
  \draw[decorate with = star, paint=green] (v7) -- (v10);
  \draw[decorate with = isosceles triangle, paint=red] (v7) -- (v20);
  
  \draw[decorate with = star, paint=green] (v8) -- (v14);
  
  \draw[decorate with = rectangle, paint=blue] (v9) -- (v12);
  \draw[decorate with = isosceles triangle, paint=red] (v9) -- (v20);
  
  \draw[decorate with = isosceles triangle, paint=red] (v11) -- (v14);
  
  \draw[decorate with = isosceles triangle, paint=red] (v12) -- (v15);
  \draw[decorate with = isosceles triangle, paint=red] (v12) -- (v17);
  \draw[decorate with = star, paint=green] (v12) -- (v19);
  
  \draw[decorate with = isosceles triangle, paint=red] (v14) -- (v17);
  
  \draw[decorate with = rectangle, paint=blue] (v15) -- (v18);
  
  \draw[decorate with = star, paint=green] (v16) -- (v19);
  
  \draw[decorate with = isosceles triangle, paint=red] (v18) -- (v19);
  
  \draw[decorate with = isosceles triangle, paint=red] (v19) -- (v20);
  
  \node[vertex] at (v1) {}; 
  \node[vertex] at (v2) {}; 
  \node[vertex] at (v3) {}; 
  \node[vertex] at (v4) {}; 
  \node[vertex] at (v5) {}; 
  \node[vertex] at (v6) {}; 
  \node[vertex] at (v7) {}; 
  \node[vertex] at (v8) {}; 
  \node[vertex] at (v9) {}; 
  \node[vertex] at (v10) {}; 
  \node[vertex] at (v11) {}; 
  \node[vertex] at (v12) {}; 
  \node[vertex] at (v13) {}; 
  \node[vertex] at (v14) {}; 
  \node[vertex] at (v15) {}; 
  \node[vertex] at (v16) {}; 
  \node[vertex] at (v17) {}; 
  \node[vertex] at (v18) {}; 
  \node[vertex] at (v19) {}; 
  \node[vertex] at (v20) {}; 
  \end{scope}
  
  \begin{scope}[shift={(-4.125,-3.5)}, scale=0.6]
  \coordinate (v1) at (-1.75,0) {};
  \coordinate (v2) at (0.75,-1.25) {};
  \coordinate (v3) at (2.25,0.25) {};
  \coordinate (v4) at (-2.75,0.75) {};
  \coordinate (v5) at (2,-0.75) {};
  \coordinate (v6) at (0,0) {};
  \coordinate (v7) at (0.5,-1.75) {};
  \coordinate (v8) at (1.5,0.6) {};
  \coordinate (v9) at (-1.25,-1.25) {};
  \coordinate (v10) at (1.5,-1.75) {};
  \coordinate (v11) at (1.1,0.1) {};
  \coordinate (v12) at (-1,-0.5) {};
  \coordinate (v13) at (1,-0.75) {};
  \coordinate (v14) at (0.5,1) {};
  \coordinate (v15) at (-1.5,0.8) {};
  \coordinate (v16) at (-3,-1.5) {};
  \coordinate (v17) at (-0.75,0.75) {};
  \coordinate (v18) at (-2.5,-0.25) {};
  \coordinate (v19) at (-2.5,-1) {};
  \coordinate (v20) at (-0.75,-1.75) {};
  
  \draw[decorate with = rectangle, paint=blue] (v1) -- (v4);
  \draw[decorate with = rectangle, paint=blue] (v1) -- (v9);
  \draw[decorate with = isosceles triangle, paint=red] (v1) -- (v17);
  \draw[decorate with = rectangle, paint=blue] (v1) -- (v18);
  \draw[decorate with = isosceles triangle, paint=red] (v1) -- (v19);

  \draw[decorate with = isosceles triangle, paint=red] (v2) -- (v7);
  \draw[decorate with = rectangle, paint=blue] (v2) -- (v13);
  
  \draw[decorate with = isosceles triangle, paint=red] (v3) -- (v5);
  \draw[decorate with = isosceles triangle, paint=red] (v3) -- (v8);
  
  \draw[decorate with = rectangle, paint=blue] (v5) -- (v10);
  \draw[decorate with = isosceles triangle, paint=red] (v5) -- (v11);
  \draw[decorate with = isosceles triangle, paint=red] (v5) -- (v13);
  
  \draw[decorate with = rectangle, paint=blue] (v6) -- (v8);
  \draw[decorate with = isosceles triangle, paint=red] (v6) -- (v11);
  \draw[decorate with = isosceles triangle, paint=red] (v6) -- (v12);
  \draw[decorate with = rectangle, paint=blue] (v6) -- (v15);
  
  \draw[decorate with = isosceles triangle, paint=red] (v7) -- (v20);
  
  \draw[decorate with = rectangle, paint=blue] (v9) -- (v12);
  \draw[decorate with = isosceles triangle, paint=red] (v9) -- (v20);
  
  \draw[decorate with = isosceles triangle, paint=red] (v11) -- (v14);
  
  \draw[decorate with = isosceles triangle, paint=red] (v12) -- (v15);
  \draw[decorate with = isosceles triangle, paint=red] (v12) -- (v17);
  
  \draw[decorate with = isosceles triangle, paint=red] (v14) -- (v17);
  
  \draw[decorate with = rectangle, paint=blue] (v15) -- (v18);

  \draw[decorate with = isosceles triangle, paint=red] (v18) -- (v19);
  
  \draw[decorate with = isosceles triangle, paint=red] (v19) -- (v20);
  
  \node[vertex2] at (v1) {}; 
  \node[vertex2] at (v2) {}; 
  \node[vertex2] at (v3) {}; 
  \node[vertex2] at (v4) {}; 
  \node[vertex2] at (v5) {}; 
  \node[vertex2] at (v6) {}; 
  \node[vertex2] at (v7) {}; 
  \node[vertex2] at (v8) {}; 
  \node[vertex2] at (v9) {}; 
  \node[vertex2] at (v10) {}; 
  \node[vertex2] at (v11) {}; 
  \node[vertex2] at (v12) {}; 
  \node[vertex2] at (v13) {}; 
  \node[vertex2] at (v14) {}; 
  \node[vertex2] at (v15) {}; 
  \node[vertex2] at (v16) {}; 
  \node[vertex2] at (v17) {}; 
  \node[vertex2] at (v18) {}; 
  \node[vertex2] at (v19) {}; 
  \node[vertex2] at (v20) {}; 
  \end{scope}
  
  \begin{scope}[shift={(0,-3.5)}, scale=0.6]
  \coordinate (v1) at (-1.75,0) {};
  \coordinate (v2) at (0.75,-1.25) {};
  \coordinate (v3) at (2.25,0.25) {};
  \coordinate (v4) at (-2.75,0.75) {};
  \coordinate (v5) at (2,-0.75) {};
  \coordinate (v6) at (0,0) {};
  \coordinate (v7) at (0.5,-1.75) {};
  \coordinate (v8) at (1.5,0.6) {};
  \coordinate (v9) at (-1.25,-1.25) {};
  \coordinate (v10) at (1.5,-1.75) {};
  \coordinate (v11) at (1.1,0.1) {};
  \coordinate (v12) at (-1,-0.5) {};
  \coordinate (v13) at (1,-0.75) {};
  \coordinate (v14) at (0.5,1) {};
  \coordinate (v15) at (-1.5,0.8) {};
  \coordinate (v16) at (-3,-1.5) {};
  \coordinate (v17) at (-0.75,0.75) {};
  \coordinate (v18) at (-2.5,-0.25) {};
  \coordinate (v19) at (-2.5,-1) {};
  \coordinate (v20) at (-0.75,-1.75) {};
  
  \draw[decorate with = star, paint=green] (v1) -- (v6);
  \draw[decorate with = isosceles triangle, paint=red] (v1) -- (v17);
  \draw[decorate with = isosceles triangle, paint=red] (v1) -- (v19);
  
  \draw[decorate with = star, paint=green] (v2) -- (v5);
  \draw[decorate with = star, paint=green] (v2) -- (v6);
  \draw[decorate with = isosceles triangle, paint=red] (v2) -- (v7);
  
  \draw[decorate with = isosceles triangle, paint=red] (v3) -- (v5);
  \draw[decorate with = isosceles triangle, paint=red] (v3) -- (v8);
  
  \draw[decorate with = star, paint=green] (v4) -- (v18);
  
  \draw[decorate with = isosceles triangle, paint=red] (v5) -- (v11);
  \draw[decorate with = isosceles triangle, paint=red] (v5) -- (v13);
  
  \draw[decorate with = isosceles triangle, paint=red] (v6) -- (v11);
  \draw[decorate with = isosceles triangle, paint=red] (v6) -- (v12);
  \draw[decorate with = star, paint=green] (v6) -- (v13);
  
  \draw[decorate with = star, paint=green] (v7) -- (v10);
  \draw[decorate with = isosceles triangle, paint=red] (v7) -- (v20);
  
  \draw[decorate with = star, paint=green] (v8) -- (v14);
  
  \draw[decorate with = isosceles triangle, paint=red] (v9) -- (v20);
  
  \draw[decorate with = isosceles triangle, paint=red] (v11) -- (v14);
  
  \draw[decorate with = isosceles triangle, paint=red] (v12) -- (v15);
  \draw[decorate with = isosceles triangle, paint=red] (v12) -- (v17);
  \draw[decorate with = star, paint=green] (v12) -- (v19);
  
  \draw[decorate with = isosceles triangle, paint=red] (v14) -- (v17);
  
  \draw[decorate with = star, paint=green] (v16) -- (v19);
  
  \draw[decorate with = isosceles triangle, paint=red] (v18) -- (v19);
  
  \draw[decorate with = isosceles triangle, paint=red] (v19) -- (v20);
  
  \node[vertex2] at (v1) {}; 
  \node[vertex2] at (v2) {}; 
  \node[vertex2] at (v3) {}; 
  \node[vertex2] at (v4) {}; 
  \node[vertex2] at (v5) {}; 
  \node[vertex2] at (v6) {}; 
  \node[vertex2] at (v7) {}; 
  \node[vertex2] at (v8) {}; 
  \node[vertex2] at (v9) {}; 
  \node[vertex2] at (v10) {}; 
  \node[vertex2] at (v11) {}; 
  \node[vertex2] at (v12) {}; 
  \node[vertex2] at (v13) {}; 
  \node[vertex2] at (v14) {}; 
  \node[vertex2] at (v15) {}; 
  \node[vertex2] at (v16) {}; 
  \node[vertex2] at (v17) {}; 
  \node[vertex2] at (v18) {}; 
  \node[vertex2] at (v19) {}; 
  \node[vertex2] at (v20) {}; 
  \end{scope}
  
  \begin{scope}[shift={(4.125,-3.5)}, scale=0.6]
  \coordinate (v1) at (-1.75,0) {};
  \coordinate (v2) at (0.75,-1.25) {};
  \coordinate (v3) at (2.25,0.25) {};
  \coordinate (v4) at (-2.75,0.75) {};
  \coordinate (v5) at (2,-0.75) {};
  \coordinate (v6) at (0,0) {};
  \coordinate (v7) at (0.5,-1.75) {};
  \coordinate (v8) at (1.5,0.6) {};
  \coordinate (v9) at (-1.25,-1.25) {};
  \coordinate (v10) at (1.5,-1.75) {};
  \coordinate (v11) at (1.1,0.1) {};
  \coordinate (v12) at (-1,-0.5) {};
  \coordinate (v13) at (1,-0.75) {};
  \coordinate (v14) at (0.5,1) {};
  \coordinate (v15) at (-1.5,0.8) {};
  \coordinate (v16) at (-3,-1.5) {};
  \coordinate (v17) at (-0.75,0.75) {};
  \coordinate (v18) at (-2.5,-0.25) {};
  \coordinate (v19) at (-2.5,-1) {};
  \coordinate (v20) at (-0.75,-1.75) {};
  
  \draw[decorate with = rectangle, paint=blue] (v1) -- (v4);
  \draw[decorate with = star, paint=green] (v1) -- (v6);
  \draw[decorate with = rectangle, paint=blue] (v1) -- (v9);
  \draw[decorate with = rectangle, paint=blue] (v1) -- (v18);
  
  \draw[decorate with = star, paint=green] (v2) -- (v5);
  \draw[decorate with = star, paint=green] (v2) -- (v6);
  \draw[decorate with = rectangle, paint=blue] (v2) -- (v13);
  
  \draw[decorate with = star, paint=green] (v4) -- (v18);
  
  \draw[decorate with = rectangle, paint=blue] (v5) -- (v10);
  
  \draw[decorate with = rectangle, paint=blue] (v6) -- (v8);
  \draw[decorate with = star, paint=green] (v6) -- (v13);
  \draw[decorate with = rectangle, paint=blue] (v6) -- (v15);
  
  \draw[decorate with = star, paint=green] (v7) -- (v10);

  \draw[decorate with = star, paint=green] (v8) -- (v14);
  
  \draw[decorate with = rectangle, paint=blue] (v9) -- (v12);

  \draw[decorate with = star, paint=green] (v12) -- (v19);
  
  \draw[decorate with = rectangle, paint=blue] (v15) -- (v18);
  
  \draw[decorate with = star, paint=green] (v16) -- (v19);
  
  \node[vertex2] at (v1) {}; 
  \node[vertex2] at (v2) {}; 
  \node[vertex2] at (v3) {}; 
  \node[vertex2] at (v4) {}; 
  \node[vertex2] at (v5) {}; 
  \node[vertex2] at (v6) {}; 
  \node[vertex2] at (v7) {}; 
  \node[vertex2] at (v8) {}; 
  \node[vertex2] at (v9) {}; 
  \node[vertex2] at (v10) {}; 
  \node[vertex2] at (v11) {}; 
  \node[vertex2] at (v12) {}; 
  \node[vertex2] at (v13) {}; 
  \node[vertex2] at (v14) {}; 
  \node[vertex2] at (v15) {}; 
  \node[vertex2] at (v16) {}; 
  \node[vertex2] at (v17) {}; 
  \node[vertex2] at (v18) {}; 
  \node[vertex2] at (v19) {}; 
  \node[vertex2] at (v20) {}; 
  \end{scope}
 \end{tikzpicture}
 \caption[caption]{In the picture, the colors red, blue, and green are denoted by (red) triangles, (blue) rectangles, and (green) stars, respectively. Above, an edge-colored graph $G$ is shown, and in the second row, we can see its color-subgraphs $G^{\text{red,\,blue}}$, $G^{\text{red,\,green}}$, $G^{\text{blue,\,green}}$, respectively.}
 \label{fig:colored_graph}
\end{figure}

\begin{definition}[Color-avoiding connectivity] \label{def:coloravoiding_connectivity}
 Let $G = (V, \underline{E})$ be an edge-colored graph with $k \in \mathbb{N}_+$ colors. For any color $i \in [k]$, we say that the vertices $v,w \in V(G)$ are \emph{$i$-avoiding connected}, briefly denoted by
 \[ v \stackrel{G^{\setminus i}}{\longleftrightarrow} w \text{,} \]
 if $v$ and $w$ are in the same component in the graph $G^{\setminus i}$, i.e., there exists a path between $v$ and $w$ in $G$ which does not contain any edges of color $i$; such a path is called an \emph{$i$-avoiding $v$-$w$ path}.
 
 We say that the vertices $v,w \in V(G)$ are \emph{color-avoiding connected} if they are $i$-avoiding connected for all $i \in [k]$.
\end{definition}

Note that the relation of color-avoiding connectivity is an equivalence relation and thus it defines a partition of the vertex set. 

\begin{definition}[Color-avoiding\hspace{1pt} connected\hspace{1pt} components]\hspace{1pt} \label{def:coloravoiding_component}
 The\hspace{1pt} equivalence classes of the relation of color-avoiding connectivity are called \emph{color-avoiding connected components}.
 
 The set of vertices of the color-avoiding connected component of a vertex $v$ in an edge-colored graph $G$ is denoted by $\widetilde{\mathcal{C}}(v) \colonequals \widetilde{\mathcal{C}}(v,G)$.
\end{definition}

Note that even if $v$ and $w$ are in the same color-avoiding connected component $\widetilde{\mathcal{C}}$, it might still happen that there exists a color $i$ for which every $i$-avoiding $v$-$w$ path has vertices outside $\widetilde{\mathcal{C}}$. For an example, see Figure~\ref{fig:coloravoiding_components}.

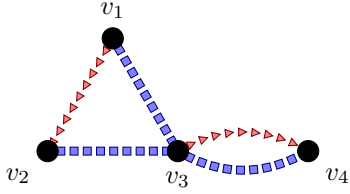
\begin{figure}[H]
 \centering
 \begin{tikzpicture}
  \tikzstyle{vertex}=[draw,circle,fill,minimum size=8,inner sep=0]
  
  \coordinate (v1) at (90:1);
  \coordinate (v2) at (210:1);
  \coordinate (v3) at (330:1);
  \coordinate (v4) at ($(v3)+({sqrt(3)},0)$);
  
  \draw[decorate with = rectangle, paint=blue] (v1) -- (v3);
  \draw[decorate with = rectangle, paint=blue] (v2) -- (v3);
  \draw[decorate with = isosceles triangle, paint=red] (v1) -- (v2);
  \draw[decorate with = isosceles triangle, paint=red] (v3) to [bend left=30] (v4);
  \draw[decorate with = rectangle, paint=blue] (v3) to [bend right=30] (v4);
  
  \node[vertex] at (v1) [label=above: $v_1$] {};
  \node[vertex] at (v2) [label=below left: $v_2$] {};
  \node[vertex] at (v3) [label=below: $v_3$] {};
  \node[vertex] at (v4) [label=below right: $v_4$] {};
 \end{tikzpicture}
 \caption{In the picture, the colors red and blue are denoted by (red) triangles and (blue) rectangles, respectively. The color-avoiding connected components of this graph are $\{ v_1, v_2 \}$ and $\{ v_3, v_4 \}$. Note that the red-avoiding $v_1$-$v_2$ path leaves the component $\{ v_1, v_2 \}$.}
 \label{fig:coloravoiding_components}
\end{figure}

\begin{definition}[Edge-colored Erd\H{o}s--R\'{e}nyi multigraph (ECER graph)] \label{def:ECER}
 Let $k,n \in \mathbb{N}_+$ and $\underline{\lambda} = (\lambda_1,\ldots,\lambda_k) \in \mathbb{R}_+^k$. We say that the edge-colored random multigraph $G = (V, \underline{E})$ is an \emph{edge-colored Erd\H{o}s--R\'{e}nyi multigraph} (or \emph{ECER graph} for short) with parameter $n$ and color density parameter vector $\underline{\lambda}$ -- briefly denoted by $G \sim \mathcal{G}_n (V, \underline{\lambda})$ -- if $|V| \le n$ and $\underline{E}=(E_1, \ldots, E_k)$, where $E_1, \ldots, E_k$ are independent in the stochastic sense and $E_i$ is the edge set of an Erd\H{o}s--R\'{e}nyi graph with vertex set $V$ and edge probability $p_i = 1 - \exp \left( - \lambda_i / n \right)$ (i.e., each possible edge of $\binom{V}{2}$ is included in $E_i$ independently of each other with probability $p_i$) for all $i \in [k]$. 
\end{definition}

Note that similarly to uncolored Erd\H{o}s--R\'{e}nyi graphs, the law of an ECER graph is also invariant under permutations of its vertex set; we refer to this property as \emph{vertex exchangeability}.

\begin{definition}[Empirical density of vertices in color-avoiding connected components of size $\ell$] \label{def:f_ell_G}
 Let $k \in \mathbb{N}_+$ and $\underline{\lambda} \in \mathbb{R}^k_+$. Given an ECER graph $G \sim \mathcal{G}_n ([n], \underline{\lambda})$, let
 \[ f_{\ell}(G) \colonequals f_{\ell}(\underline{\lambda}, G) \colonequals \frac{1}{n} \sum_{v \in [n]} \mathds{1} \bigg[ \Big| \, \widetilde{\mathcal{C}}(v) \Big| = \ell \bigg] \]
 denote the fraction of vertices contained in color-avoiding connected components of size $\ell$ for any $\ell \in \mathbb{N}_+$.
\end{definition}

The next claim directly follows from this definition.

\begin{claim}[Size of color-avoiding connected components in ECER graphs] \label{claim:sum_of_f_ell_G's}
 Let $k, n \in \mathbb{N}_+$, $\underline{\lambda} \in \mathbb{R}^k_+$, and $G \sim \mathcal{G}_n([n],\underline{\lambda})$ an ECER graph. Then
 \[ \sum_{\ell \in [n]} f_{\ell}(G) = 1 \text{.} \]
\end{claim}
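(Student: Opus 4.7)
The plan is essentially to unwind the definitions and swap the order of summation. Since color-avoiding connectivity is an equivalence relation on $V = [n]$ (as noted after Definition~\ref{def:coloravoiding_connectivity}), every vertex $v \in [n]$ lies in exactly one color-avoiding connected component $\widetilde{\mathcal{C}}(v)$. This component is a nonempty subset of $[n]$, so its size $|\widetilde{\mathcal{C}}(v)|$ is some integer $\ell \in [n]$.

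Consequently, for each fixed $v \in [n]$, exactly one of the indicators $\mathds{1}\bigl[|\widetilde{\mathcal{C}}(v)| = \ell\bigr]$ (ranging over $\ell \in [n]$) equals $1$, and the rest are $0$; thus
\[ \sum_{\ell \in [n]} \mathds{1}\bigl[|\widetilde{\mathcal{C}}(v)| = \ell\bigr] = 1. \]
Plugging this into the definition of $f_\ell(G)$ and interchanging the finite sums yields
\[ \sum_{\ell \in [n]} f_\ell(G) = \frac{1}{n} \sum_{v \in [n]} \sum_{\ell \in [n]} \mathds{1}\bigl[|\widetilde{\mathcal{C}}(v)| = \ell\bigr] = \frac{1}{n} \sum_{v \in [n]} 1 = 1. \]

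There is no real obstacle here: the statement is a deterministic tautology valid for every realization of $G$, and the randomness of the ECER construction plays no role. The only thing worth emphasizing is the underlying partition structure, i.e., that distinct color-avoiding connected components are disjoint and cover $[n]$, which is exactly the content of the equivalence relation remark preceding Definition~\ref{def:coloravoiding_component}.
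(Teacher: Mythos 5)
Your proof is correct and matches the paper's (unstated) reasoning: the paper simply remarks the claim "directly follows from this definition," and your argument — each vertex lies in exactly one color-avoiding component of size in $[n]$, so the indicators over $\ell$ sum to one for each $v$, then swap the finite sums — is exactly that observation made explicit.
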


In Theorem~\ref{thm:conv_of_f_ell_G_n} below, we prove that given a sequence of ECER graphs $G_n \sim \mathcal{G}_n ([n], \underline{\lambda})$ for all $n \in \mathbb{N}_+$, the sequence $f_{\ell}(G_n)$ converges in probability to a deterministic value $f^*_{\ell}$ for all $\ell \in \mathbb{N}_+$ as $n \to \infty$ if $\underline{\lambda}$ satisfies some condition. We also show that the empirical density of a largest color-avoiding connected component of $G_n$ converges to a deterministic value $f^*_\infty$ as $n \to \infty$. In order to identify the limits $f^*_{\ell}$ and $f^*_\infty$, we need some further notation.

\begin{definition}[Edge-colored Poisson branching process (ECBP) tree] \label{def:ECBP}
 Let $k \in \mathbb{N}_+$ and $\underline{\lambda} \in \mathbb{R}^k_+$. We say that $G_{\infty} \colonequals G_{\infty}(r)$ is an \emph{edge-colored Poisson branching process tree} (or \emph{ECBP tree} for short) with color intensity parameter vector $\underline{\lambda}$ -- briefly denoted by $G_{\infty}(r) \sim \mathcal{G}_{\infty}(\underline{\lambda})$ -- if $G_{\infty} = (V, \underline{E})$ is a (possibly infinite) edge-colored tree on a random vertex set $V$ with a distinguished root vertex $r \in V$, and with a random edge set $\underline{E} = (E_1, \ldots, E_k)$ colored with $k$ colors, and $G_{\infty}$ can be recursively generated in the following way. Assume that we have already generated the graph $G_{\infty}$ up to the $d$-th generation (i.e., up to graph distance $d$ from the root) for some $d \in \mathbb{N}$. Given a vertex $v$ in the $d$-th generation, let $X_{v,i}$ denote the number of vertices in the $(d+1)$-th generation which are connected to $v$ by an edge of color $i$, where $i \in [k]$. Then $X_{v,i} \sim \mathrm{POI}(\lambda_i)$. Moreover, the family
 \[ \big\{ X_{v,i} \; : \; \text{$v$ is a vertex in the $d$-th generation and $i \in [k]$} \big\} \]
 of random variables are conditionally independent given the graph up to generation $d$.
\end{definition}

Since in this article all the edge-colored branching process trees have Poisson offspring distribution, we do not include the word Poisson in the abbreviation ECBP.

\begin{definition}[Friends in ECBP trees] \label{def:friends}
 Let $k \in \mathbb{N}_+$ and $\underline{\lambda} \in \mathbb{R}^k_+$. Given an ECBP tree $G_{\infty} \colonequals G_{\infty}(r) \sim \mathcal{G}_\infty(\underline{\lambda})$, a vertex $v \in V(G_{\infty})$ and a color $i \in [k]$, we say that $v$ is \emph{$i$-avoiding connected to infinity}, briefly denoted by 
 \[ v \stackrel{G_{\infty}^{\setminus i}}{\longleftrightarrow} \infty \text{,} \]
 if there exists an infinitely long path from $v$ which does not contain any edges of color $i$ and all of the vertices on this path are descendants of $v$.
 
 We say that the root $r$ and the vertex $v$ are \emph{$i$-avoiding friends} if the event
 \[ \Big\{ r \stackrel{G_{\infty}^{\setminus i}}{\longleftrightarrow} v \Big\} \cup \left( \Big\{ r \stackrel{G_{\infty}^{\setminus i}}{\longleftrightarrow} \infty \Big\} \cap \Big\{ v \stackrel{G_{\infty}^{\setminus i}}{\longleftrightarrow} \infty \Big\} \right) \]
 occurs. In words: $r$ and $v$ are $i$-avoiding friends if and only if there is an $i$-avoiding $r$-$v$ path or both of them are $i$-avoiding connected to infinity (in which case we say that they are $i$-avoiding connected through infinity).
 
 We say that $r$ and $v$ are \emph{(color-avoiding) friends} if they are $i$-avoiding friends for all $i \in [k]$. The set of friends of the root $r$ are denoted by $\widetilde{\mathcal{C}}^*(r) \colonequals \widetilde{\mathcal{C}}^*(r, G_{\infty})$.
\end{definition}

Note that the root $r$ is always a friend of itself.

\begin{definition}[Probability of $\ell$ friends in ECBP trees] \label{def:fstar_ell}
 Let $k \in \mathbb{N}_+$ and $\underline{\lambda} \in \mathbb{R}^k_+$. Given an ECBP tree $G_{\infty}(r) \sim \mathcal{G}_{\infty}(\underline{\lambda})$ and $\ell \in \mathbb{N}_+ \cup \{ \infty \}$, let
 \[ f^*_{\ell} \colonequals f^*_{\ell}(\underline{\lambda}) \colonequals \mathbb{P} \bigg( \, \Big| \widetilde{\mathcal{C}}^*(r) \Big| = \ell \, \bigg) \]
 denote the probability that the root $r$ has exactly $\ell$ friends.
\end{definition}

The next claim directly follows from this definition.

\begin{claim}[Number of friends in ECBP trees] \label{claim:sum_of_fstar_ell's}
 Let $k \in \mathbb{N}_+$, $\underline{\lambda} \in \mathbb{R}^k_+$, and $G_{\infty}(r) \sim \mathcal{G}_{\infty}(\underline{\lambda})$ an ECBP tree. Then
 \[ f^*_\infty + \sum_{\ell \in \mathbb{N}_+} f^*_{\ell} = 1 \text{.} \]
\end{claim}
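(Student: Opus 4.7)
The plan is to observe that this is essentially a tautological consequence of Definitions~\ref{def:friends} and~\ref{def:fstar_ell}, once one checks that $|\widetilde{\mathcal{C}}^*(r)|$ is almost surely a well-defined element of $\mathbb{N}_+ \cup \{\infty\}$. Concretely, I would argue in three short steps.

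First, I would note that the random set $\widetilde{\mathcal{C}}^*(r)$ is never empty: by Definition~\ref{def:friends}, $r$ is $i$-avoiding connected to itself via the trivial path for every color $i \in [k]$, so $r \in \widetilde{\mathcal{C}}^*(r)$ and hence $|\widetilde{\mathcal{C}}^*(r)| \ge 1$ always. This rules out the value $0$ and confirms $|\widetilde{\mathcal{C}}^*(r)| \in \mathbb{N}_+ \cup \{\infty\}$ pointwise on the probability space. Measurability of the event $\{|\widetilde{\mathcal{C}}^*(r)| = \ell\}$ for each $\ell \in \mathbb{N}_+$ and of $\{|\widetilde{\mathcal{C}}^*(r)| = \infty\}$ follows from the recursive (generation-by-generation) construction of $G_\infty$ in Definition~\ref{def:ECBP}, since each such event is expressible in terms of countably many edge-color indicators.

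Second, the events $\{ |\widetilde{\mathcal{C}}^*(r)| = \ell \}$ for $\ell \in \mathbb{N}_+ \cup \{\infty\}$ are pairwise disjoint and their union is the whole sample space. Therefore by $\sigma$-additivity,
\[ f^*_\infty + \sum_{\ell \in \mathbb{N}_+} f^*_\ell \;=\; \mathbb{P}\bigl(|\widetilde{\mathcal{C}}^*(r)| \in \mathbb{N}_+ \cup \{\infty\}\bigr) \;=\; 1, \]
which is the claim.

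Since the statement is purely definitional, there is no real obstacle: the only subtle point worth flagging is that one must allow $\ell = \infty$ as a possible value (which is why the sum alone over $\ell \in \mathbb{N}_+$ need not equal $1$), and this is already built into Definition~\ref{def:fstar_ell}. No properties of the Poisson offspring distribution are needed here; the same identity would hold for any offspring law.
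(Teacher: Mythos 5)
Your proof is correct and matches the paper's treatment: the paper states that the claim follows directly from Definition~\ref{def:fstar_ell}, and your argument simply spells out that tautology — the root is always its own friend, so $|\widetilde{\mathcal{C}}^*(r)|$ takes values in $\mathbb{N}_+ \cup \{\infty\}$, and the corresponding events partition the sample space, giving the identity by $\sigma$-additivity. Nothing more is needed, and your remark that the Poisson offspring law plays no role is accurate.
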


\begin{definition}[Density/intensity parameters of uncolored color-subgraphs] \label{def:lambda_I}
 Let $k \in \mathbb{N}_+$, $\underline{\lambda} = (\lambda_1, \ldots, \lambda_k) \in \mathbb{R}_+^k$, and for any $i \in [k]$ or $I \subseteq [k]$, let us denote $\lambda_I \colonequals \sum_{i \in I} \lambda_i$, $\lambda^{\text{uc}} \colonequals \lambda_{[k]}$, and $\lambda^{\setminus i} \colonequals \lambda_{[k] \setminus \{ i \}}$.
\end{definition}

\begin{definition}[Fully supercitical/critical-subcritical $\underline{\lambda}$] \label{def:fully_supercrit}
 Let $k \in \mathbb{N}_+$. We say that a vector $\underline{\lambda} \in \mathbb{R}_+^k$ is \emph{fully supercritical} if $\lambda^{\setminus i} > 1$ for all $i \in [k]$, and we say that $\underline{\lambda}$ is \emph{fully critical-subcritical} if $\lambda^{\setminus i} \le 1$ for all $i \in [k]$.
\end{definition}

Note that a vector might be neither fully supercritical nor fully critical-subcritical.

We make the following simplifying assumption on the color density parameter vector $\underline{\lambda}$ when we state some of our main results.

\begin{assumption}[Simplifying assumption] \label{asm:simplifying_assumption}
 Let $k \ge 2$ be an integer, let $\underline{\lambda} \in \mathbb{R}_+^k$ and assume that $\lambda_I < 1$ for any $I \subseteq [k]$ with $|I| \le k-2$.
\end{assumption}

Heuristically, if $\underline{\lambda}$ is fully supercritical and Assumption~\ref{asm:simplifying_assumption} holds, then one really needs to use all of the other $k-1$ colors to create an infinitely long $i$-avoiding path for each $i \in [k]$ in an ECBP tree.

\begin{theorem}[Convergence of empirical component size densities] \label{thm:conv_of_f_ell_G_n}
 Let $k \in \mathbb{N}_+$, $\underline{\lambda} \in \mathbb{R}_+^k$ and let $G_n \sim \mathcal{G}_n([n],\underline{\lambda})$ for all $n \in \mathbb{N}_+$. If Assumption~\ref{asm:simplifying_assumption} holds, then
 \[ \frac{1}{n} \max_{v \in [n]} \Big| \widetilde{\mathcal{C}}(v, G_n) \Big| \; \stackrel{\mathbb{P}}{\longrightarrow} \; f^*_\infty, \qquad n \to \infty \]
 and for any $\ell \in \mathbb{N}_+$, we have
 \[ f_{\ell}(G_n) \; \stackrel{\mathbb{P}}{\longrightarrow} \; f^*_{\ell}, \qquad n \to \infty \text{.} \]
\end{theorem}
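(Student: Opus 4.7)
The plan is to prove $f_\ell(G_n) \to f^*_\ell$ for every finite $\ell$ first, via a local-coupling and second-moment argument, and then to deduce the giant-component convergence from this together with a uniqueness argument. The starting point is the structural identity
\[
\widetilde{\mathcal{C}}(v, G_n) = \bigcap_{i \in [k]} \mathcal{C}_i(v, G_n),
\]
where $\mathcal{C}_i(v, G_n)$ denotes the connected component of $v$ in $G_n^{\setminus i}$. By Definition~\ref{def:ECER}, each $G_n^{\setminus i}$ is an Erd\H{o}s--R\'{e}nyi random graph of intensity $\lambda^{\setminus i}$, and edges of different colors are independent. Consequently, the local weak limit of $G_n$ around any fixed vertex, viewed as a random edge-colored rooted multigraph, is precisely the ECBP tree $\mathcal{G}_\infty(\underline{\lambda})$ of Definition~\ref{def:ECBP}. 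I would invoke the results of Section~\ref{section:connection_between_ECER_and_ECBP} to identify, for each color $i$, the event that $v$ lies in the giant component of $G_n^{\setminus i}$ with the ECBP event that the root $r$ is $i$-avoiding connected to infinity.

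By vertex exchangeability, $\mathbb{E}[f_\ell(G_n)] = \mathbb{P}(|\widetilde{\mathcal{C}}(1, G_n)| = \ell)$. To prove this converges to $f^*_\ell$, I would simultaneously perform a truncated breadth-first exploration of $\mathcal{C}_i(1, G_n)$ for every $i \in [k]$: on those colors for which the exploration terminates with boundedly many vertices, the local coupling identifies the explored $i$-avoiding subgraph with a finite $i$-avoiding subtree of $G_\infty(r)$; on those colors for which the exploration escapes to a macroscopic set, one invokes the correspondence with the event that $r$ is $i$-avoiding connected to infinity in $G_\infty$. The event $\{|\widetilde{\mathcal{C}}(1, G_n)| = \ell\}$ is then encoded by finitely many such local conditions, and its probability converges to $f^*_\ell$ by Definition~\ref{def:fstar_ell}. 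Concentration follows from a standard second-moment computation: for a pair of uniformly random vertices $u, v$, their bounded-range colored local neighborhoods are disjoint and asymptotically independent with probability $1 - o(1)$, so $\mathrm{Cov}(\mathds{1}[|\widetilde{\mathcal{C}}(u)|=\ell], \mathds{1}[|\widetilde{\mathcal{C}}(v)|=\ell])$ tends to zero and hence $\mathrm{Var}(f_\ell(G_n)) \to 0$.

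For the convergence of $\tfrac{1}{n}\max_v |\widetilde{\mathcal{C}}(v, G_n)|$ I would combine the above with Claims~\ref{claim:sum_of_f_ell_G's} and~\ref{claim:sum_of_fstar_ell's}: the proven convergence $f_\ell(G_n) \to f^*_\ell$ for every finite $\ell$ implies that the fraction of vertices in color-avoiding components of bounded size converges to $1 - f^*_\infty$, so the fraction of vertices in ``large'' color-avoiding components converges to $f^*_\infty$. It then suffices to establish uniqueness of the giant. When $\underline{\lambda}$ is not fully supercritical, some $\lambda^{\setminus i_0} \le 1$ forces every component of $G_n^{\setminus i_0}$ to have size $o(n)$, and hence $\max_v |\widetilde{\mathcal{C}}(v)|/n \to 0 = f^*_\infty$. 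When $\underline{\lambda}$ is fully supercritical, each $G_n^{\setminus i}$ has a unique giant by classical Erd\H{o}s--R\'{e}nyi theory; any two vertices belonging to \emph{all} of these $k$ giants lie in the same $i$-avoiding component for every $i$ and are therefore color-avoiding connected, while any vertex missing from some $i$-giant is trapped in a small $i$-avoiding component and hence in a small color-avoiding component. Thus the intersection of the $k$ giants forms a single macroscopic color-avoiding component whose density converges to $f^*_\infty$.

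The main obstacle is making this local coupling faithful at the joint level of all $k$ color classes and formalizing the dichotomy ``$v$ lies in a small $i$-avoiding component'' versus ``$v$ lies in the $i$-giant'' uniformly over $i$, in such a way that it matches the dichotomy ``finite $i$-avoiding subtree'' versus ``$i$-avoiding survival'' in $G_\infty$. Assumption~\ref{asm:simplifying_assumption} is what makes this correspondence manageable: it forces every $i$-avoiding infinite path in $G_\infty$ to use edges of all $k - 1$ remaining colors, so that no infinite $i$-avoiding structure can arise from a sub-collection of $k - 2$ colors; this cleanly separates the small $i$-avoiding components from the $i$-giant and aligns the various $i$-giants across $i \in [k]$, so that the density of the color-avoiding giant reduces to the joint survival probability $f^*_\infty$ in the limiting ECBP tree.
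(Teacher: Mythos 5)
Your plan shares the same backbone as the paper's (colored local weak convergence of ECER to ECBP, the dictionary between $i$-giant membership in $G_n$ and $i$-avoiding survival in $G_\infty$, and a second-moment concentration step), but there is a genuine gap in the central reduction for the small-component densities, and a likely circularity in the proposed ordering.

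The claim that ``the event $\{|\widetilde{\mathcal{C}}(1,G_n)|=\ell\}$ is then encoded by finitely many such local conditions'' is not correct as stated. If $|\widetilde{\mathcal{C}}(1,G_n)|=\ell<\infty$, then some color $i_0$ has $\mathcal{C}(1,G_n^{\setminus i_0})$ small, and $\widetilde{\mathcal{C}}(1)$ is contained in that small set; but for each candidate $w$ in this set and each supercritical color $j$ for which $1$ lies in the $j$-giant, whether $w$ belongs to $\widetilde{\mathcal{C}}(1)$ requires knowing whether $w$ lies in the $j$-giant as well. That is not an $\mathcal{F}_{1,d}$-measurable condition for any fixed $d$. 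The paper resolves exactly this issue by recording the type~I vectors of the vertices on the exploration boundary, by controlling the ``bad'' events where type~I vectors mispredict $i$-avoiding connectivity (Lemma~\ref{lemma:bad_events}), by the boundary-effect lemma (Lemma~\ref{lemma:types_trickle_down_the_same_way}), and by decomposing according to the color-avoiding horizon $\tau_\infty$ (Definitions~\ref{def:stopping_time}--\ref{def:f_ell_d_F_G} and Lemma~\ref{lemma:f_ell_d_F_G_n_conv}). Your sketch acknowledges the obstacle in the final paragraph but does not supply these ingredients; ``Assumption~\ref{asm:simplifying_assumption} makes it manageable'' only guarantees that the explored core is a.s.\ finite, not that the boundary information is tractable.

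The proposed ordering (prove $f_\ell(G_n)\to f_\ell^*$ first, then deduce the giant) is also at risk of circularity. The local/second-moment argument naturally delivers a lower bound of the form $f_\ell(G_n)\geq f_\ell^*-\varepsilon$ with high probability, since the horizon decomposition expresses $f_\ell^*$ as a countable sum and each summand is captured by a local event. But matching upper bounds require ruling out that extra mass accumulates in bounded-size components, and in the paper this is done by first establishing $\tfrac1n\max_v|\widetilde{\mathcal{C}}(v,G_n)|\to f_\infty^*$ (Lemma~\ref{lemma:conv_giant_block}, via Theorem~\ref{thm:conv_of_typeI}) and then invoking Claims~\ref{claim:sum_of_f_ell_G's} and~\ref{claim:sum_of_fstar_ell's}. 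If you instead want to derive the giant result from the $f_\ell$ convergence, you have assumed the conclusion of the upper bound you still owe. To close the argument along your lines, you would either have to prove the giant-density convergence independently (as the paper does, via the type~I vector densities) or supply a direct dominated-convergence/uniform-integrability argument for the interchange of $\sum_{d,F}$ with $\lim_{n\to\infty}$; neither appears in the sketch.
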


We prove the above theorem in Section~\ref{section:conv_of_f_ell_G_n}. Note that a short alternative proof of the above theorem without using Assumption~\ref{asm:simplifying_assumption} is given in the recent paper~\cite{lichevschapira}.

From Theorem~\ref{thm:conv_of_f_ell_G_n} and Claim~\ref{claim:sum_of_fstar_ell's}, we can conclude the following.

\begin{corollary}[Existence of \emph{giant} color-avoiding connected component in ECER graphs]
 Let $k, n \in \mathbb{N}_+$, $\underline{\lambda} \in \mathbb{R}_+^k$ and let $G_n \sim \mathcal{G}_n([n],\underline{\lambda})$. If Assumption~\ref{asm:simplifying_assumption} holds and $f^*_{\infty} > 0$, then $G_n$ has a unique largest color-avoiding connected component asymptotically almost surely as $n \to \infty$.
\end{corollary}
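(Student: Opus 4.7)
The plan is to deduce this corollary from Theorem~\ref{thm:conv_of_f_ell_G_n} together with Claims~\ref{claim:sum_of_f_ell_G's} and~\ref{claim:sum_of_fstar_ell's} via a mass-conservation argument. The conclusion is to be read in the usual percolation sense: the goal is to show that for every $\epsilon > 0$, with probability tending to $1$ the second-largest color-avoiding component of $G_n$ has size at most $\epsilon n$, so that one component of size close to $f^*_\infty n$ dominates all others by an unbounded factor.

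The first step is to truncate the size distribution. Given $\epsilon > 0$, Claim~\ref{claim:sum_of_fstar_ell's} lets us choose a finite $L = L(\epsilon) \in \mathbb{N}_+$ such that
\[ \sum_{\ell = 1}^{L} f^*_\ell \; > \; 1 - f^*_\infty - \frac{\epsilon}{4}. \]
Applying the second convergence statement of Theorem~\ref{thm:conv_of_f_ell_G_n} to each of the finitely many $\ell \in \{1,\ldots,L\}$ and combining the resulting high-probability events yields
\[ \sum_{\ell=1}^L f_\ell(G_n) \; \stackrel{\mathbb{P}}{\longrightarrow} \; \sum_{\ell=1}^L f^*_\ell, \qquad n \to \infty, \]
so with probability tending to $1$ we have $\sum_{\ell=1}^L f_\ell(G_n) > 1 - f^*_\infty - \epsilon/2$. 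Combined with Claim~\ref{claim:sum_of_f_ell_G's}, this means that with probability tending to $1$ the fraction of vertices contained in color-avoiding components of size strictly greater than $L$ is at most $f^*_\infty + \epsilon/2$.

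The second step combines this with the first convergence statement of Theorem~\ref{thm:conv_of_f_ell_G_n}, which gives that the maximum color-avoiding component of $G_n$ has size at least $(f^*_\infty - \epsilon/4)n$ with probability tending to $1$. Since $L$ is fixed and $f^*_\infty - \epsilon/4 > 0$ (we may assume $\epsilon < f^*_\infty$), for $n$ large this maximum itself has size exceeding $L$, so it alone accounts for a fraction at least $f^*_\infty - \epsilon/4$ of the vertices lying in components of size $> L$. Consequently all other color-avoiding components of size greater than $L$ together contain at most a $3\epsilon/4$ fraction of the vertices, and in particular the second-largest color-avoiding component has size at most $\max(L, 3\epsilon n/4)$, which is at most $\epsilon n$ once $n \ge L/\epsilon$. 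Intersecting the finitely many high-probability events above finishes the argument.

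There is no genuine obstacle here; the only care needed is to first fix a truncation level $L$ so that the tail $\sum_{\ell > L} f^*_\ell$ is negligible, and then to invoke the $f_\ell(G_n) \to f^*_\ell$ convergence only for the finitely many values $\ell \le L$, where a standard combination of high-probability events is legitimate. Attempting to use such a convergence uniformly in $\ell$ is exactly what this truncation step avoids.
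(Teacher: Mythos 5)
Your argument is correct and is essentially the deduction the paper intends: the corollary is stated as following from Theorem~\ref{thm:conv_of_f_ell_G_n} together with Claim~\ref{claim:sum_of_fstar_ell's}, and your truncation-plus-mass-conservation argument (fix $L$ with $\sum_{\ell\le L}f^*_\ell$ close to $1-f^*_\infty$, apply $f_\ell(G_n)\to f^*_\ell$ only for $\ell\le L$, and compare with the largest component of size $\approx f^*_\infty n$) is exactly this, mirroring the same truncation device the paper itself uses in the proof of Lemma~\ref{lemma:conv_of_f_ell_G_n}. No gaps; the use of Claim~\ref{claim:sum_of_f_ell_G's} and the restriction to finitely many $\ell$ are handled correctly.
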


\begin{proposition}[Characterization of existence of giant color-avoiding connected component in ECER graphs] \label{prop:char_of_giant}
 Let $k \in \mathbb{N}_+$. A vector $\underline{\lambda} \in \mathbb{R}_+^k$ is fully supercritical if and only if $f^*_{\infty} > 0$.
\end{proposition}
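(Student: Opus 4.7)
The plan is to prove the two directions separately. For the easy direction, assume $\underline{\lambda}$ is \emph{not} fully supercritical, so $\lambda^{\setminus i_0} \le 1$ for some $i_0 \in [k]$. Then the component of $r$ in $G_\infty^{\setminus i_0}$ (consisting of descendants of $r$) is a Galton--Watson tree with $\mathrm{POI}(\lambda^{\setminus i_0})$ offspring distribution, which is (sub)critical and therefore a.s.\ finite. In particular $\mathbb{P}\bigl(r \stackrel{G_\infty^{\setminus i_0}}{\longleftrightarrow} \infty\bigr) = 0$, so by Definition~\ref{def:friends} every $i_0$-avoiding friend of $r$ must lie in this a.s.\ finite $i_0$-avoiding cluster, which forces $|\widetilde{\mathcal{C}}^*(r)| < \infty$ almost surely and hence $f^*_\infty = 0$.

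For the other direction, assume $\underline{\lambda}$ is fully supercritical and set $A_i \colonequals \{r \stackrel{G_\infty^{\setminus i}}{\longleftrightarrow} \infty\}$. Each $A_i$ is the survival event of a supercritical GW tree with offspring mean $\lambda^{\setminus i} > 1$, so $\mathbb{P}(A_i) > 0$. First I will show that $q^* \colonequals \mathbb{P}\bigl(\bigcap_{i \in [k]} A_i\bigr) > 0$ using the FKG inequality: generating the ECBP tree from the independent Poisson variables $X_{v,j}$ (the number of color-$j$ children of $v$), each event $A_i$ is a non-decreasing function of this family (adding offspring of colors $\ne i$ only enlarges the $i$-avoiding cluster, and $A_i$ is independent of $X_{v,i}$), so Harris--FKG yields $\mathbb{P}\bigl(\bigcap_i A_i\bigr) \ge \prod_i \mathbb{P}(A_i) > 0$. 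Next, let $\mathcal{S}_0$ denote the set of $v \in V(G_\infty)$ such that $v \stackrel{G_\infty^{\setminus i}}{\longleftrightarrow} \infty$ using descendants of $v$ alone, for every $i \in [k]$. On $\bigcap_i A_i$, both $r$ and every $v \in \mathcal{S}_0$ are $i$-avoiding connected to infinity for each $i$, so the ``through infinity'' clause of Definition~\ref{def:friends} makes every element of $\mathcal{S}_0$ a color-avoiding friend of $r$; hence it suffices to prove $|\mathcal{S}_0| = \infty$ almost surely on $\bigcap_i A_i$. By the branching property, conditional on the tree up to generation $d$ the indicators $\mathds{1}[v \in \mathcal{S}_0]$ for generation-$d$ vertices $v$ are i.i.d.\ $\mathrm{Bernoulli}(q^*)$; since $\bigcap_i A_i$ is a sub-event of survival of the total GW tree (mean offspring $\lambda^{\text{uc}} > 1$), the generation sizes $M_d$ tend to infinity a.s.\ on this event, and a standard Chernoff plus Borel--Cantelli argument gives $|\mathcal{S}_0| = \infty$ a.s.\ there, whence $f^*_\infty \ge q^* > 0$.

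The main obstacle I anticipate is the FKG step. For $k = 2$ the events $A_1$ and $A_2$ depend on disjoint edge populations and are independent, so positivity of their intersection is immediate; but for $k \ge 3$ the events $A_i$ share underlying random variables and are only positively correlated, so a genuine correlation inequality such as FKG is needed to pass from ``each $\mathbb{P}(A_i) > 0$'' to $\mathbb{P}(\bigcap_i A_i) > 0$. The remaining steps are essentially routine consequences of the branching property and standard concentration for supercritical Galton--Watson processes.
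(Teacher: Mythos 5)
Your proof is correct and takes essentially the same approach as the paper, whose substantive argument for this proposition is contained in Lemma~\ref{lemma:infinitely_many_friends}: the not-fully-supercritical direction via the a.s.\ finite $i_0$-avoiding cluster $\mathcal{C}(r, G_\infty^{\setminus i_0})$, and the fully-supercritical direction via Harris--FKG to get $\mathbb{P}\bigl(\bigcap_i A_i\bigr) > 0$ together with the branching property and growth of generation sizes to show the friend set is a.s.\ infinite on that event. The only cosmetic difference is that the paper works with the generations $R^{\setminus i}_{\infty,d}$ inside the $i$-avoiding subtree (invoking Kesten--Stigum there) while you work with the full generations $M_d$; both routes go through.
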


We prove Proposition~\ref{prop:char_of_giant} in Section~\ref{subsection:conv_of_giant_block}. In Section~\ref{section:explicit_formulas}, we give explicit formulas for $f^*_{\infty}$ in terms of the Lambert $W$ function. These formulas are recursive, and as the number of colors $k$ increases, the formulas get more complex.

\begin{proposition}[Characterization of triviality of component structure in ECER graphs] \label{prop:char_of_component_structure}
 Let $k \in \mathbb{N}_+$ and let $\underline{\lambda} \in \mathbb{R}_+^k$ for which Assumption~\ref{asm:simplifying_assumption} holds.
 \begin{enumerate}[(i)]
  \item \label{item:fully} If $\underline{\lambda}$ is fully critical-subcritical, then $f^*_{\ell} = \mathds{1}[\ell=1]$ for any $\ell \in \mathbb{N}_+$.
  \item \label{item:not_fully} If $\underline{\lambda}$ is not fully critical-subcritical, then $f^*_{\ell} > 0$ for any $\ell \in \mathbb{N}_+$.
 \end{enumerate}
\end{proposition}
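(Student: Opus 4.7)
For part (i), suppose $\lambda^{\setminus i} \le 1$ for every $i \in [k]$. Then the sub-branching-process of $G_\infty$ obtained by keeping only edges of colors $[k]\setminus\{i\}$ has Poisson offspring with mean $\lambda^{\setminus i} \le 1$ and dies out almost surely, so $\mathbb{P}\bigl(r \stackrel{G_\infty^{\setminus i}}{\longleftrightarrow} \infty\bigr) = 0$ for each $i$. For any $v \ne r$ the unique $r$-$v$ path uses at least one color $i_0$; for $r,v$ to be $i_0$-avoiding friends, the path must either avoid $i_0$ (which it does not) or both endpoints must be $i_0$-avoiding connected to infinity (which has probability zero). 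Hence $\widetilde{\mathcal{C}}^*(r) = \{r\}$ almost surely and $f^*_\ell = \mathds{1}[\ell=1]$.

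For part (ii), set $S \colonequals \{i \in [k] : \lambda^{\setminus i} > 1\}$, which is non-empty by hypothesis, and fix $i^* \in S$. The case $\ell = 1$ follows from the event ``$r$ has no offspring'', of probability $e^{-\lambda^{\text{uc}}} > 0$. For $\ell \ge 2$, the plan is to build a positive-probability event on which the friends of $r$ are exactly the vertices of a color-$i^*$ chain $v_0 = r, v_1, \ldots, v_{\ell-1}$. For a vertex $b$ in an ECBP tree set $\mathcal{A}(b) \colonequals \{c \in [k] : b \stackrel{G_\infty^{\setminus c}}{\longleftrightarrow} \infty\}$; the key auxiliary claim is that a fresh ECBP rooted at $b$ satisfies $\mathbb{P}(\mathcal{A}(b) = \{i^*\}) > 0$.

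I would prove this claim by a ``colored chain'' construction. Since the $c$-avoiding sub-BP is subcritical for $c \in S^c$, one has $\mathcal{A}(b) \subseteq S$ almost surely, so only colors $c \in S \setminus \{i^*\}$ must be explicitly excluded. Enumerate $S \setminus \{i^*\} = \{c_1, \ldots, c_m\}$ and consider the event that $b = w_0$ has exactly one offspring, of color $c_1$, called $w_1$; that $w_1$ has exactly one offspring, of color $c_2$, called $w_2$; and so on up to $w_{m-1}$ having exactly one offspring, of color $c_m$, called $w_m$; and finally that $w_m$'s non-$i^*$ subtree is infinite. On this event, for each $j \in [m]$, $w_0$'s $c_j$-avoiding subtree is forced to be the finite segment $\{w_0, w_1, \ldots, w_{j-1}\}$ (since the unique child of $w_{j-1}$ has the excluded color $c_j$), while $w_0$'s non-$i^*$ subtree contains the entire chain together with $w_m$'s infinite non-$i^*$ subtree. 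Hence $\mathcal{A}(w_0) = \{i^*\}$, and the event's probability factors into a product of positive Poisson probabilities for the prescribed offspring counts together with the positive survival probability of the Poisson$(\lambda^{\setminus i^*})$ branching process.

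Given the auxiliary claim, define $E_\ell$ as the event that (a) $v_0, \ldots, v_{\ell-1}$ form a color-$i^*$ chain with $v_j$ having exactly one color-$i^*$ child $v_{j+1}$ for $j < \ell-1$ and no color-$i^*$ child for $j = \ell-1$, and (b) each $v_j$ has at least one non-$i^*$ offspring, with every non-$i^*$ offspring of every $v_j$ (viewed as the root of a fresh ECBP) satisfying $\mathcal{A} = \{i^*\}$. By independence of offspring counts across vertices and colors, of fresh ECBP subtrees rooted at distinct vertices, and by the auxiliary claim, $\mathbb{P}(E_\ell) > 0$. On $E_\ell$ each $v_j$ is $i^*$-avoiding-infinite through any of its bushes, so each $v_j$ is a color-avoiding friend of $r$; conversely, any $u \notin \{v_0, \ldots, v_{\ell-1}\}$ has its unique $r$-$u$ path entering some bush through a color $c \ne i^*$, and since each bush root has $\mathcal{A} = \{i^*\}$ its $c$-avoiding subtree is finite, so $r$'s full $c$-avoiding subtree is finite, $r$ is not $c$-avoiding-infinite, and $u$ fails the $c$-avoiding friendship test. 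Hence $\widetilde{\mathcal{C}}^*(r) = \{v_0, \ldots, v_{\ell-1}\}$ on $E_\ell$, yielding $f^*_\ell > 0$. The principal obstacle is verifying the auxiliary claim: one must carefully track how the chain's rigid single-offspring structure rules out $c$-avoiding escape to infinity for every $c \in S \setminus \{i^*\}$ while keeping the non-$i^*$ direction alive.
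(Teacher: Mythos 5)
Part (i) of your proof is essentially identical to the paper's: both pick a color $i$ on the unique $r$-$v$ path, observe $r$ is almost surely not $i$-avoiding connected to infinity when $\lambda^{\setminus i}\le 1$, and conclude $\widetilde{\mathcal{C}}^*(r)=\{r\}$ a.s.

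For part (ii) you take a genuinely different route. The paper fixes $i_1\in I_{\underline{\lambda}}$, $i_2\neq i_1$, and constructs a single depth-2 tree $F$ (a star at $r$ with $\ell-1$ edges of color $i_1$ and one of color $i_2$, each child $v_j$ having one further child $w_j$ via $i_2$), then conditions on $\{G_{\infty,2}(r)\simeq F\}\cap\{\forall j: w_j\stackrel{\setminus i_1}{\longleftrightarrow}\infty\}$, whose probability factorizes cleanly as $(\theta^{\setminus i_1})^{\ell}\cdot\mathbb{P}(G_{\infty,2}(r)\simeq F)>0$ and on which $\widetilde{\mathcal{C}}^*(r)=\{r,v_1,\dots,v_{\ell-1}\}$ by direct inspection. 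Your construction instead builds a color-$i^*$ chain of length $\ell$ from $r$, attaches to each chain vertex ``bushes'' whose roots satisfy $\mathcal{A}(b)=\{i^*\}$, and proves the auxiliary claim $\mathbb{P}(\mathcal{A}(b)=\{i^*\})>0$ via a second chain through the other supercritical colors. Your argument is correct (I verified: on $E_\ell$ one has $\mathcal{A}(r)=\{i^*\}$ since for $c\neq i^*$ the $c$-avoiding subtree of $r$ is a finite union of finite sets, and the chain edges and bushes yield exactly the friend set $\{v_0,\dots,v_{\ell-1}\}$; the auxiliary claim is also correct, with the chain through $S\setminus\{i^*\}$ blocking each $c_j$-avoiding escape while the terminal non-$i^*$ subtree keeps $i^*$ alive), but it is considerably more elaborate than the paper's two-level tree and requires the extra lemma. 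The one thing your approach buys is the auxiliary claim itself, $\mathbb{P}(\mathcal{A}(b)=\{i^*\})>0$, a self-contained structural fact about ECBP trees that is of independent interest; the paper's construction sidesteps it entirely by engineering both the color-avoiding finiteness and the escape-to-infinity directly in the explicit $2$-neighborhood of $r$.
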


We prove Proposition~\ref{prop:char_of_component_structure} in Section~\ref{subsection:conv_of_small_comps}.

We note that the recent paper~\cite{lichevschapira} includes results about the size of biggest color-avoiding component for certain parameter vectors $\underline{\lambda}$ which are neither fully subcritical nor fully critical-subcritical (see Theorem~1.1(ii) of~\cite{lichevschapira}) as well as fully subcritical vectors $\underline{\lambda}$ (see Theorem~1.1(iii) of~\cite{lichevschapira}).

A vector $\underline{\lambda} = (\lambda_1, \ldots, \lambda_k) \in \mathbb{R}_+^k$ for some integer $k \ge 2$ is called \emph{homogeneous} if $\lambda_1 = \ldots = \lambda_k$.

\begin{definition}[Barely supercritical homogeneous setup] \label{def:barely_supercrit}
 Let $k \ge 2$ be an integer. For any $\varepsilon \ge 0$, let us define $\underline{\lambda}(\varepsilon) \colonequals \left(\frac{1+\varepsilon}{k-1}, \ldots, \frac{1+\varepsilon}{k-1} \right) \in \mathbb{R}_+^k$. Let us define $f^*_{\infty}(\varepsilon) \colonequals f^*_{\infty} \big( \underline{\lambda}(\varepsilon) \big)$. 
\end{definition}

Note that if $\varepsilon > 0$, then $f^*_{\infty}(\varepsilon) > 0$, but if $\varepsilon = 0$, then $f^*_{\infty}(\varepsilon) = 0$ by Proposition~\ref{prop:char_of_giant}. Also note that if 
$\varepsilon \ge 0$ is small enough, then Assumption~\ref{asm:simplifying_assumption} holds for $\underline{\lambda}(\varepsilon)$.

Our next result is about the asymptotic behaviour of $f^*_{\infty}(\varepsilon)$ as $\varepsilon \to 0_+$.

\begin{theorem}[Size of barely supercritical homogeneous giant] \label{thm:barely_supcrit}
 For any integer $k \ge 2$, there exists a constant $C(k) \in \mathbb{R}_+$ such that
 \begin{equation*}
  \lim_{\varepsilon \to 0_+} \frac{f^*_{\infty}(\varepsilon)}{\varepsilon^k} = C(k) \text{.}
 \end{equation*}
 In particular, $C(2)= 4$,  $C(3)=32$, and $C(4)=624$.
\end{theorem}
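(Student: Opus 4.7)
The plan is to write $f^*_\infty(\varepsilon)$ as an alternating sum via inclusion-exclusion over a small family of survival probabilities, derive a branching-process recursion for these, and Taylor-expand in $\varepsilon$. Under Assumption~\ref{asm:simplifying_assumption} one first identifies
\begin{equation*}
 f^*_\infty(\varepsilon) \;=\; \mathbb{P}\!\left(\, \bigcap_{i=1}^k A_i \,\right), \qquad A_i := \Bigl\{r \stackrel{G_\infty^{\setminus i}}{\longleftrightarrow} \infty\Bigr\}
\end{equation*}
(the inclusion $\{|\widetilde{\mathcal{C}}^*(r)| = \infty\} \subseteq \bigcap_i A_i$ follows because if $A_i^c$ occurs then every friend of $r$ lies in the $i$-avoiding cluster of $r$, which is a.s.\ finite under Assumption~\ref{asm:simplifying_assumption}; the reverse inclusion follows from a Galton--Watson-type argument showing that on $\bigcap_i A_i$ there are a.s.\ infinitely many descendants $v$ with $v \in \bigcap_i A_i^v$, each a friend of $r$ through infinity). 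By inclusion-exclusion and the homogeneous symmetry (so $\pi_m(\varepsilon) := \mathbb{P}(\bigcap_{i\in I} A_i^c)$ depends only on $m = |I|$), one has $f^*_\infty(\varepsilon) = \sum_{m=0}^k (-1)^m \binom{k}{m} \pi_m(\varepsilon)$.

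Conditioning on the colored offspring of the root in the ECBP tree with common intensity $\lambda = (1+\varepsilon)/(k-1)$, the quantities $y_m := 1 - \pi_m$ satisfy
\begin{equation*}
 1 - y_m \;=\; \exp\!\Bigl( -\lambda \bigl[ m\, y_{m-1} + (k-m)\, y_m \bigr] \Bigr), \qquad m = 1, \ldots, k, \quad y_0 = 0.
\end{equation*}
For $m = 1$ this reduces to the classical Poisson$(1+\varepsilon)$ survival equation, giving $y_1(\varepsilon) = 2\varepsilon + O(\varepsilon^2)$ real-analytic at $\varepsilon = 0$; for $m \geq 2$ the implicit function theorem (with nonzero linearization coefficient $(m-1)/(k-1)$ in $y_m$) inductively yields an analytic expansion $y_m(\varepsilon) = \sum_{j \geq 1} a_{m,j}\, \varepsilon^j$.

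The main structural step---and the main obstacle---is to show by induction on $j$ that $m \mapsto a_{m,j}$ is the restriction to $m \in \{0, 1, \ldots, k\}$ of a polynomial in $m$ of degree at most $j$. Equating $\varepsilon^j$-coefficients in the recursion yields, after clearing denominators, a linear difference equation of the form $(m-1)\, a_{m,j} - m\, a_{m-1,j} = P_j(m)$, where $P_j$ is a polynomial in $m$ of degree $\leq j$ whose coefficients depend polynomially on the $a_{m',j'}$ with $j' < j$ (themselves polynomial in $m'$ of degree $\leq j'$ by the inductive hypothesis). Using $a_{0,j} = 0$ together with evaluations of the difference equation at $m = 0$ and $m = 1$ forces $P_j(0) = P_j(1) = 0$, so $P_j(m) = m(m-1) Q_j(m)$ with $\deg Q_j \leq j - 2$; dividing by $m(m-1)$ telescopes to $a_{m,j}/m - a_{m-1,j}/(m-1) = Q_j(m)$, making $a_{m,j}/m$ a polynomial in $m$ of degree $\leq j - 1$, hence $a_{m,j}$ a polynomial of degree $\leq j$. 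Combined with the classical identity $\sum_{m=0}^k (-1)^m \binom{k}{m} P(m) = 0$ valid for any polynomial $P$ of degree $< k$, all $\varepsilon^j$-coefficients of $f^*_\infty(\varepsilon) = -\sum_j \varepsilon^j \sum_m (-1)^m \binom{k}{m}\, a_{m,j}$ vanish for $j < k$, and the limit $C(k) = (-1)^{k+1}\, k!\, \beta_k$ exists, where $\beta_k$ is the coefficient of $m^k$ in the polynomial $a_{\cdot, k}$.

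The explicit values $C(2)$ and $C(3)$ then follow by computation. For $k = 2$ a shortcut is available: $A_1^c$ and $A_2^c$ depend on disjoint color subgraphs and hence are independent, so $f^*_\infty(\varepsilon) = q(\varepsilon)^2$ with $q(\varepsilon) = 2\varepsilon + O(\varepsilon^2)$ the Poisson$(1+\varepsilon)$ survival probability, giving $C(2) = 4$. For $k = 3$ one unwinds the recursion to order $\varepsilon^3$, computes $y_1, y_2, y_3$ to that order, and substitutes into $f^*_\infty = 3 y_1 - 3 y_2 + y_3$ to obtain $C(3) = 32$.
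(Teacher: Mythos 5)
Your proposal is correct, and it takes a genuinely different route from the paper's. The paper's proof of Theorem~\ref{thm:barely_supcrit} starts from $f^*_\infty(\varepsilon) = \mathbb{E}\bigl(\prod_{i\in[k]}\bigl(1-(1-\theta^{\setminus k}(\varepsilon))^{|\widetilde{N}_{k-1}^{\setminus i}(r,G_\infty^\varepsilon)|}\bigr)\bigr)$, divides by $(\theta^{\setminus k}(\varepsilon))^k\sim(2\varepsilon)^k$, takes the limit inside the expectation by dominated convergence, and expresses $C(k)=2^k\,\mathbb{E}\bigl(\prod_{i\in[k]}|\widetilde{N}_{k-1}^{\setminus i}(r,G_\infty^0)|\bigr)$ as a mixed partial derivative of the generating function $\Phi_{k-2}$. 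You instead work from the inclusion--exclusion formula~\eqref{eq:explicit_fstar_infty} combined with the homogeneous version of the implicit system of Lemma~\ref{lemma:implicit_equation}, Taylor-expand $y_m(\varepsilon)=\sum_j a_{m,j}\varepsilon^j$ via the implicit function theorem, and exploit the fact that $m\mapsto a_{m,j}$ is a polynomial of degree $\le j$ so that $\sum_{m=0}^k(-1)^m\binom{k}{m}a_{m,j}=0$ for $j<k$. Your route avoids the generating-function machinery of Section~\ref{subsection:f_infty_balazs} and gives something slightly stronger---real analyticity of $f^*_\infty(\varepsilon)$ at $0$ and the vanishing of all Taylor coefficients below order $k$---while the paper's route produces a more transparent probabilistic formula for $C(k)$. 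One detail in your induction that should be spelled out: the bound $\deg_m P_j\le j$ is not a naive degree count. The $\ell$-fold products $[\varepsilon^j]\,x^\ell$ superficially look like degree $j+\ell$ because $m\,a_{m-1,j'}+(k-m)\,a_{m,j'}$ naively has degree $\deg a_{\cdot,j'}+1$; the point is that the leading $m^{\deg a_{\cdot,j'}+1}$ terms of $m\,a_{m-1,j'}$ and $(k-m)\,a_{m,j'}$ cancel, so this combination actually has degree $\le\deg a_{\cdot,j'}\le j'$. Once this is noted, $\deg_m x_i\le i$, hence $\deg_m[\varepsilon^j]\,x^\ell\le j$, and the rest of your argument ($P_j(0)=P_j(1)=0$ from $a_{0,j'}\equiv 0$ and the $m=1$ equation, division by $m(m-1)$, telescoping) goes through; I verified it agrees with direct computation at $j=1,2$ and reproduces $C(2)=4$.
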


In Section~\ref{section:explicit_formulas}, we provide a method for computing the value of $C(k)$ for any integer $k \ge 2$ and prove Theorem~\ref{thm:barely_supcrit}.

\section{Notation} \label{section:notation}

In this section, we introduce some notation that we use throughout the paper. The central definition of our paper concerns the set of vertices reachable from a vertex by a fixed chronology of colors, see Definition~\ref{def:layers}. We also compiled a list of all notations that appear in the paper starting on page \pageref{list_of_symbols_starts_here}.

\medskip

Given an (edge-colored) graph $G$ and a set of vertices $W \subseteq V(G)$, let $G[W]$ denote the (edge-colored) subgraph of $G$ induced by $W$, and let $G-W$ denote the (edge-colored) graph obtained from $G$ by deleting the vertices of $W$ (and all the edges incident to any vertex of $W$).

Given an edge-colored graph $G = (V, \underline{E})$ with $k \in \mathbb{N}_+$ colors, where $\underline{E}=(E_1,\ldots,E_k)$, and a colored edge set $\underline{E}' = (E'_1, \ldots, E'_k)$ satisfying $E'_i \subseteq E_i$ for each $i \in [k]$, let $G-\underline{E}'$ denote the edge-colored graph that is obtained from $G$ by deleting the edges of $E'_i$ from $E_i$ for each $i \in [k]$.

For two (edge-colored) graphs $G_1$ and $G_2$, let $G_1 \oplus G_2$ denote their disjoint union.

We say that two edge-colored graphs $G_1$ and $G_2$ are \emph{isomorphic} -- briefly denoted by $G_1 \simeq G_2$ -- if there exists a bijection between their vertex sets which is an isomorphism for the set of edges of each color.

We say that an edge-colored graph is \emph{rooted} if it has a distinguished vertex. We say that the rooted edge-colored graphs $G_1$ and $G_2$ with distinguished vertices $v_1$ and $v_2$, respectively, are \emph{isomorphic} if there exists an isomorphism between them mapping $v_1$ to $v_2$. We say that an edge-colored graph is \emph{two-rooted} if it has two distinct distinguished vertices. We say that the two-rooted edge-colored graphs $G_1$ and $G_2$ with distinguished vertices $v_1, w_1$ and $v_2, w_2$, respectively, are \emph{isomorphic} if there exists an isomorphism between them mapping $v_1$ to $v_2$ and $w_1$ to $w_2$.

Let $G=(V,E)$ be a graph without an edge-coloring. For a vertex $v \in V$, let $\mathcal{C}(v) \colonequals \mathcal{C}(v,G)$ denote the set of vertices of the connected component of $v$ in $G$. Denote by
\begin{equation} \label{eq:U_max}
 \mathcal{U}_{\text{max}}(G) \colonequals \bigcup \, \{ \, \mathcal{C} : \text{$\mathcal{C}$ is a component of $G$ with maximum cardinality} \} \text{,}
\end{equation}
i.e.\ the vertices of the union of the connected components of $G$ with maximum cardinality. For a vertex $v \in V(G)$, let $N(v) \colonequals N(v,G)$ denote the open neighborhood of $v$, i.e., the set of the neighbors of $v$. For a set of vertices $W \subseteq V(G)$, let
\begin{equation} \label{eq:N_W}
 N(W) \colonequals N(W,G) \colonequals \left( \bigcup_{v \in W} N(v) \right) \setminus W
\end{equation}
denote the open neighborhood of $W$, and let
\begin{equation} \label{eq:R_W}
 R(W)\colonequals R(W,G) \colonequals \bigcup_{v \in W} \mathcal{C}(v)
\end{equation}
denote the set of vertices reachable from $W$ in $G$. For the vertices $v,w \in V(G)$, let $\mathrm{dist}(v,w) \colonequals \mathrm{dist}_G(v,w)$ denote the distance of $v$ and $w$ (i.e., the number of edges on a shortest $v$-$w$ path) in $G$. For a vertex $v \in V(G)$ and a vertex set $W \subseteq V(G)$, let
\begin{equation} \label{eq:dist_v_W}
 \mathrm{dist}(v,W) \colonequals \mathrm{dist}_G(v,W) \colonequals \min \big\{ \mathrm{dist}(v,w) : w \in W \big\} \text{.}
\end{equation}

For an edge-colored graph $G$, two vertices $v,w \in V(G)$ and a set of vertices $W \subseteq V(G)$, let us use the notation $\mathcal{C}(v,G) \colonequals \mathcal{C}(v, G^{\text{uc}})$, and $N(W,G) \colonequals N(W, G^{\text{uc}})$, and $R(W,G) \colonequals R(W, G^{\text{uc}})$, and $\mathrm{dist}_G(v,w) \colonequals \mathrm{dist}_{G^{\text{uc}}}(v,w)$, and $\mathrm{dist}_G(v,W) \colonequals \mathrm{dist}_{G^{\text{uc}}}(v,W)$.

\begin{definition}[Color strings of length $h$ without repetition] \label{def:color_strings}
 Let $k \in \mathbb{N}_+$ and let $h \in \{ 0, 1, \ldots, k \}$. Let $S_h$ denote the set of strings $\underline{s} = (i_1, \ldots, i_h)$ of colors without any repetition, i.e., $i_1, \ldots, i_h \in [k]$ and $\big| \{ i_1, \ldots, i_h \} \big| = h$; whereby the set $S_0$ has one element, namely the empty string $\emptyset$.
 
 Let us define $\mathrm{set}(\underline{s}) \colonequals \{ i_1, \ldots, i_h \}$ for a string $\underline{s} = (i_1, \ldots, i_h) \in S_h$.
\end{definition}

Note that $|S_h| = k \cdot (k-1) \cdot \ldots \cdot (k-h+1)$ for any $h \in [k]$.

\begin{definition}[Color string operations] \label{def:color_string_operations}
 Let $k \in \mathbb{N}_+$. For any color string $\underline{s} = (i_1, \ldots, i_h) \in S_h$, where $h \in [k]$, let $\underline{s}^- \colonequals (i_1, \ldots, i_{h-1}) \in S_{h-1}$ denote the color string that can be obtained from $\underline{s}$ by deleting its last coordinate. For any $\underline{s} = (i_1, \ldots, i_h) \in S_h$, where $h \in \{ 0, 1, \ldots, k-1 \}$, and for any $i \in [k] \setminus \mathrm{set}(s)$, let $\underline{s}i \colonequals (i_1, \ldots, i_h, i) \in S_{h+1}$ denote the concatenation of $\underline{s}$ and $i$.
\end{definition}

For an edge-colored forest $F$ with $k \in \mathbb{N}_+$ colors, a vertex $v \in V(F)$ and a color string $\underline{s} = (i_1, \ldots, i_h) \in S_h$, where $h \in [k]$, let us define the following sets of vertices. Let $\widetilde{R}_{\underline{s}}(v,F)$ denote the set of vertices that can be reached from $v$ with a path whose edges are of color $i_1, \ldots, i_h$ and the new colors on this path appear in this specific order. Let $\widetilde{N}_{\underline{s}}(v,F)$ denote the set of vertices that can be reached from $v$ with a path whose edges except the last one are of color $i_1, \ldots, i_{h-1}$, the last edge is of color $i_h$, and the new colors on this path appear in this specific order. For an edge-colored graph $G$, the sets $\widetilde{R}_{\underline{s}}(v,G)$ and $\widetilde{N}_{\underline{s}}(v,G)$, are defined the same way, but with the extra requirement that their vertices cannot be reached with a path that uses at most $h - 1$ colors. In addition, let $\widetilde{R}^{\le}_h(v,G)$ denote the set of vertices that can be reached with a path that uses at most $h$ different colors. For an example, see Figure~\ref{fig:layers}. Formally, these definitions are introduced as follows.

\begin{definition}[Set of vertices reachable by a fixed chronology of colors] \label{def:layers}
 Given an edge-colored graph $G=(V,\underline{E})$ with $k \in \mathbb{N}_+$ colors, and given a vertex $v \in V$, we define the set of verticess $\widetilde{R}^{\le}_{h-1}(v) \colonequals \widetilde{R}^{\le}_{h-1}(v,G)$, $\widetilde{N}_{\underline{s}}(v) \colonequals \widetilde{N}_{\underline{s}}(v,G)$, and $\widetilde{R}_{\underline{s}}(v) \colonequals \widetilde{R}_{\underline{s}}(v,G)$ for all $\underline{s} \in S_h$ and $h \in [k]$ recursively.
 \begin{itemize}
  \item Let $\widetilde{R}_{\emptyset}(v) \colonequals \{ v \}$.
  
  \item Assume that $\widetilde{R}_{\underline{s}}(v)$ is already defined for all strings $\underline{s} \in S_j$ and for all $j \in \{ 0, 1, \ldots, h-1 \}$. Let us first define
  \[ \widetilde{R}^{\le}_{h-1}(v) \colonequals \bigcup_{j=0}^{h-1} \, \bigcup_{\underline{s} \in S_j} \widetilde{R}_{\underline{s}}(v) \text{.} \]
  If $\underline{s} \in S_h$ and $i \in [k]$ is the last coordinate of $\underline{s}$, then let
  \[ \widetilde{N}_{\underline{s}}(v) \colonequals N \left( \widetilde{R}_{\underline{s}^-}(v,G), \, G^{\{ i \}} \right) \setminus \widetilde{R}^{\le}_{h-1}(v,G) \text{,} \]
  and finally let
  \[ \widetilde{R}_{\underline{s}}(v) \colonequals R \left( \widetilde{N}_{\underline{s}}(v,G), G^{\mathrm{set}(\underline{s})} - \widetilde{R}_{\underline{s}^-}(v,G) \right) \text{.} \]
 \end{itemize}
 
 For a set of vertices $W \subseteq V$, let
 \[ \widetilde{R}^{\le}_{h-1}(W) \colonequals \widetilde{R}^{\le}_{h-1}(W,G) \colonequals \bigcup_{w \in W} \widetilde{R}^{\le}_{h-1}(w) \text{.} \]
\end{definition}

\begin{figure}[H]
 \centering
 \begin{tikzpicture}
  \tikzstyle{vertex}=[draw,circle,fill,minimum size=8,inner sep=0]
  
  \coordinate (r) at (0,0);
  \coordinate (v1) at (1,1);
  \coordinate (v2) at (1,-1);
  \coordinate (v3) at (2.25,1.5);
  \coordinate (v4) at (2.25,0.5);
  \coordinate (v5) at (2.25,-0.5);
  \coordinate (v6) at (2.25,-1.5);
  \coordinate (v7) at (3.75,1.5);
  \coordinate (v8) at (3.75,0.5);
  \coordinate (v9) at (3.75,-1.5);
  \coordinate (v10) at (5.25,1.5);
  \coordinate (v11) at (6.75,1.5);
  \coordinate (v12) at (8.25,1.5);
  
  \draw[decorate with = isosceles triangle, paint=red] (r) -- (v1);
  \draw[decorate with = rectangle, paint=blue] (r) -- (v2);
  \draw[decorate with = isosceles triangle, paint=red] (v1) -- (v3);
  \draw[decorate with = rectangle, paint=blue] (v1) -- (v4);
  \draw[decorate with = isosceles triangle, paint=red] (v2) -- (v5);
  \draw[decorate with = star, paint=green] (v2) -- (v6);
  \draw[decorate with = rectangle, paint=blue] (v3) -- (v7);
  \draw[decorate with = star, paint=green] (v4) -- (v8);
  \draw[decorate with = isosceles triangle, paint=red] (v6) -- (v9);
  \draw[decorate with = isosceles triangle, paint=red] (v7) -- (v10);
  \draw[decorate with = rectangle, paint=blue] (v10) -- (v11);
  \draw[decorate with = star, paint=green] (v11) -- (v12);
  
  \node[vertex] at (r) [label=left: $r$] {};
  \node[vertex] at (v1) [label=above: $v_1$] {};
  \node[vertex] at (v2) [label=below: $v_2$] {};
  \node[vertex] at (v3) [label=above: $v_3$] {};
  \node[vertex] at (v4) [label={[xshift=0pt, yshift=-21pt] $v_4$}] {};
  \node[vertex] at (v5) [label={[xshift=0pt, yshift=-3pt] $v_5$}] {};
  \node[vertex] at (v6) [label=below: $v_6$] {};
  \node[vertex] at (v7) [label=above: $v_7$] {};
  \node[vertex] at (v8) [label={[xshift=0pt, yshift=-21pt] $v_8$}] {};
  \node[vertex] at (v9) [label=below: $v_9$] {};
  \node[vertex] at (v10) [label=above: $v_{10}$] {};
  \node[vertex] at (v11) [label=above: $v_{11}$] {};
  \node[vertex] at (v12) [label=above: $v_{12}$] {};
 \end{tikzpicture}
 \captionsetup{singlelinecheck=off}
 \caption[]{In the picture, the colors red, blue, and green are denoted by (red) triangles, (blue) rectangles, and (green) stars, respectively. In this graph,
 \vspace{-7pt}
 \begin{itemize} \setlength{\itemsep}{-2pt}
  \item[] $\widetilde{R}_{(\mathrm{red})}(r) = \{ v_1, v_3 \}$, \quad $\widetilde{R}_{(\mathrm{blue})}(r) = \{ v_2 \}$, \quad $\widetilde{R}_{(\mathrm{green})}(r) = \emptyset$,
  \item[] $\widetilde{R}_{(\mathrm{red}, \mathrm{blue})}(r) = \{ v_4, v_7, v_{10}, v_{11} \}$, \quad $\widetilde{R}_{(\mathrm{blue}, \mathrm{red})}(r) = \{ v_5 \}$,
  \item[] $\widetilde{R}_{(\mathrm{red}, \mathrm{blue}, \mathrm{green})}(r) = \{ v_8, v_{12} \}$, \quad $\widetilde{R}_{(\mathrm{blue}, \mathrm{green}, \mathrm{red})}(r) = \{ v_9 \}$,
  \item[] $\widetilde{R}^{\le}_1 = \{ v_1, v_2, v_3 \}$,
  \item[] $\widetilde{N}_{(\mathrm{red})}(r) = \{ v_1 \}$,~ $\widetilde{N}_{(\mathrm{red}, \mathrm{blue})}(r) = \{ v_4, v_7 \}$,~ $\widetilde{N}_{(\mathrm{red}, \mathrm{blue}, \mathrm{green})}(r) = \{ v_8, v_{12} \}$.
 \end{itemize}}
 \label{fig:layers}
\end{figure}

\begin{definition}[Size of the set reachable using $k-2$ colors] \label{def:rho}
 Given an edge-colored graph $G=(V,\underline{E})$ with $k \in \mathbb{N}_+$ colors and $v \in V$, let us denote
 \[ \rho(v) \colonequals \rho(v,G) \colonequals \left| \widetilde{R}^{\le}_{k-2}(v) \right| \text{.} \]
\end{definition}

\begin{claim}[Size of the set reachable using $k-2$ colors in ECBP trees] \label{claim:finiteness_of_rhor}
 Let $k \in \mathbb{N}_+$, $\underline{\lambda} \in \mathbb{R}_+^k$ and $G_{\infty}(r) \sim \mathcal{G}_{\infty}(\underline{\lambda})$. If Assumption~\ref{asm:simplifying_assumption} holds, then $\rho(r)$ is almost surely finite.
\end{claim}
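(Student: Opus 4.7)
The plan is to dominate $\widetilde{R}^{\le}_{k-2}(r)$ by a union of finitely many connected components in monochromatic-subset subgraphs of $G_\infty$, each of which can be identified with a subcritical Galton--Watson tree under Assumption~\ref{asm:simplifying_assumption}. For each $I \subseteq [k]$, let $\mathcal{A}_I(r)$ denote the vertex set of the connected component of $r$ in $G_\infty^I$. First, I would show by an easy induction on $h$ that mirrors the recursive Definition~\ref{def:layers} that
\[ \widetilde{R}_{\underline{s}}(r) \subseteq \mathcal{A}_{\mathrm{set}(\underline{s})}(r) \qquad \text{for every } \underline{s} \in S_h, \; h \in [k], \]
since every edge used in the construction of $\widetilde{R}_{\underline{s}}(r)$ carries a color from $\mathrm{set}(\underline{s})$ and is incident to a vertex already placed in $\mathcal{A}_{\mathrm{set}(\underline{s})}(r)$. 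Taking the union over all color strings of length at most $k-2$ yields the inclusion
\[ \widetilde{R}^{\le}_{k-2}(r) \;\subseteq\; \bigcup_{I \subseteq [k], \, |I| \le k-2} \mathcal{A}_I(r). \]

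Next, I would identify $\mathcal{A}_I(r)$ as the total progeny of a Galton--Watson branching process. Since $G_\infty$ is a tree rooted at $r$, the component $\mathcal{A}_I(r)$ consists of $r$ together with those descendants reachable via color-$I$ edges. For any vertex $v \in V(G_\infty)$, its number of children in $G_\infty^I$ is $\sum_{i \in I} X_{v,i}$, which by independence of the $X_{v,i}$ and the convolution property of Poisson random variables has law $\mathrm{POI}(\lambda_I)$; moreover these offspring counts are conditionally independent across $v$ by Definition~\ref{def:ECBP}. Hence $|\mathcal{A}_I(r)|$ is the total progeny of a Galton--Watson branching process with $\mathrm{POI}(\lambda_I)$ offspring distribution.

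Under Assumption~\ref{asm:simplifying_assumption}, every $I$ with $|I| \le k-2$ satisfies $\lambda_I < 1$, so each such branching process is strictly subcritical and $|\mathcal{A}_I(r)| < \infty$ almost surely. Since there are only $\sum_{j=0}^{k-2}\binom{k}{j}$ subsets $I \subseteq [k]$ with $|I| \le k-2$, the right-hand side of the inclusion is a finite union of almost surely finite sets and is therefore almost surely finite, giving $\rho(r) = |\widetilde{R}^{\le}_{k-2}(r)| < \infty$ almost surely. The only nontrivial verification is the inductive inclusion, which is purely a matter of unpacking the somewhat layered Definition~\ref{def:layers}; everything else is standard subcritical branching-process theory.
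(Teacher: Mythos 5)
Your proof is correct and follows essentially the route the paper intends (the paper leaves this Claim unproved, but supplies the two ingredients: Claim~\ref{claim:union_of_bags} gives the decomposition of $\widetilde{R}^{\le}_{k-2}(r)$ into components of the uncolored color-subgraphs $G_\infty^I$, and Claim~\ref{claim:GIr_is_BP} identifies each such component as a Galton--Watson tree with $\mathrm{POI}(\lambda_I)$ offspring). You reprove the inclusion $\widetilde{R}_{\underline{s}}(r) \subseteq \mathcal{C}(r, G_\infty^{\mathrm{set}(\underline{s})})$ by induction on $|\underline{s}|$ rather than simply invoking Claim~\ref{claim:union_of_bags}, and you take the union over all $I$ with $|I| \le k-2$ where the paper observes equality already with $|I| = k-2$ (the smaller components are nested inside), but these are cosmetic differences; the subcriticality conclusion from Assumption~\ref{asm:simplifying_assumption} and the finite-union step are exactly as the paper's framework sets up.
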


The next claim follows directly from Definition~\ref{def:layers}.

\begin{claim}[Connections between connected components and reachable sets] \label{claim:union_of_bags}
 Let $G=(V,\underline{E})$ be an edge-colored graph with $k \in \mathbb{N}_+$ colors and let $v \in V$. Then for any $I \subseteq [k]$, we have
 \[ \mathcal{C} \big( v, G^I \big) = \bigcup_{\underline{s} \, : \, \mathrm{set}(\underline{s}) \subseteq I} \widetilde{R}_{\underline{s}}(v,G) \]
 and
 \[ \widetilde{R}^{\le}_{k-2}(v,G) = \bigcup_{\substack{I \subseteq [k] \\ |I|=k-2}} \mathcal{C} \big( v, G^I \big) \text{.} \]
\end{claim}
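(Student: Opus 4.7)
The plan is to derive both identities from Definition~\ref{def:layers} by induction. The cornerstone is the following auxiliary fact, proved by induction on $h$: for each $\underline{s} \in S_h$ and each $u \in \widetilde{R}_{\underline{s}}(v,G)$, there exists a path from $v$ to $u$ in $G^{\mathrm{set}(\underline{s})}$. This is immediate for $h = 0$, since $\widetilde{R}_{\emptyset}(v) = \{v\}$. For the inductive step, the recursive definition $\widetilde{R}_{\underline{s}}(v) = R(\widetilde{N}_{\underline{s}}(v), G^{\mathrm{set}(\underline{s})} - \widetilde{R}_{\underline{s}^-}(v))$ provides a vertex $w' \in \widetilde{R}_{\underline{s}^-}(v)$, an edge $w'w$ of color $i_h$, and a path from $w$ to $u$ using only colors in $\mathrm{set}(\underline{s})$. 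Concatenating with the inductively given $v$-to-$w'$ path in $G^{\mathrm{set}(\underline{s}^-)} \subseteq G^{\mathrm{set}(\underline{s})}$ produces the desired $v$-to-$u$ path. This already yields the $\supseteq$ inclusion in the first identity.

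For the $\subseteq$ inclusion in the first identity, I would proceed by strong induction on $|I|$. The base case $I = \emptyset$ is trivial since $\mathcal{C}(v, G^{\emptyset}) = \{v\} = \widetilde{R}_{\emptyset}(v)$. For $|I| \ge 1$, fix $u \in \mathcal{C}(v, G^I)$. If $u \in \mathcal{C}(v, G^{I'})$ for some $I' \subsetneq I$, the induction hypothesis delivers the required color string $\underline{s}$. Otherwise, choose any path $P$ from $v$ to $u$ in $G^I$ and let $\underline{s} \in S_h$ record the colors appearing on $P$ in order of first occurrence; the minimality of $I$ for $u$ forces $u \notin \widetilde{R}^{\le}_{h-1}(v)$. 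A secondary induction along the prefixes of $P$ then shows that each intermediate vertex lies at the appropriate level $\widetilde{R}_{\underline{s}^-}(v)$ or $\widetilde{N}_{\underline{s}}(v)$ of the construction, and hence $u \in \widetilde{R}_{\underline{s}}(v)$.

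The second identity then follows by a purely set-theoretic manipulation: applying the first identity to each $I \subseteq [k]$ with $|I| = k-2$ gives
\[ \bigcup_{\substack{I \subseteq [k] \\ |I| = k-2}} \mathcal{C}(v, G^I) \;=\; \bigcup_{\substack{I \subseteq [k] \\ |I| = k-2}} \;\bigcup_{\mathrm{set}(\underline{s}) \subseteq I} \widetilde{R}_{\underline{s}}(v) \;=\; \bigcup_{|\mathrm{set}(\underline{s})| \le k-2} \widetilde{R}_{\underline{s}}(v) \;=\; \widetilde{R}^{\le}_{k-2}(v,G), \]
where the middle equality uses that any color string $\underline{s}$ with $|\mathrm{set}(\underline{s})| \le k-2$ has its color set contained in at least one $I \subseteq [k]$ of cardinality $k-2$, and the last equality is simply the definition of $\widetilde{R}^{\le}_{k-2}(v,G)$.

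The main obstacle, though not technically deep, is the bookkeeping in the $\subseteq$ direction of the first identity. The heuristic picture that $\widetilde{R}_{\underline{s}}(v)$ consists of vertices \emph{first} reachable via the chronology $\underline{s}$ is intuitively clear, but one must carefully align the chronology extracted from an arbitrary path $P$ with the sequence of $\widetilde{N}$- and $\widetilde{R}$-extensions dictated by the recursive definition, and handle the subtraction $\setminus \widetilde{R}^{\le}_{h-1}(v)$ to ensure that the vertex $u$ is actually captured at precisely the correct level rather than an earlier one.
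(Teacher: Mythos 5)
Your $\supseteq$ direction of the first identity (the auxiliary fact that every $u \in \widetilde{R}_{\underline{s}}(v,G)$ lies in $\mathcal{C}(v,G^{\mathrm{set}(\underline{s})})$) is correct, and it does yield the inclusion $\widetilde{R}^{\le}_{k-2}(v) \subseteq \bigcup_{|I|=k-2}\mathcal{C}(v,G^I)$, which is the only part of this claim the paper later invokes (to bound $A_4$ in the proof of Proposition~\ref{prop:good_events}). Since the paper states that the claim ``follows directly from Definition~\ref{def:layers}'' without proof, there is no argument to compare yours against.

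The $\subseteq$ direction of the first identity, however, has a genuine gap at the step ``the minimality of $I$ for $u$ forces $u \notin \widetilde{R}^{\le}_{h-1}(v)$.'' This implication does not hold: by Definition~\ref{def:layers}, $\widetilde{N}_{\underline{s}}(v)$ subtracts $\widetilde{R}^{\le}_{h-1}(v)$, which is a union over \emph{all} color strings of length $<h$, including those whose colors lie outside $I$, whereas minimality of $I$ only rules out $u\in\mathcal{C}(v,G^{I'})$ for $I'\subsetneq I$. Your secondary induction along the prefixes of $P$ breaks at the first vertex already absorbed into $\widetilde{R}^{\le}_{h-1}(v)$ through colors outside $I$. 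In fact the first identity is false as stated: take $k=3$, $V=\{v,w,u,x\}$, $E_1=\{\{v,w\}\}$, $E_2=\{\{w,u\}\}$, $E_3=\{\{v,x\},\{x,u\}\}$, and $I=\{1,2\}$. Then $\mathcal{C}(v,G^{\{1,2\}})=\{v,w,u\}$, but $u\in\widetilde{R}_{(3)}(v)\subseteq\widetilde{R}^{\le}_1(v)$, so $u$ is removed in forming $\widetilde{N}_{(1,2)}(v)$, giving $\widetilde{R}_{(1,2)}(v)=\emptyset$ and $\bigcup_{\mathrm{set}(\underline{s})\subseteq\{1,2\}}\widetilde{R}_{\underline{s}}(v)=\{v,w\}\subsetneq\mathcal{C}(v,G^{\{1,2\}})$. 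Consequently, your set-theoretic derivation of the second identity from the first also fails; the nontrivial inclusion $\bigcup_{|I|=k-2}\mathcal{C}(v,G^I)\subseteq\widetilde{R}^{\le}_{k-2}(v)$ would need a direct argument (for instance, tracking along a color-minimal path that the chronology of first occurrences of colors always produces a capture at a level no larger than the number of colors used), rather than a reduction to the per-$I$ identity, which is not true.
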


\begin{definition}[The outer boundary of the set of vertices reachable with $k-2$ colors] \label{def:outer_boundary}
 Given an edge-colored graph $G=(V,\underline{E})$ with $k \in \mathbb{N}_+$ colors, a color $i \in [k]$, a number $h \in \{ 0, 1, \ldots, k-1 \}$ and a vertex $v \in V$, let us denote
 \[ \widetilde{N}_h(v) \colonequals \widetilde{N}_h(v,G) \colonequals \bigcup_{\underline{s} \in S_h} \widetilde{N}_{\underline{s}}(v) \text{,} \]
 \[ \widetilde{N}_{k-1}^{\setminus i}(v) \colonequals \widetilde{N}_{k-1}^{\setminus i}(v,G) \colonequals \bigcup_{\substack{\underline{s} \in S_{k-1}: \\ \mathrm{set}(\underline{s}) = [k] \setminus \{ i \}}} \widetilde{N}_{\underline{s}}(v) \text{,} \]
 and
 \[ \underline{b}(v) \colonequals \underline{b}(v,G) \colonequals \Bigg( \Big| \widetilde{N}_{k-1}^{\setminus i}(v) \Big| \Bigg)_{i \in [k]} \text{.} \]
 
 For a set of vertices $W \subseteq V$, let
 \[ \widetilde{N}_h(W) \colonequals \widetilde{N}_h(W,G) \colonequals \bigcup_{w \in W} \widetilde{N}_h(w) \text{.} \]
\end{definition}

In words: The set $\widetilde{N}_h(v)$ consists of those vertices that can be reached from $v$ with a path which uses exactly $h$ colors and the color of the last edge is new, but cannot be reached with a path of at most $h-1$ colors. The set $\widetilde{N}_{k-1}^{\setminus i}(v)$ consists of those vertices that can be reached from $v$ with a path which uses all of the colors in $[k] \setminus \{ i \}$ and the color of the last edge is new, but cannot be reached with a path of at most $k-2$ colors. The vector $\underline{b}(v)$ is of length $k$ and contains the cardinalities of the boundary sets $\widetilde{N}_{k-1}^{\setminus i}(v)$ for all $i \in [k]$.


\section{The connection between ECER graphs and ECBP trees} \label{section:connection_between_ECER_and_ECBP}

Section~\ref{section:connection_between_ECER_and_ECBP} contains some auxiliary results for proving the statements of Section~\ref{section:main_results}.

In Section~\ref{subsection:exploring}, we introduce some notation and state some basic results about the exploration of ECER graphs and ECBP trees. 

In Section~\ref{subsection:types}, we introduce the notion of \emph{color-avoiding type I vectors}: informally, in the case of ECER graphs, the $i$-th coordinate of the type I vector of a vertex $v$ is the indicator of the event that $v$ is in the $i$-avoiding giant component, while in the case of an ECBP trees, the $i$-th coordinate of the type I vector of a vertex $v$ is the indicator of the event that $v$ is $i$-avoiding connected to infinity for some color $i$. The main result of Section~\ref{subsection:types} states that the empirical frequency of a type I vector in a sequence of ECER graphs converges as $n \to \infty$ to the probability of the root having the same type I vector in an ECBP tree with the same parameter vector $\underline{\lambda}$.

In Section~\ref{subsection:types_and_color_avoiding_connectivity}, we further elaborate on the relations between the notion of type I vectors, color-avoiding connectivity in ECER graphs and color-avoiding friendship in ECBP trees.

\subsection{Exploration of ECER graphs and ECBP trees} \label{subsection:exploring}

The main goal of Section~\ref{subsection:exploring} is to formalize the idea that ECER graphs locally look like ECBP trees.

We use two types of exploration methods in edge-colored graphs: given a vertex $v$, in Definition~\ref{def:exploring_up_to_dist_d} we explore the vertices reachable from $v$ with at most $d$ edges, while in Definition~\ref{def:exploring_up_to_colordist_h} we explore the vertices reachable from $v$ with at most $h$ colors plus one more final edge that has a different color for some $d, h \in \mathbb{N}$. Note that if $h \le k-2$, then using the second techniques we explore a ``small'' part, due to our Assumption~\ref{asm:simplifying_assumption}.

We observe that even if we condition on what we explored in an ECER graph with any of these methods, the remaining graph is ECER, cf.\ Lemma~\ref{lemma:after_conditioning_still_ECER}. In Definition~\ref{def:good_events}, we collect some ``good'' properties of the explored and unexplored parts of ECER graphs and in Proposition~\ref{prop:good_events}, we prove (using known results about Erd\H{o}s--R\'{e}nyi random graphs) that these good properties do hold with high probability as $n \to \infty$.

\begin{definition}[Exploring up to distance $d$] \label{def:exploring_up_to_dist_d}
 Let $G = (V,E)$ be a graph, $v, w \in V$ a vertex, $W \subseteq V$ a set of vertices, and $d \in \mathbb{N}$. Let us define
 \[ R_d(v) \colonequals R_d(v,G) \colonequals \big\{ u \in V : \mathrm{dist}(u,v) = d \big\} \text{,} \]
 i.e.\ the set of vertices of $G$ at distance $d$ from the root, and let
 \[ R_d(W) \colonequals R_d(W,G) \colonequals \big\{ u \in V : \mathrm{dist}(u,W) = d \big\} \text{.} \]
 Let
 \[ R^{\le}_d(v) \colonequals R^{\le}_d(v,G) \colonequals \big\{ u \in V : \mathrm{dist}(u,v) \le d \big\} \text{,} \]
 i.e.\ the set of vertices of $G$ at distance at most $d$ from the root, and let
 \[ R^{\le}_d(W) \colonequals R^{\le}_d(W,G) \colonequals \bigcup_{w \in W} R^{\le}_d(w) \text{.} \]
 
 Let us define
 \[ G_d(v) \colonequals G \Big[ R^{\le}_d(v) \Big] - E \Big( G \big[ R_d(v) \big] \Big) \text{,} \]
 i.e. the subgraph of $G$ spanned by the vertices we can reach from $v$ using at most $d-1$ edges, plus the edges (along with their endpoints) that connect this spanned subgraph to the rest of the graph.
 
 We also define a similar subgraph with two roots,
 \[ G_d(v,w) \colonequals G \Big[ R^{\le}_d \big( \{ v,w \} \big) \Big] - E \Big( G \big[ R_d \big( \{ v,w \} \big) \big] \Big) \text{.} \]
 
 Let us define
 \[ G'_d(v) \colonequals G - \Big( R^{\le}_{d-1} (v) \Big) \text{,} \]
 i.e. the subgraph of the graph $G$ that can be obtained by deleting the vertices at distance at most $d-1$ from $v$.
 
 We also define a similar subgraph
 \[ G'_d(v,w) \colonequals G - \Big( R^{\le}_{d-1} \big( \{ v,w \} \big) \Big) \text{.} \]
 
 For an ECER graph $G_n \sim \mathcal{G}_n (V, \underline{\lambda})$ (where $n \in \mathbb{N}_+$ and $\lambda \in \mathbb{R}_+^k$ for some $k \in \mathbb{N}_+$) or for an ECBP tree $G_{\infty} \colonequals G_{\infty}(r) \sim \mathcal{G}_{\infty}(\underline{\lambda})$, we use the notation 
 \[ R_{n,d}(v) \colonequals R_d(v, G_n), \qquad R_{n,d}(W) \colonequals R_d(W, G_n), \qquad R_{\infty,d} \colonequals R_d(r, G_{\infty}) \text{,} \]
 \[ R^{\le}_{n,d}(v) \colonequals R^{\le}_d(v, G_n), \qquad R^{\le}_{n,d}(W) \colonequals R^{\le}_d(v, G_n), \qquad R^{\le}_{\infty,d} \colonequals R^{\le}_d(r, G_{\infty}) \text{,} \]
 and we use the notation $G_{n,d}(v)$, $G_{\infty,d}(r)$, $G_{n,d}(v,w)$, $G'_{n,d}(v)$, $G'_{n,d}(v,w)$ for the above defined subgraphs, respectively. In addition, for a color $i \in [k]$ or for a subset of colors $I \subseteq [k]$, we use the notation
 \[ R_{\infty,d}^I \colonequals R_d \big(r, G_{\infty}^I \big), \qquad R_{\infty,d}^{\setminus i} \colonequals R_{\infty,d}^{[k] \setminus \{ i \}} \text{.} \]
\end{definition}

\begin{definition}[Exploring until you see $h+1$ colors for the first time] \label{def:exploring_up_to_colordist_h}
 Let $k \in \mathbb{N}_+$, $h \in \{ 0, 1, \ldots, k-2 \}$ and $G = (V,\underline{E})$ be a random edge-colored graph with $k$ colors, let $v, w \in V$, and let us define the following.
 
 Let $\mathcal{F}_{v,h}$ denote the sigma-algebra
 \[ \mathcal{F}_{v,h} \colonequals \mathcal{F}_{v,h}(G) \colonequals \sigma \left( \mathds{1} \big[ \{ u_1, u_2 \} \in E_i \big] \; : \; u_1 \in \widetilde{R}^{\le}_h(v), \; u_2 \in V, \; i \in [k] \, \right) \text{,} \]
 and let $\mathcal{F}_{v,w,h}$ denote the sigma-algebra
 \[ \mathcal{F}_{v,w,h} \colonequals \mathcal{F}_{v,w,h}(G) \colonequals \sigma \big( \mathcal{F}_{v,h}, \mathcal{F}_{w,h} \big) \text{.} \]
 
 Let $\widetilde{G}_h(v)$ denote the following edge-colored, rooted subgraph of $G$:
 \[ \widetilde{G}_h(v) \colonequals G \left[ \widetilde{R}^{\le}_h(v) \cup \widetilde{N}_{h+1}(v) \right] - \underline{E} \bigg( G \Big[ \widetilde{N}_{h+1}(v) \Big] \bigg) \text{,} \]
 i.e. the subgraph of $G$ spanned by the vertices we can reach from $v$ using at most $h$ colors, plus the edges (along with their endpoints) that connect this spanned subgraph to the rest of the graph.
 
 We also define a similar edge-colored subgraph with two roots,
 \begin{equation*}
  \widetilde{G}_h(v,w) \colonequals G \left[ \widetilde{R}^{\le}_h \big( \{ v,w \} \big) \cup \widetilde{N}_{h+1} \big( \{ v,w \} \big) \right] - \underline{E} \bigg( G \Big[ \widetilde{N}_{h+1} \big( \{ v,w \} \big) \Big] \bigg) \text{.}
 \end{equation*}
 
 Let $\widetilde{G}'_h(v,w)$ denote the following edge-colored subgraph of $G$:
 \[ \widetilde{G}'_h(v,w) \colonequals G - \left( \widetilde{R}^{\le}_h \big( \{ v,w \} \big) \right) \text{,} \]
 i.e. the subgraph that can be obtained by deleting the vertices reachable from $v$ or $w$ with at most $h$ colors.
 
 For an ECER graph $G_n \sim \mathcal{G}_n (V, \underline{\lambda})$ (where $n \in \mathbb{N}_+$ and $\lambda \in \mathbb{R}_+^k$ for some $k \in \mathbb{N}_+$) or for an ECBP tree $G_{\infty}(r) \sim \mathcal{G}_{\infty}(\underline{\lambda})$, we use the notation $\widetilde{G}_{n,h}(v)$, $\widetilde{G}_{\infty,h}(r)$, $\widetilde{G}_{n,h}(v,w)$, $\widetilde{G}'_{n,h}(v,w)$, respectively, for the above subgraphs.
\end{definition}

\begin{claim}[Known information given $\mathcal{F}_{v,h}$] \label{claim:known_info_given_sigmaalg}
 Let $k\in \mathbb{N}_+$, $h \in \{ 0, 1, \ldots, k-2 \}$ and let $G = (V, \underline{E})$ be an edge-colored graph colored with $k$ colors. Then for any $v, w \in V$ and $\underline{s} \in S_{h+1}$, the sets $\widetilde{R}^{\le}_h(v,G)$ and $\widetilde{N}_{\underline{s}}(v,G)$ are all $\mathcal{F}_{v,h}$-measurable. In particular, $\rho(v)$ and $\underline{b}(v)$ are also $\mathcal{F}_{v,k-2}$-measurable, and $\widetilde{G}_h(v,w)$ is $\mathcal{F}_{v,w,h}$-measurable.
\end{claim}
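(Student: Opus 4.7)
The plan is to prove the claim by induction on $h$, unravelling the layered recursive definition of $\widetilde{R}^{\le}_h(v)$ and $\widetilde{N}_{\underline{s}}(v)$ in Definition~\ref{def:layers}. First I need to address the apparent self-reference in $\mathcal{F}_{v,h}$, whose generating indicators are indexed by the random set $\widetilde{R}^{\le}_h(v)$. I interpret it by stratification: an event $B$ lies in $\mathcal{F}_{v,h}$ iff for every deterministic $A \subseteq V$ one has $B \cap \{\widetilde{R}^{\le}_h(v) = A\} \in \sigma(\mathds{1}[\{u_1,u_2\} \in E_i] : u_1 \in A, u_2 \in V, i \in [k])$. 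Under this reading, $\mathcal{F}_{v,h}$ contains the joint information of the set $\widetilde{R}^{\le}_h(v)$ together with all colors of edges of $G$ incident to it, and the monotonicity $\mathcal{F}_{v,h-1} \subseteq \mathcal{F}_{v,h}$ follows from $\widetilde{R}^{\le}_{h-1}(v) \subseteq \widetilde{R}^{\le}_h(v)$ together with the fact that the construction of $\widetilde{R}^{\le}_{h-1}(v)$ itself only inspects edges incident to $\widetilde{R}^{\le}_{h-1}(v)$.

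The induction on $h$ is then mechanical. The base case $h=0$ is immediate since $\widetilde{R}^{\le}_0(v) = \{v\}$ is deterministic and for $\underline{s}=(i) \in S_1$ the set $\widetilde{N}_{\underline{s}}(v) = N(v, G^{\{i\}})$ is determined by the $\mathcal{F}_{v,0}$-measurable edge indicators out of $v$. Assuming the claim for $h-1$, the identity $\widetilde{R}_{\underline{s}}(v) = R(\widetilde{N}_{\underline{s}}(v), G^{\mathrm{set}(\underline{s})} - \widetilde{R}_{\underline{s}^-}(v))$ for $\underline{s} \in S_h$ expresses $\widetilde{R}_{\underline{s}}(v)$ through a BFS that only visits vertices of $\widetilde{R}^{\le}_h(v)$ along edges that have at least one endpoint in $\widetilde{R}^{\le}_h(v)$ and are therefore $\mathcal{F}_{v,h}$-measurable; taking the union yields $\mathcal{F}_{v,h}$-measurability of $\widetilde{R}^{\le}_h(v) = \widetilde{R}^{\le}_{h-1}(v) \cup \bigcup_{\underline{s} \in S_h} \widetilde{R}_{\underline{s}}(v)$. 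Applying the same reasoning to $\widetilde{N}_{\underline{s}''}(v) = N(\widetilde{R}_{(\underline{s}'')^-}(v), G^{\{i''\}}) \setminus \widetilde{R}^{\le}_h(v)$ for $\underline{s}'' \in S_{h+1}$ with last coordinate $i''$ completes the inductive step.

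The remaining assertions are then immediate: setting $h=k-2$ gives $\rho(v) = |\widetilde{R}^{\le}_{k-2}(v)| \in \mathcal{F}_{v,k-2}$ and each $\widetilde{N}_{k-1}^{\setminus i}(v) = \bigcup_{\underline{s} \in S_{k-1},\, \mathrm{set}(\underline{s})=[k]\setminus\{i\}} \widetilde{N}_{\underline{s}}(v) \in \mathcal{F}_{v,k-2}$, hence $\underline{b}(v) \in \mathcal{F}_{v,k-2}$; for $\widetilde{G}_h(v,w)$, both its vertex set and its edge set are built from quantities of the form $\widetilde{R}^{\le}_h(\cdot)$ and $\widetilde{N}_{h+1}(\cdot)$ applied separately to $v$ and to $w$, all of which are $\sigma(\mathcal{F}_{v,h}, \mathcal{F}_{w,h}) = \mathcal{F}_{v,w,h}$-measurable by the single-root statements. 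The hard part is really only the conceptual step of making $\mathcal{F}_{v,h}$ rigorous in the presence of its self-reference; once the stratified interpretation is accepted, the rest is a routine unrolling of the recursion, with every edge queried at level $h$ having at least one endpoint in $\widetilde{R}^{\le}_h(v)$ and hence lying in $\mathcal{F}_{v,h}$.
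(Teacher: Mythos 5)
The paper states this claim without proof, treating it as a routine consequence of Definitions~\ref{def:layers} and~\ref{def:exploring_up_to_colordist_h}, so there is no proof to compare against; your argument is correct and supplies the missing rigor. You rightly flag the self-referential form of $\mathcal{F}_{v,h}$ (its generating indicators are indexed by the random set $\widetilde{R}^{\le}_h(v)$), and your stratified reading is the standard way to make such a sigma-algebra precise. Implicit in it is the stopping-set property that for every deterministic $A \subseteq V$ the event $\{\widetilde{R}^{\le}_h(v)=A\}$ lies in $\sigma\big(\mathds{1}[\{u_1,u_2\}\in E_i] : u_1 \in A,\, u_2 \in V,\, i \in [k]\big)$; this holds because running the exploration using only edges incident to $A$ either stays inside $A$ (and hence determines $\widetilde{R}^{\le}_h(v)$ completely, deciding the event) or produces a vertex outside $A$ at some stage (refuting the event), and it is needed for your stratified $\mathcal{F}_{v,h}$ to actually be a sigma-algebra. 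You could spell this out, but it is of the same routine nature as the rest. Given that, the induction on $h$ is sound: the recursion in Definition~\ref{def:layers} only ever queries edges with an endpoint in $\widetilde{R}^{\le}_h(v)$, so each new $\widetilde{R}_{\underline{s}}(v)$ and $\widetilde{N}_{\underline{s}}(v)$ is measurable with respect to the information already revealed, and the monotonicity $\mathcal{F}_{v,h-1}\subseteq\mathcal{F}_{v,h}$ carries the inductive hypothesis forward. The consequences for $\rho(v)$, $\underline{b}(v)$ and $\widetilde{G}_h(v,w)$ follow as you state, the last one because the edge set of $\widetilde{G}_h(v,w)$ consists entirely of edges with an endpoint in $\widetilde{R}^{\le}_h(\{v,w\})$ and is hence $\mathcal{F}_{v,w,h}$-measurable.
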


The next two claims directly follow from the definitions of ECER graphs and ECBP trees.

\begin{claim}[Distribution of uncolored color-subgraphs of ECER graphs] \label{claim:GI_is_ER}
 Let $k,n \in \mathbb{N}_+$, $\underline{\lambda} \in \mathbb{R}_+^k$, and let $G \sim \mathcal{G}_n (V, \underline{\lambda})$. Then
 \[ G^I \sim \mathcal{G}_n (V, \lambda_I) \]
 for any $I \subseteq [k]$. In particular, $G^{\text{uc}} \sim \mathcal{G}_n (V, \lambda^{\text{uc}})$ and $G^{\setminus i} \sim \mathcal{G}_n (V, \lambda^{\setminus i})$ for any $i \in [k]$.
\end{claim}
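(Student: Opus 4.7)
The plan is to verify the claim by a direct computation using the edge-level independence built into the definition of an ECER graph. First I would fix an arbitrary candidate edge $e \in \binom{V}{2}$ and compute the probability that $e$ lies in $G^I = \left(V, \bigcup_{i \in I} E_i\right)$. By Definition~\ref{def:ECER}, the edge sets $E_1, \ldots, E_k$ are stochastically independent, and the event $\{e \in E_i\}$ has probability $p_i = 1 - \exp(-\lambda_i/n)$. Since $\{e \notin G^I\} = \bigcap_{i \in I} \{e \notin E_i\}$, independence across colors yields
\begin{equation*}
 \mathbb{P}\bigl( e \notin G^I \bigr) = \prod_{i \in I} \exp(-\lambda_i/n) = \exp\!\Bigl( - \tfrac{1}{n} \sum_{i \in I} \lambda_i \Bigr) = \exp(-\lambda_I/n),
\end{equation*}
so that $\mathbb{P}(e \in G^I) = 1 - \exp(-\lambda_I/n) = p_I$, which matches the edge probability of $\mathcal{G}_n(V, \lambda_I)$.

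Next I would check that distinct edges of $G^I$ are included independently, which is the remaining requirement for $G^I$ to be Erd\H{o}s--R\'{e}nyi with parameter $\lambda_I$. For any edges $e_1, e_2 \in \binom{V}{2}$ with $e_1 \ne e_2$, the indicator $\mathds{1}\bigl[ e_j \in G^I \bigr]$ is a (deterministic) function of the family $\bigl( \mathds{1}[e_j \in E_i] \bigr)_{i \in I}$. Within a single color $i$, the indicators $\mathds{1}[e_1 \in E_i]$ and $\mathds{1}[e_2 \in E_i]$ are independent because $E_i$ is an Erd\H{o}s--R\'{e}nyi graph; across different colors, independence is provided by Definition~\ref{def:ECER}. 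Hence the two collections $\bigl( \mathds{1}[e_1 \in E_i] \bigr)_{i \in I}$ and $\bigl( \mathds{1}[e_2 \in E_i] \bigr)_{i \in I}$ are jointly independent, and the same holds for any finite family of distinct edges.

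Combining the two steps, each edge of $\binom{V}{2}$ appears in $G^I$ independently with probability $1 - \exp(-\lambda_I/n)$, so $G^I \sim \mathcal{G}_n(V, \lambda_I)$ by the definition of an (uncolored) Erd\H{o}s--R\'{e}nyi graph, obtained as the one-color special case of Definition~\ref{def:ECER}. The statements $G^{\mathrm{uc}} \sim \mathcal{G}_n(V, \lambda^{\mathrm{uc}})$ and $G^{\setminus i} \sim \mathcal{G}_n(V, \lambda^{\setminus i})$ follow by specializing to $I = [k]$ and $I = [k]\setminus\{i\}$, respectively, together with Definition~\ref{def:lambda_I}. There is no real obstacle here: the proof is essentially bookkeeping around the two independence properties (across colors and within each color), and the only point deserving explicit mention is the identity $\prod_{i\in I} \exp(-\lambda_i/n) = \exp(-\lambda_I/n)$, which is the reason the parametrization $p_i = 1 - \exp(-\lambda_i/n)$ was chosen in the first place.
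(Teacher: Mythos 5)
Your proof is correct and is precisely the spelled-out version of what the paper leaves implicit: the paper states only that this claim ``directly follows from the definitions of ECER graphs and ECBP trees'' and gives no further argument. Your two-step verification (the per-edge probability computation via $\prod_{i\in I}\exp(-\lambda_i/n)=\exp(-\lambda_I/n)$, then independence across distinct edges from independence within each $E_i$ and across colors) is exactly the bookkeeping the authors intend the reader to supply.
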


\begin{claim}[Distribution of uncolored color-subtrees of ECBP trees] \label{claim:GIr_is_BP}
 Let $k \in \mathbb{N}_+$, $\underline{\lambda} \in \mathbb{R}_+^k$, and let $G_{\infty}(r) \sim \mathcal{G}_{\infty}(\underline{\lambda})$. Then $\big( \big| R_{\infty,d}^I \big| \big)_{d \in \mathbb{N}}$ is a branching process with offspring distribution $\mathrm{POI}(\lambda_I)$ for any $I \subseteq [k]$. In particular, $\big( \big| R_{\infty,d}^{\setminus i} \big| \big)_{d \in \mathbb{N}}$ is a branching process with offspring distribution $\mathrm{POI}(\lambda^{\setminus i})$ for any $i \in [k]$.
\end{claim}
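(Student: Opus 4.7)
The plan is to use two simple ingredients: first, that $G_{\infty}$ is a tree, so the color-subgraph $G_{\infty}^I$ is a forest whose component containing $r$ is itself a subtree of $G_{\infty}$; and second, that the sum of independent Poisson random variables is Poisson with the summed parameter. Together these will immediately identify $\bigl(|R_{\infty,d}^I|\bigr)_{d \in \mathbb{N}}$ as a Galton--Watson process with $\mathrm{POI}(\lambda_I)$ offspring distribution.

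First I would introduce, for each vertex $v$ of $G_{\infty}$, the random variable $Y_{v,I} \colonequals \sum_{i \in I} X_{v,i}$, where $X_{v,i}$ is the number of color-$i$ children of $v$ supplied by Definition~\ref{def:ECBP}. By the conditional-independence clause of that definition, the $X_{v,i}$'s attached to a fixed generation-$d$ vertex $v$ are mutually independent, and independent across $v$'s in that generation (all conditional on the tree up to generation $d$). Poisson additivity therefore yields $Y_{v,I} \sim \mathrm{POI}(\lambda_I)$ for every $v$, jointly conditionally independent across $v$'s in the same generation.

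Next I would prove, by induction on $d$, that $R_{\infty,d}^I$ equals precisely the set of vertices $v$ at graph distance $d$ from $r$ in $G_{\infty}$ whose unique $r$-to-$v$ path in the tree uses only edges of colors from $I$. The base case $d=0$ is trivial since $R_{\infty,0}^I = \{r\}$. For the inductive step, note that because $G_{\infty}$ is a tree, there is a unique path in $G_{\infty}$ from $r$ to any vertex $v$; consequently $v \in R_{\infty,d}^I$ iff this path has length $d$ and lies entirely in $\bigcup_{i \in I} E_i$, which is equivalent to $v$ being a child (in $G_{\infty}$) of some $u \in R_{\infty,d-1}^I$ via an edge whose color lies in $I$. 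Hence each $u \in R_{\infty,d-1}^I$ contributes exactly $Y_{u,I}$ vertices to $R_{\infty,d}^I$, and these contributions are disjoint because $G_{\infty}$ is a tree.

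Combining these observations gives the recursion
\[
 \bigl| R_{\infty,d}^I \bigr| \;=\; \sum_{u \in R_{\infty,d-1}^I} Y_{u,I}, \qquad d \in \mathbb{N}_+,
\]
with the $Y_{u,I}$'s i.i.d.\ $\mathrm{POI}(\lambda_I)$ (conditionally on $R_{\infty,d-1}^I$), which is exactly the defining recursion of a Galton--Watson branching process with $\mathrm{POI}(\lambda_I)$ offspring. The in-particular assertions for $I = [k] \setminus \{i\}$ follow by specialization. There is no real obstacle here; the only point to be careful about is invoking the tree structure when arguing that graph distance in $G_{\infty}^I$ coincides with tree-depth for vertices whose ancestor path is $I$-colored, and that different parents contribute disjoint sets of $I$-children.
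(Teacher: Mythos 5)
Your argument is correct and amounts to a careful spelling-out of what the paper takes for granted: the paper gives no proof of this claim (it is introduced with ``the next two claims directly follow from the definitions of ECER graphs and ECBP trees''). The two ingredients you isolate---Poisson additivity applied to the conditionally independent $X_{v,i}$ to get $Y_{v,I}\sim\mathrm{POI}(\lambda_I)$, and the tree structure guaranteeing both that $R_{\infty,d}^I$ consists exactly of the depth-$d$ vertices whose root path is $I$-colored and that distinct parents in $R_{\infty,d-1}^I$ contribute disjoint batches of $I$-children---are precisely the implicit justification, so this is the same approach made explicit.
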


\begin{lemma}[The unexplored graph is ECER] \label{lemma:after_conditioning_still_ECER}
 Let $k \ge 2$ be an integer, let $n \in \mathbb{N}_+$, $d \in \mathbb{N}$ and $h \in \{ 0, 1, \ldots, k-2 \}$, let $\underline{\lambda} \in \mathbb{R}_+^k$, $G \sim \mathcal{G}_n([n], \underline{\lambda})$ and $v, w \in [n]$. Then the conditional distribution of the edge-colored subgraph $G'_d(v,w)$ given the graph $G_d(v,w)$ is 
 \[ \mathcal{G}_n \Big( [n] \setminus R^{\le}_{d-1} \big( \{ v,w \}, G \big), \underline{\lambda} \Big) \text{,} \]
 and the conditional distribution of the edge-colored subgraph $\widetilde{G}'_h(v,w)$ given $\mathcal{F}_{v,w,h}$ is 
 \[ \mathcal{G}_n \Big( [n] \setminus \widetilde{R}^{\le}_h \big( \{ v,w \}, G \big), \underline{\lambda} \Big) \text{.} \]
\end{lemma}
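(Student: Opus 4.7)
The plan is to establish both claims via the standard ``spatial Markov property'' of ECER graphs: since each color-indicator $\mathds{1}[\{u_1,u_2\} \in E_i]$ is an independent Bernoulli variable under $\mathcal{G}_n(V,\underline{\lambda})$, and since both exploration procedures reveal exactly those indicators that have at least one endpoint in the explored vertex set, the edges entirely among the unexplored vertices will be independent of the exploration and retain their original ECER law.

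For the first claim, I would first unwind the definition of $G_d(v,w)$ to observe that its edge set consists exactly of the edges of $G$ with at least one endpoint in $S \colonequals R^{\le}_{d-1}(\{v,w\},G)$. In particular, $G_d(v,w)$ determines both $S$ and the family $\bigl( \mathds{1}[\{u_1,u_2\} \in E_i] \bigr)_{u_1 \in S,\, u_2 \in [n],\, i \in [k]}$. I would then partition the probability space by the outcome of $G_d(v,w)$: for each possible value $g$, let $S_g \subseteq [n]$ denote the vertex set determined by $g$. On the event $\{G_d(v,w)=g\}$, $S$ equals $S_g$ deterministically, and this event lies in the sigma-algebra generated by $\{ \mathds{1}[\{u_1,u_2\} \in E_i] : u_1 \in S_g,\, u_2 \in [n],\, i \in [k] \}$. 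By Definition~\ref{def:ECER}, this sigma-algebra is independent of $\sigma \bigl( \mathds{1}[\{u_1,u_2\} \in E_i] : u_1,u_2 \in [n] \setminus S_g,\, i \in [k] \bigr)$, and the latter carries precisely the law of the color-indicators of $\mathcal{G}_n([n] \setminus S_g, \underline{\lambda})$. Since the edges of $G'_d(v,w)$ are indexed exactly by the pairs in $\binom{[n] \setminus S_g}{2}$, the first claim follows.

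For the second claim, the argument is structurally identical. By Claim~\ref{claim:known_info_given_sigmaalg}, the set $\widetilde{R}^{\le}_h(\{v,w\},G)$ is $\mathcal{F}_{v,w,h}$-measurable, and by Definition~\ref{def:exploring_up_to_colordist_h} the sigma-algebra $\mathcal{F}_{v,w,h} = \sigma(\mathcal{F}_{v,h},\mathcal{F}_{w,h})$ is generated by the indicators $\mathds{1}[\{u_1,u_2\} \in E_i]$ with $u_1 \in \widetilde{R}^{\le}_h(v) \cup \widetilde{R}^{\le}_h(w) = \widetilde{R}^{\le}_h(\{v,w\})$ and $u_2 \in [n]$. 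Partitioning on the value of $\widetilde{R}^{\le}_h(\{v,w\},G)$ together with the configuration of edges incident to it, and applying the same independence argument as above to the complementary family of edge-indicators, yields the desired conditional ECER law on $[n] \setminus \widetilde{R}^{\le}_h(\{v,w\},G)$.

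The main (but essentially bookkeeping) obstacle is that in both cases the explored vertex set is itself random and determined by the revealed edges; this is handled uniformly by the partition-then-apply-independence strategy above, which reduces the claim to the elementary independence of disjoint families of edge-indicators built into the definition of $\mathcal{G}_n(V,\underline{\lambda})$.
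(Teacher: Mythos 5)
Your proof is correct and takes essentially the same approach as the paper: both rest on the spatial Markov property of ECER graphs, i.e., that the edge-indicators revealed by the exploration are independent of the edge-indicators among the unexplored vertices. The paper's proof is terser (it writes out only the second statement, treating the first as analogous, and leaves the partition-over-realizations bookkeeping implicit), whereas you spell out the measurability of the random explored set and the conditioning argument more explicitly, but the underlying mechanism is identical.
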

\begin{proof}
 We only prove the second statement of the lemma since the first one can be obtained analogously.
 
 It follows from the definitions of $\mathcal{F}_{v,w,h}$ and $\widetilde{G}_h(v,w)$ that given $\mathcal{F}_{v,w,h}$, the edges of the subgraph $\widetilde{G}_h(v,w)$ are determined, and conversely, the subgraph $\widetilde{G}_h(v,w)$ determines every event in $\mathcal{F}_{v,w,h}$.
 
 Therefore, conditional on $\mathcal{F}_{v,w,h}$, the edges of color $i$ between the vertices in $[n] \setminus \widetilde{R}^{\le}_h \big( \{ v,w \}, G \big)$ are still present independently of each other with probability $p_i = 1 - \exp \left( -\lambda_i / n \right)$ for any $i \in [k]$. Therefore, the conditional distribution of $\widetilde{G}'_h(v,w)$, i.e.\ the subgraph of $G$ spanned by $[n] \setminus \widetilde{R}^{\le}_h \big( \{ v,w \}, G \big)$, is an ECER graph with parameter $n$ and color density parameter vector $\underline{\lambda}$ on the vertex set $[n] \setminus \widetilde{R}^{\le}_h \big( \{ v,w \}, G \big)$.
\end{proof}

\begin{proposition}[Colored local weak convergence] \label{prop:benjamini_schramm}
 Let $k \in \mathbb{N}_+$ and $\underline{\lambda} \in \mathbb{R}_+^k$, and let us assume that Assumption~\ref{asm:simplifying_assumption} holds. Let $G_n \sim \mathcal{G}_n([n],\underline{\lambda})$ and $v_1, v_2 \in [n]$, $v_1 \ne v_2$ for all $n \in \mathbb{N}_+$, and let $G_{\infty} \colonequals G_{\infty}(r) \sim \mathcal{G}_{\infty}(\underline{\lambda})$. Then the graph $\widetilde{G}_{\infty,k-2}(r)$ is almost surely finite and $\widetilde{G}_{n,k-2}(v_1,v_2)$ converges in distribution to two i.i.d.\ copies of $\widetilde{G}_{\infty,k-2}(r)$ in the sense that
 \[ \lim_{n \to \infty} \mathbb{P} \Big( \widetilde{G}_{n,k-2}(v_1,v_2) \simeq H_1 \oplus H_2 \Big) = \prod_{j \in [2]} \mathbb{P} \Big( \widetilde{G}_{\infty,k-2}(r) \simeq H_j \Big) \]
 for any pair of fixed finite, rooted, edge-colored graphs $H_1$ and $H_2$.
\end{proposition}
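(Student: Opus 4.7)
The plan is to establish (a) almost sure finiteness of $\widetilde{G}_{\infty,k-2}(r)$, then (b) decouple the two explorations around $v_1$ and $v_2$ in $G_n$, and finally (c) couple each single-root exploration with an ECBP tree.

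For (a), Claim~\ref{claim:union_of_bags} expresses $\widetilde{R}^{\le}_{k-2}(r,G_\infty)$ as a finite union of sets $\mathcal{C}(r,G_\infty^I)$ over $I\subseteq[k]$ with $|I|=k-2$. By Claim~\ref{claim:GIr_is_BP} each such set is the total progeny of a $\mathrm{POI}(\lambda_I)$-offspring Galton--Watson tree, subcritical by Assumption~\ref{asm:simplifying_assumption}, hence a.s.\ finite. The boundary $\widetilde{N}_{k-1}(r)$ is then also a.s.\ finite because each vertex of $G_\infty$ has $\mathrm{POI}(\lambda^{\mathrm{uc}})$ children; in particular $\mathbb{E}|\widetilde{R}^{\le}_{k-2}(r)|$ is finite.

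For (b), I would apply Lemma~\ref{lemma:after_conditioning_still_ECER}: conditional on $\mathcal{F}_{v_1,k-2}(G_n)$, the subgraph $\widetilde{G}'_{n,k-2}(v_1)$ is ECER on $[n]\setminus\widetilde{R}^{\le}_{k-2}(v_1,G_n)$. On the event $\{v_2\notin\widetilde{R}^{\le}_{k-2}(v_1,G_n)\}$, the two single-root explorations are conditionally independent and $\widetilde{G}_{n,k-2}(v_1,v_2)$ is their disjoint union. The complementary event has probability $O(1/n)$: dominating the BFS of $G_n^I$ by a subcritical Poisson branching process (using that $\mathrm{Bin}(m,1-e^{-\lambda_I/n})$ is stochastically dominated by a $\mathrm{POI}(\lambda_I+o(1))$ variable for large $n$) yields $\mathbb{E}|\widetilde{R}^{\le}_{k-2}(v_1,G_n)| = O(1)$, and vertex exchangeability then gives $\mathbb{P}(v_2\in\widetilde{R}^{\le}_{k-2}(v_1,G_n))=O(1/n)$.

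For (c), it suffices to show that for every fixed finite rooted edge-colored graph $H$,
\[
\lim_{n\to\infty}\mathbb{P}\bigl(\widetilde{G}_{n,k-2}(v_1)\simeq H\bigr)=\mathbb{P}\bigl(\widetilde{G}_{\infty,k-2}(r)\simeq H\bigr).
\]
I would explore $\widetilde{G}_{n,k-2}(v_1)$ layer by layer following Definition~\ref{def:layers}: at each step, from a newly processed vertex $u$ and each color $i$, the number of color-$i$ edges to unexplored vertices is $\mathrm{Bin}(n-O(1),\,1-e^{-\lambda_i/n})$, whose total-variation distance from $\mathrm{POI}(\lambda_i)$ is $O(1/n)$ once the explored set stays of constant size, and the labels of the new neighbors are uniform over the unexplored set. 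Restricted to the event of producing a copy of $H$, the exploration has size at most $|V(H)|$, so a union bound shows that two newly attached neighbors coincide, or that a color-$i$ back-edge to the explored set appears, with probability $O(1/n)$. The limiting tree-like exploration then matches the ECBP recursion of Definition~\ref{def:ECBP} color by color, and summing over the finitely many labellings consistent with $H$ yields the displayed convergence; combined with (b), this gives the product form.

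The main obstacle is the bookkeeping required to produce exactly the subgraph $\widetilde{G}_{n,k-2}(v_1)$ of Definition~\ref{def:exploring_up_to_colordist_h}: the boundary vertices of $\widetilde{N}_{k-1}(v_1)$ must be recorded as endpoints of boundary edges only (no further subexploration), and every discovered vertex must be assigned to its correct color-string index $\underline{s}\in S_h$ under the ``freshness'' constraint in Definition~\ref{def:layers}. Here Assumption~\ref{asm:simplifying_assumption} is essential: it guarantees via the subcritical-branching-process domination above that the total exploration remains of size $O(1)$ with probability $1-o(1)$, and that the event in which a single vertex is reachable both through $\le k-2$ colors and via the $k-1$-color boundary has probability $O(1/n)$, so the layered classification in $G_n$ agrees with the one in $G_\infty$ on an event of asymptotic probability one.
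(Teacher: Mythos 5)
Your argument is correct in outline, but it takes a genuinely different route from the paper. You prove the statement from first principles: a layer-by-layer exploration of $\widetilde{G}_{n,k-2}(v_1)$ following Definition~\ref{def:layers}, with binomial-to-Poisson total-variation bounds and $O(1/n)$ collision/back-edge estimates on the event that the exploration stays of bounded size (which is where Assumption~\ref{asm:simplifying_assumption} enters, exactly as in Claim~\ref{claim:finiteness_of_rhor}), plus a by-hand decoupling of the two roots via Lemma~\ref{lemma:after_conditioning_still_ECER}. The paper instead argues softly: it first quotes known local weak convergence of the uncolored Erd\H{o}s--R\'{e}nyi graph $G_n^{\text{uc}}$ to the Poisson branching-process tree (including the two-rooted version, which settles the decoupling of $v_1$ and $v_2$ for free), upgrades this to edge-colored balls of fixed radius $d$ by noting that, conditionally on the uncolored ball, the coloring mechanism is identical in the ECER and ECBP models, and then converts radius-$d$ ball convergence into convergence of the color-exploration $\widetilde{G}_{\cdot,k-2}$ by observing that $\{\widetilde{G}_{\infty,k-2}(r)\simeq H\}$ is measurable with respect to the depth-$d$ ball (for $d$ the depth of $H$) and applying Scheff\'{e}'s lemma to sum over the countably many ball realizations in $\mathcal{B}_d(H)$. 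Your approach is self-contained and quantitative (it yields $O(1/n)$ rates), at the price of the bookkeeping you acknowledge; the paper's approach is shorter and outsources the probabilistic core to standard local weak convergence machinery, and along the way it also produces the auxiliary statements~\eqref{eq:lim_of_Ed_is_1} and~\eqref{eq:conv_of_edgecolored_balls_of_radius_d} that are reused later.

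One point in your step (b) is stated too weakly: the event $\{v_2\notin\widetilde{R}^{\le}_{k-2}(v_1,G_n)\}$ does not by itself make $\widetilde{G}_{n,k-2}(v_1,v_2)$ the disjoint union of the two single-root explorations, since the exploration from $v_2$ (run in $G_n$) could still enter $\widetilde{R}^{\le}_{k-2}(v_1,G_n)$ or hit a vertex of $\widetilde{N}_{k-1}(v_1,G_n)$, in which case it also differs from the exploration of $v_2$ inside $\widetilde{G}'_{n,k-2}(v_1)$. What you need is that the two explored balls of Definition~\ref{def:exploring_up_to_colordist_h}, boundaries included, are vertex-disjoint; fortunately this stronger event also has probability $1-O(1/n)$ by exactly the expected-size-plus-exchangeability bound you already use, so the gap is one of precision rather than of substance.
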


We prove Proposition~\ref{prop:benjamini_schramm} in the Appendix.
Note that during the proof we also obtain that for any $k \in \mathbb{N}_+$, $d \in \mathbb{N}$, and $\underline{\lambda} \in \mathbb{R}_+^k$, and for any $G_n \sim \mathcal{G}_n([n],\underline{\lambda})$ and $v_1, v_2 \in [n]$, $v_1 \ne v_2$ for all $n \in \mathbb{N}_+$, and for any pair of fixed finite, rooted, edge-colored graphs $F_1$ and $F_2$, we have
\begin{equation} \label{eq:lim_of_Ed_is_1}
 \lim_{n \to \infty} \mathbb{P} \big( \text{$G_{n,d}(v_1, v_2)$ is a forest with two components} \big) = 1
\end{equation}
and
\begin{equation} \label{eq:conv_of_edgecolored_balls_of_radius_d}
 \lim_{n \to \infty} \mathbb{P} \big( G_{n,d}(v_1, v_2) \simeq F_1 \oplus F_2 \big) = \prod_{j \in [2]} \mathbb{P} \big( G_{\infty,d}(r) \simeq F_j \big) \text{.}
\end{equation}

\begin{definition}[Probability of being $i$-avoiding connected to infinity] \label{def:theta_minus_i}
 Let $k \in \mathbb{N}_+$, $i \in [k]$ and let $\underline{\lambda} \in \mathbb{R}^k_+$. Let us denote by $\theta^{\setminus i}$ the survival probability of the branching process with offspring distribution $\mathrm{POI}(\lambda^{\setminus i})$.
\end{definition}

Note that if $G_{\infty} \colonequals G_{\infty}(r) \sim \mathcal{G}_{\infty}(\underline{\lambda})$, then by Claim~\ref{claim:GIr_is_BP},
\[ \theta^{\setminus i} = \mathbb{P} \left( r \stackrel{G_{\infty}^{\setminus i}}{\longleftrightarrow} \infty \right) \]
holds.

\begin{definition}[Supercritical indices] \label{def:set_of_supercrit_indices}
 Let $k \in \mathbb{N}_+$. Given $\underline{\lambda} \in \mathbb{R}_+^k$, we say that $i \in [k]$ is a \emph{supercritical index} if $\lambda^{\setminus i} > 1$, and let us define the set $I_{\underline{\lambda}}$ of \emph{supercritical indices} of $\underline{\lambda}$ by
 \[ I_{\underline{\lambda}} \colonequals \left\{ i \in [k] : \lambda^{\setminus i} > 1 \right\} \text{.} \]
\end{definition}

Note that $\underline{\lambda}$ is fully supercritical (cf.\ Definition~\ref{def:fully_supercrit}) if and only if $I_{\underline{\lambda}} = [k]$ and $\underline{\lambda}$ is fully critical-subcritical if and only if $I_{\underline{\lambda}} = \emptyset$. Also note that $\theta^{\setminus i} = 0$ holds if and only if $i \in [k] \setminus I_{\underline{\lambda}}$.

\begin{definition}[Good events for ECER graphs] \label{def:good_events}
 Let $k,n \in \mathbb{N}_+$ and $\underline{\lambda} \in \mathbb{R}_+^k$. Let $G \sim \mathcal{G}_n(V,\underline{\lambda})$, $v,w \in V$, $v \ne w$, $i \in I_{\underline{\lambda}}$, and let us define the following events.
 
 \begin{align*}
  A_1^{\setminus i} \colonequals & A_1^{\setminus i}(G) \colonequals \left\{ \text{$G^{\setminus i}$ has a unique largest component} \right\} \\[10pt]
  A_2^{\setminus i} \colonequals & A_2^{\setminus i}(G) \\
  \colonequals & \left\{ \text{the non-largest components of $G^{\setminus i}$ have size at most $n^{1/4}$} \right\} \\[10pt]
  A_3(v,w) \colonequals & A_3(v,w,G) \colonequals \Big\{ \text{$\widetilde{G}_{k-2}(v,w)$ is a forest with two components} \Big\} \\[10pt]
  A_4(v,w) \colonequals & A_4(v,w,G) \colonequals \bigg\{ \Big| \, \widetilde{R}^{\le}_{k-2} \big( \{ v,w \}, G \big) \Big| \le n^{1/4} \bigg\} \\[10pt]
  A_5^{\setminus i}(v,w) \colonequals & A_5^{\setminus i}(v,w,G) \colonequals \bigg\{ \Big| \, \widetilde{N}^{\setminus i}_{k-1} \big( \{ v,w \}, G \big) \Big| \le n^{1/4} \bigg\} \\[10pt]
  A_6^{\setminus i}(v,w) \colonequals & A_6^{\setminus i}(v,w,G) \colonequals \Bigg\{ \bigg| \, \mathcal{U}_{\text{max}} \bigg( \Big( \widetilde{G}'_{k-2}(v,w) \Big)^{\setminus i} \bigg) \bigg| \ge n^{3/4} \Bigg\} \\[10pt]
  A_7^{\setminus i}(v,w) \colonequals & A_7^{\setminus i}(v,w,G) \\
  \colonequals & \left\{ \text{$\Big( \widetilde{G}'_{k-2}(v,w) \Big)^{\setminus i}$ has a unique largest component} \right\} \\[10pt]
  A_8^{\setminus i}(v,w) \colonequals & A_8^{\setminus i}(v,w,G) \\
  \colonequals & \bigg\{ \text{the non-largest components of $\Big( \widetilde{G}'_{k-2}(v,w) \Big)^{\setminus i}$} \\ & \hspace{130pt} \text{have size at most $n^{1/4}$} \bigg\} \\[10pt]
  A_9^{\setminus i}(v,w) \colonequals & A_9^{\setminus i}(v,w,G) \colonequals \left\{ \mathcal{U}_{\text{max}} \Bigg( \Big( \widetilde{G}'_{k-2}(v,w) \Big)^{\setminus i} \Bigg) \subseteq \mathcal{U}_{\text{max}} \big( G_n^{\setminus i} \big) \right \} \\[15pt]
  A(v,w) \colonequals & A(v,w,G) \\
  \colonequals & \displaystyle \bigcap_{i \in I_{\underline{\lambda}}} \left( A_1^{\setminus i} \cap A_2^{\setminus i} \right) \cap A_3(v,w) \cap A_4(v,w) \cap \left( \bigcap_{j=5}^9 \bigcap_{i \in I_{\underline{\lambda}}} A_j^{\setminus i}(v,w) \right)
 \end{align*}
\end{definition}

\medskip

\begin{claim}[Implications of local tree structure] \label{claim:A_3_implies}
 Let $k,n \in \mathbb{N}_+$, $\underline{\lambda} \in \mathbb{R}_+^k$ and let $G \sim \mathcal{G}_n(V,\underline{\lambda})$ and $v,w \in V$, $v \ne w$. Then the event $A_3(v,w)$ is $\mathcal{F}_{v,w,k-2}$-measurable, and $A_3(v,w)$ implies that for any $i_1, i_2 \in [k]$, $i_1 \ne i_2$, the sets of vertices $\widetilde{N}_{k-1}^{\setminus i_1}(v)$, $\widetilde{N}_{k-1}^{\setminus i_2}(v)$, $\widetilde{N}_{k-1}^{\setminus i_1}(w)$, $\widetilde{N}_{k-1}^{\setminus i_2}(w)$ are pairwise disjoint.
\end{claim}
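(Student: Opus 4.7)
My plan is to handle the measurability assertion and the disjointness assertion separately.

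For measurability, I would appeal to Claim~\ref{claim:known_info_given_sigmaalg}: the sets $\widetilde{R}^{\le}_{k-2}(v)$ and every $\widetilde{N}_{\underline{s}}(v)$ with $\underline{s} \in S_{k-1}$ are $\mathcal{F}_{v,k-2}$-measurable, and analogously for $w$. Hence the vertex set $\widetilde{R}^{\le}_{k-2}(\{v,w\}) \cup \widetilde{N}_{k-1}(\{v,w\})$ of $\widetilde{G}_{k-2}(v,w)$ is $\mathcal{F}_{v,w,k-2}$-measurable. Every edge of $\widetilde{G}_{k-2}(v,w)$ has at least one endpoint in $\widetilde{R}^{\le}_{k-2}(v) \cup \widetilde{R}^{\le}_{k-2}(w)$ (edges entirely inside $\widetilde{N}_{k-1}(\{v,w\})$ are removed by definition), so the corresponding edge-indicators are $\mathcal{F}_{v,w,k-2}$-measurable directly from the definition of that sigma-algebra. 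Therefore $\widetilde{G}_{k-2}(v,w)$, and with it the event $A_3(v,w)$, is $\mathcal{F}_{v,w,k-2}$-measurable.

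For the disjointness I would assume $A_3(v,w)$ and refute each way a vertex $u$ could lie in two of the four sets. The \emph{cross-root} cases $u \in \widetilde{N}_{k-1}^{\setminus i}(v) \cap \widetilde{N}_{k-1}^{\setminus j}(w)$ are the easiest: $u$ carries one edge of $\widetilde{G}_{k-2}(v,w)$ into $\widetilde{R}^{\le}_{k-2}(v)$ and another into $\widetilde{R}^{\le}_{k-2}(w)$, linking the $v$-component and $w$-component and contradicting ``two components.'' For a \emph{within-root} case $u \in \widetilde{N}_{k-1}^{\setminus i_1}(v) \cap \widetilde{N}_{k-1}^{\setminus i_2}(v)$ with $i_1 \ne i_2$, I would unpack the definitions to produce strings $\underline{s}_1, \underline{s}_2 \in S_{k-1}$ with $\mathrm{set}(\underline{s}_a) = [k] \setminus \{i_a\}$, predecessors $u'_a \in \widetilde{R}_{\underline{s}_a^-}(v)$, and edges $\{u'_a, u\} \in E_{j_a}$ where $j_a$ is the last coordinate of $\underline{s}_a$. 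If these two edges are distinct --- either $u'_1 \ne u'_2$, giving a non-trivial cycle via the unique tree-path between $u'_1$ and $u'_2$, or $u'_1 = u'_2$ with $j_1 \ne j_2$, giving a multi-edge 2-cycle --- the forest property fails immediately.

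The main obstacle is the coincidence sub-case $u'_1 = u'_2 =: u'$ and $j_1 = j_2 =: j$, in which $u$ has only one incident edge in the forest and no cycle appears locally. I would handle it by translating $u' \in \widetilde{R}_{\underline{s}_1^-}(v) \cap \widetilde{R}_{\underline{s}_2^-}(v)$ via Claim~\ref{claim:union_of_bags} into $u' \in \mathcal{C}(v, G^{[k]\setminus\{i_1,j\}}) \cap \mathcal{C}(v, G^{[k]\setminus\{i_2,j\}})$, and using the structural observation that any path in $G$ from $v$ to a vertex of $\widetilde{R}^{\le}_{k-2}(v)$ using at most $k-2$ colors keeps every intermediate vertex inside $\widetilde{R}^{\le}_{k-2}(v)$, hence inside the induced subtree $G[\widetilde{R}^{\le}_{k-2}(v)]$ of the forest. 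Uniqueness of the $v$-$u'$ path in that subtree forces its color set $C(u')$ to satisfy $C(u') \subseteq [k]\setminus\{i_1,i_2,j\}$, so $|C(u')| \le k-3$. Appending the edge $\{u',u\}$ of color $j \notin C(u')$ shows that $u$ is reachable from $v$ with at most $k-2$ distinct colors, placing $u \in \widetilde{R}^{\le}_{k-2}(v)$ --- which contradicts $u \in \widetilde{N}_{k-1}(v)$. The subtlety here is that the contradiction is not local at $u$; one has to descend into the chronology data of $u'$ and use tree-uniqueness to show that $u'$'s double membership forces too few colors on the $v$-$u'$ path.
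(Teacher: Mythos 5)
The paper states Claim~\ref{claim:A_3_implies} without any proof, so you are being judged on your own argument. Your measurability part is fine (it is already contained in Claim~\ref{claim:known_info_given_sigmaalg}), and you correctly isolated the only genuinely nontrivial point, the coincidence sub-case $u_1'=u_2'$, $j_1=j_2$: there your unique-path/chronology argument via Claim~\ref{claim:union_of_bags} (the unique $v$--$u'$ path must be traversable inside both color sets $[k]\setminus\{i_1,j\}$ and $[k]\setminus\{i_2,j\}$, so appending the color-$j$ edge puts $u$ into $\widetilde{R}^{\le}_{k-2}(v)$, a contradiction) is exactly the right idea.

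The gap is that in every case you treat $G\big[\widetilde{R}^{\le}_{k-2}(v)\big]$ and the edges $\{u_a',u\}$ as edges of the forest $\widetilde{G}_{k-2}(v,w)$, and in the cross-root case you identify the two components of the forest with the $v$- and $w$-explorations. Neither is granted by the literal Definition~\ref{def:exploring_up_to_colordist_h}: every edge with \emph{both} endpoints in $\widetilde{N}_{k-1}(v)\cup\widetilde{N}_{k-1}(w)$ is deleted, and a vertex of $\widetilde{R}^{\le}_{k-2}(v)$ may well lie in $\widetilde{N}_{k-1}(w)$, so the cycle or parallel edge you exhibit need not survive in $\widetilde{G}_{k-2}(v,w)$, and $G\big[\widetilde{R}^{\le}_{k-2}(v)\big]$ need not be a subtree of the forest (which your coincidence argument also relies on). This cannot be argued away, because the claim as literally stated is false: for $k=3$ take the edges $\{v,a\}\in E_2$, $\{v,b\}\in E_1$, $\{a,u\},\{b,u\},\{x,a\}\in E_3$, $\{w,x\}\in E_1$; then $u\in\widetilde{N}_{2}^{\setminus 1}(v)\cap\widetilde{N}_{2}^{\setminus 2}(v)$, but since $a\in\widetilde{N}_{2}^{\setminus 2}(w)$ and $u,x\in\widetilde{N}_{2}(v)$, the edges $\{a,u\}$ and $\{x,a\}$ are deleted and $\widetilde{G}_{1}(v,w)$ is a forest with exactly two components, i.e.\ $A_3(v,w)$ holds while the asserted disjointness fails. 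The claim is correct only under the stronger reading of $A_3(v,w)$ that the paper in fact uses elsewhere (e.g.\ in the proof of Proposition~\ref{prop:connection_between_typeII_and_typeIII_in_ECER}, where $A_3$ is also taken to give $\big|\widetilde{R}^{\le}_{k-2}(\{v,w\})\big|=\rho(v)+\rho(w)$): namely that $\widetilde{G}_{k-2}(v,w)$ is the disjoint union of $\widetilde{G}_{k-2}(v)$ and $\widetilde{G}_{k-2}(w)$, each a tree, equivalently that the sets $\widetilde{R}^{\le}_{k-2}(v)\cup\widetilde{N}_{k-1}(v)$ and $\widetilde{R}^{\le}_{k-2}(w)\cup\widetilde{N}_{k-1}(w)$ are disjoint. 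Under that reading the only deleted edges join two boundary vertices of the same root, so all the edges your argument needs are present, the two components are the two explorations, and your case analysis (cycle, parallel edge, chronology argument) does close the proof; you should state that reading explicitly rather than derive it from ``forest with two components'', since the example above shows it cannot be derived.
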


\begin{proposition}[Typical behaviour of ECER graphs] \label{prop:good_events}
 Let $k \in \mathbb{N_+}$, $\underline{\lambda} \in \mathbb{R}_+^k$, let $G_n \sim \mathcal{G}_n([n],\underline{\lambda})$ and $v,w \in [n]$, $v \ne w$ for all $n \in \mathbb{N}_+$. If Assumption~\ref{asm:simplifying_assumption} holds, then
 \[ \lim_{n \to \infty} \mathbb{P} \big( A_3(v,w,G_n) \big) = 1 \quad \quad \text{ and } \quad \quad \lim_{n \to \infty} \mathbb{P} \big( A(v,w,G_n) \big) = 1 \text{.} \]
\end{proposition}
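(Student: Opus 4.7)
The plan is to show that each of the finitely many events appearing in the definition of $A(v,w)$ holds with probability tending to one, and then conclude by a union bound; since $A_3(v,w)$ is itself one of these events, both claims of the proposition follow simultaneously. The events $A_1^{\setminus i}$ and $A_2^{\setminus i}$ for $i\in I_{\underline{\lambda}}$ depend only on the uncolored graph $G_n^{\setminus i}$, which by Claim~\ref{claim:GI_is_ER} is an Erd\H{o}s--R\'{e}nyi random graph $\mathcal{G}_n([n],\lambda^{\setminus i})$ with $\lambda^{\setminus i}>1$; the classical supercritical Erd\H{o}s--R\'{e}nyi theorems (uniqueness of a linear-sized giant, together with an $O(\log n)$ upper bound on the size of every other component) then give $\mathbb{P}(A_1^{\setminus i}\cap A_2^{\setminus i})\to 1$ directly.

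For the local-neighborhood events $A_3(v,w)$, $A_4(v,w)$, and $A_5^{\setminus i}(v,w)$, I would invoke Proposition~\ref{prop:benjamini_schramm}: $\widetilde{G}_{n,k-2}(v,w)$ converges in distribution to $H_1\oplus H_2$, where $H_1,H_2$ are two i.i.d.\ copies of the almost surely finite ECBP tree $\widetilde{G}_{\infty,k-2}(r)$. Since each $H_j$ is a tree by Definition~\ref{def:ECBP}, $H_1\oplus H_2$ is a forest with exactly two components; combined with the tightness of the graph sizes supplied by Claim~\ref{claim:finiteness_of_rhor}, this yields $\mathbb{P}(A_3(v,w))\to 1$ by restricting the sum over isomorphism classes to finitely many of bounded size. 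The same tightness of $|\widetilde{R}^{\le}_{k-2}(\{v,w\},G_n)|$, extended by the almost sure finiteness of the one-step outer boundary of the limiting tree, gives $\mathbb{P}(A_4(v,w))\to 1$ and $\mathbb{P}(A_5^{\setminus i}(v,w))\to 1$, since any tight random variable is at most $n^{1/4}$ with probability tending to one.

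For the unexplored-graph events $A_6^{\setminus i}$, $A_7^{\setminus i}$, $A_8^{\setminus i}$, I would condition on $\mathcal{F}_{v,w,k-2}$: by Lemma~\ref{lemma:after_conditioning_still_ECER}, $\widetilde{G}'_{n,k-2}(v,w)$ is conditionally an ECER graph on a vertex set of cardinality $n'=n-|\widetilde{R}^{\le}_{k-2}(\{v,w\})|$ with the same $\underline{\lambda}$. Restricting to $A_4(v,w)$ gives $n'=n-O(n^{1/4})$, and applying Claim~\ref{claim:GI_is_ER} in this conditional setting identifies $\bigl(\widetilde{G}'_{n,k-2}(v,w)\bigr)^{\setminus i}$ as a supercritical Erd\H{o}s--R\'{e}nyi graph on $n'$ vertices with intensity $\lambda^{\setminus i}\cdot n'/n=\lambda^{\setminus i}(1-o(1))>1$; the classical supercritical estimates reapplied in this slightly shrunken regime then give $A_6^{\setminus i}\cap A_7^{\setminus i}\cap A_8^{\setminus i}$ conditionally, hence unconditionally, with probability tending to one. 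Finally, $A_9^{\setminus i}$ follows deterministically from $A_2^{\setminus i}\cap A_6^{\setminus i}\cap A_7^{\setminus i}$: the unique largest component of $\bigl(\widetilde{G}'_{n,k-2}(v,w)\bigr)^{\setminus i}$ has $\ge n^{3/4}$ vertices and is connected in $G_n^{\setminus i}$ as well, so it lies in a single component of $G_n^{\setminus i}$ of size $\ge n^{3/4}$, which by $A_2^{\setminus i}$ can only be $\mathcal{U}_{\text{max}}(G_n^{\setminus i})$. The main obstacle is the bookkeeping for the conditioning: one must ensure that the classical supercritical Erd\H{o}s--R\'{e}nyi bounds apply uniformly over the random, $\mathcal{F}_{v,w,k-2}$-measurable vertex subset produced by the exploration; this is resolved by first intersecting with $A_4(v,w)$, on which the conditional intensity parameter $\lambda^{\setminus i}(1+o(1))$ stays bounded away from the critical value $1$.
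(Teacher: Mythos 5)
Your proposal is correct and follows essentially the same route as the paper: reduce to showing each of the finitely many sub-events holds with probability tending to one, then combine. The treatment of $A_1^{\setminus i},A_2^{\setminus i}$ via Claim~\ref{claim:GI_is_ER} and the classical supercritical Erd\H{o}s--R\'{e}nyi theory, the use of Proposition~\ref{prop:benjamini_schramm} for $A_3$, and the conditional decomposition via Lemma~\ref{lemma:after_conditioning_still_ECER} for $A_6,A_7,A_8$ all mirror the paper's argument. The one genuine (minor) divergence is in the estimates for $A_4$ and $A_5^{\setminus i}$: where you argue by tightness deduced from the distributional convergence supplied by Proposition~\ref{prop:benjamini_schramm}, the paper instead stochastically dominates $|\mathcal{C}(v,G_n^I)|$ by the total progeny of a subcritical Poisson branching process, bounds its expectation by $1/(1-\lambda_I)$, and finishes with Markov's inequality and a union bound. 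Both give the needed $o(n^{1/4})$ size bound; the Markov route is slightly more quantitative, while yours is more uniform in spirit with how $A_3$ is handled. Your derivation of $A_9^{\setminus i}$ from $A_2^{\setminus i}\cap A_6^{\setminus i}\cap A_7^{\setminus i}$ alone is valid (the paper cites a couple of additional events, but the definition of $\mathcal{U}_{\text{max}}$ as the union of all maximum-cardinality components makes your shorter deduction go through).
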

\begin{proof} 
 Clearly, it is enough to show that for each $j \in \{ 5, 6, \ldots, 9 \}$ and $i \in I_{\underline{\lambda}}$, the probabilities of the events $A_1^{\setminus i}$, $A_2^{\setminus i}$, $A_3(v,w)$, $A_4(v,w)$, and $A_j^{\setminus i}$ goes to 1 as $n \to \infty$.
 
 Let $i \in I_{\underline{\lambda}}$ be an arbitrary color.
 
 From our Claim~\ref{claim:GI_is_ER} and Theorem~4.8 of~\cite{vdH}, it follows that
 \begin{equation} \label{eq:typical_no1}
  \frac{ \ \Big| \, \mathcal{U}_{\text{max}} \big( G_n^{\setminus i} \big) \Big| \ }{n} \stackrel{\mathbb{P}}{\longrightarrow} \theta^{\setminus i}, \qquad n \to \infty \text{.}
 \end{equation}
 Note that $\theta^{\setminus i}>0$ holds for any $i \in I_{\underline{\lambda}}$ and thus,
 \begin{equation} \label{eq:typical_no2}
  \lim_{n \to \infty} \mathbb{P} \bigg( \Big| \, \mathcal{U}_{\text{max}} \big( G_n^{\setminus i} \big) \Big| \ge n^{3/4} \bigg) = 1 \text{.}
 \end{equation}
 From~\eqref{eq:typical_no1} and Lemma~4.14 of~\cite{vdH}, it follows that
 \begin{equation} \label{eq:A1}
  \lim_{n \to \infty} \mathbb{P} \big( A_1^{\setminus i} \big) = 1
 \end{equation}
 and
 \begin{equation} \label{eq:A2}
  \lim_{n \to \infty} \mathbb{P} \big( A_2^{\setminus i} \big) \\
  = 1 \text{.}
 \end{equation}
 
 By Proposition~\ref{prop:benjamini_schramm}, if Assumption~\ref{asm:simplifying_assumption} holds, then 
 \[ \lim_{n \to \infty} \mathbb{P} \big( A_3(v,w,G_n) \big) = 1 \text{.} \]
 
 Now we prove that $\lim_{n \to \infty} \mathbb{P} \big( A_4(v,w,G_n) \big) = 1$. Let $I \subseteq [k]$ with $|I|=k-2$. Then by Claim~\ref{claim:GI_is_ER}, we have $G_n^I \sim \mathcal{G}_n([n],\lambda_I)$. By Assumption~\ref{asm:simplifying_assumption}, we can apply Theorem 4.2 of~\cite{vdH} to get that $\big| \mathcal{C}(v, G_n^I) \big|$ is stochastically dominated by the total number of offspring in a subcritical Poisson branching process with mean number of offspring at most $\lambda_I$. Therefore
 \[ \mathbb{E} \bigg( \Big| \mathcal{C} \big( v, G_n^I \big) \Big| \bigg) \le \frac{1}{1-\lambda_I} \text{.} \]
 Then Claim~\ref{claim:union_of_bags}, the union bound, and Markov's inequality imply that 
 \begin{equation} \label{eq:A4}
  \lim_{n \to \infty} \mathbb{P} \big( A_4(v,w,G_n) \big) = 1 \text{.}
 \end{equation}
 
 The probability of $A_5^{\setminus i}(v,w,G_n)$ is estimated similarly: clearly,
 \[ \widetilde{N}^{\setminus i}_{k-1}(v,G_n) \subseteq N \Big( \widetilde{R}^{\le}_{k-2}(v,G_n), G_n \Big) \text{,} \]
 thus again the expectation of $\big| \widetilde{N}^{\setminus i}_{k-1}(v,G_n) \big|$ can be bounded by a constant (that does not depend on $n$) and it follows from the union bound and Markov's inequality that 
 \[ \lim_{n \to \infty} \mathbb{P} \big( A_5^{\setminus i}(v,w,G_n) \big) = 1 \text{.} \]
 
 By~\eqref{eq:A4}, we obtain 
 \[ \frac{ \ \bigg| V \bigg(\Big( \widetilde{G}'_{n,k-2}(v,w) \Big)^{\setminus i} \bigg) \bigg| \ }{n} = \frac{ \ \Big| V \left( \widetilde{G}'_{n,k-2}(v,w) \right) \Big| \ }{n} \stackrel{\mathbb{P}}{\longrightarrow} 1, \qquad n \to \infty \text{.} \]
 From this, our Lemma~\ref{lemma:after_conditioning_still_ECER}, and Theorem 4.8 of~\cite{vdH}, it follows
 \begin{equation} \label{eq:law_of_large_numbers_giant}
  \!\! \frac{ \, \bigg| \, \mathcal{U}_{\text{max}} \bigg( \! \Big( \widetilde{G}'_{n,k-2}(v,w) \Big)^{ \!\! \setminus i} \bigg) \bigg| \, }{\bigg| \, V \bigg( \Big( \widetilde{G}'_{n,k-2}(v,w) \Big)^{\setminus i} \bigg) \bigg|} \! \cdot \! \frac{ \, \bigg| \, V \bigg( \!  \Big( \widetilde{G}'_{n,k-2}(v,w) \Big)^{ \!\! \setminus i} \bigg) \bigg| \, }{n} \stackrel{\mathbb{P}}{\longrightarrow} \theta^{\setminus i}, \qquad n \to \infty \text{.}
 \end{equation}
 Thus,
 \begin{equation} \label{eq:A6}
  \lim_{n \to \infty} \mathbb{P} \big( A_6^{\setminus i}(v,w,G_n) \big) = 1 \text{.}
 \end{equation}
 From~\eqref{eq:law_of_large_numbers_giant} and Lemma~4.14 of~\cite{vdH}, it follows that
 \begin{equation} \label{eq:A7}
  \lim_{n \to \infty} \mathbb{P} \big( A_7^{\setminus i}(v,w,G_n) \big) = 1 \text{.}
 \end{equation}
 and
 \[ \lim_{n \to \infty} \mathbb{P} \big( A_8^{\setminus i}(v,w,G_n) \big) = 1 \text{.} \]
 The above observations (namely,~\eqref{eq:typical_no2},~\eqref{eq:A1},~\eqref{eq:A2}, and~\eqref{eq:A6},~\eqref{eq:A7}) imply
 \[ \lim_{n \to \infty} \mathbb{P} \big( A_9^{\setminus i}(v,w,G_n) \big) = 1 \text{.} \]
\end{proof}

\subsection{Types} \label{subsection:types}

In Section~\ref{subsection:types}, we define the color-avoiding type I vectors of the vertices, first in ECBP trees, then in ECER graphs. The main result of this section, Theorem~\ref{thm:conv_of_typeI}, connects the notion of these type I vectors. 

\begin{definition}[Color-avoiding type I vectors in ECBP trees] \label{def:typeI_in_ECBP}
 Let $k \in \mathbb{N}_+$, $\underline{\lambda} \in \mathbb{R}_+^k$, and let $G_{\infty} \colonequals G_{\infty}(r) \sim \mathcal{G}_{\infty}(\underline{\lambda})$. The \emph{(color-avoiding) type I vector} of a vertex $v$ in the ECBP tree $G_{\infty}$ is
 \[ \underline{t}^*(v) \colonequals \underline{t}^*(v, G_{\infty}) \colonequals \bigg( \mathds{1} \Big[ v \stackrel{G_{\infty}^{\setminus i}}{\longleftrightarrow} \infty \Big] \bigg)_{i \in I_{\underline{\lambda}}} \text{.} \]
\end{definition}

In words: for any $i \in I_{\underline{\lambda}}$, the $i$-th coordinate of the type I vector of a vertex $v$ in an ECBP tree is 1 if $v$ is $i$-avoiding connected to infinity through its descendants, otherwise it is 0. The reason why we only use the indices in $I_{\underline{\lambda}}$ is that we expect the branching process $G_{\infty}$ to survive without the edges of color $i$ if and only if $i \in I_{\underline{\lambda}}$.

\begin{definition}[Probability of type I vectors in ECBP trees] \label{def:pstar_gamma}
 Let $k \in \mathbb{N}_+$, $\underline{\lambda} \in \mathbb{R}_+^k$, and let $G_{\infty}(r) \sim \mathcal{G}_{\infty}(\underline{\lambda})$. Given any $\underline{\gamma} \in \left\{ 0,1 \right\}^{I_{\underline{\lambda}}}$, let us denote
 \[ p^*(\underline{\gamma}) \colonequals p^*(\underline{\gamma},\underline{\lambda}) \colonequals \mathbb{P} \big( \underline{t}^*(r)=\underline{\gamma} \big) \text{.} \]
\end{definition}

\begin{definition}[Color-avoiding type I vectors in ECER graphs] \label{def:typeI_in_ECER}
 Let $k,n \in \mathbb{N}_+$, $\underline{\lambda} \in \mathbb{R}_+^k$, and let $G \sim \mathcal{G}_n (V, \underline{\lambda})$. The \emph{(color-avoiding) type I vector} of a vertex $v$ in the ECER graph $G$ is
 \[ \underline{t}(v) \colonequals \underline{t}(v,G) \colonequals \bigg( \mathds{1} \left[ v \in \mathcal{U}_{\text{max}} \big( G^{\setminus i} \big) \right] \bigg)_{i \in I_{\underline{\lambda}}} \text{.} \]
\end{definition}

In words: for any $i \in I_{\underline{\lambda}}$, the $i$-th coordinate of the type of a vertex $v$ in an edge-colored finite graph is 1 if $v$ is in one of the largest $i$-avoiding components, otherwise it is 0. Similarly as before, the reason why we only use the indices in $I_{\underline{\lambda}}$ is that we expect $\mathcal{U}_{\text{max}}\big(G^{\setminus i}\big)$ to be a giant component if and only if $i \in I_{\underline{\lambda}}$, so the sequence of indicators $\mathds{1} \left[ v \in \mathcal{U}_{\text{max}} \big(G^{\setminus i}\big) \right]$ does not converge to zero in probability as $n \to \infty$ if and only if $i \in I_{\underline{\lambda}}$.

We are now ready to state the main result of Section~\ref{subsection:types}.

\begin{theorem}[Convergence of the empirical density of type I vectors in ECER graphs] \label{thm:conv_of_typeI}
 Let $k \in \mathbb{N}_+$, $\underline{\lambda} \in \mathbb{R}_+^k$, $\underline{\gamma} \in \{ 0,1 \}^{I_{\underline{\lambda}}}$, and let $G_n \sim \mathcal{G}_n([n], \underline{\lambda})$ for all $n \in \mathbb{N}_+$. If Assumption~\ref{asm:simplifying_assumption} holds, then
 \[ \frac{1}{n} \sum_{v \in [n]} \mathds{1} \big[ \, \underline{t}(v, G_n) =\underline{\gamma} \, \big] \; \stackrel{\mathbb{P}}{\longrightarrow} \; p^*(\underline{\gamma}), \qquad n \to \infty \text{.} \]
\end{theorem}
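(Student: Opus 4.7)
My plan is to combine a second-moment (Chebyshev) argument with the exploration framework of Section~\ref{subsection:exploring}. By the vertex exchangeability of the ECER distribution,
\[ \mathbb{E}\bigg[\frac{1}{n}\sum_{v\in[n]}\mathds{1}\big[\underline{t}(v,G_n)=\underline{\gamma}\big]\bigg]=\mathbb{P}\big(\underline{t}(1,G_n)=\underline{\gamma}\big), \]
and the variance of the same average is bounded above by $\tfrac{1}{n}+\big|\mathbb{P}(\underline{t}(1)=\underline{t}(2)=\underline{\gamma})-\mathbb{P}(\underline{t}(1)=\underline{\gamma})^2\big|$. Hence it suffices to establish the single-vertex convergence $\mathbb{P}(\underline{t}(1,G_n)=\underline{\gamma})\to p^*(\underline{\gamma})$ and the asymptotic factorization $\mathbb{P}(\underline{t}(1)=\underline{t}(2)=\underline{\gamma})\to p^*(\underline{\gamma})^2$.

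Setting $v=1$, $w=2$, I condition on $\mathcal{F}_{v,w,k-2}$ and work on the good event $A(v,w)$ supplied by Proposition~\ref{prop:good_events}. The combination of $A_3$ (local tree structure), $A_4$ and $A_5^{\setminus i}$ (polynomially small ball and boundary) and $A_1^{\setminus i},A_2^{\setminus i},A_7^{\setminus i},A_8^{\setminus i},A_9^{\setminus i}$ (unique, ambient-coincident giant in the remainder) gives
\[ \mathds{1}\big[v\in\mathcal{U}_{\max}(G_n^{\setminus i})\big]=\mathds{1}\Big[\widetilde{N}_{k-1}^{\setminus i}(v,G_n)\cap\mathcal{U}_{\max}\big((\widetilde{G}'_{n,k-2}(v,w))^{\setminus i}\big)\neq\emptyset\Big], \]
and likewise with $w$ in place of $v$, because on $A(v,w)$ every $\setminus i$-path from $v$ to the ambient giant must leave the explored tree through $\widetilde{N}_{k-1}^{\setminus i}(v)$, and the unique large component of the remainder is contained in the ambient giant by $A_9^{\setminus i}$. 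By Lemma~\ref{lemma:after_conditioning_still_ECER}, conditional on $\mathcal{F}_{v,w,k-2}$ the graph $\widetilde{G}'_{n,k-2}(v,w)$ is a fresh ECER graph on $n-o(n)$ vertices, and by Claim~\ref{claim:A_3_implies} the boundary sets $\widetilde{N}_{k-1}^{\setminus i}(u)$ for $u\in\{v,w\}$, $i\in I_{\underline{\lambda}}$, are pairwise disjoint. The crux is then the conditional factorization
\[ \mathbb{P}\big(\underline{t}(v)=\underline{t}(w)=\underline{\gamma}\,\big|\,\mathcal{F}_{v,w,k-2}\big)\,\mathds{1}_{A(v,w)}=\prod_{i\in I_{\underline{\lambda}}}\phi_i\big(b_i(v),\gamma_i\big)\,\phi_i\big(b_i(w),\gamma_i\big)\,\mathds{1}_{A(v,w)}+o_{\mathbb{P}}(1), \]
with $\phi_i(b,1)=1-(1-\theta^{\setminus i})^b$ and $\phi_i(b,0)=(1-\theta^{\setminus i})^b$. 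Taking expectations and invoking the colored local weak convergence of Proposition~\ref{prop:benjamini_schramm} (Claim~\ref{claim:finiteness_of_rhor} ensures that the limit boundary sizes are almost surely finite) produces the analogous formula for the ECBP tree. The identity $p^*(\underline{\gamma})=\mathbb{E}\big[\prod_{i\in I_{\underline{\lambda}}}\phi_i(b_i(r),\gamma_i)\big]$ is in fact exact on the tree side: the subtrees hanging off $\widetilde{N}_{k-1}(r)$ are i.i.d.\ copies of $\mathcal{G}_\infty(\underline{\lambda})$, each boundary vertex $u$ belongs to a unique $\widetilde{N}_{k-1}^{\setminus i^*(u)}(r)$ and can contribute only to the $i^*(u)$-coordinate of $\underline{t}^*(r)$, and the $\setminus i^*(u)$-survival probability of its subtree equals $\theta^{\setminus i^*(u)}$ by Claim~\ref{claim:GIr_is_BP}.

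The main obstacle is the conditional factorization above. The one-marginal estimate $\mathbb{P}(u\in\mathcal{U}_{\max}((\widetilde{G}')^{\setminus i})\mid\mathcal{F}_{v,w,k-2})\to\theta^{\setminus i}$ is immediate from exchangeability inside the fresh ECER graph $\widetilde{G}'_{n,k-2}(v,w)$ together with the law of large numbers for the giant used in the proof of Proposition~\ref{prop:good_events}. To upgrade this to joint asymptotic independence across all pairs $(u,i)$ I would carry out a secondary exploration inside $\widetilde{G}'_{n,k-2}(v,w)$: from each of the finitely many boundary vertices $u$ and for each $i\in I_{\underline{\lambda}}$, apply the tree-convergence content of Proposition~\ref{prop:benjamini_schramm} in the color-subgraph $(\widetilde{G}')^{\setminus i}$ to exhibit the neighborhoods of the $(u,i)$-pairs as an asymptotically disjoint forest of independent Poisson branching processes with offspring mean $\lambda^{\setminus i}$. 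Their survival/extinction events then factorize with marginals $\theta^{\setminus i}$, and a short Bonferroni/inclusion-exclusion calculation over the at most $n^{1/4}$ elements of $\widetilde{N}_{k-1}^{\setminus i}(v)$ converts ``some boundary vertex lies in the remainder giant'' into $1-(1-\theta^{\setminus i})^{b_i(v)}+o(1)$. Once this decoupling is in place, the factorization, and with it the convergences (i) and (ii), follow.
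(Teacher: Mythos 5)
Your proposal is correct in outline and shares the paper's skeleton: the reduction to first and second moments via exchangeability and Chebyshev, the use of the good event $A(v,w)$ from Proposition~\ref{prop:good_events} to replace $\{v\in\mathcal{U}_{\text{max}}(G_n^{\setminus i})\}$ by the boundary event $T^{\setminus i}(v,w)$ (this is exactly Lemma~\ref{lemma:connection_between_typeI_and_typeII_in_ECER}, which you essentially re-derive), the factorized limit $\prod_i\big((1-(1-\theta^{\setminus i})^{\beta_i})\gamma_i+(1-\theta^{\setminus i})^{\beta_i}(1-\gamma_i)\big)$, and the transfer to the ECBP tree via Proposition~\ref{prop:benjamini_schramm}. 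Where you genuinely diverge is the decoupling step. The paper conditions on the data $D\big((v_j,\underline{\beta}_j,m_j)_{j\in[2]}\big)$, uses vertex exchangeability (Proposition~\ref{prop:connection_between_typeII_and_typeIII_in_ECER}) to replace the $\mathcal{F}_{v,w,k-2}$-measurable boundary vertices by independently and uniformly chosen marks in a fresh ECER graph, and then gets \emph{exact} conditional independence of the marks given $\sigma(\theta_n^{\setminus i}:i)$, so that only the law of large numbers for the giant, bounded convergence and a Scheff\'e summation over the data are needed (Lemma~\ref{lemma:data_type_connection_between_ECBP_and_ECER}); no exploration coupling beyond the cited Erd\H{o}s--R\'enyi results enters. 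You instead propose a secondary truncated exploration from each boundary vertex inside $\widetilde{G}'_{n,k-2}(v,w)$ and a coupling with independent $\mathrm{POI}(\lambda^{\setminus i})$ branching processes. This route works, but note three points your sketch glosses over: (i) Proposition~\ref{prop:benjamini_schramm} by itself only describes fixed neighborhoods of the two roots, so the secondary exploration needs the standard ER-to-branching-process coupling of~\cite{vdH}, not that proposition; (ii) survival to infinity is not a local event, so you must identify ``$u$ lies in $\mathcal{U}_{\text{max}}\big((\widetilde{G}')^{\setminus i}\big)$'' with ``the truncated exploration from $u$ in $(\widetilde{G}')^{\setminus i}$ exceeds size $n^{1/4}$'', which is exactly what $A_6^{\setminus i},A_7^{\setminus i},A_8^{\setminus i}$ provide; and (iii) the Bonferroni step should not be run over ``at most $n^{1/4}$'' boundary vertices --- rather, use the tightness of $\rho(v)+\rho(w)$ (Claim~\ref{claim:finiteness_of_rhor} via local weak convergence) to restrict to a bounded number of sources, after which the disjointness of the truncated explorations (including the edges of the $k-2$ shared colors between different $(u,i)$-explorations) is a routine $o(1)$ estimate. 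With these repairs your argument gives the same conditional factorization up to $o_{\mathbb{P}}(1)$, and taking expectations with the bounded functional of $\widetilde{G}_{n,k-2}(v,w)$ yields the theorem; the trade-off is that the paper's exchangeability-plus-conditioning trick is softer, while your exploration argument is more hands-on but self-contained at the level of couplings.
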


The rest of Section~\ref{subsection:types} is devoted to the proof of Theorem~\ref{thm:conv_of_typeI}. After some further definitions and observations, the key idea of the proof is stated in Lemma~\ref{lemma:connection_between_typeI_and_typeII_in_ECER}.

\begin{definition}[The known data for ECBP trees] \label{def:data_for_ECBP}
 Let $k \in \mathbb{N}_+$, $\underline{\lambda} \in \mathbb{R}_+^k$, $\underline{\gamma} \in \left\{ 0,1 \right\}^{I_{\underline{\lambda}}}$, $\underline{\beta} \in \mathbb{N}^k$, $m \in \mathbb{N}_+$, and let $G_{\infty} \colonequals G_{\infty}(r) \sim \mathcal{G}_{\infty}(\underline{\lambda})$. Let us define the event
 \[ D^* \big( \underline{\beta}, m \big) \colonequals D^* \big( \underline{\beta}, m, G_{\infty} \big) \colonequals \left\{ \, \underline{b}(r) = \underline{\beta}, \, \rho(r) = m \right\} \text{.} \]
\end{definition}

\begin{definition}[Probability of type I vectors in ECBP trees given some data] \label{def:pstar_gamma_cond_Dstar}
 Let $k \in \mathbb{N}_+$, $\underline{\lambda} \in \mathbb{R}_+^k$, $\underline{\gamma} \in \left\{ 0,1 \right\}^{I_{\underline{\lambda}}}$, $\underline{\beta} \in \mathbb{N}^k$, $m \in \mathbb{N}_+$, and let $G_{\infty} \colonequals G_{\infty}(r) \sim \mathcal{G}_{\infty}(\underline{\lambda})$. Let us define
 \[ p^* \Big( \underline{\gamma} \Bigm| D^* \big( \underline{\beta}, m \big) \Big) \colonequals \mathbb{P} \Big( \underline{t}^*(r) = \underline{\gamma} \Bigm| D^*(\underline{\beta}, m) \Big) \text{.} \]
\end{definition}

Note that by the following claim, $p^* \big( \underline{\gamma} \bigm| D^* (\underline{\beta}, m) \big)$ actually does not depend on $m$.

\begin{proposition}[Probability of type I vectors in ECBP trees given some data] \label{prop:pstar_gamma_cond_Dstar_expressed_as_a_product}
 Let $k \in \mathbb{N}_+$, $\underline{\lambda} \in \mathbb{R}_+^k$, $\underline{\gamma} \in \left\{ 0,1 \right\}^{I_{\underline{\lambda}}}$, $\underline{\beta} \in \mathbb{N}^k$, $m \in \mathbb{N}_+$, and let $G_{\infty} \colonequals G_{\infty}(r) \sim \mathcal{G}_{\infty}(\underline{\lambda})$. Then
 \[ p^* \Big( \underline{\gamma} \Bigm| D^* \big( \underline{\beta}, m \big) \Big) = g_{\underline{\gamma}, \underline{\beta}} \Big( \big( \theta^{\setminus i} \big)_{i \in I_{\underline{\lambda}}} \, \Big) \text{,} \]
 where
 \begin{multline*}
  g_{\underline{\gamma}, \underline{\beta}}: ~ [0,1]^{I_{\underline{\lambda}}} \to \mathbb{R} \\
  (x_i)_{i \in I_{\underline{\lambda}} } \mapsto \prod_{i \in I_{\underline{\lambda}}} \Big( \big( 1-(1-x_i)^{\beta_i} \big) \gamma_i + (1-x_i)^{\beta_i} (1-\gamma_i) \Big) \text{.}
 \end{multline*}
\end{proposition}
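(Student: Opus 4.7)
The plan is to reduce the event $\{\underline{t}^*(r)=\underline{\gamma}\}$ to an event about the survival of independent branching processes hanging off the boundary set $\bigcup_{i\in I_{\underline{\lambda}}}\widetilde{N}_{k-1}^{\setminus i}(r)$, and then exploit independence to factorize the probability.

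The central structural step is the following deterministic claim: for every $i\in[k]$, the event $\{r \stackrel{G_\infty^{\setminus i}}{\longleftrightarrow} \infty\}$ is equivalent to the event that at least one vertex $v\in\widetilde{N}_{k-1}^{\setminus i}(r)$ has an infinite $i$-avoiding path inside its descendant subtree. The forward direction uses Assumption~\ref{asm:simplifying_assumption}: any infinite $i$-avoiding path $P$ from $r$ uses some subset $I\subseteq[k]\setminus\{i\}$ of colors; if $|I|\le k-2$ then all vertices of $P$ lie in $\widetilde{R}_{k-2}^{\le}(r)$, which is a.s.\ finite by Claim~\ref{claim:finiteness_of_rhor}, a contradiction. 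Hence $I=[k]\setminus\{i\}$, and the first vertex $v$ on $P$ outside $\widetilde{R}_{k-2}^{\le}(r)$ is reached via an edge whose color is new along the $r$--$v$ path, forcing $v\in\widetilde{N}_{k-1}^{\setminus i}(r)$. The converse is immediate: concatenate an $r$--$v$ path (which uses only colors in $[k]\setminus\{i\}$) with the infinite $i$-avoiding ray from $v$.

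Next I would collect the probabilistic ingredients. First, in the tree $G_\infty$ each vertex has a unique ancestral path from $r$, so the colors used to reach it are determined; consequently the sets $\widetilde{N}_{k-1}^{\setminus i}(r)$ are pairwise disjoint as $i$ varies. Secondly, the event $D^*(\underline{\beta},m)$ is measurable with respect to the colored subtree $\widetilde{G}_{\infty,k-2}(r)$ (it is determined by $\widetilde{R}^{\le}_{k-2}(r)$ together with the boundary edges landing in $\widetilde{N}_{k-1}(r)$). By the branching property, conditionally on $\widetilde{G}_{\infty,k-2}(r)$, the descendant subtrees rooted at the boundary vertices are mutually independent ECBP trees with the same parameter vector $\underline{\lambda}$. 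Finally, for a fixed boundary vertex $v\in\widetilde{N}_{k-1}^{\setminus i}(r)$, the event that its descendant subtree contains an infinite $i$-avoiding ray is, by Claim~\ref{claim:GIr_is_BP}, exactly the survival event of a Poisson branching process with mean $\lambda^{\setminus i}$; its probability is $\theta^{\setminus i}$ by Definition~\ref{def:theta_minus_i}.

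Combining these, for each $i\in I_{\underline{\lambda}}$ the $\beta_i$ independent subtrees hanging off $\widetilde{N}_{k-1}^{\setminus i}(r)$ each survive $i$-avoidingly with probability $\theta^{\setminus i}$, so
\[
\mathbb{P}\Big( r \stackrel{G_\infty^{\setminus i}}{\longleftrightarrow} \infty \,\Bigm|\, D^*(\underline{\beta},m)\Big)=1-(1-\theta^{\setminus i})^{\beta_i},
\]
and since the subtrees associated to distinct colors are disjoint (hence independent), the joint law of $(\mathds{1}[r\stackrel{G_\infty^{\setminus i}}{\longleftrightarrow}\infty])_{i\in I_{\underline{\lambda}}}$ factorizes over $i$. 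Writing out the indicator for $\underline{t}^*(r)=\underline{\gamma}$ coordinate-by-coordinate then yields exactly $g_{\underline{\gamma},\underline{\beta}}\big((\theta^{\setminus i})_{i\in I_{\underline{\lambda}}}\big)$.

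The main obstacle I anticipate is purely bookkeeping: verifying carefully that conditioning on $D^*(\underline{\beta},m)$ is truly measurable with respect to the finite subtree $\widetilde{G}_{\infty,k-2}(r)$ (so that the branching/Markov property can be applied cleanly to decouple the descendant subtrees of the boundary vertices), and that the disjointness of the $\widetilde{N}_{k-1}^{\setminus i}(r)$ in a tree promotes the conditional independence across different colors $i$. Everything else is a direct consequence of the structural equivalence in the first step and the definition of $\theta^{\setminus i}$.
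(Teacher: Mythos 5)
Your proposal follows essentially the same route as the paper: reduce $\{r\stackrel{G_\infty^{\setminus i}}{\longleftrightarrow}\infty\}$ to the event that some vertex of $\widetilde{N}^{\setminus i}_{k-1}(r)$ is $i$-avoiding connected to infinity, use the pairwise disjointness of the boundary sets (which holds because $G_\infty$ is a tree) to get conditional independence across colors given $D^*(\underline{\beta},m)$, and then factor the conditional probability using $\theta^{\setminus i}$. The only superfluous step is your appeal to Assumption~\ref{asm:simplifying_assumption} via Claim~\ref{claim:finiteness_of_rhor}: the proposition is stated without that assumption, and the finiteness of $\widetilde{R}^{\le}_{k-2}(r)$ that your structural argument needs is already forced by conditioning on the event $\{\rho(r)=m\}\supseteq D^*(\underline{\beta},m)$ with $m\in\mathbb{N}_+$.
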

\begin{proof}
 It is not difficult to see that for any $i \in I_{\underline{\lambda}}$, the root $r$ is $i$-avoiding connected to infinity if and only if at least one vertex in $\widetilde{N}_{k-1}^{\setminus i}(r)$ is $i$-avoiding connected to infinity. Note that conditional on $D^* \big( \beta, m \big)$, the events 
 \[ \bigcup_{v \in \widetilde{N}_{k-1}^{\setminus i} (r)} \Big\{v \stackrel{G_{\infty}^{\setminus i}}{\longleftrightarrow} \infty \Big\} \]
 for all $i \in I_{\underline{\lambda}}$ are conditionally independent since the sets $\widetilde{N}_{k-1}^{\setminus i} (r)$ for all $i \in I_{\underline{\lambda}}$ are disjoint. Thus, by the definition of $\theta^{\setminus i}$ for any $i \in I_{\underline{\lambda}}$, we obtain
 \[ \mathbb{P} \Big( \underline{t}^*(r) = \underline{\gamma} \bigm| D^* \big( \beta, m \big) \Big) = g_{\underline{\gamma}, \underline{\beta}} \Big( \big( \theta^{\setminus i} \big)_{i \in I_{\underline{\lambda}}} \, \Big) \text{.} \]
\end{proof}

\begin{claim}[Total law of probability for type I vectors in ECBP trees] \label{claim:total_law_of_probab_for_typeI_in_ECBP}
 Let $k \in \mathbb{N}_+$, $\underline{\lambda} \in \mathbb{R}_+^k$, $\underline{\gamma}_1, \underline{\gamma}_2 \in \left\{ 0,1 \right\}^{I_{\underline{\lambda}}}$, and let $G_{\infty}(r) \sim \mathcal{G}_{\infty}(\underline{\lambda})$. If Assumption~\ref{asm:simplifying_assumption} holds, then
 \[ p^*(\underline{\gamma}_1) \cdot p^*(\underline{\gamma}_2) = \sum_{\substack{\underline{\beta}_1, \underline{\beta}_2 \in \mathbb{N}^k, \\ m_1, m_2 \in \mathbb{N}_+}} \prod_{j \in [2]} p^* \Big( \underline{\gamma}_j \Bigm| D^* \big( \underline{\beta}_j, m_j \big) \Big) \cdot \mathbb{P} \left( D^* \big( \underline{\beta}_j, m_j \big) \right) \text{.} \]
\end{claim}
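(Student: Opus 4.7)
The plan is to view the claim as simply the product of two instances of the total law of probability, one for each factor on the left-hand side. The only nontrivial piece of work is to justify that the events $D^*(\underline{\beta},m)$, as $(\underline{\beta},m)$ ranges over $\mathbb{N}^k\times\mathbb{N}_+$, form a countable partition of a set of full probability in the underlying ECBP tree.

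First I would verify this partition property. By Claim~\ref{claim:finiteness_of_rhor}, under Assumption~\ref{asm:simplifying_assumption} we have $\rho(r)<\infty$ almost surely, so $\rho(r)$ takes values in $\mathbb{N}_+$ almost surely. For each $i\in[k]$, the set $\widetilde{N}_{k-1}^{\setminus i}(r)$ is contained in the one-step Poisson neighborhood of $\widetilde{R}^{\le}_{k-2}(r)$, and since $\rho(r)$ is a.s.\ finite and each vertex emits only finitely many edges in $G_{\infty}$ almost surely, the vector $\underline{b}(r)$ takes values in $\mathbb{N}^k$ almost surely. Hence the events $\{D^*(\underline{\beta},m):\underline{\beta}\in\mathbb{N}^k,\,m\in\mathbb{N}_+\}$ are pairwise disjoint and their union has probability one.

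Given this, the standard total law of probability applied to the event $\{\underline{t}^*(r)=\underline{\gamma}_j\}$ yields, for each $j\in[2]$,
\[
p^*(\underline{\gamma}_j)=\sum_{\underline{\beta}_j\in\mathbb{N}^k,\,m_j\in\mathbb{N}_+} p^*\Big(\underline{\gamma}_j\Bigm| D^*(\underline{\beta}_j,m_j)\Big)\cdot \mathbb{P}\big(D^*(\underline{\beta}_j,m_j)\big).
\]
Multiplying the $j=1$ and $j=2$ identities and using absolute convergence (each factor is a sum of non-negative terms bounded above by $1$) to exchange the order of the resulting double sum, I obtain exactly the right-hand side of the claim.

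There is no real obstacle here beyond the bookkeeping described above; the potential subtlety is only the a.s.\ finiteness of $\underline{b}(r)$, but as sketched this follows immediately from Claim~\ref{claim:finiteness_of_rhor} together with the fact that each vertex has finitely many Poisson offspring a.s. Note that the claim does not assert independence between the two factors (the two $\underline{\gamma}_j$'s refer to the same root $r$ of a single ECBP tree); it is purely an algebraic product-of-identities statement, presumably stated in this form because it will later be paired with the two-vertex local weak convergence of Proposition~\ref{prop:benjamini_schramm}, where the i.i.d.\ product structure on the ECER side arises from taking two distinct vertices $v_1\neq v_2$.
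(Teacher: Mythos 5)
Your proposal is correct and matches the paper's own (one-line) proof, which likewise derives the identity from Claim~\ref{claim:finiteness_of_rhor} together with the total law of probability applied to each factor $p^*(\underline{\gamma}_j)$ and then multiplies the two resulting sums. The extra details you supply (a.s.\ finiteness of $\underline{b}(r)$ and the interchange of summation by non-negativity) are exactly the routine bookkeeping the paper leaves implicit.
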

\begin{proof}
 It follows from Claim~\ref{claim:finiteness_of_rhor} and the total law of probability.
\end{proof}

Now we turn to ECER graphs. Note that the notion of color-avoiding type I vectors in ECER graphs was already introduced in Definition~\ref{def:typeI_in_ECER}.

\begin{definition}[Color-avoiding type II events in ECER graphs] \label{def:typeII_in_ECER}
 Let $k,n \in \mathbb{N}_+$, $\underline{\lambda} \in \mathbb{R}_+^k$, $i \in I_{\underline{\lambda}}$ and $\underline{\gamma} \in \left\{ 0,1 \right\}^{I_{\underline{\lambda}}}$ and let $G \sim \mathcal{G}_n(V,\underline{\lambda})$, and $v, w \in V$, $v \ne w$. Let us define the event
 \[ T^{\setminus i}(v,w) \colonequals T^{\setminus i}(v,w,G) \colonequals \Bigg\{ \, \widetilde{N}_{k-1}^{\setminus i}(v,G) \cap \mathcal{U}_{\text{max}} \bigg( \Big( \widetilde{G}'_{k-2}(v,w) \Big)^{\setminus i} \bigg) \neq \emptyset \, \Bigg\} \text{,} \]
 and the \emph{(color-avoiding) type II event}
 \[ T \big( v, w, \underline{\gamma} \big) \colonequals T \big( v, w, \underline{\gamma}, G \big) \colonequals \left( \bigcap_{\substack{i \in I_{\underline{\lambda}}: \\ \gamma_i=1}} T^{\setminus i}(v,w) \right) \cap \left( \bigcap_{\substack{i \in I_{\underline{\lambda}}: \\ \gamma_i=0}} \left( T^{\setminus i}(v,w) \right)^c \right) \text{.} \]
\end{definition}

 The idea is that if the event $A(v,w)$ occurs -- and if Assumption~\ref{asm:simplifying_assumption} holds, then it occurs with high probability by Proposition~\ref{prop:good_events} --, then by our next lemma we can replace the complicated event $\big\{ v \in \mathcal{U}_{\text{max}} \big( G^{\setminus i} \big) \big\}$ for type I vectors with the simpler type II subevent $T^{\setminus i}(v,w)$ for any $i \in I_{\underline{\lambda}}$. The reason why $T^{\setminus i}(v,w)$ is simpler to handle is that $\widetilde{N}_{k-1}^{\setminus i}(v)$ is $\mathcal{F}_{v,w,k-2}$-measurable, and $\big( \widetilde{G}'_{k-2}(v,w) \big)^{\setminus i}$ is an ECER graph given the sigma-algebra $\mathcal{F}_{v,w,k-2}$ by Lemma~\ref{lemma:after_conditioning_still_ECER} and Claim~\ref{claim:GI_is_ER}.

\begin{lemma}[Types I and II agree with high probability in ECER graphs] \label{lemma:connection_between_typeI_and_typeII_in_ECER}
 Let $k,n \in \mathbb{N}_+$, $\underline{\lambda} \in \mathbb{R}_+^k$, $\underline{\gamma} \in \left\{ 0,1 \right\}^{I_{\underline{\lambda}}}$, and let $G \sim \mathcal{G}_n(V,\underline{\lambda})$, and $v, w \in V$, $v \ne w$. If $n$ is large enough and the event $A(v,w)$ occurs, then
 \[ \big\{ \underline{t}(v) = \underline{\gamma} \big\} = T \big( v,w,\underline{\gamma} \big) \text{.} \]
\end{lemma}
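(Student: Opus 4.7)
The plan is to reduce the claimed event identity to the coordinate-wise biconditional $\{v \in \mathcal{U}_{\text{max}}(G^{\setminus i})\} = T^{\setminus i}(v,w)$ on $A(v,w)$ for each $i \in I_{\underline{\lambda}}$, and then intersect over $i$ according to the bits of $\underline{\gamma}$. The $(\Leftarrow)$ implication is immediate: if $u \in \widetilde{N}^{\setminus i}_{k-1}(v) \cap \mathcal{U}_{\text{max}}((\widetilde{G}'_{k-2}(v,w))^{\setminus i})$, then the definition of $\widetilde{N}^{\setminus i}_{k-1}(v)$ supplies an $i$-avoiding path from $v$ to $u$, while $A_9^{\setminus i}$ places $u$ inside $\mathcal{U}_{\text{max}}(G^{\setminus i})$, yielding $v \in \mathcal{U}_{\text{max}}(G^{\setminus i})$.

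For the $(\Rightarrow)$ direction, set $\mathcal{U} \colonequals \mathcal{U}_{\text{max}}(G^{\setminus i})$ and $\mathcal{W} \colonequals \mathcal{U}_{\text{max}}((\widetilde{G}'_{k-2}(v,w))^{\setminus i})$, and assume $v \in \mathcal{U}$. Since $\mathcal{W} \subseteq \mathcal{U}$ (by $A_9^{\setminus i}$) with $|\mathcal{W}| \ge n^{3/4}$ (by $A_6^{\setminus i}$), we can fix an $i$-avoiding simple path $P = (v_0 = v, v_1, \ldots, v_t)$ in $G^{\setminus i}$ with $v_t \in \mathcal{W}$. Let $j_1$ be the smallest index with $v_{j_1} \notin \widetilde{R}^{\le}_{k-2}(v)$; then $v_0, \ldots, v_{j_1 - 1} \in \widetilde{R}^{\le}_{k-2}(v)$, and on $A_3$ the subgraph $G[\widetilde{R}^{\le}_{k-2}(v)]$ is a tree, so this segment is the unique tree path from $v$ to $v_{j_1 - 1}$. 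Its color set $I_v$ is contained in $[k] \setminus \{i\}$ and has size at most $k - 2$; letting $c \in [k] \setminus \{i\}$ be the color of the edge $v_{j_1 - 1} v_{j_1}$, the condition $v_{j_1} \notin \widetilde{R}^{\le}_{k-2}(v)$ forces $I_v \cup \{c\} = [k] \setminus \{i\}$. By Definition~\ref{def:layers} this places $v_{j_1} \in \widetilde{N}^{\setminus i}_{k-1}(v)$, and on $A_3$ we also have $\widetilde{N}^{\setminus i}_{k-1}(v) \cap \widetilde{R}^{\le}_{k-2}(\{v,w\}) = \emptyset$, so $v_{j_1} \in V(\widetilde{G}'_{k-2}(v,w))$.

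Thus $v_{j_1}$ sits in some component of $(\widetilde{G}'_{k-2}(v,w))^{\setminus i}$ that is contained in $\mathcal{U}$ and, by $A_7^{\setminus i}$ and $A_8^{\setminus i}$, is therefore either $\mathcal{W}$ or has size at most $n^{1/4}$. The remaining step is to choose $P$ so that after $v_{j_1}$ the path stays inside $V(\widetilde{G}'_{k-2}(v,w))$; then $v_{j_1}, \ldots, v_t$ is a path in $(\widetilde{G}'_{k-2}(v,w))^{\setminus i}$, forcing $v_{j_1} \in \mathcal{W}$ and hence $T^{\setminus i}(v,w)$. This rerouting step, which prevents the connection from $v$ to $\mathcal{W}$ from being forced through $\widetilde{R}^{\le}_{k-2}(w)$, is the main obstacle: it hinges on the size asymmetry $|\mathcal{W}| \ge n^{3/4}$ against the combined total $|\widetilde{N}^{\setminus i}_{k-1}(v)| \cdot n^{1/4} \le n^{1/2}$ bounding the small components of $(\widetilde{G}'_{k-2}(v,w))^{\setminus i}$ touched by $\widetilde{N}^{\setminus i}_{k-1}(v)$ (using $A_5^{\setminus i}$ and $A_8^{\setminus i}$), together with $A_9^{\setminus i}$. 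Carrying this out for each $i \in I_{\underline{\lambda}}$ and intersecting over the bits of $\underline{\gamma}$ yields the claimed identity $\{\underline{t}(v) = \underline{\gamma}\} = T(v,w,\underline{\gamma})$ on $A(v,w)$.
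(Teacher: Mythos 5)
Your $(\Leftarrow)$ argument matches the paper's exactly. For the $(\Rightarrow)$ direction, however, you and the paper diverge, and your argument has a genuine gap that you yourself partly flag. You attempt a direct argument: starting from $v \in \mathcal{U}_{\text{max}}(G^{\setminus i})$, fix an $i$-avoiding path $P$ from $v$ to $\mathcal{W} \colonequals \mathcal{U}_{\text{max}}\big((\widetilde{G}'_{k-2}(v,w))^{\setminus i}\big)$, identify the first exit vertex $v_{j_1} \in \widetilde{N}^{\setminus i}_{k-1}(v)$, and try to show $v_{j_1} \in \mathcal{W}$. The identification of $v_{j_1}$ is correct, using the tree structure from $A_3$. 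But the step you single out as ``the main obstacle'' --- ensuring that the tail of $P$ after $v_{j_1}$ stays inside $V(\widetilde{G}'_{k-2}(v,w))$ --- is precisely where the direct approach breaks, and the size-asymmetry heuristic you offer does not close it. Nothing in $A(v,w)$ guarantees that an $i$-avoiding connection from $v$ to $\mathcal{W}$ can be realized without re-entering $\widetilde{R}^{\le}_{k-2}(\{v,w\})$; the path may re-enter $\widetilde{R}^{\le}_{k-2}(v)$ itself (not only $\widetilde{R}^{\le}_{k-2}(w)$), so $v_{j_1}$ can sit in a small component of $(\widetilde{G}'_{k-2}(v,w))^{\setminus i}$ even though $v \in \mathcal{U}_{\text{max}}(G^{\setminus i})$. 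The bounds you cite control the total size of such small components but cannot help you reroute a single fixed path.

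The paper avoids this entirely by proving the contrapositive. Assuming $T^{\setminus i}(v,w)$ fails, it establishes the covering
\[
 \mathcal{C}(v, G^{\setminus i}) \subseteq \widetilde{R}^{\le}_{k-2}\big(\{v,w\}\big) \cup \bigcup_{u \in \widetilde{N}^{\setminus i}_{k-1}(v)} \mathcal{C}\Big(u, \big(\widetilde{G}'_{k-2}(v,w)\big)^{\setminus i}\Big) \text{,}
\]
bounds the right-hand side by $n^{1/4} + n^{1/4} \cdot n^{1/4} < n^{3/4}$ (using $A_4$, $A_5^{\setminus i}$, $A_8^{\setminus i}$ and the assumption that no $u$ lies in $\mathcal{W}$), and then concludes $v \notin \mathcal{U}_{\text{max}}(G^{\setminus i})$ from $A_1^{\setminus i}$, $A_6^{\setminus i}$, $A_7^{\setminus i}$, $A_9^{\setminus i}$. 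In this framing the size estimates play their correct logical role: they bound a vertex count, not a path choice. You have assembled the right ingredients (the first-exit vertex, the events $A_4$--$A_9^{\setminus i}$, the $n^{3/4}$-versus-$n^{1/2}$ gap), but the contrapositive/covering decomposition --- not a rerouting of $P$ --- is the move that actually uses them; as written, your $(\Rightarrow)$ direction is not a proof.
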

\begin{proof}
 Throughout the proof, we assume that the event $A(v,w,G)$ occurs. Let $i \in I(\underline{\lambda})$ be arbitrary. First, note that $\big\{ \big( \underline{t}(v,G) \big)_i = 1 \big\} = \big\{ v \in \mathcal{U}_{\text{max}} \big( G^{\setminus i} \big) \big\}$.
 
 It is enough to show that the events $T^{\setminus i}(v,w,G)$ and $\big\{ v \in \mathcal{U}_{\text{max}} \big( G^{\setminus i} \big) \big\}$ imply one another.
 
 Assume that the event $T^{\setminus i}(v,w,G)$ holds, i.e., there exists a vertex $u \in V$ such that
 \[ u \in \widetilde{N}_{k-1}^{\setminus i}(v,G) \cap \mathcal{U}_{\text{max}} \bigg( \Big( \widetilde{G}'_{k-2}(v,w) \Big)^{\setminus i} \bigg) \text{.} \]
 Then the event $A(v,w,G)$ -- in particular $A_9^{\setminus i}(v,w,G)$ -- implies $u \in \mathcal{U}_{\text{max}} \big( G^{\setminus i} \big)$. By the choice of $u$ and the definition of $\widetilde{N}_{k-1}^{\setminus i}(v,G)$, $u$ and $v$ are $i$-avoiding connected, which implies $v \in \mathcal{U}_{\text{max}} \big( G^{\setminus i} \big)$.

 Now assume that the event $T^{\setminus i}(v,w,G)$ does not hold. First, we show that
 \[ \mathcal{C}(v, G^{\setminus i}) \subseteq \widetilde{R}^{\le}_{k-2} \big( \{ v,w \}, G \big) \cup \left( \bigcup_{u \in \widetilde{N}_{k-1}^{\setminus i}(v,G)} \mathcal{C} \left( u, \Big( \widetilde{G}'_{k-2}(v,w) \Big)^{\setminus i} \right) \right) \text{.} \]
 Let $v' \in \mathcal{C}(v, G^{\setminus i})$. If $v' \notin \widetilde{R}^{\le}_{k-2} \big( \{ v,w \}, G \big)$, then any $i$-avoiding $v$-$v'$ path (which must exist) contains a vertex $u \in \widetilde{N}_{k-1}^{\setminus i}(v,G)$, so $v' \in \mathcal{C} \Big( u, \big( \widetilde{G}'_{k-2}(v,w) \big)^{\setminus i} \Big)$.
 
 From the above, it follows that
 \[ \big| \mathcal{C}(v, G^{\setminus i}) \big| \le \left| \widetilde{R}^{\le}_{k-2} \big( \{ v,w \}, G \big) \right| + \sum_{u \in \widetilde{N}_{k-1}^{\setminus i}(v,G)} \Bigg| \mathcal{C} \bigg( u,\Big( \widetilde{G}'_{k-2}(v,w) \Big)^{\setminus i} \bigg) \Bigg| \text{.} \]
 Note that the event $\big( T^{\setminus i}(v,w,G) \big)^c$ implies that if $u \in \widetilde{N}_{k-1}^{\setminus i}(v,G)$, then
 \[ \mathcal{C} \left( u, \Big( \widetilde{G}'_{k-2}(v,w) \Big)^{\setminus i} \right) \cap \mathcal{U}_{\text{max}} \bigg( \Big( \widetilde{G}'_{k-2}(v,w) \Big)^{\setminus i} \bigg) = \emptyset \text{.} \]
 Also note that the event $A(v,w,G)$ -- in particular $A_4(v,w,G)$, $A_5^{\setminus i}(v,w,G)$, and $A_8^{\setminus i}(v,w,G)$ -- implies
 \[ \big| \mathcal{C}(v, G^{\setminus i}) \big| \le n^{1/4} + n^{1/4} \cdot n^{1/4} \text{.} \]
 Therefore, if $n$ is large enough, then $\big| \mathcal{C}(v, G^{\setminus i}) \big| < n^{3/4}$ holds, so the event $A(v,w,G)$ -- in particular $A_1^{\setminus i}$, $A_7^{\setminus i}(v,w,G)$, and $A_9^{\setminus i}(v,w,G)$ -- implies
 \[ \mathcal{C}(v,G) \cap \mathcal{U}_{\text{max}} \big( G^{\setminus i} \big) = \emptyset \text{,} \]
 from which $v \notin \mathcal{U}_{\text{max}}\big(G^{\setminus i} \big)$ follows.
\end{proof}

\begin{definition}[The known data for ECER graphs] \label{def:data_for_ECER}
 Let $k,n \in \mathbb{N}_+$, $\underline{\lambda} \in \mathbb{R}_+^k$, $\underline{\beta}_1, \underline{\beta}_2 \in \mathbb{N}^k$, $m_1, m_2 \in \mathbb{N}_+$, and let $G \sim \mathcal{G}_n(V,\underline{\lambda})$, and $v_1, v_2 \in V$, $v_1 \ne v_2$. Let us define the event
 \begin{multline*}
  D \Big( \big( v_j, \underline{\beta}_j, m_j \big)_{j \in [2]} \Big) \colonequals D \Big( \big( v_j, \underline{\beta}_j, m_j \big)_{j \in [2]}, G \Big) \\
  \colonequals A_3(v_1,v_2) \cap \left\{ \underline{b}(v_1) = \underline{\beta}_1, \underline{b}(v_2) = \underline{\beta}_2, \rho(v_1) = m_1, \rho(v_2) = m_2 \right\} \text{.}
 \end{multline*}
\end{definition}

\begin{definition}[Probability of type II events in ECER graphs given some data] \label{def:p_gamma1_gamma2_cond_D}
 Let $k,n \in \mathbb{N}_+$, $\underline{\lambda} \in \mathbb{R}_+^k$, $\underline{\gamma}_1, \underline{\gamma}_2 \in \left\{ 0,1 \right\}^{I_{\underline{\lambda}}}$, $\underline{\beta}_1, \underline{\beta}_2 \in \mathbb{N}^k$, $m_1, m_2 \in \mathbb{N}_+$, and let $G \sim \mathcal{G}_n(V,\underline{\lambda})$, and $v_1, v_2 \in V$, $v_1 \ne v_2$. Let us define
 \begin{multline*}
  p \Big( \underline{\gamma}_1, \underline{\gamma}_2 \Bigm| D \big( (v_j, \underline{\beta}_j, m_j)_{j \in [2]} \big) \Big) \colonequals p \Big( \underline{\gamma}_1, \underline{\gamma}_2, G \Bigm| D \big( (v_j, \underline{\beta}_j, m_j)_{j \in [2]} \big) \Big) \\
  \colonequals \mathbb{P} \Big( T \big( v_1, v_2, \underline{\gamma}_1 \big) \cap T \big( v_2, v_1, \underline{\gamma}_2 \big) \Bigm| D \big( (v_j, \underline{\beta}_j, m_j)_{j \in [2]} \big) \Big) \text{.}
 \end{multline*}
\end{definition}

Note that by Lemma~\ref{lemma:connection_between_typeI_and_typeII_in_ECER}, $p \big( \underline{\gamma}_1, \underline{\gamma}_2, G \bigm| D \big( (v_j, \underline{\beta}_j, m_j)_{j \in [2]} \big) \big)$ can be viewed as an analogue of $p^* \big( \underline{\gamma}_1 \bigm| D^*(\underline{\beta}_1, m_1) \big) \cdot p^* \big( \underline{\gamma}_2 \bigm| D^*(\underline{\beta}_2, m_2) \big)$.

The next claim follows from the total law of probability.

\begin{claim}[Total law of probability for type II events in ECER graphs] \label{claim:total_law_of_probab_for_typeII_in_ECER}
 Let $k,n \in \mathbb{N}_+$, $\underline{\lambda} \in \mathbb{R}_+^k$, $\underline{\gamma}_1, \underline{\gamma}_2 \in \left\{ 0,1 \right\}^{I_{\underline{\lambda}}}$, and let $G \sim \mathcal{G}_n(V,\underline{\lambda})$ and $v_1, v_2 \in V$, $v_1 \ne v_2$. Then
 \begin{multline*}
  \mathbb{P} \left( T \big( v_1, v_2, \underline{\gamma}_1 \big) \cap T \big( v_2, v_1, \underline{\gamma}_2 \big) \cap A_3(v,w) \right) \\
  = \sum_{\substack{ \underline{\beta}_1, \underline{\beta}_2 \in \mathbb{N}^k, \\ m_1, m_2 \in \mathbb{N}_+}} p \Big( \underline{\gamma}_1, \underline{\gamma}_2 \Bigm| D \big( (v_j, \underline{\beta}_j, m_j)_{j \in [2]} \big) \Big) \cdot \mathbb{P} \left( D \big( (v_j, \underline{\beta}_j, m_j)_{j \in [2]} \big) \right) \text{.}
 \end{multline*}
\end{claim}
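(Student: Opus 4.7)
The plan is to verify this identity as a direct application of the total law of probability, using that the family of events $\{D((v_j,\underline{\beta}_j,m_j)_{j\in[2]})\}$, indexed over $\underline{\beta}_1,\underline{\beta}_2\in\mathbb{N}^k$ and $m_1,m_2\in\mathbb{N}_+$, forms a countable partition of the event $A_3(v_1,v_2)$. The proof will not require any substantive new idea, only a careful bookkeeping of definitions.

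First I would observe that by Claim~\ref{claim:known_info_given_sigmaalg}, the random vectors $\underline{b}(v_j)$ and the integers $\rho(v_j)$ are $\mathcal{F}_{v_j,k-2}$-measurable, hence $\mathcal{F}_{v_1,v_2,k-2}$-measurable, and so is $A_3(v_1,v_2)$. Next I would check that on the finite graph $G$ we always have $\rho(v_j)\in\mathbb{N}_+$: indeed $\rho(v_j)\ge 1$ because $v_j\in\widetilde{R}_\emptyset(v_j)\subseteq\widetilde{R}^{\le}_{k-2}(v_j)$, while $\rho(v_j)\le n$ since $G$ has at most $n$ vertices. Together with $\underline{b}(v_j)\in\mathbb{N}^k$, this ensures that the atoms $\{\underline{b}(v_1)=\underline{\beta}_1,\underline{b}(v_2)=\underline{\beta}_2,\rho(v_1)=m_1,\rho(v_2)=m_2\}$ partition the whole sample space as $\underline{\beta}_1,\underline{\beta}_2,m_1,m_2$ range over the index set, so intersecting with $A_3(v_1,v_2)$ gives the claimed countable partition of $A_3(v_1,v_2)$.

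With this partition in hand, I would simply split the probability on the left-hand side according to the atoms and rewrite each joint probability as a conditional probability times the probability of the conditioning atom:
\begin{align*}
 & \mathbb{P}\bigl(T(v_1,v_2,\underline{\gamma}_1)\cap T(v_2,v_1,\underline{\gamma}_2)\cap A_3(v_1,v_2)\bigr) \\
 &\quad = \sum_{\underline{\beta}_1,\underline{\beta}_2,m_1,m_2} \mathbb{P}\Bigl(T(v_1,v_2,\underline{\gamma}_1)\cap T(v_2,v_1,\underline{\gamma}_2)\cap D\bigl((v_j,\underline{\beta}_j,m_j)_{j\in[2]}\bigr)\Bigr) \\
 &\quad = \sum_{\underline{\beta}_1,\underline{\beta}_2,m_1,m_2} \mathbb{P}\Bigl(T(v_1,v_2,\underline{\gamma}_1)\cap T(v_2,v_1,\underline{\gamma}_2)\Bigm| D\bigl((v_j,\underline{\beta}_j,m_j)_{j\in[2]}\bigr)\Bigr)\cdot\mathbb{P}\bigl(D(\cdots)\bigr),
\end{align*}
where the terms in which $\mathbb{P}(D(\cdots))=0$ are simply omitted. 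Finally, recognising the conditional probability in the summand as precisely the quantity $p(\underline{\gamma}_1,\underline{\gamma}_2\mid D((v_j,\underline{\beta}_j,m_j)_{j\in[2]}))$ introduced in Definition~\ref{def:p_gamma1_gamma2_cond_D} yields the stated identity.

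There is no genuine obstacle here; the claim is essentially definitional. The only minor point worth highlighting is the verification that $\rho(v_j)$ almost surely takes values in $\mathbb{N}_+$ on the finite graph (in contrast to the ECBP setting, where almost-sure finiteness required Assumption~\ref{asm:simplifying_assumption} via Claim~\ref{claim:finiteness_of_rhor}).
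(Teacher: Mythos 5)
Your proof is correct and takes exactly the route the paper intends: the paper gives no details, stating only that the claim "follows from the total law of probability," and your argument is precisely the fleshed-out version of that one-line justification, partitioning $A_3(v_1,v_2)$ by the values of $\underline{b}(v_j)$ and $\rho(v_j)$ and invoking Definition~\ref{def:p_gamma1_gamma2_cond_D}. The side remarks (that $\rho(v_j)\in\mathbb{N}_+$ deterministically in the finite setting, and that terms with $\mathbb{P}(D(\cdots))=0$ are omitted) are sensible and do not change the substance.
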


In Proposition~\ref{prop:connection_between_typeII_and_typeIII_in_ECER} below, we want to give an alternative formula for the probabilities $p \big( \underline{\gamma}_1, \underline{\gamma}_2 \bigm| D \big( (u_j, \underline{\beta}_j, m_j)_{j \in [2]} \big) \big)$ which involves less conditioning. In order to do so, we need the next definition.

\begin{definition}[Color-avoiding type III events in ECER graphs] \label{def:typeIII_in_ECER}
 Let $k \in \mathbb{N}_+$, $\underline{\lambda} \in \mathbb{R}_+^k$ and $\underline{\gamma} \in \left\{ 0,1 \right\}^{I_{\underline{\lambda}}}$. Let $n, m \in \mathbb{N}_+$ and $\underline{\beta} = (\beta_1, \ldots, \beta_k) \in \mathbb{N}^k$ so that
 \[ \sum_{i \in [k]} \beta_i \le n-m \text{.} \]
 Let $G \sim \mathcal{G}_n([n-m], \underline{\lambda})$, and for each $i \in [k]$ and $j \in [\beta_i]$ let us fix the vertices $v_{i,j} \in [n-m]$, and let 
 \[ \underline{v} = (v_{i,j})_{i \in [k], \, j \in [\beta_i]} \text{.} \]
 For any $i \in I_{\underline{\lambda}}$, let us define the event
 \[ \widehat{T}^{\setminus i}(\underline{v}, \underline{\beta}, m) \colonequals \widehat{T}^{\setminus i}(\underline{v}, \underline{\beta}, m, G) \colonequals \left\{ \, \exists j \in [\beta_i] \, : \,  v_{i,j} \in \mathcal{U}_{\text{max}} \big( G^{\setminus i} \big) \, \right\} \text{,} \]
 and the \emph{(color-avoiding) type III event}
 \begin{multline*}
  \widehat{T} \big( \underline{v}, \underline{\gamma}, \underline{\beta}, m \big) \colonequals \widehat{T} \big( \underline{v}, \underline{\gamma}, \underline{\beta}, m, G \big) \\
  \colonequals \left( \bigcap_{\substack{i \in I_{\underline{\lambda}}: \\ \gamma_i=1}} \widehat{T}^{\setminus i}(\underline{v}, \underline{\beta}, m) \right) \cap \left( \bigcap_{\substack{i \in I_{\underline{\lambda}}: \\ \gamma_i=0}} \left( \widehat{T}^{\setminus i}(\underline{v}, \underline{\beta}, m) \right)^c \right) \text{.}
 \end{multline*}
\end{definition}

\begin{proposition}[Types II and III agree in probability in ECER graphs] \label{prop:connection_between_typeII_and_typeIII_in_ECER}
 Let $k \in \mathbb{N}_+$, $\underline{\lambda} \in \mathbb{R}_+^k$ and $\underline{\gamma}_1, \underline{\gamma}_2 \in \left\{ 0,1 \right\}^{I_{\underline{\lambda}}}$. Let $n, m_1, m_2 \in \mathbb{N}_+$ and $\underline{\beta}_1 = (\beta_{1,1}, \ldots, \beta_{1,k}), \underline{\beta}_2 = (\beta_{2,1}, \ldots, \beta_{2,k}) \in \mathbb{N}^k$ so that
 \[ \sum_{i \in [k]} \beta_{1,i} + \sum_{i \in [k]} \beta_{2,i} \le n - (m_1 + m_2) \text{.} \]
 Let $G \sim \mathcal{G}_n([n], \underline{\lambda})$ and $H \sim \mathcal{G}_n \big( [ n - (m_1 + m_2) ], \underline{\lambda} \big)$, let us fix the vertices $u_1, u_2 \in [n]$, $u_1 \ne u_2$ and for each $i \in [k]$, $j_1 \in [\beta_{1,i}]$ and $j_2 \in [\beta_{2,i}]$, let us fix distinct vertices $v_{i,j_1}, w_{i,j_2} \in \big[ n - (m_1 + m_2) \big]$, and let 
 \[ \underline{v} = (v_{i,j_1})_{i \in [k], \, j_1 \in [\beta_{1,i}]}, \qquad \underline{w} = (w_{i,j_2})_{i \in [k], \, j_2 \in [\beta_{2,i}]} \text{.} \]
 Then
 \begin{multline*}
  p \Big( \underline{\gamma}_1, \underline{\gamma}_2 \Bigm| D \big( (u_j, \underline{\beta}_j, m_j)_{j \in [2]} \big) \Big) \\
  = \mathbb{P} \left( \widehat{T} \big( \underline{v}, \underline{\gamma}_1, \underline{\beta}_1, m_1, H \big) \cap \widehat{T} \big( \underline{w}, \underline{\gamma}_2, \underline{\beta}_2, m_2, H \big) \right) \text{.}
 \end{multline*}
\end{proposition}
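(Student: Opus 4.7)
The plan is to combine Lemma~\ref{lemma:after_conditioning_still_ECER} with the vertex exchangeability of the ECER distribution. The key observation is that both the type II and type III events are measurable functions of (i) the unexplored graph $\widetilde{G}'_{k-2}(u_1,u_2)$ together with (ii) the labels of the distinguished boundary vertices $\widetilde{N}_{k-1}^{\setminus i}(u_j,G)$ for $i\in I_{\underline{\lambda}}$ and $j\in[2]$. Since the former is ECER after conditioning, and the latter are determined up to relabeling by their cardinalities, both sides of the claimed equality reduce to the same thing.

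First I would condition on the sigma-algebra $\mathcal{F}_{u_1,u_2,k-2}$. By Claim~\ref{claim:known_info_given_sigmaalg}, the event $D((u_j,\underline{\beta}_j,m_j)_{j\in[2]})$ is $\mathcal{F}_{u_1,u_2,k-2}$-measurable, as are the sets $\widetilde{N}_{k-1}^{\setminus i}(u_j,G)$ for $i\in I_{\underline{\lambda}}$, $j\in[2]$. By Lemma~\ref{lemma:after_conditioning_still_ECER}, conditional on $\mathcal{F}_{u_1,u_2,k-2}$ the graph $\widetilde{G}'_{k-2}(u_1,u_2)$ is ECER on the (random) vertex set $V_{\text{rem}} \colonequals [n]\setminus \widetilde{R}^{\le}_{k-2}(\{u_1,u_2\},G)$. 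On the event $D((u_j,\underline{\beta}_j,m_j)_{j\in[2]})$, Claim~\ref{claim:A_3_implies} together with $\rho(u_1)=m_1$, $\rho(u_2)=m_2$ gives $|V_{\text{rem}}|=n-m_1-m_2$, and the boundary sets $\widetilde{N}_{k-1}^{\setminus i}(u_j,G)\subseteq V_{\text{rem}}$ are pairwise disjoint across $i\in I_{\underline{\lambda}}$ and $j\in[2]$, with prescribed cardinalities $|\widetilde{N}_{k-1}^{\setminus i}(u_j,G)|=\beta_{j,i}$.

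Next I would rewrite the left-hand side. By Definition~\ref{def:typeII_in_ECER}, $T^{\setminus i}(u_j,u_{3-j})$ is the event that at least one vertex of $\widetilde{N}_{k-1}^{\setminus i}(u_j,G)$ lies in $\mathcal{U}_{\max}((\widetilde{G}'_{k-2}(u_1,u_2))^{\setminus i})$, and this is a measurable function of the conditional ECER graph on $V_{\text{rem}}$ together with the labeled boundary sets. Choose, for every $i\in[k]$ and $j\in[2]$, an enumeration of $\widetilde{N}_{k-1}^{\setminus i}(u_j,G)$ as $(V_{i,1}^{(j)},\dots,V_{i,\beta_{j,i}}^{(j)})$; then the intersection of type II events becomes exactly a type III event (in the sense of Definition~\ref{def:typeIII_in_ECER}) on the ECER graph $\widetilde{G}'_{k-2}(u_1,u_2)$ with distinguished vertex tuples $(V_{i,\cdot}^{(1)})$ and $(V_{i,\cdot}^{(2)})$.

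Finally, I invoke vertex exchangeability: the law of an ECER graph is permutation-invariant on its vertex set, so the joint law of the pair consisting of an $\mathcal{G}_n(V_{\text{rem}},\underline{\lambda})$-graph and any fixed tuple of distinct vertices of $V_{\text{rem}}$ with prescribed cardinalities $\beta_{j,i}$ is identical (after relabeling $V_{\text{rem}}$ bijectively onto $[n-m_1-m_2]$) to the law of $H\sim\mathcal{G}_n([n-m_1-m_2],\underline{\lambda})$ together with the specified disjoint tuples $\underline{v}$ and $\underline{w}$. Hence the conditional probability of $T(u_1,u_2,\underline{\gamma}_1)\cap T(u_2,u_1,\underline{\gamma}_2)$ given $D((u_j,\underline{\beta}_j,m_j)_{j\in[2]})$ equals $\mathbb{P}(\widehat{T}(\underline{v},\underline{\gamma}_1,\underline{\beta}_1,m_1,H)\cap\widehat{T}(\underline{w},\underline{\gamma}_2,\underline{\beta}_2,m_2,H))$, which is the desired identity.

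The argument is essentially bookkeeping: the only subtlety is to make the vertex exchangeability step rigorous in the presence of conditioning on a random vertex set $V_{\text{rem}}$ and random distinguished tuples. I expect this to be the main hurdle, and it would be handled cleanly by first conditioning further on the exact identity of $V_{\text{rem}}$ and of the boundary sets (so that everything on the left becomes a function of the ECER graph $\widetilde{G}'_{k-2}(u_1,u_2)$ alone), applying Lemma~\ref{lemma:after_conditioning_still_ECER}, and then noting that the resulting probability depends on the conditioning only through the cardinalities $n-m_1-m_2$ and $(\beta_{j,i})$, which are exactly the parameters appearing on the right-hand side.
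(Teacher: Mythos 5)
Your proposal is correct and follows essentially the same route as the paper: condition on $\mathcal{F}_{u_1,u_2,k-2}$, use the $\mathcal{F}_{u_1,u_2,k-2}$-measurability of the data event together with Lemma~\ref{lemma:after_conditioning_still_ECER} and Claim~\ref{claim:A_3_implies}, and identify the intersection of type II events with a type III event for the unexplored graph, whose conditional law is that of $H$, with the boundary sets serving as the distinguished vertex tuples. The only difference is cosmetic: you spell out the vertex-exchangeability/relabeling step (that the answer depends only on $n-m_1-m_2$ and the cardinalities $\beta_{j,i}$, not on the identity of $V_{\mathrm{rem}}$ or of the boundary vertices), which the paper leaves implicit when it enumerates the boundary sets as $\underline{v}'$, $\underline{w}'$ and equates the conditional probability with the type III probability for $H$.
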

\begin{proof}
 Let us condition on the sigma-algebra $\mathcal{F}_{u_1, u_2, k-2}(G)$ and assume that the $\mathcal{F}_{u_1, u_2, k-2}(G)$-measurable event $D \big( (u_j, \underline{\beta}_j, m_j)_{j \in [2]} \big)$ occurs. Then by the definition of $A_3(u_1, u_2, G)$, we have
 \[ \Big| \widetilde{R}^{\le}_{k-2} \big( \{ u_1, u_2 \}, G \big) \Big| = \rho(u_1, G) + \rho(u_2, G) = m_1 + m_2. \]
 Thus by Lemma~\ref{lemma:after_conditioning_still_ECER}, given $\mathcal{F}_{u_1, u_2, k-2}(G)$, the conditional distribution of the graph $\widetilde{G}'_{k-2}(u_1, u_2)$ is the same as the distribution of $H$. In addition, our assumption that $A_3(u_1, u_2, G)$ holds implies that the sets of vertices $\widetilde{N}_{k-1}^{\setminus i_1}(u_1, G)$, $\widetilde{N}_{k-1}^{\setminus i_2}(u_2, G)$ for all $i_1, i_2 \in [k]$ are pairwise disjoint (cf.\ Claim~\ref{claim:A_3_implies}). Let us denote
 \[ \widetilde{N}_{k-1}^{\setminus i}(u_1, G) = \Big\{ v'_{ij_1} : j_1 \in \Big[ \big| \big( b(u_1) \big)_i \big| \Big] \Big\} \]
 and
 \[ \widetilde{N}_{k-1}^{\setminus i}(u_2, G) = \Big\{ w'_{ij_2} : j_2 \in \Big[ \big| \big( b(u_2) \big)_i \big| \Big] \Big\} \]
 for all $i \in [k]$, and let us use the notation
 \[ \underline{v}' = (v'_{i,j_1})_{i \in [k], \, j_1 \in \left[\rule{0cm}{8pt}\right. \! \left|\rule{0cm}{6pt}\right. \! \left(\rule{0cm}{6pt}\right. \! b(u_1) \! \left.\rule{0cm}{6pt}\right)_i \! \left.\rule{0cm}{6pt}\right| \! \left.\rule{0cm}{8pt}\right]}, \qquad \underline{w}' = (w'_{i,j_2})_{i \in [k], \, j_2 \in \left[\rule{0cm}{8pt}\right. \! \left|\rule{0cm}{6pt}\right. \! \left(\rule{0cm}{6pt}\right. \! b(u_2) \! \left.\rule{0cm}{6pt}\right)_i \! \left.\rule{0cm}{6pt}\right| \! \left.\rule{0cm}{8pt}\right]} \text{.} \]
 By the definitions of type II and type III events, we have
 \begin{multline*}
  \mathbb{P} \left( T \big( u_1, u_2, \underline{\gamma}_1, G \big) \cap T \big( u_2, u_1, \underline{\gamma}_2, G \big) \middle| \, D \big( v, w, \underline{\beta}_1, \underline{\beta}_2, m_1, m_2 \big) \right) \\
  = \mathbb{P} \left( \widehat{T} \big( \underline{v}', \underline{\gamma}_1, \underline{\beta}_1, m_1, \widetilde{G}'_{k-2}(u_1, u_2) \big) \cap \widehat{T} \big( \underline{w}', \underline{\gamma}_2, \underline{\beta}_2, m_2, \widetilde{G}'_{k-2}(u_1, u_2) \big) \right) \text{.} 
 \end{multline*}
 Thus by the definition of $p \big( \underline{\gamma}_1, \underline{\gamma}_2 \bigm| D \big( (u_j, \underline{\beta}_j, m_j)_{j \in [2]} \big) \big)$, we proved the required statement.
\end{proof}

\begin{lemma}[Data and types: connections between ECBP trees and ECER graphs] \label{lemma:data_type_connection_between_ECBP_and_ECER}
 Let $k \in \mathbb{N}_+$, $\underline{\lambda} \in \mathbb{R}_+^k$, $\underline{\gamma}_1, \underline{\gamma}_2 \in \left\{ 0,1 \right\}^{I_{\underline{\lambda}}}$, $\underline{\beta}_1, \underline{\beta}_2 \in \mathbb{N}^k$, $m_1, m_2 \in \mathbb{N}_+$, and let $G_n \sim \mathcal{G}_n([n], \underline{\lambda})$ and $u_1, u_2 \in [n]$, $u_1 \ne u_2$ for all $n \in \mathbb{N}_+$. If Assumption~\ref{asm:simplifying_assumption} holds, then
 \begin{equation} \label{eq:data_type_connection_no1}
  \lim_{n \to \infty} \mathbb{P} \Big( D \big( (u_j, \underline{\beta}_j, m_j)_{j \in [2]}, G_n \big) \Big) = \mathbb{P} \left( D^* \big( \underline{\beta}_1, m_1 \big) \right) \cdot \mathbb{P} \left( D^* \big( \underline{\beta}_2, m_2 \big) \right)
 \end{equation}
 and
 \begin{multline} \label{eq:data_type_connection_no2}
  \lim_{n \to \infty} p \Big( \underline{\gamma}_1, \underline{\gamma}_2, G_n \Bigm| D \big( (u_j, \underline{\beta}_j, m_j)_{j \in [2]} \big) \Big) \\
  = p^* \Big( \underline{\gamma}_1 \Bigm| D^* \big( \underline{\beta}_1, m_1 \big) \Big) \cdot p^* \Big( \underline{\gamma}_2 \Bigm| D^* \big( \underline{\beta}_2, m_2 \big) \Big) \text{.}
  \end{multline}
 \end{lemma}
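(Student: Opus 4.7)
The plan is to establish both statements by combining the colored local weak convergence of Proposition~\ref{prop:benjamini_schramm} with a joint asymptotic independence statement for membership in the various color-avoiding giants.

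For \eqref{eq:data_type_connection_no1}, I first observe that the event $D \big( (u_j, \underline{\beta}_j, m_j)_{j \in [2]} \big)$ is $\mathcal{F}_{u_1, u_2, k-2}$-measurable and, thanks to the presence of $A_3(u_1, u_2)$ in its definition, is determined by the isomorphism class of $\widetilde{G}_{n, k-2}(u_1, u_2)$, which is then a disjoint union of two rooted trees of size at most $m_j + \sum_i \beta_{j,i}$ each. Letting $\mathcal{H}_j$ denote the finite set of isomorphism classes of finite rooted edge-colored trees with data $(\underline{\beta}_j, m_j)$, one can write
\[ D \Big( (u_j, \underline{\beta}_j, m_j)_{j \in [2]} \Big) = \bigsqcup_{(H_1, H_2) \in \mathcal{H}_1 \times \mathcal{H}_2} \Big\{ \widetilde{G}_{n, k-2}(u_1, u_2) \simeq H_1 \oplus H_2 \Big\}, \]
with the analogous decomposition $D^*(\underline{\beta}_j, m_j) = \bigsqcup_{H_j \in \mathcal{H}_j} \{ \widetilde{G}_{\infty, k-2}(r) \simeq H_j \}$ on the branching-process side (using Claim~\ref{claim:finiteness_of_rhor} for a.s.\ finiteness of $\widetilde{G}_{\infty, k-2}(r)$). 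Applying Proposition~\ref{prop:benjamini_schramm} termwise and summing over the finite product $\mathcal{H}_1 \times \mathcal{H}_2$ then yields \eqref{eq:data_type_connection_no1}.

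For \eqref{eq:data_type_connection_no2}, Proposition~\ref{prop:connection_between_typeII_and_typeIII_in_ECER} rewrites the conditional probability on the left as $\mathbb{P} \big( \widehat{T}(\underline{v}, \underline{\gamma}_1, \underline{\beta}_1, m_1, H_n) \cap \widehat{T}(\underline{w}, \underline{\gamma}_2, \underline{\beta}_2, m_2, H_n) \big)$ for an ECER graph $H_n \sim \mathcal{G}_n([n - m_1 - m_2], \underline{\lambda})$ with fixed distinct boundary vertices $\underline{v}, \underline{w}$, while Proposition~\ref{prop:pstar_gamma_cond_Dstar_expressed_as_a_product} presents the right-hand side as the probability that an independent Bernoulli$(\theta^{\setminus i})$ family, one variable per element of $\underline{v} \cup \underline{w}$ with parameter determined by the color index $i$, realises the union-and-complement pattern prescribed by $(\underline{\gamma}_1, \underline{\gamma}_2)$. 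Hence both sides are the same continuous function of the joint law of the indicator variables $\mathds{1}[v_{i, j_1} \in \mathcal{U}_{\text{max}}(H_n^{\setminus i})]$ and $\mathds{1}[w_{i, j_2} \in \mathcal{U}_{\text{max}}(H_n^{\setminus i})]$ over $i \in I_{\underline{\lambda}}$, so it suffices to prove that this finite family converges jointly in distribution, as $n \to \infty$, to an independent Bernoulli$(\theta^{\setminus i})$ family.

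I expect this joint asymptotic independence to be the main obstacle. I would handle it through a multi-root, multi-color strengthening of Proposition~\ref{prop:benjamini_schramm}: for any fixed distinct vertices $x_1, \ldots, x_s \in V(H_n)$ and arbitrary colors $i_1, \ldots, i_s \in [k]$, the depth-$d$ exploration of $x_a$ in $H_n^{\setminus i_a}$ converges jointly in distribution to independent Poisson branching process trees with respective offspring means $\lambda^{\setminus i_a}$; the disjoint-exploration argument underlying Proposition~\ref{prop:benjamini_schramm} should go through essentially verbatim, since distinct fixed vertices have disjoint depth-$d$ neighborhoods with probability tending to one. Combining this with the uniqueness-of-the-giant estimates already invoked in the proof of Proposition~\ref{prop:good_events} -- namely \eqref{eq:typical_no1}--\eqref{eq:A2} and Lemma~4.14 of~\cite{vdH} -- lets one identify $\{x_a \in \mathcal{U}_{\text{max}}(H_n^{\setminus i_a})\}$ asymptotically with the survival event of the $a$-th limiting branching process (letting $n \to \infty$ and then $d \to \infty$), which by Definition~\ref{def:theta_minus_i} gives the Bernoulli$(\theta^{\setminus i_a})$ marginals together with the required mutual independence.
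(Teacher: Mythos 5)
Your handling of \eqref{eq:data_type_connection_no1} agrees with the paper, which just invokes Propositions~\ref{prop:benjamini_schramm} and~\ref{prop:good_events}; your finite disjoint decomposition of $D\big((u_j,\underline{\beta}_j,m_j)_{j\in[2]}\big)$ into events $\{\widetilde{G}_{n,k-2}(u_1,u_2)\simeq H_1\oplus H_2\}$ is the right way to make that invocation explicit.

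For \eqref{eq:data_type_connection_no2}, your proof diverges from the paper's at the decisive point. Both begin by applying Propositions~\ref{prop:pstar_gamma_cond_Dstar_expressed_as_a_product} and~\ref{prop:connection_between_typeII_and_typeIII_in_ECER}, reducing the problem to showing that the indicators $\mathds{1}\big[v_{i,j_1}\in\mathcal{U}_{\text{max}}(H_n^{\setminus i})\big]$, $\mathds{1}\big[w_{i,j_2}\in\mathcal{U}_{\text{max}}(H_n^{\setminus i})\big]$, $i\in I_{\underline{\lambda}}$, realize the pattern encoded by $(\underline{\gamma}_1,\underline{\gamma}_2)$ with limiting probability $g_{\underline{\gamma}_1,\underline{\beta}_1}\big((\theta^{\setminus i})_i\big)\cdot g_{\underline{\gamma}_2,\underline{\beta}_2}\big((\theta^{\setminus i})_i\big)$. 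At this point the paper \emph{resamples} the boundary vertices $\underline{v},\underline{w}$ uniformly at random from $\big[n-m_1-m_2\big]$ --- legitimate by vertex exchangeability once the vanishing collision event $E(\underline{v},\underline{w},H_n)^c$ is discarded. Conditionally on $H_n$, the resampled indicators are then \emph{exactly} independent $\mathrm{Bernoulli}(\theta_n^{\setminus i})$ with $\theta_n^{\setminus i}:=\big|\mathcal{U}_{\text{max}}(H_n^{\setminus i})\big|/(n-m_1-m_2)$, so the conditional probability of the pattern is exactly $g_{\underline{\gamma}_1,\underline{\beta}_1}\big((\theta_n^{\setminus i})_i\big)\cdot g_{\underline{\gamma}_2,\underline{\beta}_2}\big((\theta_n^{\setminus i})_i\big)$; one finishes via $\theta_n^{\setminus i}\stackrel{\mathbb{P}}{\longrightarrow}\theta^{\setminus i}$ (the analogue of \eqref{eq:law_of_large_numbers_giant}), continuity and boundedness of $g_{\underline{\gamma},\underline{\beta}}$, and dominated convergence.

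Your route instead keeps the boundary vertices fixed and aims to prove joint convergence of the membership indicators to i.i.d.\ Bernoullis directly, via a multi-root multi-color extension of Proposition~\ref{prop:benjamini_schramm} combined with uniqueness-of-the-giant estimates. This is a legitimate line of attack, but it hides the real work in the phrase ``lets one identify $\{x_a\in\mathcal{U}_{\text{max}}(H_n^{\setminus i_a})\}$ asymptotically with the survival event.'' Local weak convergence controls depth-$d$ neighbourhoods and uniqueness controls component sizes, but bridging the two requires showing that moderately large components are asymptotically giant, e.g.\ $\lim_{K\to\infty}\limsup_{n\to\infty}\mathbb{P}\big(K\le\big|\mathcal{C}(x_a,H_n^{\setminus i_a})\big|\le n^{1/4}\big)=0$. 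This is the duality/second-moment argument behind Theorem~4.8 and Lemma~4.14 of~\cite{vdH}, not an ``essentially verbatim'' consequence of the ingredients you cite, and it would need to be unpacked for the multi-root, multi-color setting. The paper's resampling device makes the conditional i.i.d.\ structure tautological and thereby dispenses with exactly this step --- that is what it buys.
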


\begin{proof}
 Since Assumption~\ref{asm:simplifying_assumption} holds, Propositions~\ref{prop:benjamini_schramm} and~\ref{prop:good_events} imply~\eqref{eq:data_type_connection_no1}.
 
 Now we prove~\eqref{eq:data_type_connection_no2}. By Proposition~\ref{prop:pstar_gamma_cond_Dstar_expressed_as_a_product}, we have
 \begin{multline*}
  p^* \Big( \underline{\gamma}_1 \Bigm| D^* \big( \underline{\beta}_1, m_1 \big) \Big) \cdot p^* \Big( \underline{\gamma}_2 \Bigm| D^* \big( \underline{\beta}_2, m_2 \big) \Big) \\
  = g_{\underline{\gamma}_1, \underline{\beta}_1} \Big( \big( \theta^{\setminus i} \big)_{i \in I_{\underline{\lambda}}} \, \Big) \cdot g_{\underline{\gamma}_2, \underline{\beta}_2} \Big( \big( \theta^{\setminus i} \big)_{i \in I_{\underline{\lambda}}} \, \Big) \text{.}
 \end{multline*}
 Now we reformulate the left-hand side of~\eqref{eq:data_type_connection_no2}.
 
 Let $n \in \mathbb{N}_+$ be large enough such that 
 \[ \sum_{i \in [k]} \beta_{1,i} + \sum_{i \in [k]} \beta_{2,i} \le n - (m_1 + m_2) \text{.} \]
 Let $H_n \sim \mathcal{G}_n \big( [ n - (m_1 + m_2) ], \underline{\lambda} \big)$. For each $i \in [k]$ and $j_1 \in [\beta_{1,i}]$, $j_2 \in [\beta_{2,i}]$, choose the vertices $v_{i,j_1}$ and $w_{i,j_2}$ independently and uniformly at random from $\big[ n-(m_1+m_2) \big]$, and let
 \[ \underline{v} = (v_{i,j_1})_{i \in [k], j_1 \in [\beta_{1,i_1}]}, \quad \underline{w} = (w_{i,j_2})_{i \in [k], j_2 \in [\beta_{2,i}]} \text{.} \]
 By Proposition~\ref{prop:connection_between_typeII_and_typeIII_in_ECER}, we have
 \begin{multline*}
  p \Big( \underline{\gamma}_1, \underline{\gamma}_2, G_n \Bigm| D \big( (u_j, \underline{\beta}_j, m_j)_{j \in [2]} \big) \Big) \\
  = \mathbb{P} \left( \widehat{T} \big( \underline{v}, \underline{\gamma}_1, \underline{\beta}_1, m_1, H_n \big) \cap \widehat{T} \big( \underline{w}, \underline{\gamma}_2, \underline{\beta}_2, m_2, H_n \big) \middle| E(\underline{v}, \underline{w}, H_n) \right) \text{,}
 \end{multline*}
 where recall that
 \[ E(\underline{v}, \underline{w}, H_n) = \{ \text{the vertices in $\underline{v}$ and $\underline{w}$ are all different in $H_n$} \} \text{.} \]
 It is easy to see that
 \[ \lim_{n \to \infty} \mathbb{P} \big( E(\underline{v}, \underline{w}, H_n) \big) = 1 \text{,} \]
 thus
 \begin{multline*}
  \lim_{n \to \infty} \mathbb{P} \left( \widehat{T} \big( \underline{v}, \underline{\gamma}_1, \underline{\beta}_1, m_1, H_n \big) \cap \widehat{T} \big( \underline{w}, \underline{\gamma}_2, \underline{\beta}_2, m_2, H_n \big) \middle| E(\underline{v}, \underline{w}, H_n) \right) \\
  = \lim_{n \to \infty} \mathbb{P} \left( \widehat{T} \big( \underline{v}, \underline{\gamma}_1, \underline{\beta}_1, m_1, H_n \big) \cap \widehat{T} \big( \underline{w}, \underline{\gamma}_2, \underline{\beta}_2, m_2, H_n \big) \right) \text{.}
 \end{multline*}
 Let
 \[ \theta^{\setminus i}_n \colonequals \frac{\Big| \, \mathcal{U}_{\text{max}} \big( H_n^{\setminus i} \big) \Big|}{n-(m_1+m_2)} \]
 for all $i \in I_{\underline{\lambda}}$.
 Since the vertices in $\underline{v}$ and $\underline{w}$ were chosen independently and uniformly at random, it is not difficult to see that
 \begin{multline*}
  \mathbb{P} \left( \widehat{T} \big( \underline{v}, \underline{\gamma}_1, \underline{\beta}_1, m_1, H_n \big) \cap \widehat{T} \big( \underline{w}, \underline{\gamma}_2, \underline{\beta}_2, m_2, H_n \big) \middle| \ \sigma \big( \theta^{\setminus i}_n : i \in [k] \big) \right) \\
  = g_{\underline{\gamma}_1, \underline{\beta}_1} \Big( \big( \theta^{\setminus i}_n \big)_{i \in I_{\underline{\lambda}}} \, \Big) \cdot g_{\underline{\gamma}_2, \underline{\beta}_2} \Big( \big( \theta^{\setminus i}_n \big)_{i \in I_{\underline{\lambda}}} \, \Big) \text{,}
 \end{multline*}
 thus
 \begin{multline*}
  \mathbb{P} \left( \widehat{T} \big( \underline{v}, \underline{\gamma}_1, \underline{\beta}_1, m_1, H_n \big) \cap \widehat{T} \big( \underline{w}, \underline{\gamma}_2, \underline{\beta}_2, m_2, H_n \big) \right) \\
  = \mathbb{E} \left[ g_{\underline{\gamma}_1, \underline{\beta}_1} \Big( \big( \theta^{\setminus i}_n \big)_{i \in I_{\underline{\lambda}}} \, \Big) \cdot g_{\underline{\gamma}_2, \underline{\beta}_2} \Big( \big( \theta^{\setminus i}_n \big)_{i \in I_{\underline{\lambda}}} \, \Big) \right] \text{.}
 \end{multline*}
 
 Similarly to~\eqref{eq:law_of_large_numbers_giant}, one can show that
 \[ \theta^{\setminus i}_n \stackrel{\mathbb{P}}{\longrightarrow} \theta^{\setminus i}, \qquad n \to \infty \text{.} \]
 Since $g$ is continuous,
 \begin{multline*} 
  g_{\underline{\gamma}_1, \underline{\beta}_1} \Big( \big( \theta^{\setminus i}_n \big)_{i \in I_{\underline{\lambda}}} \, \Big) \cdot g_{\underline{\gamma}_2, \underline{\beta}_2} \Big( \big( \theta^{\setminus i}_n \big)_{i \in I_{\underline{\lambda}}} \, \Big) \\
  \stackrel{\mathbb{P}}{\longrightarrow} g_{\underline{\gamma}_1, \underline{\beta}_1} \Big( \big( \theta^{\setminus i} \big)_{i \in I_{\underline{\lambda}}} \, \Big) \cdot g_{\underline{\gamma}_2, \underline{\beta}_2} \Big( \big( \theta^{\setminus i} \big)_{i \in I_{\underline{\lambda}}} \, \Big), \qquad n \to \infty \text{,}
 \end{multline*}
 and since $g$ is bounded, we obtain
 \begin{multline*}
  \lim_{n \to \infty} \mathbb{E} \left[ g_{\underline{\gamma}_1, \underline{\beta}_1} \Big( \big( \theta^{\setminus i}_n \big)_{i \in I_{\underline{\lambda}}} \, \Big) \cdot g_{\underline{\gamma}_2, \underline{\beta}_2} \Big( \big( \theta^{\setminus i}_n \big)_{i \in I_{\underline{\lambda}}} \, \Big) \right] \\
  = g_{\underline{\gamma}_1, \underline{\beta}_1} \Big( \big( \theta^{\setminus i} \big)_{i \in I_{\underline{\lambda}}} \, \Big) \cdot g_{\underline{\gamma}_2, \underline{\beta}_2} \Big( \big( \theta^{\setminus i} \big)_{i \in I_{\underline{\lambda}}} \, \Big) \text{,}
 \end{multline*}
 thus the statement of the lemma follows.
\end{proof}

We are now ready to prove the main result of Section~\ref{subsection:types}.

\begin{proof}[Proof of Theorem~\ref{thm:conv_of_typeI}]
 Clearly, it suffices to show that
 \[ \lim_{n \to \infty} \mathbb{E} \left( \frac{1}{n} \sum_{v \in [n] } \mathds{1} \big[ \, \underline{t}(v, G_n) = \underline{\gamma} \, \big] \right) = p^*(\underline{\gamma}) \]
 and
 \[ \lim_{n \to \infty} \mathrm{Var} \left( \frac{1}{n} \sum_{v \in [n]} \mathds{1} \big[ \, \underline{t}(v, G_n) = \underline{\gamma} \, \big] \right) = 0 \text{.} \]
 
 In order to prove this, it is enough to show that for any $v_1, v_2 \in [n]$, $v_1 \ne v_2$ and for any $\underline{\gamma}_1, \underline{\gamma}_2 \in \{0,1\}^{I_{\underline{\lambda}}}$, we have
 \begin{equation} \label{eq:typeI_of_one_vertex_conv_in_distr}
  \lim_{n \to \infty} \mathbb{P} \left( \underline{t}(v_1, G_n) = \underline{\gamma}_1 \right) = p^*(\underline{\gamma}_1)
 \end{equation}
 and
 \begin{equation} \label{eq:typeI_of_two_vertices_conv_in_distr}
  \lim_{n \to \infty} \mathbb{P} \left( \underline{t}(v_1, G_n) = \underline{\gamma}_1, \underline{t}(v_2, G_n) = \underline{\gamma}_2 \right) = p^* (\underline{\gamma}_1) \cdot p^* (\underline{\gamma}_2)
 \end{equation}
 Note that~\eqref{eq:typeI_of_one_vertex_conv_in_distr} follows from~\eqref{eq:typeI_of_two_vertices_conv_in_distr} by the total law of probability. So it is enough to prove~\eqref{eq:typeI_of_two_vertices_conv_in_distr}.
 
 Let us denote
 \[ q^* \Big( \big( \underline{\gamma}_j, \underline{\beta}_j, m_j \big)_{j \in [2]} \Big) \colonequals \prod_{j \in [2]} p^* \Big( \underline{\gamma}_j \Bigm| D^* \big( \underline{\beta}_j, m_j \big) \Big) \cdot \mathbb{P} \left( D^* \big( \underline{\beta}_j, m_j \big) \right) \text{.} \]
 
 By Claim~\ref{claim:total_law_of_probab_for_typeI_in_ECBP}, we have
 \begin{equation} \label{eq:for_RHS}
  p^* (\underline{\gamma}_1) \cdot p^* (\underline{\gamma}_2) = \sum_{\substack{\underline{\beta}_1, \underline{\beta}_2 \in \mathbb{N}^k, \\ m_1, m_2 \in \mathbb{N}_+}} \, q^* \Big( \big( \underline{\gamma}_j, \underline{\beta}_j, m_j \big)_{j \in [2]} \Big) \text{.}
 \end{equation}
 
 Now we reformulate the left-hand side of~\eqref{eq:typeI_of_two_vertices_conv_in_distr}. Let us denote
 \begin{multline*}
  q_n \Big( \big( \underline{\gamma}_j, \underline{\beta}_j, m_j \big)_{j \in [2]} \Big) \\
  \colonequals p \Big( \underline{\gamma}_1, \underline{\gamma}_2, G_n \Bigm| D \big( (v_j, \underline{\beta}_j, m_j)_{j \in [2]} \big) \Big) \cdot \mathbb{P} \left( D \big( (v_j, \underline{\beta}_j, m_j)_{j \in [2]}, G_n \big) \right) \text{.}
 \end{multline*}
 By Proposition~\ref{prop:good_events}, Lemma~\ref{lemma:connection_between_typeI_and_typeII_in_ECER}, Proposition~\ref{prop:good_events}, and Claim~\ref{claim:total_law_of_probab_for_typeII_in_ECER}, respectively,
 \begin{multline} \label{eq:for_LHS}
  \lim_{n \to \infty} \mathbb{P} \left( \underline{t}(v_1, G_n) = \underline{\gamma}_1, \underline{t}(v_2, G_n) = \underline{\gamma}_2 \right) \\
  = \lim_{n \to \infty} \mathbb{P} \left( \underline{t}(v_1, G_n) = \underline{\gamma}_1, \underline{t}(v_2, G_n) = \underline{\gamma}_2, A(v_1, v_2, G_n) \right) \\
  = \lim_{n \to \infty} \mathbb{P} \left( T \big( v_1, v_2, \underline{\gamma}_1, G_n \big) \cap T \big( v_2, v_1, \underline{\gamma}_2, G_n \big) \cap A(v_1, v_2, G_n) \right) \\
  = \lim_{n \to \infty} \mathbb{P} \left( T \big( v_1, v_2, \underline{\gamma}_1, G_n \big) \cap T \big( v_2, v_1, \underline{\gamma}_2, G_n \big) \cap A_3(v_1, v_2, G_n) \right) \\
  = \lim_{n \to \infty} \sum_{\substack{\underline{\beta}_1, \underline{\beta}_2 \in \mathbb{N}^k, \\ m_1, m_2 \in \mathbb{N}_+}} q_n \Big( \big( \underline{\gamma}_j, \underline{\beta}_j, m_j \big)_{j \in [2]} \Big) \text{.}
 \end{multline}
 
 By Lemma~\ref{lemma:data_type_connection_between_ECBP_and_ECER}, we have
 \[ \lim_{n \to \infty} q_n \Big( \big( \underline{\gamma}_j, \underline{\beta}_j, m_j \big)_{j \in [2]} \Big) = q^* \Big( \big( \underline{\gamma}_j, \underline{\beta}_j, m_j \big)_{j \in [2]} \Big) \text{,} \]
 and clearly,
 \[ \lim_{n \to \infty} \sum_{\substack{\underline{\gamma}_1, \underline{\gamma}_2 \in \{0,1\}^{I_{\underline{\lambda}}}, \\ \underline{\beta}_1, \underline{\beta}_2 \in \mathbb{N}^k, \\ m_1, m_2 \in \mathbb{N}_+}} q_n \Big( \big( \underline{\gamma}_j, \underline{\beta}_j, m_j \big)_{j \in [2]} \Big) = \lim_{n \to \infty} \mathbb{P} \big( A_3(v_1, v_2, G_n) \big) = 1 \]
 and
 \[ \sum_{\substack{\underline{\gamma}_1, \underline{\gamma}_2 \in \{0,1\}^{I_{\underline{\lambda}}}, \\ \underline{\beta}_1, \underline{\beta}_2 \in \mathbb{N}^k, \\ m_1, m_2 \in \mathbb{N}_+}} q^* \Big( \big( \underline{\gamma}_j, \underline{\beta}_j, m_j \big)_{j \in [2]} \Big) = 1 \text{.} \]
 Thus we can use Scheff\'{e}'s lemma to infer
 \[ \lim_{n \to \infty} \sum_{\substack{\underline{\gamma}_1, \underline{\gamma}_2 \in \{0,1\}^{I_{\underline{\lambda}}}, \\ \underline{\beta}_1, \underline{\beta}_2 \in \mathbb{N}^k, \\ m_1, m_2 \in \mathbb{N}_+}} \Bigg| \, q_n \Big( \big( \underline{\gamma}_j, \underline{\beta}_j, m_j \big)_{j \in [2]} \Big) - q^* \Big( \big( \underline{\gamma}_j, \underline{\beta}_j, m_j \big)_{j \in [2]} \Big) \Bigg| = 0 \text{.} \]
 In particular,
 \[ \lim_{n \to \infty} \sum_{\substack{\underline{\beta}_1, \underline{\beta}_2 \in \mathbb{N}^k, \\ m_1, m_2 \in \mathbb{N}_+}} \Bigg| \, q_n \Big( \big( \underline{\gamma}_j, \underline{\beta}_j, m_j \big)_{j \in [2]} \Big) - q^* \Big( \big( \underline{\gamma}_j, \underline{\beta}_j, m_j \big)_{j \in [2]} \Big) \Bigg| = 0 \text{.} \]
 Putting this together with \eqref{eq:for_RHS} and \eqref{eq:for_LHS}, we obtain~\eqref{eq:typeI_of_two_vertices_conv_in_distr}, therefore the proof of the theorem is complete.
\end{proof}

Note that with some necessary modifications, the following version of the theorem could also be proved.
 
Let $k \in \mathbb{N}_+$, $m \in \mathbb{N}$, $\underline{\lambda} \in \mathbb{R}_+^k$, $\underline{\gamma} \in \{ 0,1 \}^{I_{\underline{\lambda}}}$, and let $G_n \sim \mathcal{G}_n([n-m], \underline{\lambda})$ for all $n \in \mathbb{N}_+$ with $n > m$. If Assumption~\ref{asm:simplifying_assumption} holds, then
\begin{equation} \label{eq:conv_of_typeI_version2}
 \frac{1}{n-m} \sum_{v \in [n-m]} \mathds{1} \big[ \, \underline{t}(v, G_n) =\underline{\gamma} \, \big] \; \stackrel{\mathbb{P}}{\longrightarrow} \; p^*(\underline{\gamma}), \qquad n \to \infty \text{.}
\end{equation}

\subsection{Types and color-avoiding connectivity} \label{subsection:types_and_color_avoiding_connectivity}

The goal of Section~\ref{subsection:types_and_color_avoiding_connectivity} is to further elaborate on the relation between the notion of types and color-avoiding connectivity in ECER graphs. Informally, Lemma~\ref{lemma:asymptotic_indep_of_typeI} states that the type I vectors of $k$ distinct vertices are close to being independent if $n$ is big (note that the case $k=2$ was settled in \eqref{eq:typeI_of_two_vertices_conv_in_distr}). Roughly speaking, Lemma~\ref{lemma:bad_events} states that with high probability a pair of randomly picked vertices in an ECER graph are $i$-avoiding connected if and only if both of them are in the $i$-avoiding giant component. Informally, Lemma~\ref{lemma:types_trickle_down_the_same_way} states that if we only explore a ball of radius $d$ in an ECBP tree or in an ECER graph, but we know the type I vectors of the vertices on the boundary, then we can infer all information about the color-avoiding connectivity or friendship between the explored vertices and the root, moreover, the effect of type I vectors of vertices at level $d$ is the same in ECBP trees and ECER graphs.

\begin{lemma}[Asymptotic independence of type I vectors in ECER graphs] \label{lemma:asymptotic_indep_of_typeI}
 Let $k, \beta \in \mathbb{N}_+$ and $\underline{\lambda} \in \mathbb{R}_+^k$. Let $G_n \sim \mathcal{G}_n([n], \underline{\lambda})$ for all $n \in \mathbb{N}_+$ with $n \ge \beta$, and let us fix the distinct vertices $v_1, \ldots, v_{\beta} \in [n]$. For any $j \in [\beta]$, let $\underline{\gamma}_j \in \{ 0, 1 \}^{I_{\underline{\lambda}}}$. If Assumption~\ref{asm:simplifying_assumption} holds, then
 \[ \lim_{n \to \infty} \mathbb{P} \Big( \forall j \in [\beta]: \, \underline{t}(v_j, G_n) = \underline{\gamma}_j \Big) = \prod_{j \in [\beta]} p^*(\underline{\gamma}_j) \text{.} \]
\end{lemma}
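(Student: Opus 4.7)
The plan is to generalize the $\beta = 2$ argument from the proof of Theorem~\ref{thm:conv_of_typeI} (specifically \eqref{eq:typeI_of_two_vertices_conv_in_distr}) to $\beta$ vertices, carrying along the same machinery of type I/II/III events, data vectors, and Scheff\'{e}-style summation. First I would extend the basic ingredients from two roots to $\beta$ roots. Proposition~\ref{prop:benjamini_schramm} generalizes to say that $\widetilde{G}_{n,k-2}(v_1, \ldots, v_\beta)$ (the $\beta$-rooted analogue of $\widetilde{G}_{n,k-2}(v,w)$) converges in distribution to $\beta$ i.i.d.\ copies of $\widetilde{G}_{\infty, k-2}(r)$; the proof in the Appendix goes through verbatim because it only uses that the small neighborhoods are typically vertex-disjoint and the number of pairs of roots is bounded. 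Similarly, Lemma~\ref{lemma:after_conditioning_still_ECER} extends: conditional on $\mathcal{F}_{v_1, \ldots, v_\beta, k-2}$, the graph $\widetilde{G}'_{k-2}(v_1, \ldots, v_\beta)$ spanned by the unexplored vertices is an ECER graph with the same color densities on the leftover vertex set, since edges between unexplored vertices are untouched by the conditioning.

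Next I would define the $\beta$-vertex analogues of the good events in Definition~\ref{def:good_events}: let $A_3(v_1, \ldots, v_\beta)$ state that $\widetilde{G}_{k-2}(v_1, \ldots, v_\beta)$ is a forest with exactly $\beta$ tree components, let $A_4, A_5$ give polynomial bounds on $\bigl|\widetilde{R}^{\le}_{k-2}\bigr|$ and $\bigl|\widetilde{N}^{\setminus i}_{k-1}\bigr|$ of the joint exploration, and let $A_6, \ldots, A_9$ play the same role as before for the unexplored graph. Each of these still has probability tending to $1$: $A_3$ by the generalized Proposition~\ref{prop:benjamini_schramm}; $A_4, A_5$ by Markov's inequality using Assumption~\ref{asm:simplifying_assumption} and a union bound over the $\beta$ roots; $A_6, A_7, A_8, A_9$ by applying the $\beta$-vertex version of Lemma~\ref{lemma:after_conditioning_still_ECER} together with Theorem~4.8 and Lemma~4.14 of~\cite{vdH} on the (still giant-containing) unexplored ECER graph. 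On this good event, the proof of Lemma~\ref{lemma:connection_between_typeI_and_typeII_in_ECER} generalizes vertex-by-vertex to give
\[ \bigl\{\,\forall j \in [\beta]\colon \underline{t}(v_j) = \underline{\gamma}_j\,\bigr\} = \bigcap_{j \in [\beta]} T\bigl(v_j, (v_\ell)_{\ell \ne j}, \underline{\gamma}_j\bigr), \]
because the disjointness guaranteed by $A_3$ makes each vertex's reasoning independent of the others.

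Then I would condition on the joint data $D\bigl((v_j, \underline{\beta}_j, m_j)_{j \in [\beta]}\bigr)$ (the obvious $\beta$-vertex analogue of Definition~\ref{def:data_for_ECER}) and apply the $\beta$-vertex version of Proposition~\ref{prop:connection_between_typeII_and_typeIII_in_ECER}: given the data, the boundary sets $\widetilde{N}^{\setminus i}_{k-1}(v_j)$ for all $j \in [\beta]$ and $i \in I_{\underline{\lambda}}$ are pairwise disjoint, and the type II intersection reduces to the intersection of type III events for i.i.d.\ uniform vertices in a fresh ECER graph $H_n$ on $n - \sum_j m_j$ vertices. Since the fractions $\theta^{\setminus i}_n$ of the $i$-avoiding giant in $H_n$ converge in probability to $\theta^{\setminus i}$ and the vertices $\underline{v}^{(j)}$ are uniform and (with high probability) distinct, the conditional probability factorizes asymptotically as $\prod_{j \in [\beta]} g_{\underline{\gamma}_j, \underline{\beta}_j}\bigl((\theta^{\setminus i})_{i \in I_{\underline{\lambda}}}\bigr)$, exactly matching $\prod_j p^*\bigl(\underline{\gamma}_j \bigm| D^*(\underline{\beta}_j, m_j)\bigr)$ via Proposition~\ref{prop:pstar_gamma_cond_Dstar_expressed_as_a_product}. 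Combined with the $\beta$-vertex version of \eqref{eq:data_type_connection_no1} for the data probabilities, this yields the $\beta$-fold analogue of Lemma~\ref{lemma:data_type_connection_between_ECBP_and_ECER}.

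Finally I would sum over all possible data $(\underline{\beta}_j, m_j)_{j \in [\beta]}$ and apply Scheff\'{e}'s lemma exactly as in the last displayed computation of the proof of Theorem~\ref{thm:conv_of_typeI}, using that both the ECER sum over data and the product of ECBP sums equal (in the limit) one, to upgrade pointwise convergence of the summands to $\ell^1$ convergence and conclude
\[ \lim_{n \to \infty} \mathbb{P}\bigl(\forall j \in [\beta]\colon \underline{t}(v_j, G_n) = \underline{\gamma}_j\bigr) = \prod_{j \in [\beta]} p^*(\underline{\gamma}_j). \]
The main obstacle is the bookkeeping in the $\beta$-vertex extension of $A_3$ and of Proposition~\ref{prop:connection_between_typeII_and_typeIII_in_ECER}: one must verify that with high probability the $(k-2)$-color explorations from $\beta$ distinct roots, together with their boundary sets across all $k$ colors, are pairwise disjoint --- this relies crucially on Assumption~\ref{asm:simplifying_assumption}, which makes each individual exploration stochastically dominated by a subcritical branching process with a bounded-mean total population, so that pairwise collision probabilities are $O(1/n)$ and a union bound over the $\binom{\beta}{2}$ pairs suffices.
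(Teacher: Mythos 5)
Your plan is workable, but it takes a genuinely different and much heavier route than the paper. You re-run the entire two-root machinery of the proof of Theorem~\ref{thm:conv_of_typeI} with $\beta$ roots: $\beta$-rooted local weak convergence, $\beta$-root good events, the $\beta$-root versions of Lemma~\ref{lemma:connection_between_typeI_and_typeII_in_ECER}, Proposition~\ref{prop:connection_between_typeII_and_typeIII_in_ECER} and Lemma~\ref{lemma:data_type_connection_between_ECBP_and_ECER}, and a final Scheff\'{e} summation over the data $(\underline{\beta}_j,m_j)_{j\in[\beta]}$; I see no gap in this, since every ingredient (disjointness of the $(k-2)$-color explorations under Assumption~\ref{asm:simplifying_assumption}, the conditional ECER property of the unexplored graph, the factorization via $g_{\underline{\gamma},\underline{\beta}}$ and $\theta^{\setminus i}_n\to\theta^{\setminus i}$) does extend from two roots to $\beta$ roots exactly as you describe. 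The paper, however, proves the lemma in a few lines without touching any of this: by vertex exchangeability the probability is unchanged if the fixed distinct vertices are replaced by i.i.d.\ uniform vertices $w_1,\ldots,w_\beta$ (conditioning on their distinctness is asymptotically negligible), and then $\mathbb{P}\big(\forall j:\underline{t}(w_j,G_n)=\underline{\gamma}_j\big)=\mathbb{E}\big(\prod_{j\in[\beta]}|V(\underline{\gamma}_j,G_n)|/n\big)$, where $V(\underline{\gamma},G_n)$ is the set of vertices with type I vector $\underline{\gamma}$; Theorem~\ref{thm:conv_of_typeI} gives convergence in probability of each factor to $p^*(\underline{\gamma}_j)$, and bounded convergence (the fractions lie in $[0,1]$) finishes the proof. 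So the trade-off is: your approach re-derives the asymptotic factorization from first principles and would in fact reprove Theorem~\ref{thm:conv_of_typeI} along the way, at the cost of substantial $\beta$-root bookkeeping; the paper's argument leverages the already-proved empirical law of large numbers plus exchangeability to get the $\beta$-vertex independence essentially for free.
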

\begin{proof}
 Since the vertices $v_1, \ldots, v_{\beta} \in [n]$ were chosen to be distinct, the vertex exchangeability of ECER graphs implies that $\mathbb{P} \big( \forall j \in [\beta]: \, \underline{t}(v_j, G_n) = \underline{\gamma}_j \big)$ does not depend on the choice of $v_1, \ldots, v_{\beta}$. Let us choose the vertices $w_1, \ldots, w_{\beta}$ independently and uniformly at random from $[n]$, and let
 \[ E(w_1, \ldots, w_{\beta}, G_n) \colonequals \{ \text{the vertices $w_1, \ldots, w_{\beta}$ are all different in $G_n$} \} \text{.} \]
 It is easy to see that
 \[ \lim_{n \to \infty} \mathbb{P} \big( E(w_1, \ldots, w_{\beta}, G_n) \big) = 1 \text{,} \]
 thus
 \begin{multline*}
  \lim_{n \to \infty} \mathbb{P} \Big( \forall j \in [\beta]: \, \underline{t}(v_j, G_n) = \underline{\gamma}_j \Big) \\
  = \lim_{n \to \infty} \mathbb{P} \Big( \forall j \in [\beta]: \, \underline{t}(w_j, G_n) = \underline{\gamma}_j \Bigm| E(w_1, \ldots, w_{\beta}, G_n) \Big) \\
  = \lim_{n \to \infty} \mathbb{P} \Big( \forall j \in [\beta]: \, \underline{t}(w_j, G_n) = \underline{\gamma}_j \Big) \text{.}
 \end{multline*}
 For any $\underline{\gamma} \in \{ 0,1 \}^{I_{\underline{\lambda}}}$, let
 \[ V (\underline{\gamma}, G_n) \colonequals \big\{ v \in [n] : \underline{t}(v, G_n) = \underline{\gamma} \big\} \text{.} \]
 Then
 \begin{equation*}
  \mathbb{P} \Big( \forall j \in [\beta]: \, \underline{t}(w_j, G_n) = \underline{\gamma}_j \Big) 
  = \mathbb{E} \left( \prod_{j \in [\beta]} \frac{ \big| V(\underline{\gamma}_j, G_n) \big| }{n} \right) \text{.}
 \end{equation*}
 By Theorem~\ref{thm:conv_of_typeI}, we have
 \[ \prod_{j \in [\beta]} \frac{ \big| V(\underline{\gamma}_j, G_n) \big| }{n} \stackrel{\mathbb{P}}{\longrightarrow} \prod_{j \in [\beta]} p^*(\underline{\gamma}_j), \qquad n \to \infty \text{,} \]
 therefore, since $0 \leq |V(\underline{\gamma}, G_n)|/n \leq 1 $, we obtain
 \[ \lim_{n \to \infty} \mathbb{E} \left( \prod_{j \in [\beta]} \frac{ \big| V(\underline{\gamma}_j, G_n) \big| }{n} \right) = \prod_{j \in [\beta]} p^*(\underline{\gamma}_j) \text{,} \]
 and hence the statement of the lemma follows.
\end{proof}

Note that with some necessary modifications (e.g., using \eqref{eq:conv_of_typeI_version2} instead of Theorem~\ref{thm:conv_of_typeI}), the following version of the lemma could also be proved.
 
Let $k, \beta \in \mathbb{N}_+$, $m \in \mathbb{N}$ and $\underline{\lambda} \in \mathbb{R}_+^k$. Let $G_n \sim \mathcal{G}_n([n-m], \underline{\lambda})$ for all $n \in \mathbb{N}_+$ with $n \ge m+\beta$, and let us fix the distinct vertices $v_1, \ldots, v_{\beta} \in [n-m]$. For any $j \in [\beta]$, let $\underline{\gamma}_j \in \{ 0, 1 \}^{I_{\underline{\lambda}}}$. If Assumption~\ref{asm:simplifying_assumption} holds, then
\begin{equation} \label{eq:asymptotic_indep_of_typeI_version2}
 \lim_{n \to \infty} \mathbb{P} \Big( \forall j \in [\beta]: \, \underline{t}(v_j, G_n) = \underline{\gamma}_j \Big) = \prod_{j \in [\beta]} p^*(\underline{\gamma}_j) \text{.}
\end{equation}

Next we show that color-avoiding connectivity of two fixed vertices in an ECER graph is determined by the type I vectors of the vertices with high probability.

\begin{definition}[Bad events for in ECER graphs] \label{def:bad_events}
 Let $k \in \mathbb{N}_+$, $\underline{\lambda} \in \mathbb{R}_+^k$. Let $G \sim \mathcal{G}_n(V,\underline{\lambda})$, let $m \in \mathbb{N}_+$ with $m < |V|$ and let us fix the distinct vertices $v_1, \ldots, v_m \in V$, and let $\underline{v} = (v_1, \ldots, v_m)$. Let us define the events
 \[ B_1 (\underline{v}) \colonequals B_1 (\underline{v}, G) \colonequals \bigcup_{i \in [k] \setminus I_{\underline{\lambda}}} \; \bigcup_{\substack{j_1, j_2 \in [m]: \\ j_1 \ne j_2}} \Big\{ v_{j_1} \stackrel{G^{\setminus i}}{\longleftrightarrow} v_{j_2} \Big\} \text{,} \]
 \[ B_2 (\underline{v}) \colonequals B_2 (\underline{v}, G) \colonequals \bigcup_{i \in I_{\underline{\lambda}}} \; \bigcup_{\substack{j_1, j_2 \in [m]: \\ j_1 \ne j_2}} \! \Big\{ \big( t(v_{j_1}) \big)_i = \big( t(v_{j_2}) \big)_i = 1 \Big\} \triangle \Big\{ v_{j_1} \stackrel{G^{\setminus i}}{\longleftrightarrow} v_{j_2} \Big\} \text{,} \]
 and
 \[ B (\underline{v}) \colonequals B (\underline{v}, G) \colonequals B_1 (\underline{v}) \cup B_2 (\underline{v}) \text{,} \]
 where $\triangle$ denotes the symmetric difference of two sets.
\end{definition}

\begin{lemma}[Typical behaviour of ECER graphs] \label{lemma:bad_events}
 Let $k, m \in \mathbb{N}_+$, and $\underline{\lambda} \in \mathbb{R}_+^k$. Let $G_n \sim \mathcal{G}_n([n], \underline{\lambda})$ for all $n \in \mathbb{N}_+$ with $n \ge m$, and let us fix the distinct vertices $v_1, \ldots, v_m \in [n]$, and let $\underline{v} = (v_1, \ldots, v_m)$. Then
 \[ \lim_{n \to \infty} \mathbb{P} \big( B(\underline{v}, G_n) \big) = 0 \text{.} \]
\end{lemma}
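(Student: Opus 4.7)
The plan is to proceed by a union bound over colors $i \in [k]$ and ordered pairs of distinct indices $(j_1,j_2) \in [m]^2$. Since the collection of pairwise events is finite and does not depend on $n$, it suffices to show that for each fixed $i$ and $j_1 \ne j_2$, the corresponding contribution tends to $0$ as $n \to \infty$.

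For the $B_1$ contribution ($i \notin I_{\underline{\lambda}}$, so $\lambda^{\setminus i} \le 1$), I would use Claim~\ref{claim:GI_is_ER} to identify $G_n^{\setminus i}$ as an uncolored Erd\H{o}s--R\'enyi graph of intensity $\lambda^{\setminus i}$ in the (sub)critical regime, and invoke vertex exchangeability of its law to obtain
$$\mathbb{P}\!\left(v_{j_1} \stackrel{G_n^{\setminus i}}{\longleftrightarrow} v_{j_2}\right) = \frac{\mathbb{E}\big|\mathcal{C}(v_{j_1}, G_n^{\setminus i})\big| - 1}{n-1}.$$
If $\lambda^{\setminus i} < 1$, a subcritical Poisson branching process domination (the same tool already invoked for the $A_4$ part of Proposition~\ref{prop:good_events}) bounds the numerator by a constant, so the ratio is $O(1/n)$. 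If $\lambda^{\setminus i} = 1$, I would invoke the standard critical ER estimate $\mathbb{E}|\mathcal{C}(v_{j_1}, G_n^{\setminus i})| = O(n^{2/3})$, yielding $O(n^{-1/3})$. Either way the probability vanishes.

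For the $B_2$ contribution ($i \in I_{\underline{\lambda}}$), I would first intersect with $A_1^{\setminus i} \cap A_2^{\setminus i}$, both of which have probability tending to $1$ by Proposition~\ref{prop:good_events}. On $A_1^{\setminus i}$ the unique largest component of $G_n^{\setminus i}$ is $\mathcal{U}_{\mathrm{max}}(G_n^{\setminus i})$, so if both $v_{j_1}, v_{j_2}$ lie in it they are automatically $i$-avoiding connected, which annihilates one side of the symmetric difference. The other side (connected but not both in the giant) forces, by transitivity of the $i$-avoiding connectivity relation, that neither vertex lies in the giant and hence they share a common non-giant component. On $A_2^{\setminus i}$ such a component has size at most $n^{1/4}$, so a second application of vertex exchangeability gives
$$\mathbb{P}\!\left(v_{j_1} \stackrel{G_n^{\setminus i}}{\longleftrightarrow} v_{j_2}, \; \big|\mathcal{C}(v_{j_1}, G_n^{\setminus i})\big| \le n^{1/4}\right) \le \frac{n^{1/4}}{n-1} \longrightarrow 0.$$

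The only ingredient that is not essentially immediate is the critical-case bound $\mathbb{E}|\mathcal{C}(v, G_n^{\setminus i})| = O(n^{2/3})$ when $\lambda^{\setminus i}=1$; it is a standard fact from classical ER theory, but must be cited carefully since the Poisson branching process domination used elsewhere in the paper only yields a useful estimate in the strictly subcritical regime. Everything else reduces to a first-moment plus vertex-exchangeability calculation, combined with the structural high-probability facts already assembled in Proposition~\ref{prop:good_events}.
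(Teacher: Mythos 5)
Your proof is correct, and its skeleton coincides with the paper's: union bound over colors and ordered pairs, the vertex-exchangeability identity
\[
\mathbb{P} \left( v_{j_1} \stackrel{G_n^{\setminus i}}{\longleftrightarrow} v_{j_2} \right) = \frac{\mathbb{E} \Big( \big| \mathcal{C}(v_{j_1}, G_n^{\setminus i}) \big| \Big) - 1}{n-1} \text{,}
\]
and, for the $B_2$ part, intersecting with $A_1^{\setminus i}\cap A_2^{\setminus i}$ so that one side of the symmetric difference is empty and the other is absorbed into a small-cluster event of probability $O(n^{1/4}/n)$. The only place you diverge is in controlling $\mathbb{E}\big|\mathcal{C}(v_{j_1}, G_n^{\setminus i})\big|$ for the $B_1$ part: you split into $\lambda^{\setminus i}<1$ (subcritical branching-process domination, constant expectation) and $\lambda^{\setminus i}=1$ (critical susceptibility estimate; your $O(n^{2/3})$ is correct but not sharp, the true order is $\Theta(n^{1/3})$). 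You rightly flag that the critical moment bound would need its own citation, since the branching-process domination used for $A_4$ only helps strictly below criticality. The paper avoids this entirely by handling critical and subcritical in a single stroke: by Theorem 5.1 of \cite{vdH}, $\mathbb{P}\big(|\mathcal{U}_{\text{max}}(G_n^{\setminus i})|\ge n^{3/4}\big)\to 0$ whenever $\lambda^{\setminus i}\le 1$, and then the crude split
\[
\frac{\mathbb{E} \Big( \big| \mathcal{C}(v_{j_1}, G_n^{\setminus i}) \big| \Big) - 1}{n-1} \le \frac{n^{3/4}-1}{n-1} + \mathbb{P} \Big( \big| \mathcal{U}_{\text{max}}(G_n^{\setminus i}) \big| \ge n^{3/4} \Big)
\]
sends both terms to zero with no moment estimate at all. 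This trades your sharper (but harder-to-cite) susceptibility bound for a high-probability tail bound that the paper already has on hand; both routes close the argument.
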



\begin{proof}
 First we show that $\lim_{n \to \infty} \mathbb{P} \big( B_1(\underline{v}, G_n) \big) = 0$. Let $i \in [k] \setminus I_{\underline{\lambda}}$ and $j_1, j_2 \in [m]$, $j_1 \ne j_2$. It is enough to show that
 \begin{equation} \label{eq:vj1_and_vj2_not_i_avoiding_connected}
 \lim_{n \to \infty} \mathbb{P} \left( v_{j_1} \stackrel{G_n^{\setminus i}}{\longleftrightarrow} v_{j_2} \right) = 0
 \end{equation}
 holds. By the definition of $I_{\underline{\lambda}}$, we have $\lambda^{\setminus i} \le 1$, thus by Claim~\ref{claim:GI_is_ER}, the graph $G_n^{\setminus i}$ is a critical or subcritical Erd\H{o}s--R\'{e}nyi random graph. Thus by Theorem 5.1 of~\cite{vdH}, we have
 \begin{equation} \label{eq:no_i_avoiding_giant}
  \lim_{n \to \infty} \mathbb{P} \Big( \big| \, \mathcal{U}_{\text{max}} \big( G_n^{\setminus i} \big) \big| \ge n^{3/4} \Big) = 0.
 \end{equation}
 By the vertex exchangeability of the Erd\H{o}s--R\'{e}nyi random graph, we obtain
 \begin{multline} \label{eq:exchangeability}
  \mathbb{P} \left( v_{j_1} \stackrel{G_n^{\setminus i}}{\longleftrightarrow} v_{j_2} \right) = \mathbb{P} \Big( v_{j_2} \in \mathcal{C} \big( v_{j_1}, G_n^{\setminus i} \big) \Big) \\
  = \frac{1}{n-1} \sum_{v \in [n] \setminus \{v_j\} } \mathbb{P} \Big( v \in \mathcal{C} \big( v_{j_1}, G_n^{\setminus i} \big) \Big) = \frac{\mathbb{E} \Big( \, \Big| \mathcal{C} \big( v_{j_1}, G_n^{\setminus i} \big) \Big| \, \Big) - 1}{n-1} \\
  \le \frac{n^{3/4}-1}{n-1} + \mathbb{P} \Big( \, \Big| \, \mathcal{U}_{\text{max}} \big( G_n^{\setminus i} \big) \Big| \ge n^{3/4} \Big) \text{.}
 \end{multline}
 Thus~\eqref{eq:vj1_and_vj2_not_i_avoiding_connected} follows from~\eqref{eq:no_i_avoiding_giant} and~\eqref{eq:exchangeability}.
 
 \medskip
 
 Next, we show that $\lim_{n \to \infty} \mathbb{P} \big( B_2(\underline{v}, G_n) \big) = 0$.  Let us choose $i \in I_{\underline{\lambda}}$ and $j_1, j_2 \in [m]$, $j_1 \ne j_2$ arbitrarily. It is enough to show that
 \[ \lim_{n \to \infty} \mathbb{P} \left( \Big\{ \big( t(v_{j_1}) \big)_i = \big( t(v_{j_2}) \big)_i = 1 \Big\} \triangle \Big\{ v_{j_1} \stackrel{G^{\setminus i}}{\longleftrightarrow} v_{j_2} \Big\} \right) = 0 \text{.} \]
 Since $i \in I_{\underline{\lambda}}$, by Proposition~\ref{prop:good_events}, it is enough to show that
 \begin{multline*}
  \! \lim_{n \to \infty} \mathbb{P} \Bigg( A_1^{\setminus i}(G_n) \cap A_2^{\setminus i}(G_n) \cap \bigg( \Big\{ \big( t(v_{j_1}) \big)_i = \big( t(v_{j_2}) \big)_i = 1 \Big\} \triangle \Big\{ v_{j_1} \stackrel{G^{\setminus i}}{\longleftrightarrow} v_{j_2} \Big\} \bigg) \Bigg) \\
  = 0 \text{.}
 \end{multline*}
 Thus, by the union bound it is enough to show that
 \begin{multline} \label{eq:bad_no1}
  \! \lim_{n \to \infty} \mathbb{P} \left( A_1^{\setminus i}(G_n) \cap A_2^{\setminus i}(G_n) \cap \Big\{ \big( t(v_{j_1}) \big)_i = \big( t(v_{j_2}) \big)_i = 1 \Big\} \cap \Big\{ v_{j_1} \stackrel{G^{\setminus i}}{\longleftrightarrow} v_{j_2} \Big\}^c \right) \\
  = 0
 \end{multline}
 and
 \begin{multline} \label{eq:bad_no2}
  \! \lim_{n \to \infty} \mathbb{P} \left( A_1^{\setminus i}(G_n) \cap A_2^{\setminus i}(G_n) \cap \Big\{ \big( t(v_{j_1}) \big)_i = \big( t(v_{j_2}) \big)_i = 1 \Big\}^c \cap \Big\{ v_{j_1} \stackrel{G^{\setminus i}}{\longleftrightarrow} v_{j_2} \Big\} \right) \\
  = 0 \text{.}
 \end{multline}
 
 If $A_1^{\setminus i}(G_n) \cap \big\{ \big( t(v_{j_1}) \big)_i = \big( t(v_{j_2}) \big)_i = 1 \big\}$ holds, then both $v_{j_1}$ and $v_{j_2}$ are in the unique largest $i$-avoiding connected component of $G_n$, i.e.\ $v_{j_1}$ and $v_{j_2}$ are $i$-avoiding connected, thus~\eqref{eq:bad_no1} follows.
 
 On the other hand, if $A_2^{\setminus i}(G_n) \cap \big\{ \big( t(v_{j_1}) \big)_i = \big( t(v_{j_2}) \big)_i = 1 \big\}^c$ holds, then without loss of generality we may assume that $\big( t(v_{j_1}) \big)_i = 0$. Then $\big| \mathcal{C} \big( v_{j_1}, G_n^{\setminus i} \big) \big| \le n^{1/4}$, thus similarly to~\eqref{eq:no_i_avoiding_giant} and~\eqref{eq:exchangeability}, we have
 \[ \lim_{n \to \infty} \mathbb{P} \left( v_{j_1} \stackrel{G^{\setminus i}}{\longleftrightarrow} v_{j_2} \, \middle| \, A_2^{\setminus i}(G_n),\  \big( t(v_{j_1}) \big)_i=0\,  \right) = 0  \text{,} \]
 so~\eqref{eq:bad_no2} follows.
\end{proof}

\begin{lemma}[The boundary effect of type I vectors is the same in ECBP trees and ECER graphs] \label{lemma:types_trickle_down_the_same_way}
 Let $k, d \in \mathbb{N}_+$, $m \in \mathbb{N}$ and $\underline{\lambda} \in \mathbb{R}_+^k$.  Let $G_\infty \sim  \mathcal{G}_\infty( \underline{\lambda})$ and let $G_n \sim \mathcal{G}_n([n-m],\underline{\lambda})$ for all $n \in \mathbb{N}_+$ with $n \ge m$. Let $F$ denote a possible realization of $G_{\infty,d}(r)$ and let $\Gamma \!: R_d(r,F) \to \{0,1\}^{I_{\underline{\lambda}}}$ be an assignment of type I vectors to the vertices on the $d$-th level of $F$. Let $u \in V(F)$ and $v \in [n-m]$, and let $\underline{w}$ denote a possible arrangement of the set $R_d(r,F)$ of vertices into a vector. Let
 \begin{align*}
  q^* & \colonequals \mathbb{P} \left( \, u \in \widetilde{\mathcal{C}}^*(r,G_\infty) \, \middle| \, G_{\infty,d}(r) \simeq F, \, \forall w \in R_d(r,F):\, \underline{t}^*(w,G_\infty) = \Gamma(w) \right) \text{,} \\
  q_n & \colonequals \mathbb{P} \bigg( \, u \in \widetilde{\mathcal{C}}(v,G_n) \, \Bigm| \, G_{n,d}(v) \simeq F, \, \forall w \in R_d(r,F): \, \underline{t} \big( w,G'_{n,d}(v) \big) = \Gamma(w) \text{,} \\
       & \hspace{97pt} \Big( B \big( \underline{w}, G'_{n,d}(v) \big) \Big)^c \bigg) \text{,}
 \end{align*}
 where we identified the vertices of $F$, $G_{n,d}(v)$ and $G_{\infty,d}(r)$ under the above isomorphisms so that the root $r$ of $F$ is identified with $v$, noting that this also identifies $R_d(r,F)$ with a subset of the vertices of $G'_{n,d}(v)$. Then
 \begin{enumerate}[(i)]
  \item \label{item:zero_one_law} both $q^*$ and $q_n$ can only take the values $0$ or $1$, and
  \item \label{item:q_star_q_n_same} we have $q^* = q_n$.
 \end{enumerate}
  \end{lemma}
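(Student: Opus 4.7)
My plan is to reduce both $q^*$ and $q_n$, under their respective conditionings, to deterministic $\{0,1\}$-valued functions of $F$ and $\Gamma$; claim (i) is then immediate, and claim (ii) reduces to matching the two resulting indicators. For the ECBP side, I would decompose the event $\{u \in \widetilde{\mathcal{C}}^*(r, G_\infty)\}$ into per-color $i$-avoiding friendship events from Definition~\ref{def:friends}. The direct event $\{r \stackrel{G_\infty^{\setminus i}}{\longleftrightarrow} u\}$ reduces to whether the unique tree path from $r$ to $u$ inside $F$ avoids color $i$, a property of $F$ alone. The events $\{r \stackrel{G_\infty^{\setminus i}}{\longleftrightarrow} \infty\}$ and $\{u \stackrel{G_\infty^{\setminus i}}{\longleftrightarrow} \infty\}$ reduce, via the "through descendants" restriction in Definition~\ref{def:friends}, to the existence of a boundary vertex $w \in R_d(r,F)$ reachable by an $i$-avoiding $F$-path from $r$ (respectively from $u$ through descendants only) with $\Gamma(w)_i = 1$; for $i \notin I_{\underline{\lambda}}$ the "infinity" disjunct contributes with conditional probability zero. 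On the ECER side, the conditioning on $(B(\underline{w}, G'_{n,d}(v)))^c$ together with Definition~\ref{def:bad_events} characterizes boundary connectivity in the outside graph entirely via $\Gamma$: for $i \in I_{\underline{\lambda}}$ two boundary vertices lie in the same component of $(G'_{n,d}(v))^{\setminus i}$ if and only if both satisfy $\Gamma(\cdot)_i = 1$, and for $i \notin I_{\underline{\lambda}}$ distinct boundary vertices are never $i$-avoiding connected outside. Combined with the tree structure of $F$, this determines whether $v \stackrel{G_n^{\setminus i}}{\longleftrightarrow} u$ holds for every $i \in [k]$, and hence pins down $q_n$.

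For the matching step, I would analyze both predicates color by color. For $i \notin I_{\underline{\lambda}}$ each side reduces to "the $r$-$u$ tree path in $F$ is $i$-avoiding". For $i \in I_{\underline{\lambda}}$ both sides reduce to a common form: either the $r$-$u$ tree path in $F$ is $i$-avoiding, or there exist boundary witnesses $w_1, w_2 \in R_d(r,F)$ with $\Gamma(w_1)_i = \Gamma(w_2)_i = 1$ realizing a joint escape of $r$ and $u$ into the outside giant (for ECER) or to infinity (for ECBP). The non-trivial point is to translate the ECBP "through descendants" constraint into the $F$-based statement used for ECER and verify they agree term-by-term, using that the bulk of the route lives inside $F$ and only the final handoff to the outside giant (or to the continuation of the branching process beyond $F$) depends on $\Gamma$.

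I expect the main obstacle to be precisely this alignment of the "through descendants" restriction with the more permissive witness structure produced by $B^c$ on the ECER side: a priori, a boundary witness $w_2 \in C_u^{F,i}$ (that is, in the $F^{\setminus i}$-component of $u$) need not lie in the subtree of $u$ rooted at $r$. I plan to resolve this by a careful tree-combinatorial case analysis of the placement of color-$i$ edges along the $r$-$u$ path in $F$, together with the observation that both $\widetilde{\mathcal{C}}^*(r)$ and $\widetilde{\mathcal{C}}(v)$ are defined via conjunctions over all colors $i \in [k]$, so that any apparent single-color discrepancy can be cross-checked against the condition for some other color $i'$ to conclude that the two indicators coincide.
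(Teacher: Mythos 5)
You correctly isolate the central subtlety: the ECBP event $u \stackrel{G_\infty^{\setminus i}}{\longleftrightarrow} \infty$ in Definition~\ref{def:friends} requires an $i$-avoiding ray through the \emph{descendants} of $u$, whereas on the ECER side the boundary witness attached to $u$ merely needs to lie in the $F^{\setminus i}$-component of $u$ and may well sit outside the subtree below $u$. Your diagnosis of this asymmetry is in fact sharper than the paper's own Case~2.2 forward argument, which passes from ``$u$ is $i$-avoiding connected to $w_{2h}$ in $F$'' and $\Gamma(w_{2h})_i=1$ directly to ``$u \stackrel{G_\infty^{\setminus i}}{\longleftrightarrow} \infty$''; that step needs $w_{2h}$ to be a descendant of $u$, which is not established there.

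The problem is that your proposed repair --- invoking the conjunction over $i\in[k]$ to cross-check an offending color against another --- is not actually carried out, and as sketched it cannot work for all $(F,\Gamma,u)$ in the stated generality of the lemma. Concretely, with $k=2$ and $d=3$, let $F$ have root $r$ with children $a,b$ (edge colors $1,2$); $a$ with children $u,p,q$ (colors $2,2,1$); $b$ with a single child $b'$ (color $2$); and depth-$3$ vertices $c,w_p,w_q,w_r$ below $u,p,q,b'$ (colors $1,2,1,2$). Take $\Gamma(c)=(0,1)$, $\Gamma(w_p)=(1,0)$, $\Gamma(w_q)=(0,1)$, $\Gamma(w_r)=(1,0)$. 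Under the $q_n$-conditioning (recall $B_2^c$ equates outside $i$-avoiding connectivity of boundary vertices with both having $\Gamma$-coordinate $1$), the vertex $u$ is $1$-avoiding connected to $v$ via $v$-$b$-$b'$-$w_r$-$[G']$-$w_p$-$p$-$a$-$u$ and $2$-avoiding connected via $v$-$a$-$q$-$w_q$-$[G']$-$c$-$u$, so $q_n=1$; but under the $q^*$-conditioning, $u$'s only level-$d$ descendant is $c$, attached by a color-$1$ edge, so $u$ is \emph{not} $1$-avoiding connected to $\infty$ through descendants and $q^*=0$. No other color compensates. So the lemma, read over the full stated range of $F$, and the per-color equivalence that both you and the paper rely on, need the extra hypothesis $F\in\mathcal{B}_d^{\tau_\infty}$ --- which is exactly how the lemma is invoked in Lemma~\ref{lemma:f_ell_d_F_G_n_conv}, and which excludes the configuration above since it forces some $i^*$-avoiding component of $r$ to be confined strictly inside depth $d-1$. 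Your plan must thread that hypothesis (or an equivalent constraint) explicitly through the tree-combinatorial case analysis; a generic appeal to cross-color compensation will not by itself yield a correct proof.
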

\begin{proof}
 Let us consider realizations of $G_{\infty}$ and $G_n$ for which all of the conditions appearing in the definitions of $q^*$ and $q_n$, respectively, hold. Note that our goal is to show that for any $i \in [k]$, the vertex $u$ is $i$-avoiding connected to $v$ in $G_n$ if and only if it is an $i$-avoiding friend of $r$ in $G_{\infty}$.

 \medskip
 
 \textit{Case 1:} $i \in [k] \setminus I_{\underline{\lambda}}$.
 
 Since the complement of the bad event $B_1 \big( \underline{w}, G'_{n,d}(v) \big)$ occurs, the vertices of $R_d(r,F)$ are not $i$-avoiding connected to each other in $G'_{n,d}(v)$. Thus any $i$-avoiding $v$-$u$ path must stay entirely in $G_{n,d}(v)$, so $v$ and $u$ are $i$-avoiding connected in $G_n$ if and only if the unique $r$-$u$ path in $F$ does not contain any edge of color $i$.
 
 In addition,  $i \in [k] \setminus I_{\underline{\lambda}}$ implies that the root $r$ of $G_\infty$ is almost surely not $i$-avoiding connected to infinity, thus by Definition~\ref{def:friends}, the vertices $r$ and $u$ are $i$-avoiding friends if and only if the unique $r$-$u$ path in $F$ does not contain any edge of color $i$. Thus in this case, the statements of the lemma hold.
 
 
 \medskip

 \textit{Case 2:} $i \in I_{\underline{\lambda}}$.
 
 \smallskip
 
 \textit{Case 2.1:} ($i \in I_{\underline{\lambda}}$ and) $r$ and $u$ are $i$-avoiding connected in $F$.
 
 In this case, the statements of the lemma clearly hold.
 
 \smallskip
 
 \textit{Case 2.2:} ($i \in I_{\underline{\lambda}}$ and) $r$ and $u$ are not $i$-avoiding connected in $F$.
 
 %
 %
 If $v$ and $u$ are $i$-avoiding connected in $G_n$, then any $i$-avoiding $v$-$u$ path cannot lie entirely in $G_{n,d}(v)$, thus there exists $h \in \mathbb{N}_+$ and there exist $w_1, \ldots, w_{2h} \in R_d(v, G_n)$ such that there exist an $i$-avoiding $v$-$w_1$ path and an $i$-avoiding $u$-$w_{2h}$ path contained completely in $G_{n,d}(v)$, and for any $j \in [h]$, there exists an $i$-avoiding $w_{2j}$-$w_{2j+1}$ path contained completely in $G_{n,d}(v)$ and there exists an $i$-avoiding $w_{2j-1}$-$w_{2j}$ path contained completely in $G'_{n,d}(v)$. Since the complement of the bad event $B_2 \big( \underline{w}, G'_{n,d}(v) \big)$ occurs, we obtain $\big( \underline{t}(w_{2j-1},G'_{n,d}(v)) \big)_i = \big( \underline{t}(w_{2j},G'_{n,d}(v)) \big)_i = 1$ for any $j \in [h]$, and the conditions appearing in the definition of $q_n$ imply that $\big( \Gamma(w_{2j-1}) \big)_i = \big( \Gamma(w_{2j}) \big)_i = 1$ must hold for any $j \in [h]$. The conditions appearing in the definition of $q^*$ imply that $\big( \underline{t}^*(w_{2j-1},G_{\infty}) \big)_i = \big( \underline{t}^*(w_{2j},G_{\infty}) \big)_i = 1$ holds for any $j \in [h]$. Since $r$ is $i$-avoiding connected to $w_1$ and so is $u$ to $w_{2h}$ in $F$, we obtain that $r$ and $u$ are $i$-avoiding friends in $G_{\infty}$.
 
 If $r$ and $u$ are $i$-avoiding friends in $G_{\infty}$, then both $r$ and $u$ are $i$-avoiding connected to infinity. Thus there exist $w_1, w_2 \in R_d(r, G_{\infty})$ such that there exist an $i$-avoiding $r$-$w_1$ path and an $i$-avoiding $u$-$w_2$ path contained completely in $G_{\infty,d}(r)$ and both $w_1$ and $w_2$ are $i$-avoiding connected to infinity. Thus $\big( \underline{t}^*(w_1,G_{\infty}) \big)_i = \big( \underline{t}^*(w_2, G_{\infty}) \big)_i = 1$, so the conditions appearing in the definition of $q^*$ imply that $\big( \Gamma(w_1) \big)_i = \big( \Gamma(w_2) \big)_i = 1$ must hold. The conditions appearing in the definition of $q_n$ imply that $\big( \underline{t}(w_1, G'_{n,d}(v)) \big)_i = \big( \underline{t}(w_2, G'_{n,d}(v)) \big)_i = 1$. Since the complement of the bad event $B_2 \big( \underline{w}, G'_{n,d}(v) \big)$ occurs, we obtain that $w_1$ and $w_2$ are $i$-avoiding connected in $G'_{n,d}(v)$. Since $r$ is $i$-avoiding connected to $w_1$ and so is $u$ to $w_2$ in $F$, we obtain that $v$ and $u$ are $i$-avoiding connected in $G_n$.
 
 Thus in this last case, the statements of the lemma hold.
 %
 %
 %
 %
\end{proof}

\section{Convergence of empirical component size densities} \label{section:conv_of_f_ell_G_n}

The goal of Section~\ref{section:conv_of_f_ell_G_n} is to prove Theorem~\ref{thm:conv_of_f_ell_G_n}. In Section~\ref{subsection:conv_of_giant_block}, we prove the statement about the convergence of the empirical density of the giant color-avoiding connected component in ECER graphs as well as Proposition~\ref{prop:char_of_giant}. In Section~\ref{subsection:conv_of_small_comps}, we prove convergence of $f_{\ell}(G_n)$ to $f^*_{\ell}$ as well as Proposition~\ref{prop:char_of_component_structure}.

Let $\underline{1} \in \mathbb{R}^k$ denote the vector whose every coordinate is equal to~1.

\subsection{Convergence of empirical density of giant color-avoid\-ing component} \label{subsection:conv_of_giant_block}

The goal of Section~\ref{subsection:conv_of_giant_block} is to prove the statement of Theorem~\ref{thm:conv_of_f_ell_G_n} about the convergence of the fraction of vertices contained in the largest color-avoid\-ing  component in ECER graph sequences, i.e., showing that
\[\frac{1}{n} \max_{v \in [n]} \Big| \widetilde{\mathcal{C}}(v, G_n) \Big| \; \stackrel{\mathbb{P}}{\longrightarrow} \; f^*_\infty, \qquad n \to \infty \text{.} \]

\begin{lemma}[The root has infinitely many friends iff its type I vector is $\underline{1}$] \label{lemma:infinitely_many_friends}
 Let $k \in \mathbb{N}_+$, $\underline{\lambda}, \underline{1} \in \mathbb{R}_+^k$ and let $G_{\infty} \colonequals G_{\infty}(r) \sim \mathcal{G}_{\infty}(\underline{\lambda})$. Then the events
 \[ \bigg\{ \Big| \widetilde{\mathcal{C}}^*(r) \Big| = \infty \bigg\} \qquad \text{and} \qquad \big\{ \underline{t}^*(r) = \underline{1} \big\} \]
 $\mathbb{P}$-almost surely coincide.
 
 In particular, $f^*_{\infty} = p^*(\underline{1})$.
\end{lemma}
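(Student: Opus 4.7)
The plan is to establish the two set inclusions between the events $\{|\widetilde{\mathcal{C}}^*(r)| = \infty\}$ and $\{\underline{t}^*(r) = \underline{1}\}$; the equality $f^*_\infty = p^*(\underline{1})$ will then follow by taking probabilities.

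For the inclusion $\{|\widetilde{\mathcal{C}}^*(r)| = \infty\} \subseteq \{\underline{t}^*(r) = \underline{1}\}$ (almost surely), I will argue by contraposition. Suppose some coordinate $i \in I_{\underline{\lambda}}$ of $\underline{t}^*(r)$ vanishes, i.e.\ $r$ is not $i$-avoiding connected to infinity. By Claim~\ref{claim:GIr_is_BP}, the descendants of $r$ reachable via non-color-$i$ edges are the progeny of a non-surviving $\mathrm{POI}(\lambda^{\setminus i})$ branching process, so $\mathcal{C}(r, G_\infty^{\setminus i})$ is almost surely finite. The definition of $i$-avoiding friendship (Definition~\ref{def:friends}) then forces every friend $v$ of $r$ to lie in $\mathcal{C}(r, G_\infty^{\setminus i})$, since the ``both connected to infinity'' alternative is ruled out by the assumption on $r$. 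Hence $\widetilde{\mathcal{C}}^*(r) \subseteq \mathcal{C}(r, G_\infty^{\setminus i})$ is finite, contradicting $|\widetilde{\mathcal{C}}^*(r)| = \infty$.

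For the reverse inclusion, the key observation is that on $\{\underline{t}^*(r) = \underline{1}\}$, every vertex $v \in V(G_\infty)$ with $\underline{t}^*(v) = \underline{1}$ is automatically a friend of $r$: for each $i \in I_{\underline{\lambda}}$, both $r$ and $v$ are $i$-avoiding connected to infinity, triggering the ``through infinity'' clause of $i$-avoiding friendship. It thus suffices to show that on $\{\underline{t}^*(r) = \underline{1}\}$ there are almost surely infinitely many such vertices. I would exploit that the uncolored branching process $(|R_{\infty,d}|)_{d\in\mathbb N}$ has offspring distribution $\mathrm{POI}(\lambda^{\text{uc}})$ with $\lambda^{\text{uc}} \ge \lambda^{\setminus i} > 1$ for any $i \in I_{\underline{\lambda}}$, and is therefore supercritical; the event $\{\underline{t}^*(r) = \underline{1}\}$ forces survival of this process, so $|R_{\infty,d}| \to \infty$ almost surely. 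By the branching property, the subtrees rooted at distinct level-$d$ vertices are i.i.d.\ copies of $\mathcal{G}_\infty(\underline{\lambda})$, so conditionally on the set of level-$d$ vertices the count $Z_d$ of those $v$ with $\underline{t}^*(v) = \underline{1}$ is Binomial with parameters $|R_{\infty,d}|$ and $p^*(\underline{1})$. A conditional law of large numbers then gives $Z_d \to \infty$ almost surely on survival, provided $p^*(\underline{1}) > 0$; in the degenerate case $p^*(\underline{1}) = 0$ the event $\{\underline{t}^*(r) = \underline{1}\}$ is itself null and the inclusion holds vacuously.

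The main obstacle is making the i.i.d.\ subtree decomposition and the passage from ``expected number of type-$\underline{1}$ vertices at level $d$ diverges'' to ``the count is almost surely infinite'' fully rigorous; both should follow cleanly from the branching property together with a standard second-moment or bounded-convergence argument applied to $Z_d / |R_{\infty,d}|$. Once this is in place, the ``in particular'' claim $f^*_\infty = p^*(\underline{1})$ is immediate.
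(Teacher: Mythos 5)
Your proposal is largely sound and shares the paper's core structure — in particular, the reverse direction hinges on the same idea: vertices of type $\underline{1}$ are automatically friends of $r$, and their count at level $d$ is conditionally $\mathrm{Bin}\big(|R_{\infty,d}|, p^*(\underline{1})\big)$, which diverges on survival. However there is a genuine, if small, gap in your forward direction. You argue by contraposition and ``suppose some coordinate $i \in I_{\underline{\lambda}}$ of $\underline{t}^*(r)$ vanishes,'' but this does not exhaust the cases where $\underline{t}^*(r) \ne \underline{1}$: when $\underline{\lambda}$ is not fully supercritical, $I_{\underline{\lambda}} \ne [k]$, so $\underline{t}^*(r)$ has fewer than $k$ coordinates and $\underline{t}^*(r) \ne \underline{1}$ holds automatically even if all its coordinates equal $1$. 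For that case you must instead pick $i \in [k] \setminus I_{\underline{\lambda}}$, observe that $\lambda^{\setminus i} \le 1$ makes the $\setminus i$-branching process (sub)critical, hence almost surely extinct, so $r$ is almost surely not $i$-avoiding connected to infinity and $\widetilde{\mathcal{C}}^*(r) \subseteq \mathcal{C}(r, G_\infty^{\setminus i})$ is almost surely finite. This is exactly the paper's first paragraph, which it treats separately; without it, your proof is incomplete.

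Where you genuinely diverge from the paper, the route is legitimate and arguably cleaner in places. For the direction ``$|\widetilde{\mathcal{C}}^*(r)| = \infty \Rightarrow \underline{t}^*(r) = \underline{1}$'' (in the fully supercritical case), the paper argues directly: infinitely many friends satisfy the $i$-avoiding friendship condition, and if none satisfies the ``through infinity'' clause then infinitely many lie in $\mathcal{C}(r, G_\infty^{\setminus i})$, whence K\H{o}nig's lemma extracts an infinite $i$-avoiding path. Your contrapositive short-circuits this: once $(\underline{t}^*(r))_i = 0$, the ``through infinity'' clause of $i$-avoiding friendship is dead, so all friends sit in $\mathcal{C}(r, G_\infty^{\setminus i})$, which is precisely the total progeny of the dying $\mathrm{POI}(\lambda^{\setminus i})$ tree and is finite. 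This buys a slightly slicker bookkeeping (the K\H{o}nig argument is absorbed into the equivalence ``BP survives iff infinite $i$-avoiding path''). For the reverse direction, two deviations: you base the Binomial count on the uncolored level set $R_{\infty,d}$ rather than on $R_{\infty,d}^{\setminus i}$ as the paper does — either works, and you rightly note $\{\underline{t}^*(r) = \underline{1}\}$ forces survival of the uncolored process. More notably, you avoid the paper's Harris--FKG estimate $p^*(\underline{1}) \ge \prod_i \theta^{\setminus i} > 0$ by dichotomizing: if $p^*(\underline{1}) = 0$ the inclusion is vacuous, otherwise the Binomial argument applies. That is a valid and economical simplification for this lemma in isolation, but note that the paper still needs $p^*(\underline{1}) > 0$ when fully supercritical to conclude $f^*_\infty > 0$ in Proposition~\ref{prop:char_of_giant}, so the FKG bound (or some replacement) cannot be dropped from the development as a whole.
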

\begin{proof}
 If $\underline{\lambda}$ is not fully supercritical (cf.\ Definition~\ref{def:fully_supercrit}), then there exists a color $i \in [k]$ for which $\lambda^{\setminus i} \le 1$; let $i \in [k]$ be such a color. Clearly, $i \notin I_{\underline{\lambda}}$, thus $\underline{t}^*(r) \ne \underline{1}$ since the lengths of these vectors are not equal. Now we need to show that $\big\{ \big| \widetilde{\mathcal{C}}^*(r) \big| = \infty \big\}$ almost surely does not occur. By Claim~\ref{claim:GIr_is_BP}, the branching process $\big( \big| R_{\infty,d}^{\setminus i} \big| \big)$ almost surely dies out, which means that $r$ is not $i$-avoiding connected to infinity (cf.\ Definition~\ref{def:friends}), thus the set of $i$-avoiding friends of $r$ is $\mathcal{C} \big( r, G_{\infty}^{\setminus i} \big)$, which is almost surely finite. Since each friend of $r$ is also an $i$-avoiding friend of it, $\widetilde{\mathcal{C}}^* (r, G_{\infty}) \subseteq \mathcal{C} \big( r, G_{\infty}^{\setminus i} \big)$ holds, which implies that $r$ has almost surely finitely many friends.
 
 \medskip
 
 If $\underline{\lambda}$ is fully supercritical, then assume first that the event $\big\{ \big| \widetilde{\mathcal{C}}^*(r) \big| = \infty \big\}$ occurs. We need to show that $r$ is $i$-avoiding connected to infinity for all $i \in [k]$. Let $i \in [k]$ be an arbitrary color. Since $\big| \widetilde{\mathcal{C}}^*(r) \big| = \infty$, clearly
 \[ \Big\{ v \stackrel{G_{\infty}^{\setminus i} }{\longleftrightarrow} r \Big\} \cup \left( \Big\{ v \stackrel{G_{\infty}^{ \setminus i} }{\longleftrightarrow} \infty \Big\} \cap \Big\{ r \stackrel{G_{\infty}^{ \setminus i} }{\longleftrightarrow} \infty \Big\} \right) \]
 holds for infinitely many vertices $v$. Now this event is the union of two events and the second one contains the desired event
 \[ \left\{ r \stackrel{G_\infty^{\setminus i}}{\longleftrightarrow} \infty \right\} \text{,} \]
 therefore if the second event occurs for any vertex $v$, then we are done. So assume that the first event holds for infinitely many vertices $v$. Noting that the degree of every vertex of $G_\infty^{\setminus i}$ is $\mathbb{P}$-almost surely finite, we obtain by K\H{o}nig's lemma \cite{konig} that there exists a path from the root $r$ to infinity in $G_\infty^{\setminus i}$, and we are done.
 
 Now let us assume that $\underline{t}^*(r) = \underline{1}$, and we need to show that this implies $\big\{ |\widetilde{\mathcal{C}}^*(r, G_\infty)| = \infty \big\}$. Let
 \[ V_{\infty}(\underline{1}) \colonequals \big\{ v \in V(G_\infty) : \underline{t}^*(v, G_\infty) = \underline{1} \big\} \text{.} \]
 By the definitions of friendship and type I vectors (cf.\ Definitions~\ref{def:typeI_in_ECBP} and~\ref{def:typeI_in_ECBP}, respectively), we only need to show that
 \begin{equation} \label{eq:V_infty_is_infty}
  \mathbb{P} \Big( \, \big| V_{\infty}(\underline{1}) \big| = +\infty \Bigm| \underline{t}^*(r) = \underline{1} \Big) = 1 \text{.} 
 \end{equation}
 Let $i \in [k]$ be an arbitrary color. By Claim~\ref{claim:GIr_is_BP}, we have that $\big( \big| R_{\infty,d}^{\setminus i} \big| \big)_{d \in \mathbb{N}}$ is a supercritical branching process and the Kesten--Stigum theorem (see, e.g., Theorem A of~\cite{lpp}) implies that conditional on $\big( \underline{t}^*(r) \big)_i = 1$, we have 
 \[ \big| R_{\infty,d}^{\setminus i} \big| \stackrel{\mathbb{P}}{\longrightarrow} \infty, \qquad d \to \infty \text{.} \]
 For any $d \in \mathbb{N}$, conditional on $G_{\infty} \Big[ R^{\le}_{\infty,d} \big( r, G_{\infty}^{\setminus i} \big) \Big]$, the vertices of $R_{\infty,d}^{\setminus i}$ have type I vectors $\underline{1}$ independently of each other with probability $p^*(\underline{1})$, thus the distribution of $\big| R_{\infty,d}^{\setminus i} \cap V_{\infty}(\underline{1}) \big|$ is $\mathrm{BIN} \big( \big| R_{\infty,d}^{\setminus i} \big|, p^*(\underline{1}) \big)$. By the Harris--FKG inequality \cites{harris, fkg} and by the fully supercriticality of $\underline{\lambda}$, we obtain
 \[ p^*(\underline{1}) \ge \prod_{i \in [k]} \mathbb{P} \Big( \big( \underline{t}^*(r) \big)_i = 1 \Big) = \prod_{i \in [k]} \theta^{\setminus i} > 0 \text{.} \]
 Therefore, conditional on the event $\{ \underline{t}^*(r) = \underline{1} \}$, we have
 \[ \Big| R_{\infty,d}^{\setminus i} \cap V_{\infty}(\underline{1}) \Big| \stackrel{\mathbb{P}}{\longrightarrow} \infty, \qquad d \to \infty \text{.} \]
 Hence \eqref{eq:V_infty_is_infty} holds.
\end{proof}

\begin{proof}[Proof of Proposition~\ref{prop:char_of_giant}]
 As we saw in the proof of Lemma~\ref{lemma:infinitely_many_friends}, if $\underline{\lambda}$ is not fully supercritical, then $r$ has almost surely finitely many friends, i.e.\ $f^*_{\infty} = 0$, and if $\underline{\lambda}$ is fully supercritical, then $f^*_{\infty} = p^*(\underline{1}) > 0$.
\end{proof}

\begin{lemma} \label{lemma:conv_giant_block}
 Let $k \in \mathbb{N}_+$, $\underline{\lambda} \in \mathbb{R}_+^k$ and let $G_n \sim \mathcal{G}_n([n],\underline{\lambda})$ for all $n \in \mathbb{N}_+$. If Assumption~\ref{asm:simplifying_assumption} holds, then
 \[ \frac{1}{n} \max_{v \in [n]} \Big| \widetilde{\mathcal{C}}(v, G_n) \Big| \; \stackrel{\mathbb{P}}{\longrightarrow} \; f^*_\infty, \qquad n \to \infty \text{.} \]
\end{lemma}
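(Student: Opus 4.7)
The plan is to combine Lemma~\ref{lemma:infinitely_many_friends} (which gives $f^*_\infty = p^*(\underline{1})$) with Theorem~\ref{thm:conv_of_typeI} and Lemmas~\ref{lemma:asymptotic_indep_of_typeI} and~\ref{lemma:bad_events}, after splitting into two cases according to Proposition~\ref{prop:char_of_giant}.

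If $\underline{\lambda}$ is not fully supercritical, then $f^*_\infty = 0$ and we may pick some $i \in [k] \setminus I_{\underline{\lambda}}$. By Claim~\ref{claim:GI_is_ER}, $G_n^{\setminus i} \sim \mathcal{G}_n([n], \lambda^{\setminus i})$ is a critical-subcritical Erd\H{o}s--R\'enyi graph, so $|\mathcal{U}_{\text{max}}(G_n^{\setminus i})|/n$ converges to $0$ in probability by the classical bound (Theorem~5.1 of~\cite{vdH}). Since $\widetilde{\mathcal{C}}(v,G_n) \subseteq \mathcal{C}(v, G_n^{\setminus i}) \subseteq \mathcal{U}_{\text{max}}(G_n^{\setminus i})$ for every $v$, the claim follows in this case.

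In the fully supercritical case $I_{\underline{\lambda}} = [k]$ and $f^*_\infty = p^*(\underline{1}) > 0$. Set $V(\underline{1}) := \{ v \in [n] : \underline{t}(v,G_n) = \underline{1}\}$; Theorem~\ref{thm:conv_of_typeI} then gives $|V(\underline{1})|/n \stackrel{\mathbb{P}}{\longrightarrow} f^*_\infty$. For the \emph{upper bound}, I would work on the high-probability event $\bigcap_{i \in [k]} \left( A_1^{\setminus i} \cap A_2^{\setminus i} \right)$ from Proposition~\ref{prop:good_events}: if $C$ is a color-avoiding component with $|C| > n^{1/4}$, then for every $v \in C$ and every $i \in [k]$, $|\mathcal{C}(v,G_n^{\setminus i})| \ge |C| > n^{1/4}$, so $A_1^{\setminus i}$ and $A_2^{\setminus i}$ together force $\mathcal{C}(v, G_n^{\setminus i}) = \mathcal{U}_{\text{max}}(G_n^{\setminus i})$, hence $v \in V(\underline{1})$. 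Thus $M := \max_v |\widetilde{\mathcal{C}}(v,G_n)|$ satisfies $M \le \max \big( |V(\underline{1})|, \, n^{1/4} \big)$, which gives $\limsup_{n \to \infty} M/n \le f^*_\infty$ in probability.

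For the \emph{lower bound} I would exploit the identity $S := \sum_{v \in [n]} |\widetilde{\mathcal{C}}(v,G_n)| = \sum_C |C|^2$, where the right-hand sum runs over color-avoiding components. Splitting off small components (whose contribution to $S$ is at most $n^{1/4} \cdot n = n^{5/4}$) and using the upper-bound observation that every component with $|C|>n^{1/4}$ lies inside $V(\underline{1})$, one gets
\[ M \cdot |V(\underline{1})| \; \ge \!\! \sum_{C :\, |C| > n^{1/4}} \!\! |C|^2 \; \ge \; S - n^{5/4}, \]
so it suffices to show $S/n^2 \stackrel{\mathbb{P}}{\longrightarrow} (f^*_\infty)^2$. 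Writing $S = \sum_{v,w} \mathds{1}\{v, w \text{ are color-avoiding connected}\}$, vertex exchangeability reduces this to a first- and second-moment calculation. Combining Lemma~\ref{lemma:bad_events} (applied with $m=2$ and $m=4$) with Lemma~\ref{lemma:asymptotic_indep_of_typeI} gives $\mathbb{P}(v_0 \sim w_0) \to (f^*_\infty)^2$ and $\mathbb{P}(v \sim w,\, v' \sim w') \to (f^*_\infty)^4$ for distinct vertices, yielding the convergence of the mean and the vanishing of the variance of $S/n^2$. The main technical nuisance is the careful fourth-moment bookkeeping -- handling the $O(n^3)$ coincidence terms where two of the four indices collapse -- and then combining the resulting concentration with the upper bound to conclude $M/n \stackrel{\mathbb{P}}{\longrightarrow} f^*_\infty$.
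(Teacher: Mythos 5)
Your proof is correct, but for the supercritical lower bound it takes a genuinely different and heavier route than the paper. In the non-fully-supercritical case you argue exactly as the paper does (one small slip: the set inclusion $\mathcal{C}(v,G_n^{\setminus i}) \subseteq \mathcal{U}_{\text{max}}\big(G_n^{\setminus i}\big)$ is false in general; what you need, and what is trivially true, is the size comparison $\big|\mathcal{C}(v,G_n^{\setminus i})\big| \le \big|\mathcal{U}_{\text{max}}\big(G_n^{\setminus i}\big)\big|$). In the supercritical case your upper bound is the paper's: on $\bigcap_{i\in[k]}\big(A_1^{\setminus i}\cap A_2^{\setminus i}\big)$ every color-avoiding component of size exceeding $n^{1/4}$ is contained in $V(\underline{1})=\{v:\underline{t}(v,G_n)=\underline{1}\}$, and $|V(\underline{1})|/n \to p^*(\underline{1})=f^*_\infty$ by Theorem~\ref{thm:conv_of_typeI} and Lemma~\ref{lemma:infinitely_many_friends}. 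Where you diverge is the lower bound: you run a second-moment computation for $S=\sum_C |C|^2$, using Lemma~\ref{lemma:bad_events} (for pairs and $4$-tuples) and Lemma~\ref{lemma:asymptotic_indep_of_typeI} to get $S/n^2 \to (f^*_\infty)^2$ in probability; this is valid (with $I_{\underline{\lambda}}=[k]$, on the complement of the bad event color-avoiding connectivity of a pair coincides with both type I vectors equaling $\underline{1}$, and the coincidence terms are $O(n^3)$), but it is unnecessary. The paper instead observes the reverse containment: on $\bigcap_{i\in[k]} A_1^{\setminus i}$ any two vertices of $V(\underline{1})$ lie in the same unique largest $i$-avoiding component for every $i$, hence are color-avoiding connected, so $V(\underline{1})$ itself sits inside a single color-avoiding component; together with your upper-bound observation this shows $\bigcap_{i\in[k]}\mathcal{U}_{\text{max}}\big(G_n^{\setminus i}\big)=V(\underline{1})$ is the largest color-avoiding component (up to the $n^{1/4}$ caveat), and the lemma follows in one line from Theorem~\ref{thm:conv_of_typeI}. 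Your route exercises the machinery that the paper reserves for Lemma~\ref{lemma:conv_of_f_ell_G_n}, at the cost of fourth-moment bookkeeping and the division by $f^*_\infty>0$; the paper's one-line identification of the giant component is the sharper argument.
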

\begin{proof}
 If $\underline{\lambda}$ is not fully supercritical, then by Proposition~\ref{prop:char_of_giant}, we have $f^*_{\infty} = 0$. Let $i \in [k]$ be a color for which $\lambda^{\setminus i} \le 1$ (since $\underline{\lambda}$ is not fully supercritical, such a color $i$ exists). By Claim~\ref{claim:GI_is_ER} and Theorem 4.8 of~\cite{vdH}, we obtain
 \[ \frac{\Big| \, \mathcal{U}_{\text{max}} \big( G_n^{\setminus i} \big) \Big|}{n} \stackrel{\mathbb{P}}{\longrightarrow} 0 = f^*_{\infty}, \qquad n \to \infty \text{.} \]
 Since every color-avoiding connected component of $G_n$ is contained in a connected component of $G_n^{\setminus i}$, we are done.
 
 Now let us assume that $\underline{\lambda}$ is fully supercritical, and let $i \in [k]$ be an arbitrary color. By Proposition~\ref{prop:good_events}, we obtain
 \[ \lim_{n \to \infty} \mathbb{P} \big( A_1^{\setminus i}(G_n) \cap A_2^{\setminus i}(G_n) \big) = 1 \text{.} \]
 If the event $A_1^{\setminus i}(G_n) \cap A_2^{\setminus i}(G_n)$ holds for all $i \in [k]$, then $\bigcap_{i \in [k]} \mathcal{U}_{\text{max}} \big( G_n^{\setminus i} \big)$ is a color-avoiding connected component of $G_n$ and all the other color-avoiding connected components have size at most $n^{1/4}$. Thus,
 \[ \left| \bigcap_{i \in [k]} \mathcal{U}_{\text{max}} \big( G_n^{\setminus i} \big) \right| = \max_{v \in [n]} \Big| \widetilde{\mathcal{C}}(v, G_n) \Big| \text{.} \]
 By the definition of type I vectors (cf.\ Definition~\ref{def:typeI_in_ECER}),
 \[ \bigcap_{i \in [k]} \mathcal{U}_{\text{max}} \big( G_n^{\setminus i} \big) = \big\{ v \in [n] : \underline{t}(v, G_n) = \underline{1} \big\} \text{.} \]
 Thus by Theorem~\ref{thm:conv_of_typeI} and Lemma~\ref{lemma:infinitely_many_friends}, we have
 \[ \frac{1}{n} \max_{v \in [n]} \Big| \widetilde{\mathcal{C}}(v, G_n) \Big| = \frac{1}{n} \sum_{v \in [n]} \big[ \, \underline{t}(v, G_n) = \underline{1} \, \big] \stackrel{\mathbb{P}}{\longrightarrow} \; p^*(\underline{1}) = f^*_{\infty}, \qquad n \to \infty \text{.} \]
\end{proof}

\subsection{Convergence of \texorpdfstring{$f_{\ell}(G_n)$}{} to \texorpdfstring{$f^*_{\ell}$}{}} \label{subsection:conv_of_small_comps}

The goal of Section~\ref{subsection:conv_of_small_comps} is to prove the remaining part of Theorem~\ref{thm:conv_of_f_ell_G_n}, which we restate now.

\begin{lemma}[Convergence of empirical component size densities] \label{lemma:conv_of_f_ell_G_n}
 Let $k \in \mathbb{N}_+$, $\underline{\lambda} \in \mathbb{R}_+^k$ and let $G_n \sim \mathcal{G}_n([n], \underline{\lambda})$ for all $n \in \mathbb{N}_+$. If Assumption~\ref{asm:simplifying_assumption} holds, then for any $\ell \in \mathbb{N}_+$,
 \[ f_{\ell}(G_n) \; \stackrel{\mathbb{P}}{\longrightarrow} \; f^*_{\ell}, \qquad n \to \infty \text{.} \]
\end{lemma}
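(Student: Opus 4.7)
The plan is the second-moment method. By vertex exchangeability, for any fixed distinct $v_1, v_2 \in [n]$,
\[ \mathbb{E}[f_\ell(G_n)] = \mathbb{P}\big(|\widetilde{\mathcal{C}}(v_1, G_n)| = \ell\big), \]
\[ \mathbb{E}[f_\ell(G_n)^2] = \tfrac{n-1}{n}\, \mathbb{P}\big(|\widetilde{\mathcal{C}}(v_1, G_n)| = \ell,\; |\widetilde{\mathcal{C}}(v_2, G_n)| = \ell\big) + \tfrac{1}{n}\,\mathbb{E}[f_\ell(G_n)], \]
so $L^2$ convergence -- and hence convergence in probability -- reduces to showing $\mathbb{P}(|\widetilde{\mathcal{C}}(v_1, G_n)| = \ell) \to f^*_\ell$ together with the factorization $\mathbb{P}(|\widetilde{\mathcal{C}}(v_1)| = |\widetilde{\mathcal{C}}(v_2)| = \ell) \to (f^*_\ell)^2$. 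I describe only the one-vertex limit; the two-vertex version follows by the same argument, using the two-root form of Proposition~\ref{prop:benjamini_schramm} together with Lemma~\ref{lemma:asymptotic_indep_of_typeI} applied to the union of the boundary vertices of both exploration balls.

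For a fixed parameter $d \in \mathbb{N}_+$, I would condition on the ball $G_{n,d}(v)$ and on the vector $\Gamma$ of type I vectors $\underline{t}(\cdot, G'_{n,d}(v))$ attached to its boundary $R_d(v, G_n)$. On the good event $A$ of Definition~\ref{def:good_events} together with the complement of the bad event $B(\underline{w}, G'_{n,d}(v))$ from Lemma~\ref{lemma:bad_events}, Lemma~\ref{lemma:types_trickle_down_the_same_way} tells us that the intersection
\[ S_d(v, G_n) \colonequals \widetilde{\mathcal{C}}(v, G_n) \cap R^{\le}_d(v, G_n) \]
is a deterministic subset $S(F, \Gamma) \subseteq V(F)$ depending only on the isomorphism type $F$ of $G_{n,d}(v)$ and on $\Gamma$. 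Setting $p_{n,d,\ell} \colonequals \mathbb{P}(|S_d(v, G_n)| = \ell,\, A)$ and expanding as a sum over pairs $(F, \Gamma)$, Proposition~\ref{prop:benjamini_schramm} (local convergence of the ball) combined with Lemma~\ref{lemma:asymptotic_indep_of_typeI} -- the latter applied to $G'_{n,d}(v)$, which by Lemma~\ref{lemma:after_conditioning_still_ECER} is itself an ECER graph on the unexplored vertex set conditionally on the ball -- yields $p_{n,d,\ell} \to p^*_{d,\ell}$ for each fixed $d$, where $p^*_{d,\ell}$ is the analogous quantity built from $G_{\infty,d}(r)$ and the ECBP type I vectors $\underline{t}^*$.

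In the ECBP model, $S_d(r, G_\infty)$ is monotone nondecreasing in $d$ and converges to $\widetilde{\mathcal{C}}^*(r, G_\infty)$; on $\{|\widetilde{\mathcal{C}}^*(r)| = \ell\}$ Lemma~\ref{lemma:infinitely_many_friends} gives $\underline{t}^*(r) \neq \underline{1}$, so some $i_0 \in I_{\underline{\lambda}}$ satisfies $\big(\underline{t}^*(r)\big)_{i_0} = 0$ and $\widetilde{\mathcal{C}}^*(r) \subseteq \mathcal{C}(r, G_\infty^{\setminus i_0})$, which is a.s.\ finite; bounded convergence then gives $p^*_{d,\ell} \to f^*_\ell$. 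The main obstacle is to transfer this back to $\mathbb{P}(|\widetilde{\mathcal{C}}(v, G_n)| = \ell)$, i.e.,
\[ \limsup_{d \to \infty}\, \limsup_{n \to \infty}\, \Big| \mathbb{P}\big(|\widetilde{\mathcal{C}}(v, G_n)| = \ell\big) - p_{n,d,\ell} \Big| = 0. \]
This reduces to the following tightness claim, uniform in $n$: on $\{|\widetilde{\mathcal{C}}(v, G_n)| \le M\} \cap A$ (for any fixed $M \ge \ell$), the color-avoiding component of $v$ is contained in $R^{\le}_d(v, G_n)$ with probability tending to $1$ as $d \to \infty$. The key observation is that there is always some color $i_0 \in [k]$ with $\widetilde{\mathcal{C}}(v, G_n) \subseteq \mathcal{C}(v, G_n^{\setminus i_0})$ where the latter is small: in the fully supercritical case, Lemma~\ref{lemma:conv_giant_block} rules out $\underline{t}(v) = \underline{1}$ on $\{|\widetilde{\mathcal{C}}(v)| \le M\}$ (otherwise $|\widetilde{\mathcal{C}}(v)| = \Theta(n) \gg M$), so we may take $i_0 \in I_{\underline{\lambda}}$ with $\big(\underline{t}(v)\big)_{i_0} = 0$, making $\mathcal{C}(v, G_n^{\setminus i_0})$ a non-giant component of a supercritical Erd\H{o}s--R\'enyi graph; otherwise we pick any $i_0 \notin I_{\underline{\lambda}}$, making $\mathcal{C}(v, G_n^{\setminus i_0})$ a component of a critical or subcritical Erd\H{o}s--R\'enyi graph. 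In either case, classical Erd\H{o}s--R\'enyi estimates (cf.\ Chapter~4 of~\cite{vdH}) control the size, and hence the graph-theoretic diameter, of $\mathcal{C}(v, G_n^{\setminus i_0})$ tightly and uniformly in $n$, closing the argument.
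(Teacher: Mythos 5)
Your route is genuinely different from the paper's and has a gap in the error control. The paper sidesteps the ``escape'' difficulty entirely: it introduces the color-avoiding horizon $\tau_\infty$ and the sets $\mathcal{B}_d^{\tau_\infty}$ (Definition~\ref{def:stopping_time}), restricts attention to $d$-balls $F \in \mathcal{B}_d^{\tau_\infty}$ for which the color-avoiding component is already captured inside $R^{\le}_{d-1}(v)$, proves $f_{\ell,d,F}(G_n) \to f^*_{\ell,d,F}$ for those (Lemma~\ref{lemma:f_ell_d_F_G_n_conv}), and then obtains both bounds on $f_\ell(G_n)$ by a sandwich argument built from $\sum_\ell f_\ell(G_n)=1$ (Claim~\ref{claim:sum_of_f_ell_G's}) and the giant convergence (Lemma~\ref{lemma:conv_giant_block}). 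No uniform tightness estimate on the truncated set $S_d$ is ever needed.

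In your scheme, writing out the error term gives
\[
\mathbb{P}\big(|\widetilde{\mathcal{C}}(v)|=\ell,\,A\big)-p_{n,d,\ell}
=\mathbb{P}\big(|\widetilde{\mathcal{C}}(v)|=\ell,\,|S_d|<\ell,\,A\big)
-\mathbb{P}\big(|\widetilde{\mathcal{C}}(v)|>\ell,\,|S_d|=\ell,\,A\big)\text{,}
\]
and the second term is not handled by your tightness claim: it includes, for every fixed $M$, the contribution from $|\widetilde{\mathcal{C}}(v)|>M$, i.e.\ the giant scenario and the intermediate range $M<|\widetilde{\mathcal{C}}(v)|\le n^{1/4}$. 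Making this vanish as $M\to\infty$ uniformly in $n$ and $d$ requires precisely the sum-to-$1$/giant sandwich the paper uses, so the ``reduction to tightness'' is not merely unproven but incomplete as a reduction. The tightness claim itself is also only sketched: the color $i_0$ you pick depends on the realization, and what you actually need is tightness of the uncolored diameter of the non-giant component $\mathcal{C}(v,G_n^{\setminus i_0})$ \emph{under the extra conditioning} on $\{|\widetilde{\mathcal{C}}(v)|\le M\}\cap A$; that statement is plausible but is not among the tools developed in this paper and would need its own argument. (Minor slip: on $\{\underline{t}^*(r)\ne\underline{1}\}$ it is not always true that some $i_0\in I_{\underline{\lambda}}$ satisfies $(\underline{t}^*(r))_{i_0}=0$; when $\underline{\lambda}$ is not fully supercritical one must instead take $i_0\notin I_{\underline{\lambda}}$, as you do later in the tightness sketch.)
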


The main difficulty is that the outcome of the event $\big\{ \big| \widetilde{\mathcal{C}}(v, G_n) \big| = \ell \big\}$ cannot be determined by looking at a ``small'' neighborhood of $v$ in $G_n$. However, the event that there exists a color $i$ for which the $i$-avoiding component of $v$ is contained in a ball of radius $d-1$ can be determined by looking at the $d$-neighborhood of $v$, and if this event occurs then the $d$-neighborhood of $v$ and the type I vectors of the vertices on the boundary of the $d$-neighborhood of $v$ together determine the set of vertices which are in the color-avoiding connected component $\widetilde{\mathcal{C}}(v, G_n)$ of $v$ with high probability. In order to make these ideas precise, we need some further definitions.
  
\begin{definition}[The color-avoiding horizon $\tau_\infty$ in ECBP trees] \label{def:stopping_time}
 Let $k \in \mathbb{N}_+$, $\underline{\lambda} \in \mathbb{R}_+^k$ and let $G_{\infty} \colonequals G_\infty(r) \sim \mathcal{G}_\infty(\underline{\lambda})$.
 
For all $i \in [k]$, let
 \[ \tau_{\infty}^{\setminus i} \colonequals \max \left\{ d \in \mathbb{N} \ \middle| \ \exists v \in R_{\infty,d}: r \stackrel{G_{\infty}^{\setminus i}}{\longleftrightarrow} v \right\} \text{,} \]
 and let
 \[ \tau_{\infty} \colonequals \min_{i \in [k]} \tau_{\infty}^{\setminus i} \text{.} \]

 For any $d \in \mathbb{N}_+$, let $\mathcal{B}_d^{\tau_{\infty}}$ denote the set of pairwise non-isomorphic possible realizations of $G_{\infty,d}(r)$ for which the event $\{ \tau_{\infty} = d-1 \}$ holds.
 
 For any $\ell \in \mathbb{N}_+$, $d \in \mathbb{N}_+$ and $F \in \mathcal{B}_d^{\tau_{\infty}}$, let
 \[ f^*_{\ell,d,F} \colonequals f^*_{\ell,d,F}(\underline{\lambda}) \colonequals \mathbb{P} \bigg( \, \Big| \widetilde{\mathcal{C}}^* (r, G_{\infty}) \Big| = \ell, \, G_{\infty,d}(r) \simeq F \, \bigg) \text{.} \]
\end{definition}

Note that the event $\{ \tau_{\infty} = d-1 \}$ is measurable with respect to the sigma-algebra generated by $G_{\infty,d}(r)$, thus
\begin{equation*}
 \biguplus_{F \in \mathcal{B}_d^{\tau_{\infty}}} \big\{ G_{\infty,d}(r) \simeq F \big\} = \{ \tau_{\infty} = d-1 \} \text{.}
\end{equation*}
Also note that $\tau_{\infty} = \infty$ if and only if $\underline{t}^*(r) = \underline{1}$, where $\underline{1} \in \mathbb{R}_+^k$. Thus from Proposition~\ref{prop:char_of_giant} and Lemma~\ref{lemma:infinitely_many_friends}, it follows that $\mathbb{P}(\tau_{\infty} = \infty) > 0$ if and only if $\underline{\lambda}$ is fully supercritical.

Observe that
\begin{equation} \label{eq:fstar_elldF_sums_to_1}
 \sum_{\ell \in \mathbb{N}_+} \sum_{d \in \mathbb{N}_+} \sum_{F \in \mathcal{B}_d^{\tau_{\infty}}} f^*_{\ell,d,F} = 1
\end{equation}
holds.

\begin{definition}[Component size $\ell$ with color-avoiding horizon $d-1$ in ECER graphs] \label{def:f_ell_d_F_G}
 Let $k, \ell, n \in \mathbb{N}_+$, $d \in \mathbb{N}_+$, $\underline{\lambda} \in \mathbb{R}_+^k$, let $G \sim \mathcal{G}_n ([n],\underline{\lambda})$ and $v \in [n]$, and let $F \in \mathcal{B}_d^{\tau_{\infty}}$. Let us define
 \[ f_{\ell,d,F}(G) \colonequals f_{\ell,d,F}(\underline{\lambda}, G) \colonequals \frac{1}{n} \sum_{v \in [n]} \mathds{1} \bigg[ \, \Big| \widetilde{\mathcal{C}}(v,G) \Big| = \ell,\,  G_d(v) \simeq F \, \bigg] \text{.} \]
\end{definition}

\begin{lemma}[Convergence of fixed horizon component size densities] \label{lemma:f_ell_d_F_G_n_conv}
 Let $k, \ell \in \mathbb{N}_+$, $d \in \mathbb{N}_+$, and $\underline{\lambda} \in \mathbb{R}_+^k$. Let $G_n \sim \mathcal{G}_n([n], \underline{\lambda})$ for all $n \in \mathbb{N}_+$, and let $F \in \mathcal{B}_d^{\tau_{\infty}}$. If Assumption~\ref{asm:simplifying_assumption} holds, then
 \[ f_{\ell, d, F}(G_n) \stackrel{\mathbb{P}}{\longrightarrow} f^*_{\ell, d, F}, \qquad n \to \infty \text{.} \]
\end{lemma}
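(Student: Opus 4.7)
The plan is a standard second-moment argument: by vertex exchangeability,
\[
\mathbb{E}\bigl[f_{\ell,d,F}(G_n)\bigr] = \mathbb{P}(\mathcal{E}(v_1)), \qquad
\mathbb{E}\bigl[f_{\ell,d,F}(G_n)^2\bigr] = O(1/n) + \tfrac{n-1}{n}\, \mathbb{P}(\mathcal{E}(v_1) \cap \mathcal{E}(v_2)),
\]
where $\mathcal{E}(v) \colonequals \{|\widetilde{\mathcal{C}}(v,G_n)| = \ell,\; G_{n,d}(v) \simeq F\}$ and $v_1, v_2 \in [n]$ are any two distinct fixed vertices. Convergence in $L^2$, hence in probability, of $f_{\ell,d,F}(G_n)$ to $f^*_{\ell,d,F}$ will therefore follow from $\mathbb{P}(\mathcal{E}(v_1)) \to f^*_{\ell,d,F}$ and $\mathbb{P}(\mathcal{E}(v_1) \cap \mathcal{E}(v_2)) \to (f^*_{\ell,d,F})^2$.

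The first step is to reduce the color-avoiding component size condition to a deterministic function of the boundary data. Since $F \in \mathcal{B}_d^{\tau_\infty}$, some color $i_0$ witnesses $\tau_\infty^{\setminus i_0} = d-1$ already inside $F$, and a short argument shows that whenever $G_{n,d}(v_j) \simeq F$, any path leaving the $d$-ball must pass through a level-$d$ vertex that is not $i_0$-avoiding reachable inside $F$, so the $i_0$-avoiding component of $v_j$ in $G_n$ is trapped in $R_{n,d-1}^{\le}(v_j)$, whence $\widetilde{\mathcal{C}}(v_j, G_n) \subseteq V(G_{n,d}(v_j))$. Combining this with Lemma~\ref{lemma:types_trickle_down_the_same_way} applied to each $u \in V(F)$ in turn, on the good event obtained by intersecting the complements of the bad events $B$ from Lemma~\ref{lemma:bad_events} for the boundary vertices of $v_1$ and $v_2$, the membership of any $u \in V(F)$ in $\widetilde{\mathcal{C}}(v_j, G_n)$ becomes a deterministic function of the isomorphism class of $G_{n,d}(v_j)$ and of the type I vector assignment $\Gamma_j : R_d(v_j, G_n) \to \{0,1\}^{I_{\underline{\lambda}}}$ computed inside $G'_{n,d}(v_j)$, and this function coincides with the corresponding one in the ECBP tree. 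Denote the resulting cardinality of the friend set under realization $(F, \Gamma)$ by $\kappa(F,\Gamma) \in \mathbb{N}_+$.

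The second step is the evaluation and comparison of the refined joint distributions. On the ECBP side, Definition~\ref{def:ECBP} implies that conditional on $G_{\infty,d}(r) \simeq F$ the subtrees rooted at the boundary vertices are i.i.d.\ ECBP trees, so the boundary types are i.i.d.\ with law $p^*(\cdot)$, and
\[
f^*_{\ell,d,F} = \mathbb{P}\bigl(G_{\infty,d}(r) \simeq F\bigr) \sum_{\Gamma} \mathds{1}[\kappa(F,\Gamma) = \ell] \prod_{w \in R_d(r,F)} p^*(\Gamma(w)).
\]
On the ECER side, Lemma~\ref{lemma:after_conditioning_still_ECER} says that conditional on $G_{n,d}(v_1, v_2)$, the residual graph $G'_{n,d}(v_1,v_2)$ is an ECER graph on $n - O(1)$ vertices into which the $O(1)$-many boundary vertices of $v_1$ and $v_2$ embed as distinct labeled vertices. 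Equation~\eqref{eq:conv_of_edgecolored_balls_of_radius_d} gives joint convergence of the two $d$-balls to independent ECBP balls, while equation~\eqref{eq:asymptotic_indep_of_typeI_version2}, applied inside this residual ECER graph, gives the asymptotic independence of the boundary types with the correct marginals $p^*$. Multiplying these contributions and summing over $(\Gamma_1, \Gamma_2)$ yields the factorization $\mathbb{P}(\mathcal{E}(v_1) \cap \mathcal{E}(v_2)) \to (f^*_{\ell,d,F})^2$, and the single-vertex version gives $\mathbb{P}(\mathcal{E}(v_1)) \to f^*_{\ell,d,F}$.

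The main obstacle will be careful bookkeeping: if $F$ has non-trivial automorphisms the isomorphism $G_{n,d}(v_j) \simeq F$ is not unique, so the transport of $\Gamma_j$ to the fixed vertex set $R_d(r,F)$ must be handled consistently when summing; and one must verify that the good event really does rule out every way in which an ECER $i$-avoiding connection between $v_j$ and some $u \in V(F)$ could deviate from what $(F, \Gamma_j)$ prescribes in the tree. All of this is controlled by intersecting finitely many events whose probabilities tend to $1$ by Proposition~\ref{prop:good_events} and Lemma~\ref{lemma:bad_events}, after which the remaining calculation is a finite combinatorial sum over $(F, \Gamma)$-data whose probabilities converge by the tools already assembled.
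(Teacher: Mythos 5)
Your proposal is correct and follows essentially the same route as the paper's own proof: a first-and-second-moment reduction (single- and two-vertex convergence), trapping $\widetilde{\mathcal{C}}(v_j,G_n)$ inside the $d$-ball via the color $i_0$ with $\tau_\infty^{\setminus i_0}=d-1$, applying Lemma~\ref{lemma:after_conditioning_still_ECER} and Lemma~\ref{lemma:types_trickle_down_the_same_way} on the complement of the bad events of Lemma~\ref{lemma:bad_events}, then factorizing via \eqref{eq:conv_of_edgecolored_balls_of_radius_d} and \eqref{eq:asymptotic_indep_of_typeI_version2}. One small note: when treating the two-vertex joint probability you should compute all boundary types inside the joint residual $G'_{n,d_1,d_2}(v_1,v_2)$ (as you do when invoking \eqref{eq:asymptotic_indep_of_typeI_version2}), not separately inside each $G'_{n,d}(v_j)$ as the phrasing of your first step suggests; the paper handles this carefully by conditioning on $G_{n,d_1}(v_1)\cup G_{n,d_2}(v_2)$ and applying the trickling-down lemma twice against the same residual.
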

\begin{proof}
 Let $G_{\infty}(r) \sim \mathcal{G}_{\infty}(\underline{\lambda})$. Let $\ell_1, \ell_2 \in \mathbb{N}_+$, $d_1, d_2 \in \mathbb{N}_+$, let $v_1, v_2 \in [n]$, $v_1 \ne v_2$ and $F_1 \in \mathcal{B}_{d_1}^{\tau_{\infty}}, F_2 \in \mathcal{B}_{d_2}^{\tau_{\infty}} $. Similarly to the beginning of the proof of Theorem~\ref{thm:conv_of_typeI}, it suffices to prove that
 \begin{equation} \label{eq:f_elldF_no1}
  \lim_{n \to \infty} \mathbb{P} \bigg( \Big| \widetilde{\mathcal{C}}(v_1, G_n) \Big| = \ell_1, \, G_{n,d_1}(v_1) \simeq F_1 \bigg) = f^*_{\ell_1, d_1, F_1}
 \end{equation}
 and
 \begin{equation} \label{eq:f_elldF_no2}
 \lim_{n \to \infty} \mathbb{P} \bigg( \forall j \in [2] : \Big| \widetilde{\mathcal{C}}(v_j,G_n) \Big| = \ell_j, \, G_{n,d_j}(v_j) \simeq F_j \bigg) = \prod_{j \in [2]} f^*_{\ell_j,d_j,F_j}
 \end{equation}
 hold. Note that~\eqref{eq:f_elldF_no1} follows from~\eqref{eq:f_elldF_no2} by the total law of probability, by \eqref{eq:fstar_elldF_sums_to_1} and from the fact that $\lim_{n \to \infty} \mathbb{P} \big( \text{$G_{n,d_2}(v_2)$ is a tree} \big) = 1$. So it is enough to prove~\eqref{eq:f_elldF_no2}.
 
 First, we reformulate the right-hand side of~\eqref{eq:f_elldF_no2}. Since $F_j \in \mathcal{B}_{d_j}^{\tau_{\infty}}$ for any $j \in [2]$, there exists a color $i_j \in [k]$ such that
 $ G_{\infty,{d_j}}(r) \simeq F_j$ implies $\tau_{\infty}^{\setminus i_j} = {d_j}-1$, hence all of the friends of $r$ are in $ R^{\le}_{\infty, d_j-1}$. For any $\ell, d \in \mathbb{N}_+$ and $F \in \mathcal{B}_d^{\tau_{\infty}}$, let $\mathcal{T}_d(\ell,F)$ denote the set of possible assignments of type I vectors to the vertices of $R_{\infty,d}$ for which $\big| \widetilde{\mathcal{C}}^*(r) \big| = \ell$ and $G_{\infty,d}(r) \simeq F$. Note that by \eqref{item:zero_one_law} of Lemma~\ref{lemma:types_trickle_down_the_same_way}, $\mathcal{T}_d(\ell,F)$ is well-defined and given $G_{\infty,d}(r) \simeq F$, any $\Gamma \in \mathcal{T}_d(\ell, F)$ determines $\widetilde{\mathcal{C}}^*(r)$. By the total law of probability, we obtain
 \begin{multline} \label{eq:fstar_elldF_prod_sum}
  \prod_{j \in [2]} f^*_{\ell_j,d_j,F_j} \\
  = \sum_{\substack{\Gamma_1 \in \mathcal{T}_{d_1}(\ell_1, F_1), \\ \Gamma_2 \in \mathcal{T}_{d_2}(\ell_2, F_2)}} \, \prod_{j \in [2]} \mathbb{P} \big( G_{\infty,d_j}(r) \simeq F_j, \, \forall u \in R_{\infty,d_j}: ~ \underline{t}^*(u, G_{\infty}) = \Gamma_j(u) \big) \\
  = \sum_{\substack{\Gamma_1 \in \mathcal{T}_{d_1}(\ell_1, F_1), \\ \Gamma_2 \in \mathcal{T}_{d_2}(\ell_2, F_2)}} \, \prod_{j \in [2]} \left( \mathbb{P} \big( G_{\infty,{d_j}}(r) \simeq F_j \big) \cdot \prod_{u \in R_{d_j}(r,F_j) } \mathbb{P} \big( \underline{t}^*(u, G_{\infty}) = \Gamma_j(u) \big) \right) \\
  = \left( \prod_{j \in [2]} \mathbb{P} \big( G_{\infty,{d_j}}(r) \simeq F_j \big) \right) 
  \cdot \sum_{\substack{\Gamma_1 \in \mathcal{T}_{d_1}(\ell_1, F_1), \\ \Gamma_2 \in \mathcal{T}_{d_2}(\ell_2, F_2)}} \, \prod_{j \in [2]} \, \prod_{u \in R_{d_j}(r,F_j)} p^* \big( \Gamma_j(u) \big) \text{.}
 \end{multline}
 
 Now we reformulate the left-hand side of~\eqref{eq:f_elldF_no2}. Let
 \begin{multline*}
  E(v_1, v_2, d_1, d_2, G_n) \\
  \colonequals \big\{ \text{$G_{n,d_1}(v_1) \cup G_{n,d_2}(v_2)$ is a forest with two components} \big\} \text{,}
 \end{multline*}
 where $G_{n,d_1}(v_1) \cup G_{n,d_2}(v_2)$ is the edge-colored sub(multi)graph of $G_n$ whose vertex and edge set is the (not necessarily disjoint) union of the vertex and edge sets of $G_{n,d_1}(v_1)$ and $G_{n,d_2}(v_2)$, respectively. Then by taking $d \colonequals \max \{ d_1, d_2 \}$ in~\eqref{eq:lim_of_Ed_is_1}, we obtain that
 \begin{multline} \label{eq:lim_ell12_d12_F12}
  \lim_{n \to \infty} \mathbb{P} \bigg( \forall j \in [2]: \Big| \widetilde{\mathcal{C}}(v_j, G_n) \Big| = \ell_j, \, G_{n,d_j}(v_j) \simeq F_j \bigg) \\
  = \lim_{n \to \infty} \mathbb{P} \bigg(  E(v_1, v_2, d_1, d_2, G_n), \, \forall j \in [2]: \Big| \widetilde{\mathcal{C}}(v_j, G_n) \Big| = \ell_j, \, G_{n,d_j}(v_j) \simeq F_j \bigg) \\
  = \lim_{n \to \infty} \mathbb{P} \bigg( \forall j \in [2]: \, \Big| \widetilde{\mathcal{C}}(v_j, G_n) \Big| = \ell_j \biggm| G_{n,d_1}(v_1) \cup G_{n,d_2}(v_2) \simeq F_1 \oplus F_2 \bigg) \\ \cdot \mathbb{P} \big( G_{n,d_1}(v_1) \cup G_{n,d_2}(v_2) \simeq F_1 \oplus F_2 \big)
 \end{multline}
 holds.
 
 Conditional on $\big\{ G_{n,d_1}(v_1) \cup G_{n,d_2}(v_2) \simeq F_1 \oplus F_2 \big\}$, we identify $G_{n,d_j}(v_j)$ with $F_j$ for all $j \in [2]$, and clearly, $R_{d_1}(v_1, G_n) \cap R_{d_2}(v_2, G_n) = \emptyset$ holds. Since $F_j \in \mathcal{B}^{\tau_{\infty}}_{d_j}$, we get $\widetilde{C}(v_j, G_n) \subseteq V(F_j)$ for all $j \in [2]$. Let 
 \[ G'_{n, d_1, d_2}(v_1, v_2) \colonequals G_n - \Big( R^{\le}_{d_1-1} (v_1) \cup R^{\le}_{d_2-1} (v_2) \Big) . \]
 Applying Lemma~\ref{lemma:after_conditioning_still_ECER} twice, we get that conditional on $G_{n,d_1}(v_1) \cup G_{n,d_2}(v_2)$, the subgraph $G'_{n, d_1, d_2}(v_1, v_2)$ is still an ECER graph with parameters $n$ and $\underline{\lambda}$.
 
 Thus by Lemma~\ref{lemma:bad_events} for $G'_{n, d_1, d_2}(v_1, v_2)$ and the vertices of $R_{d_1}(v_1, G_n) \cup R_{d_2}(v_2, G_n)$, we can apply Lemma~\ref{lemma:types_trickle_down_the_same_way} twice to conclude that given $G_{n,d_1}(v_1) \cup G_{n,d_2}(v_2) \simeq F_1 \oplus F_2$, any $\Gamma_j \in \mathcal{T}_{d_j}(\ell_j, F_j)$ determines $\widetilde{\mathcal{C}}(v_j)$ and with high probability, $\big| \widetilde{\mathcal{C}}(v_j, G_n) \big| = \ell_j$ if and only if the type configuration of $R_{d_j}(v_j, G_n)$ is in $\mathcal{T}_{d_j}(\ell_j, F_j)$ for any $j \in [2]$.
 
 Therefore and by Lemma~\ref{lemma:asymptotic_indep_of_typeI} (more precisely, by~\eqref{eq:asymptotic_indep_of_typeI_version2}), we obtain
 \begin{multline} \label{eq:friends_lim_sum_cond}
  \lim_{n \to \infty} \mathbb{P} \bigg( \forall j \in [2]: \, \Big| \widetilde{\mathcal{C}}(v_j, G_n) \Big| = \ell_j \biggm| G_{n,d_1}(v_1) \cup G_{n,d_2}(v_2) \simeq F_1 \oplus F_2 \bigg) \\
  = \! \lim_{n \to \infty} \sum_{\substack{\Gamma_1 \in \mathcal{T}_{d_1}(\ell_1, F_1), \\ \Gamma_2 \in \mathcal{T}_{d_2}(\ell_2, F_2)}} \! \! \mathbb{P} \Bigg( \forall j \in [2] \, \forall u \in R_{d_j}(v_j, G_n) \! : \, \underline{t}\big( u, G'_{n,d_1,d_2}(v_1, v_2) \big) \! = \! \Gamma_j(u) \! \Biggm| \\[-5pt]
  \hspace{205pt} G_{n,d_1}(v_1) \cup G_{n,d_2}(v_2) \simeq F_1 \oplus F_2 \Bigg) \\[15pt]
  = \sum_{\substack{\Gamma_1 \in \mathcal{T}_{d_1}(\ell_1, F_1), \\ \Gamma_2 \in \mathcal{T}_{d_2}(\ell_2, F_2)}} \, \prod_{j \in [2]} \prod_{u \in R_{d_j}(r,F_j) } p^* \big( \Gamma_j(u) \big) \text{.}
 \end{multline}
 
 Plugging~\eqref{eq:conv_of_edgecolored_balls_of_radius_d} and~\eqref{eq:friends_lim_sum_cond} into \eqref{eq:lim_ell12_d12_F12}, and also using~\eqref{eq:fstar_elldF_prod_sum}, we obtain the desired~\eqref{eq:f_elldF_no2}.
\end{proof}
 
We are now ready to prove Lemma~\ref{lemma:conv_of_f_ell_G_n}.

\begin{proof}[Proof of Lemma~\ref{lemma:conv_of_f_ell_G_n}]
 Let $\ell_0 \in \mathbb{N}$ and $\varepsilon > 0$. First, we show
 \begin{equation} \label{eq:lower_bound_on_f_ell}
  \lim_{n \to \infty} \mathbb{P} \big( f_{\ell_0}(G_n) \ge f^*_{\ell_0} - \varepsilon \big) = 1.
 \end{equation}
 By the total law of probability,
 \[ \sum_{d \in \mathbb{N}_+} \sum_{F \in \mathcal{B}_d^{\tau_{\infty}}} f^*_{\ell_0,d,F} = f^*_{\ell_0} \]
 holds, thus we can choose a $D_{\varepsilon} \in \mathbb{N}_+$, and for each $d \in [D_{\varepsilon}]$ a finite subset $\mathcal{B}_{d,\varepsilon}^{\tau_{\infty}}$ of $\mathcal{B}_d^{\tau_{\infty}}$ such that
 \[ \sum_{d \in [D_{\varepsilon}]} \sum_{F \in \mathcal{B}_{d,\varepsilon}^{\tau_{\infty}}} f^*_{\ell_0,d,F} \ge f^*_{\ell_0} - \frac{\varepsilon}{2} \text{.} \]
 By Lemma~\ref{lemma:f_ell_d_F_G_n_conv}, we have
 \[ \sum_{d \in [D_{\varepsilon}]} \sum_{F \in \mathcal{B}_{d,\varepsilon}^{\tau_{\infty}}} f_{\ell_0,d,F}(G_n) \stackrel{\mathbb{P}}{\longrightarrow} \sum_{d \in [D_{\varepsilon}]} \sum_{F \in \mathcal{B}_{d,\varepsilon}^{\tau_{\infty}}} f^*_{\ell_0,d,F} \text{.} \]
 Let us also note that
 \[ \sum_{d \in [D_{\varepsilon}]} \sum_{F \in \mathcal{B}_{d,\varepsilon}^{\tau_{\infty}}} f_{\ell_0,d,F}(G_n) \le \sum_{d \in \mathbb{N}_+} \sum_{F \in \mathcal{B}_d^{\tau_{\infty}}} f_{\ell_0,d,F}(G_n) \leq f_{\ell_0}(G_n) \]
 holds, therefore the desired inequality~\eqref{eq:lower_bound_on_f_ell} follows.
 
 \medskip
 
 Next, we show
 \begin{equation} \label{eq:upper_bound_on_f_ell}
  \lim_{n \to \infty} \mathbb{P} \big( f_{\ell_0}(G_n) \le f^*_{\ell_0} + \varepsilon \big) = 1 \text{.}
 \end{equation}
 By Claim~\ref{claim:sum_of_fstar_ell's}, there exists $L_{\varepsilon} \in \mathbb{N}_+$ such that 
 \begin{equation} \label{eq:lower_bound_on_sum_of_fstar_ell}
  \sum_{\ell \in [L_{\varepsilon}]} f^*_{\ell} \ge 1 - f^*_{\infty} - \frac{\varepsilon}{3}.
 \end{equation}
 It follows from Lemma~\ref{lemma:conv_giant_block} that
 \begin{equation*}
  \lim_{n \to \infty} \mathbb{P} \left( \sum_{\ell = L_{\varepsilon}+1}^{\infty} f_{\ell}(G_n) \ge f^*_{\infty} - \frac{\varepsilon}{3} \right) = 1 \text{,}
 \end{equation*}
 thus by Claim~\ref{claim:sum_of_f_ell_G's}, we have
 \begin{equation*}
  \lim_{n \to \infty} \mathbb{P} \left( \sum_{\ell \in [L_{\varepsilon}]} f_{\ell}(G_n) \le 1 - f^*_{\infty} + \frac{\varepsilon}{3} \right) = 1 \text{.}
 \end{equation*}
 Similarly to~\eqref{eq:lower_bound_on_f_ell}, we obtain that
 \begin{equation*}
  \lim_{n \to \infty} \mathbb{P} \left( f_{\ell}(G_n) \ge f^*_{\ell} - \frac{\varepsilon}{3 L_{\varepsilon}} \right) = 1
 \end{equation*}
 holds for all $\ell \in [L_{\varepsilon}]$. Thus by~\eqref{eq:lower_bound_on_sum_of_fstar_ell}, it follows that
 \begin{multline*}
  1 - f^*_\infty - \frac{2}{3} \varepsilon - f^*_{\ell_0} \le \sum_{\ell \in [L_{\varepsilon}]} f^*_{\ell} - \frac{\varepsilon}{3} - f^*_{\ell_0} = \sum_{\ell \in [L_{\varepsilon}]} \left( f^*_{\ell} - \frac{\varepsilon}{3 L_{\varepsilon}} \right) - f^*_{\ell_0} \\
  \le \sum_{\ell \in [L_{\varepsilon}] } f_{\ell}(G_n) - f_{\ell_0}(G_n) \le 1 - f^*_{\infty} + \frac{\varepsilon}{3} - f_{\ell_0}(G_n) 
 \end{multline*}
 holds with high probability, therefore we obtain~\eqref{eq:upper_bound_on_f_ell}, and hence the statement of the lemma follows.
\end{proof}

Now that we showed that the sequence $f_{\ell}(G_n)$ converges for any $\ell \in \mathbb{N_+}$, we prove a proposition about its limit $f^*_{\ell}$.

\begin{proof}[Proof of Proposition~\ref{prop:char_of_component_structure}]
 Let $G_{\infty} \colonequals G_{\infty}(r) \sim \mathcal{G}_{\infty}(\underline{\lambda})$.
 
 We begin with the proof of \eqref{item:fully}. Let $\underline{\lambda} \in \mathbb{R}_+^k$ be fully critical-subcritical. We need to show that $r$ has almost surely no friends other than itself (cf.\ Definition~\ref{def:friends}). Since $\underline{\lambda}$ is fully critical-subcritical, $\lambda^{\setminus i} \le 1$ holds for all $i \in [k]$, thus by Claim~\ref{claim:GIr_is_BP}, the root $r$ is almost surely not $i$-avoiding connected to infinity. Let $v \in V(G_{\infty})$, $v \ne r$ be an arbitrary vertex and let $i \in [k]$ be a color appearing on the unique path connecting $r$ and $v$ in $G_{\infty}$. Then
 \[ v \stackrel{G_{\infty}^{\setminus i}}{ \centernot \longleftrightarrow} r \text{,} \]
 thus $r$ and $v$ are almost surely not $i$-avoiding friends, so they are not friends either. Therefore, $\mathbb{P} \big( \widetilde{\mathcal{C}}^*(r) = \{ r \} \big) = 1$ and thus by Claim~\ref{claim:sum_of_fstar_ell's}, we are done.
 
 \medskip
  
 Next, we prove \eqref{item:not_fully}. Let $\underline{\lambda} \in \mathbb{R}_+^k$ be not fully critical-subcritical. By the definition of $I_{\underline{\lambda}}$, we have $I_{\underline{\lambda}} \ne \emptyset$; let $i_1 \in I_{\underline{\lambda}}$ and $i_2 \in [k] \setminus \{ i_1 \}$, and let $\ell \in \mathbb{N}_+$. Let $F$ be the following rooted, edge-colored tree: the root of $F$ is $r$, which has exactly $\ell$ neighbors $v_1, \ldots, v_{\ell}$, each of which with one further neighbor $w_1, \ldots, w_{\ell}$, respectively, and the color of the edges $ v_1, \ldots, r v_{\ell-1}$ is $i_1$, and the color of the edges $r w_1, \ldots, r w_{\ell}$ is $i_2$.
 It is not difficult to see that if the event
 \[ \{ G_{\infty,2}(r) \simeq F \} \cap \left\{ \forall j \in [\ell]: w_j \stackrel{G_\infty^{\setminus i_1}}{\longleftrightarrow} \infty \right\} \]
 occurs, then $\widetilde{\mathcal{C}}^*(r, G_{\infty}) = \{ r, v_1, \ldots, v_{\ell-1} \}$ (where we identified the vertices of $G_{\infty,2}(r)$ and $F$). All we are left to show is that the above event occurs with positive probability. Clearly, we have
 \begin{equation*}
  \mathbb{P} \left( \big\{ G_{\infty,2}(r) \simeq F \big\} \cap \left\{ \forall j \in [\ell]: w_j \stackrel{G_\infty^{\setminus i_1}}{\longleftrightarrow} \infty \right\} \right) 
  = \big( \theta^{\setminus i_1} \big)^{\ell} \cdot \mathbb{P} \big( G_{\infty,2}(r) \simeq F \big) \text{.}
 \end{equation*}
 Since $i_1 \in I_{\underline{\lambda}}$, we have $\theta^{\setminus i_1} > 0$, and it is not difficult to see that $\mathbb{P} \big( G_{\infty,2}(r) \simeq F \big) > 0$, therefore we are done.
\end{proof}

\section{Explicit formula for \texorpdfstring{$f^*_\infty$}{} and asymptotic behaviour of \texorpdfstring{$f^*_\infty(\varepsilon)$}{}} \label{section:explicit_formulas}

The goal of the section is to give explicit formulas for $f^*_\infty$ and to prove the asymptotic formula stated in Theorem~\ref{thm:barely_supcrit}. We provide two different methods for the calculation of $f^*_\infty$ in Sections~\ref{subsection:f_infty_girls} and~\ref{subsection:f_infty_balazs}. On the one hand, the method presented in Section~\ref{subsection:f_infty_girls} is simpler and works without Assumption~\ref{asm:simplifying_assumption}.
On the other hand, the method of Section~\ref{subsection:f_infty_balazs} only works under Assumption~\ref{asm:simplifying_assumption}, but it can also be used to prove Theorem~\ref{thm:barely_supcrit}.

\medskip 

Our formulas for $f^*_\infty$ use the Lambert $W$ function, which satisfies
\begin{equation} \label{eq:equation_of_LambertW}
 W(z) \mathrm{e}^{W(z)} = z
\end{equation}
for any $z \in \mathbb{C}$. When restricted to real numbers, \eqref{eq:equation_of_LambertW} is solvable if and only if $z \in [-1/\mathrm{e}, +\infty)$. Moreover, it has exactly one solution if $z \in [0, +\infty)$ and has exactly two solutions if $z \in [-1/\mathrm{e}, 0)$. The solutions satisfying $W(z) \ge -1$ form a branch (usually denoted by $W_0$) called the principal branch of the Lambert $W$ function. The Taylor series of the principal branch is
\[ W(x) = \sum_{n \in \mathbb{N}_+} \frac{(-n)^{n-1}}{n!} x^n \text{,} \]
whose radius of convergence is $1/\mathrm{e}$.
Throughout this section, we denote by $W(x)$ the principal branch of the Lambert $W$ function and we always specify which further properties of $W$ we use.

\subsection{Using a system of equations} \label{subsection:f_infty_girls}

Here we give a necessary and sufficient condition for the color intensity parameter vector $\underline{\lambda}$ to have $f^*_{\infty} > 0$, and we also give an explicit formula for $f^*_{\infty}$ in terms of the Lambert $W$ function (cf.\ \eqref{eq:equation_of_LambertW}). Let us emphasize that in this subsection we do not assume that Assumption~\ref{asm:simplifying_assumption} holds for $\underline{\lambda}$.

Let $G_{\infty} \colonequals G_{\infty}(r) \sim \mathcal{G}_{\infty}(\underline{\lambda})$. By Lemma~\ref{lemma:infinitely_many_friends}, we have
\begin{equation} \label{eq:fstar_infty}
 f^*_{\infty} = p^*(\underline{1}) = \mathbb{P} \left(\, \forall i \in [k] \, : \,  r \stackrel{G_{\infty}^{\setminus i} }{\longleftrightarrow} \infty \, \right) \text{.}
\end{equation}

\begin{definition}[Probability of avoiding a color from a fixed set of colors] \label{def:p_I}
 Let $k \in \mathbb{N}_+$, $\underline{\lambda} \in \mathbb{R}_+^k$ and $G_{\infty} \colonequals G_{\infty}(r) \sim \mathcal{G}_{\infty}(\underline{\lambda})$. Let us define
 \[ p_{\emptyset} \colonequals 0 \]
 and
 \[ p_I \colonequals \mathbb{P} \left( \bigcup_{i \in I} \left\{ r \stackrel{G_{\infty}^{\setminus i}}{\longleftrightarrow} \infty \right\} \right) \]
 for any nonempty $I \subseteq [k]$.
\end{definition}

\begin{definition}[Extended color-avoiding type I vectors in ECBP trees] \label{def:extended_typeI_in_ECBP}
 Let $k \in \mathbb{N}_+$, $\underline{\lambda} \in \mathbb{R}_+^k$, and let $G_{\infty} \colonequals G_{\infty}(r) \sim \mathcal{G}_{\infty}(\underline{\lambda})$. The \emph{extended color-avoiding type I vector} of a vertex $v$ in the ECBP tree $G_{\infty}$ is
 \[ \underline{\widehat{t}}{}^*(v) \colonequals \underline{\widehat{t}}{}^*(v, G_{\infty}) \colonequals \bigg( \mathds{1} \Big[ v \stackrel{G_{\infty}^{\setminus i}}{\longleftrightarrow} \infty \Big] \bigg)_{i \in [k]} \text{.} \]
\end{definition}

\begin{definition}[Probability of extended type I vectors in ECBP trees] \label{def:pstarhat_gamma}
 Let $k \in \mathbb{N}_+$, $\underline{\lambda} \in \mathbb{R}_+^k$, and let $G_{\infty}(r) \sim \mathcal{G}_{\infty}(\underline{\lambda})$. Given any $\underline{\gamma} \in \left\{ 0,1 \right\}^{k}$, let us denote
 \[ \widehat{p}^{\, *}(\underline{\gamma}) \colonequals \widehat{p}^{\, *}(\underline{\gamma},\underline{\lambda}) \colonequals \mathbb{P} \Big( \, \underline{\widehat{t}}{}^*(r)=\underline{\gamma} \, \Big) \text{.} \]
\end{definition}

By the definition of $\widehat{p}^{\, *}(\underline{\gamma})$, clearly
\begin{equation} \label{eq:p_I_expressed_with_pstar}
 p_{I} = \sum_{\text{\scriptsize \begin{tabular}{c} $\underline{\gamma} \in \{ 0,1 \}^k:$ \\ $\sum_{i \in I} \gamma_i \ge 1$ \end{tabular}}} \widehat{p}^{\, *}(\underline{\gamma})
\end{equation}
holds for any nonempty $I \subseteq [k]$.

To determine $f^*_{\infty}$, first note that by Proposition~\ref{prop:char_of_giant}, we can assume that $\underline{\lambda} \in \mathbb{R}_+^k$ is fully supercritical (otherwise $f^*_{\infty} = 0$). Then $I_{\underline{\lambda}} = [k]$, thus $p^*(\underline{\gamma}) = \widehat{p}^{\, *}(\underline{\gamma})$ holds for any $\underline{\gamma} \in \left\{ 0,1 \right\}^{[k]}$. By \eqref{eq:fstar_infty} and \eqref{eq:p_I_expressed_with_pstar}, and by the inclusion-exclusion formula, we obtain
\begin{equation} \label{eq:explicit_fstar_infty}
 f^*_{\infty} = p^*(\underline{1}) = \sum_{I \subseteq [k]} (-1)^{|I|} (1 - p_I) \text{,}
\end{equation}
so it is enough to give an explicit formula for the probabilities $p_I$ for all $I \subseteq [k]$.

\begin{lemma}[Implicit equation for $p_I$] \label{lemma:implicit_equation}
Let $k \in \mathbb{N}_+$, $\underline{\lambda} \in \mathbb{R}_+^k$ and $G_{\infty}(r) \sim \mathcal{G}_{\infty}(\underline{\lambda})$. Then
\[ p_I = 1 - \exp\left(-\sum_{j \in I} \lambda_j p_{I \setminus \{j\}}- p_{I} \sum _{j \in [k]\setminus I} \lambda_j \right) \]
holds for any nonempty $I \subseteq [k]$.
\end{lemma}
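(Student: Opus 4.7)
The plan is to express $1 - p_I$ as the probability of an intersection of non-survival events and then unfold it using the recursive structure of the ECBP tree. By Definition~\ref{def:p_I},
\[
1 - p_I \;=\; \mathbb{P}\!\left(\bigcap_{i \in I} \overline{B_i}\right), \qquad \text{where } B_i \;=\; \Big\{ r \stackrel{G_\infty^{\setminus i}}{\longleftrightarrow} \infty \Big\}.
\]
For each $j \in [k]$, let $N_j$ denote the number of first-generation children of $r$ joined to $r$ by an edge of color $j$. By Definition~\ref{def:ECBP} the variables $(N_j)_{j \in [k]}$ are independent Poisson with means $\lambda_j$, and conditional on $(N_j)_{j \in [k]}$ the subtrees rooted at the distinct children are i.i.d.\ copies of $G_\infty$ attached to $r$ via their respective colored edges.

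Next, because $G_\infty$ is a tree, the event $B_i$ holds if and only if there exists a child $v$ of $r$ joined by an edge of color $j \ne i$ whose own subtree contains an infinite $i$-avoiding ray emanating from $v$. Consequently, the intersection event $\bigcap_{i \in I} \overline{B_i}$ factors over the first-generation children: a child $v$ joined by an edge of color $j$ is ``good'' precisely when its subtree contains no infinite $i$-avoiding ray from $v$ for any $i \in I$ with $i \ne j$. By the self-similarity of the ECBP tree, the probability that a given child's subtree is good equals $1 - p_{I \setminus \{j\}}$ when $j \in I$ (with the convention $p_\emptyset = 0$ covering the case $I = \{j\}$), and equals $1 - p_I$ when $j \in [k] \setminus I$.

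Then, using conditional independence across the distinct children together with the Poisson probability generating function identity $\mathbb{E}\!\left[(1-x)^{N_j}\right] = \exp(-\lambda_j x)$, we multiply over all $k$ colors to obtain
\[
1 - p_I \;=\; \prod_{j \in I} \exp\bigl(-\lambda_j\, p_{I \setminus \{j\}}\bigr) \;\cdot\; \prod_{j \in [k] \setminus I} \exp\bigl(-\lambda_j\, p_I\bigr),
\]
which rearranges to the claimed identity. The only conceptual step is the color-dependent factorization over children, after which everything reduces to routine Poisson algebra, so I do not expect any serious obstacle.
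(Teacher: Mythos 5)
Your proof is correct and follows essentially the same route as the paper's: condition on the colors of the first-generation edges, use independence of branches and self-similarity of the ECBP tree to identify the probability that each child is ``good'' as $1-p_{I\setminus\{j\}}$ or $1-p_I$ according to whether the connecting color lies in $I$, and then close with the Poisson generating function. The paper organizes the factorization slightly differently (grouping vertices into the sets $\widetilde{N}_{(i)}(r)$), but the underlying argument is identical.
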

\begin{proof} 
 By the definition of $p_I$, the independence of the branches, and the definition of $\widetilde{N}_{\underline{s}}(v)$, we obtain
 \begin{multline*}
  1 - p_I = \mathbb{P} \left( \bigcap_{i \in I} \left\{ r \stackrel{G_{\infty}^{\setminus i} }{\centernot{\longleftrightarrow}} \infty \right\} \right) \\
  = \prod_{i \in I} \mathbb{P} \left( \forall v \in \widetilde{N}_{(i)}(r) ~ \forall j \in I \setminus \{ i \}: ~ \left\{ v \stackrel{G_{\infty}^{\setminus j} }{\centernot{\longleftrightarrow}} \infty \right\} \right) \\
  \cdot \prod_{i \in [k] \setminus I} \mathbb{P} \left( \forall v \in \widetilde{N}_{(i)}(r) ~ \forall j \in I: ~ \left\{ v \stackrel{G_{\infty}^{\setminus j} }{\centernot{\longleftrightarrow}} \infty \right\} \right) \text{,}
 \end{multline*}
 where $(i)$ denotes the color string consisting of the color $i$ only. Since the number of $i$-colored children (i.e., the number of children that are connected to $r$ by an edge of color $i$) has distribution $\mathrm{POI}(\lambda_i)$, its generating function is $\exp \big( - \lambda_i (1-z) \big)$ for any $i \in [k]$. Thus we obtain
 \[ 1 - p_I = \exp \left( - \sum_{i \in I} \lambda_i p_{I \setminus \{ i \}} \right) \cdot \exp \left( - p_I \sum_{i \in [k] \setminus I} \lambda_i \right) \text{.} \]
\end{proof}

Let $k \in \mathbb{N}_+$, $\underline{\lambda} \in \mathbb{R}_+^k$ and consider the following system of equations. 
\begin{equation} \label{eq:implicit_system}
 \begin{cases}
 x_{\emptyset}=0 \\
 x_I = 1 - \exp \left(- \sum_{j \in I} \lambda_j x_{I \setminus \{j\}} - x_{I} \sum_{j \in [k] \setminus I} \lambda_j \right) & \forall I \subseteq [k] , I \neq \emptyset \text{.}
 \end{cases}
\end{equation}

By Lemma~\ref{lemma:implicit_equation}, we have that $(x_I)_{I \subseteq [k]} = (p_I)_{I \subseteq [k]}$ is a solution of this system, but note that this is not a unique solution, for instance $(x_I)_{I \subseteq [k]} = \underline{0}$ is also a solution.

Observe that if $\underline{x}^* = (x^*_I)_{I \subseteq [k]}$ is a solution to the system of equations~\eqref{eq:implicit_system}, then $x^*_I < 1$ holds for any $I \subseteq [k]$.

\begin{definition}[Relevant solution] \label{def:relevant_solution}
 We say that a solution $\underline{x}^*=(x^*_I)_{I \subseteq [k]}$ of the system of equations~\eqref{eq:implicit_system} is relevant if $0 < x^*_I < 1$ holds for any nonempty $I \subseteq [k]$. Otherwise $\underline{x}^*$ is said to be irrelevant.
\end{definition}

The next lemma gives a necessary and sufficient condition for $(p_I)_{I \subseteq [k]}$ to be a relevant solution.

\begin{lemma}[The relevance of the probabilistic solution] \label{lemma:relevant_solution_of_implicit_system}
 Let $k \in \mathbb{N}_+$ and $\underline{\lambda} \in \mathbb{R}_+^k$. The probability vector $(p_I)_{I \subseteq [k]}$ is a relevant solution of the system of equations~\eqref{eq:implicit_system} if and only if $\underline{\lambda}$ is fully supercritical.
\end{lemma}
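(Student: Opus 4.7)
The plan is to reduce the statement to two observations: the probabilistic interpretation of the $p_I$ in Definition~\ref{def:p_I}, and the standard fact that for a Poisson branching process with mean offspring $\mu$, the survival probability is strictly positive if and only if $\mu > 1$.

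First, I would note that relevance of the probabilistic solution $(p_I)_{I \subseteq [k]}$ only requires checking positivity, since the defining equation~\eqref{eq:implicit_system} immediately gives $p_I = 1 - \exp(-\,\cdot\,) < 1$ for every nonempty $I \subseteq [k]$ (the exponent is a finite real number, so its exponential is strictly positive), regardless of $\underline{\lambda}$. Thus the lemma reduces to showing that $p_I > 0$ for every nonempty $I$ if and only if $\underline{\lambda}$ is fully supercritical.

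For the ``if'' direction, assume $\underline{\lambda}$ is fully supercritical. By Claim~\ref{claim:GIr_is_BP}, the process $\bigl(|R^{\setminus i}_{\infty,d}|\bigr)_{d \in \mathbb{N}}$ is a branching process with $\mathrm{POI}(\lambda^{\setminus i})$ offspring distribution, which is supercritical since $\lambda^{\setminus i} > 1$, so $\theta^{\setminus i} > 0$ for every $i \in [k]$. Because $p_I$ is the probability of a union containing the event $\{r \stackrel{G_{\infty}^{\setminus i}}{\longleftrightarrow} \infty\}$ for any chosen $i \in I$, we have $p_I \geq \theta^{\setminus i} > 0$, establishing relevance.

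For the ``only if'' direction, I would argue by contraposition. Suppose $\underline{\lambda}$ is not fully supercritical, so there exists some $i \in [k]$ with $\lambda^{\setminus i} \leq 1$. Then by the same branching process characterization, $\theta^{\setminus i} = 0$; specializing Definition~\ref{def:p_I} to $I = \{i\}$ gives $p_{\{i\}} = \theta^{\setminus i} = 0$, so $(p_I)_{I \subseteq [k]}$ fails to be relevant. I expect no substantive obstacle, as both directions follow directly once one unpacks the probabilistic meaning of $p_I$ and invokes the survival-probability dichotomy for Poisson branching processes; the bound $p_I < 1$ is automatic from~\eqref{eq:implicit_system} itself.
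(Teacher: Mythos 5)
Your proposal is correct and follows essentially the same route as the paper: the bound $p_I<1$ is exactly the paper's observation that any solution of~\eqref{eq:implicit_system} satisfies $x_I^*<1$, the positivity under full supercriticality comes from bounding $p_I$ below by a single survival probability $\theta^{\setminus i}$ with $i\in I$, and the converse is the same contrapositive via $p_{\{i\}}=\theta^{\setminus i}=0$ when $\lambda^{\setminus i}\le 1$. (If anything, your choice of $i\in I$ in the lower bound is slightly cleaner than the paper's use of the fixed color $k$.)
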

\begin{proof}
 Let $G_{\infty} \colonequals G_{\infty}(r) \sim \mathcal{G}_{\infty}(\underline{\lambda})$.
 
 First, assume that $\underline{\lambda}$ is fully supercritical. Then for any nonempty $I \subseteq [k]$, we obtain that
 \[ p_I = \mathbb{P} \left( \bigcup_{i \in I} \left\{ r \stackrel{G_{\infty}^{\setminus i}}{\longleftrightarrow} \infty \right\} \right) \ge \mathbb{P} \left( r \stackrel{G_{\infty}^{\setminus k}}{\longleftrightarrow} \infty \right) > 0 \]
 holds, hence $(p_I)_{I \subseteq [k]}$ is indeed a relevant solution.
  
 Now assume that $\underline{\lambda}$ is not fully supercritical, i.e., there exists a color $i \in [k]$ such that $\lambda^{\setminus i} \le 1$. Then
 \[ p_{\{ i \}} = \mathbb{P} \left( r \stackrel{G_{\infty}^{\setminus i}}{\longleftrightarrow} \infty \right) = 0 \text{,} \]
 thus $(p_I)_{I \subseteq [k]}$ is not a relevant solution.
\end{proof}

\begin{theorem}[Uniqueness of relevant solution] \label{thm:unique_relevant_solution}
 Let $k \in \mathbb{N}_+$ and $\underline{\lambda} \in \mathbb{R}_+^k$. If $\underline{\lambda}$ is fully supercritical, then $(p_I)_{I \subseteq [k]}$ is a unique relevant solution.
\end{theorem}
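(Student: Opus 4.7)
The plan is to argue by induction on $|I|$ that any relevant solution $(x_I^*)_{I \subseteq [k]}$ of \eqref{eq:implicit_system} must satisfy $x_I^* = p_I$ for every nonempty $I \subseteq [k]$. Since Lemma~\ref{lemma:relevant_solution_of_implicit_system} already guarantees that $(p_I)_{I \subseteq [k]}$ is itself a relevant solution under full supercriticality, this coordinate-by-coordinate identification yields the claimed uniqueness. The induction is natural because the right-hand side of \eqref{eq:implicit_system} for index $I$ involves only $x_J$ with $J \subseteq I$.

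For the base case $|I|=1$, say $I=\{i\}$, the equation collapses to
\[ x_{\{i\}} = 1 - \exp\bigl(-\lambda^{\setminus i}\, x_{\{i\}}\bigr), \]
which is the standard fixed-point equation for the survival probability of a Poisson branching process with mean $\lambda^{\setminus i}$. Full supercriticality gives $\lambda^{\setminus i}>1$, so this equation has exactly two solutions, namely $0$ and $\theta^{\setminus i} \in (0,1)$; relevance rules out $0$, and by definition $\theta^{\setminus i} = p_{\{i\}}$.

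For the inductive step with $|I|\ge 2$, assume $x_J^* = p_J$ for every nonempty $J \subsetneq I$. Substituting into \eqref{eq:implicit_system}, the equation for $x_I^*$ becomes $x = 1 - \exp(-A - Bx)$, where
\[ A := \sum_{j \in I} \lambda_j \, p_{I \setminus \{j\}}, \qquad B := \sum_{j \in [k] \setminus I} \lambda_j. \]
The key analytic input is that $h(x) := 1 - \exp(-A - Bx) - x$ is strictly concave on $\mathbb{R}$ and satisfies $h(1) = -e^{-A-B} < 0$. Provided $A > 0$, one also has $h(0) = 1 - e^{-A} > 0$, so by concavity $h$ possesses a unique root in $(0,1)$; Lemma~\ref{lemma:implicit_equation} identifies this root with $p_I$, forcing $x_I^* = p_I$.

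The only delicate point I anticipate is verifying $A > 0$, which requires $p_{I \setminus \{j\}} > 0$ for every $j \in I$. This is exactly the positivity already noted in the proof of Lemma~\ref{lemma:relevant_solution_of_implicit_system}: for any nonempty $J \subseteq [k]$ under full supercriticality, one has $p_J \ge \theta^{\setminus j} > 0$ for each $j \in J$. Since $|I \setminus \{j\}| \ge 1$ whenever $|I| \ge 2$, every term in the sum defining $A$ is strictly positive, which closes the induction.
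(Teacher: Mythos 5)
Your proof is correct and follows essentially the same inductive argument as the paper: reduce to showing each implicit equation has a unique root in $(0,1)$, settle $|I|=1$ directly, then use the positivity $A>0$ (coming from $p_J>0$ for nonempty $J$) plus concavity to close the induction. One small wrinkle: when $I=[k]$ you have $B=0$, so $h$ is affine rather than strictly concave as you claim; however non-strict concavity together with $h(0)>0>h(1)$ still forces a unique root in $(0,1)$, so your argument goes through uniformly, whereas the paper handles $I=[k]$ as a separate trivial case.
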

\begin{proof}
By the definition of a relevant solution, it is enough to show that the equation
\begin{equation} \label{eq:implicit_equation_I}
 x_I = 1 - \exp \left(- \sum_{j \in I} \lambda_j p_{I \setminus \{j\}} - x_{I} \sum_{j \in [k] \setminus I} \lambda_j \right)
\end{equation}
has a unique solution in $(0,1)$, namely $p_I$, for any nonempty $I \subseteq [k]$. We prove this by induction on $|I|$.

First, assume that $|I| = 1$, i.e., $I = \{ i \}$ for some $i \in [k]$. Then we need to show
\begin{equation} \label{eq:implicit_equation_i}
 x_{\{ i \}} = 1 - \exp \left( - x_{\{ i \}} \sum_{j \in [k] \setminus \{ i \}} \lambda_j \right) \text{.}
\end{equation}
Now the left-hand side of~\eqref{eq:implicit_equation_i} is linear in the variable $x_{\{ i \}}$, while its right-hand side is exponential in $x_{\{ i \}}$, and it is well-known that such equations have at most two solutions. As we saw earlier, $x_{\{ i \}} = 0$ and $x_{\{ i \}} = p_{\{ i \}}$ are both solutions, and by Lemma~\ref{lemma:relevant_solution_of_implicit_system}, $p_{\{ i \}} > 0$. Thus, \eqref{eq:implicit_equation_i} has indeed a unique solution in $(0,1)$.

Now let $I \subseteq [k]$ with $|I| \ge 2$ and assume that \eqref{eq:implicit_equation_I} holds for any $I' \subseteq [k]$ with $|I'| < |I|$.

\textit{Case 1:} $I \ne [k]$.

Again, the left-hand side of~\eqref{eq:implicit_equation_I} is linear, while its right-hand side is exponential in $x_I$, so there are at most two solutions. Clearly, $p_{I}$ is a solution, and by Lemma~\ref{lemma:relevant_solution_of_implicit_system}, it is in $(0,1)$. In addition, in $x_{I} = 0$ the left-hand side of~\eqref{eq:implicit_equation_I} is clearly 0, while its right-hand side is
\[ 1 - \exp \left(- \sum_{i \in I} \lambda_i p_{I \setminus \{i\}} \right) > 1  - \mathrm{e}^0 = 0 \]
so by the continuity of the linear and exponential functions, the other solution of~\eqref{eq:implicit_equation_I} is negative.

\textit{Case 2:} $I = [k]$.

Then equation~\eqref{eq:implicit_equation_I} becomes
\begin{equation*}
 x_{[k]} = 1 - \exp \left(- \sum_{i \in [k]} \lambda_i p_{[k] \setminus \{ i \}} \right) \text{,}
\end{equation*}
which clearly has a unique solution. By Lemma~\ref{lemma:implicit_equation}, this solution is $p_I$ and it is in $(0,1)$.
\end{proof}

As is clear from the proof of Theorem~\ref{thm:unique_relevant_solution}, we should solve the equations of the system~\eqref{eq:implicit_system} in a nondecreasing order according to the size of their indices $I$. Using the Lambert $W$ function (cf.\ \eqref{eq:equation_of_LambertW}), we obtain the following.

\begin{corollary} \label{cor:recursive_solution}
Let $k \in \mathbb{N}_+$ and $\underline{\lambda} \in \mathbb{R}_+^k$. If $\underline{\lambda}$ is fully supercritical, then 
\begin{gather*}
 p_{I} = 1 + \frac{W \left( - \left( \sum_{i \in [k] \setminus I} \lambda_i \right) \exp \left(- \sum_{i \in I} \lambda_i p_{I \setminus \{i\}} - \sum_{i \in [k] \setminus I} \lambda_i \right) \right)}{\sum_{i \in [k] \setminus I} \lambda_i} \text{,}
\end{gather*}
for any nonempty $I \subsetneq [k]$, and
\[ p_{[k]} = 1 - \exp \left( - \sum_{i \in [k]} \lambda_i p_{[k] \setminus \{i\}} \right) \text{.} \]
\end{corollary}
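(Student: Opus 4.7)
My plan is to obtain both formulas as direct algebraic rewrites of the fixed-point equation supplied by Lemma~\ref{lemma:implicit_equation}, using the defining identity $W(z)\mathrm{e}^{W(z)}=z$, and then appeal to Theorem~\ref{thm:unique_relevant_solution} to pin down the correct branch of $W$.

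For $I=[k]$ the set $[k]\setminus I$ is empty, so Lemma~\ref{lemma:implicit_equation} collapses at once to
\[
 p_{[k]} \;=\; 1 - \exp\!\Big(-\sum_{i\in[k]}\lambda_i\, p_{[k]\setminus\{i\}}\Big),
\]
which is the second displayed formula; no Lambert $W$ is needed.

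For nonempty $I\subsetneq [k]$ I would abbreviate $a\colonequals \sum_{i\in I}\lambda_i p_{I\setminus\{i\}}\ge 0$ and $b\colonequals \sum_{i\in[k]\setminus I}\lambda_i>0$ (positive because $I\ne[k]$). Lemma~\ref{lemma:implicit_equation} then reads $p_I = 1-\exp(-a-bp_I)$. Substituting $y\colonequals 1-p_I$ and rearranging gives $y\,\mathrm{e}^{-by}=\mathrm{e}^{-a-b}$; multiplying by $-b$ yields
\[
 (-by)\,\mathrm{e}^{-by} \;=\; -b\,\mathrm{e}^{-a-b}.
\]
Applying $W$ to both sides produces $-by=W\bigl(-b\,\mathrm{e}^{-a-b}\bigr)$, and therefore $p_I = 1 + W\bigl(-b\,\mathrm{e}^{-a-b}\bigr)/b$, which is the claimed formula.

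The one nontrivial point, and the step I expect to require the most care, is verifying that the \emph{principal} branch of $W$ is the correct choice, i.e.\ that the formula recovers the unique relevant root in $(0,1)$ supplied by Theorem~\ref{thm:unique_relevant_solution}. First, the argument lies in the real domain of $W_0$: since $\phi(b)\colonequals b-1-\ln b\ge 0$ for all $b>0$ (with minimum $0$ at $b=1$), one has $b\,\mathrm{e}^{-a-b}\le \mathrm{e}^{-1}$ for $a\ge 0$, so $-b\,\mathrm{e}^{-a-b}\in[-1/\mathrm{e},0)$. The image of $W_0$ on this interval is $[-1,0)$, so the formula outputs $p_I\in[1-1/b,\,1)$; for $b\le 1$ this trivially lies in $(0,1)$, while for $b>1$ I would confirm that the relevant $p_I$ indeed satisfies $p_I\ge 1-1/b$ by evaluating $h(x)\colonequals 1-\exp(-a-bx)-x$ at $x=1-1/b$: the inequality $h(1-1/b)=1/b-\mathrm{e}^{1-a-b}\ge 0$ is equivalent to $a+b-1\ge \ln b$, which holds because $\phi(b)\ge 0$ and $a\ge 0$. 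Since $h$ is strictly concave in the relevant region (being a difference of an increasing exponential tail and a linear function, with a unique critical point) and has $p_I$ as its unique positive root by Theorem~\ref{thm:unique_relevant_solution}, the nonnegativity $h(1-1/b)\ge 0$ forces $1-1/b\le p_I$, which places $-b(1-p_I)\in[-1,0]$ inside the range of $W_0$ and completes the branch verification.
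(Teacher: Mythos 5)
Your proof is correct and follows the route the paper intends: rewriting Lemma~\ref{lemma:implicit_equation} as $(-by)\mathrm{e}^{-by}=-b\,\mathrm{e}^{-a-b}$ and invoking the uniqueness from Theorem~\ref{thm:unique_relevant_solution} to identify the root as $p_I$. The verification that the principal branch $W_0$ is the correct one (via $-b\,\mathrm{e}^{-a-b}\in[-1/\mathrm{e},0)$ and $p_I\ge 1-1/b$, the latter by intermediate value at $x=1-1/b$ together with uniqueness in $(0,1)$) is a detail the paper leaves implicit, and your treatment of it is sound.
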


Substituting these into equation~\eqref{eq:explicit_fstar_infty}, we get the required formula for $f^*_{\infty}$. We note that using the values $p_I$ (and the help of computer software capable of symbolic computations), we can also prove the convergence statement of Theorem~\ref{thm:barely_supcrit} in the case when $k \in \{ 2, 3, 4 \}$ and obtain the values $C(2) = 4$, and $C(3) = 32$, and $C(4)=624$, as stated in Theorem~\ref{thm:barely_supcrit}. Note that in Section \ref{subsection:f_infty_balazs}, we introduce an alternative method with which the convergence statement of Theorem~\ref{thm:barely_supcrit} can be proved for any integer $k\geq 2$ and at the end of Section~\ref{subsection:f_infty_balazs}, we obtain the same values for $C(k)$ when $k \in \{ 2, 3, 4 \}$ using this alternative approach. 

\subsection{Using generating functions} \label{subsection:f_infty_balazs}

The aim of Section~\ref{subsection:f_infty_balazs} is to give an explicit formula for $f^*_\infty$  and to prove Theorem~\ref{thm:barely_supcrit}. 
By Proposition~\ref{prop:char_of_giant}, we can assume that the color intensity parameter vector $\underline{\lambda} \in \mathbb{R}_+^k$ (where $k \in \mathbb{N}_+$) is fully supercritical. In this section, we also assume that Assumption~\ref{asm:simplifying_assumption} holds for $\underline{\lambda}$.

In order to find an explicit formula for $f^*_\infty$, we first calculate the joint generating function of the random variables $\big| \widetilde{R}_{\underline{s}}(r) \big|, \, \underline{s} \in S_h$ for any $h \in \{ 0, 1, \ldots, k-2\}$. Note that by Claim~\ref{claim:GIr_is_BP}, we have that $\big( R_{\infty,d}^{\mathrm{set}(\underline{s})} \big)_{d \in \mathbb{N}}$ is a branching process with offspring distribution $\mathrm{POI}(\lambda_{\mathrm{set}(\underline{s})})$.

\begin{definition}[Generating function of Borel distribution] \label{def:gen_function_of_Borel}
 Let $k \in \mathbb{N}_+$, $\underline{\lambda} \in \mathbb{R}_+^k$, $G_{\infty} \colonequals G_{\infty}(r) \sim \mathcal{G}_{\infty}(\underline{\lambda})$, $h \in \{ 0, 1, \ldots, k-1 \}$ and let $\underline{s} \in S_h$. Let us define
 \[ F_{\underline{s}} : [0,1) \to [0,1) \qquad z \mapsto \mathbb{E} \left( z^{\left|\rule{0cm}{6pt}\right. \! R \left(\rule{0cm}{6pt}\right. \! G_{\infty}^{\mathrm{set}(\underline{s})} \! \left.\rule{0cm}{6pt}\right) \! \left.\rule{0cm}{6pt}\right|} \right) \text{,} \]
 i.e. the generating function of the total number of individuals in the branching process $\big( R_{\infty,d}^{\mathrm{set}(\underline{s})} \big)_{d \in \mathbb{N}}$.
 
 Let us denote $F_{\underline{s}}(1_-) \colonequals \lim_{z \to 1_-} F_{\underline{s}}(z)$, i.e. the probability of the event that the branching process $\big( R_{\infty,d}^{\mathrm{set}(\underline{s})} \big)_{d \in \mathbb{N}}$ dies out.
\end{definition}

Note that $ F_{\underline{s}}$ can be expressed using the Lambert $W$ function (introduced at the beginning of Section~\ref{section:explicit_formulas}) as
 \begin{equation} \label{eq:lambert_F_s}
  F_{\underline{s}}(z)=\frac{-W\left(-\lambda_{\mathrm{set}(\underline{s})} \mathrm{e}^{-\lambda_{\mathrm{set}(\underline{s})}}z \right)}{\lambda_{\mathrm{set}(\underline{s})}}, \qquad z \in [0,1) \text{.}
 \end{equation}

Note that if $\mathrm{set}(\underline{s}) = \mathrm{set}(\underline{s}')$ holds for some color strings $\underline{s}, \underline{s}' \in S_h$ with $h \in \{ 0, 1, \ldots, k-2 \}$, then $F_{\underline{s}} \equiv F_{\underline{s}'}$.

Also note that if $\underline{\lambda}$ is fully supercritical and Assumption~\ref{asm:simplifying_assumption} holds, then $F_{\underline{s}}(1_-) = 1$ for any $\underline{s} \in S_h$ with $h \in \{ 0, 1, \ldots, k-2 \}$, and $F_{\underline{s}}(1_-) < 1$ for any $\underline{s} \in S_{k-1}$.

\begin{definition}[Joint generating function of the number of vertices reachable by different color chronologies in an ECBP tree] \label{def:joint_gen_function_of_Rs}
 Let $k \in \mathbb{N}_+$, $\underline{\lambda} \in \mathbb{R}_+^k$, $G_{\infty}(r) \sim \mathcal{G}_{\infty}(\underline{\lambda})$ and $h \in \{ 0, 1, k-2 \}$. Let us define
 \[ \Phi_h: [0,1)^{S_h} \to [0,1) \qquad \left( z_{\underline{s}} \right)_{\underline{s} \in S_h} \mapsto \mathbb{E} \left( \prod_{\underline{s} \in S_h} z_{\underline{s}}^{ \big| \widetilde{R}_{\underline{s}}(r) \big| } \right) \text{.} \]
 Let us denote
 \[ \Phi_h \left( \left( 1_- \right)_{\underline{s} \in S_h} \right) \colonequals \lim_{\substack{\forall \underline{s} \in S_h:\\ z_{\underline{s}} \to 1_-}} \Phi_h \left( \left( z_{\underline{s}} \right)_{\underline{s} \in S_h} \right) \text{.} \]
\end{definition}

It follows from the definition of $S_0$ (cf.\ Definition~\ref{def:color_strings}) that $\Phi_0(z) = z$.

Note that if Assumption~\ref{asm:simplifying_assumption} holds, then $\mathbb{P} \big( \big| \widetilde{R}_{\underline{s}}(r) \big| < +\infty \big) = 1$ for any $\underline{s} \in S_h$ with $h \in \{ 0, 1, \ldots, k-2 \}$. Therefore,
\[ \Phi_h \left( \left( 1_- \right)_{\underline{s} \in S_h} \right) = 1 \]
for any $h \in \{ 0, 1, \ldots, k-2 \}$.

Now we give a recursive formula for $\Phi_{h+1}$ in terms of $\Phi_h$ for any $h \in \{ 0, 1, \ldots, k-3 \}$. Note that this formula can be viewed as ``explicit'' since it can be written in terms of elementary functions and the Lambert $W$ function (cf.\ \eqref{eq:lambert_F_s}).

\begin{lemma}[Recursion for the joint generating function of the number of vertices reachable by different color chronologies] \label{lemma:joint_gen_function_recursion}
 Let $k \in \mathbb{N}_+$, $\underline{\lambda} \in \mathbb{R}_+^k$ and $G_{\infty}(r) \sim \mathcal{G}_{\infty}(\underline{\lambda})$. Then
 \[ \Phi_{h+1} \left( \left( z_{\underline{s}} \right)_{\underline{s} \in S_{h+1}} \right) = \Phi_h \left( \left( \prod_{i \in [k] \setminus \mathrm{set}(\underline{s})} \exp \Big( \lambda_i \cdot \big( F_{ \underline{s}i} (z_{\underline{s}i}) -1 \big) \Big) \right)_{\underline{s} \in S_h} \right) \]
 holds for any $h \in \{ 0, 1, \ldots, k-3 \}$.
\end{lemma}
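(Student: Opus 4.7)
The plan is to reduce $\Phi_{h+1}$ to $\Phi_h$ by conditioning on the ``inner data'' $\widetilde R_h^{\le}(r)$ and exploiting the recursive branching structure of $\widetilde R_{\underline s i}(r)$. First, observe that every $\underline s' \in S_{h+1}$ factors uniquely as $\underline s i$ with $\underline s \in S_h$ and $i \in [k] \setminus \mathrm{set}(\underline s)$, so that the product inside $\Phi_{h+1}$ can be reindexed as
\[ \prod_{\underline s' \in S_{h+1}} z_{\underline s'}^{|\widetilde R_{\underline s'}(r)|} \;=\; \prod_{\underline s \in S_h} \prod_{i \notin \mathrm{set}(\underline s)} z_{\underline s i}^{|\widetilde R_{\underline s i}(r)|}. \]

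Next, I would describe $\widetilde R_{\underline s i}(r)$ concretely in the tree: it is the disjoint union, over $u \in \widetilde N_{\underline s i}(r)$, of the subtree $T_u$ consisting of $u$ together with all descendants of $u$ reachable using only edges of colors in $\mathrm{set}(\underline s i)$. By the independent-offspring structure of the ECBP, each $T_u$ is an independent Galton--Watson tree whose offspring distribution is $\sum_{j \in \mathrm{set}(\underline s i)} \mathrm{POI}(\lambda_j) = \mathrm{POI}(\lambda_{\mathrm{set}(\underline s i)})$; hence by Definition~\ref{def:gen_function_of_Borel}, $|T_u|$ has generating function $F_{\underline s i}$, which gives $\mathbb E\bigl( z_{\underline s i}^{|\widetilde R_{\underline s i}(r)|} \bigm| |\widetilde N_{\underline s i}(r)| \bigr) = F_{\underline s i}(z_{\underline s i})^{|\widetilde N_{\underline s i}(r)|}$.

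The key probabilistic input is that, conditional on the inner data $\widetilde R_h^{\le}(r)$ (together with its edge colors), the cardinalities $\bigl(|\widetilde N_{\underline s i}(r)|\bigr)_{\underline s \in S_h,\, i \notin \mathrm{set}(\underline s)}$ are mutually independent with $|\widetilde N_{\underline s i}(r)| \sim \mathrm{POI}(\lambda_i |\widetilde R_{\underline s}(r)|)$. Indeed, $|\widetilde N_{\underline s i}(r)| = \sum_{v \in \widetilde R_{\underline s}(r)} X_{v,i}$, where $X_{v,i} \sim \mathrm{POI}(\lambda_i)$ are the ECBP offspring counts; for $i \notin \mathrm{set}(\underline s)$ these specific $X_{v,i}$ are never consulted in building the inner data, so independence is preserved, and a sum of independent Poissons is again Poisson. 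Combining with the previous step and the generating function identity $\mathbb E(x^N) = \exp(\mu(x-1))$ for $N \sim \mathrm{POI}(\mu)$ yields
\[ \mathbb E\Bigl( \prod_{\underline s i} z_{\underline s i}^{|\widetilde R_{\underline s i}(r)|} \Bigm| \widetilde R_h^{\le}(r) \Bigr) = \prod_{\underline s \in S_h} \biggl( \prod_{i \notin \mathrm{set}(\underline s)} \exp\bigl( \lambda_i (F_{\underline s i}(z_{\underline s i}) - 1) \bigr) \biggr)^{|\widetilde R_{\underline s}(r)|}, \]
and taking the outer expectation collapses the right-hand side to $\Phi_h$ evaluated at the announced arguments, which finishes the proof.

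The main obstacle is the clean justification of the conditional independence claim in the third step: that conditioning on $\widetilde R_h^{\le}(r)$ as an edge-colored rooted subtree leaves the offspring counts $X_{v,i}$ with $v \in \widetilde R_{\underline s}(r)$, $\underline s \in S_h$, $i \notin \mathrm{set}(\underline s)$, untouched as independent $\mathrm{POI}(\lambda_i)$ variables. This amounts to verifying that $\widetilde R_h^{\le}(r)$ together with its edge colors is a measurable function only of those $X_{v,j}$ with $j$ already belonging to the color-chronology of the ancestor chain of $v$; this follows from a straightforward induction on $h$ through Definition~\ref{def:layers}, using that in a tree the chronology of any vertex is uniquely determined and that $\widetilde R_{\underline s}(r)$ only involves edges of colors in $\mathrm{set}(\underline s)$.
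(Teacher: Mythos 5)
Your proposal is correct and follows essentially the same route as the paper: condition on the inner data $\bigl(\widetilde R_{\underline s}(r)\bigr)_{\underline s \in S_h}$, use the branching property to factor the conditional generating function across pairs $(\underline s, i)$, and recognize each factor as a compound-Poisson generating function $\exp\bigl(\lambda_i(F_{\underline s i}(z_{\underline s i})-1)\bigr)$ raised to the power $|\widetilde R_{\underline s}(r)|$, which collapses the outer expectation to $\Phi_h$. The only cosmetic difference is that you aggregate the color-$i$ offspring into $|\widetilde N_{\underline s i}(r)|\sim\mathrm{POI}\bigl(\lambda_i|\widetilde R_{\underline s}(r)|\bigr)$ before composing with $F_{\underline s i}$, whereas the paper works vertex-by-vertex over $v\in\widetilde R_{\underline s}(r)$; these are algebraically identical.
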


In words: one has to plug $\prod_{i \in [k] \setminus \mathrm{set}(\underline{s})} \exp \big( \lambda_i \cdot \big( F_{ \underline{s}i} (z_{\underline{s}i}) -1 \big) \big)$ in place of the variable $z_{\underline{s}}$ for every $\underline{s} \in S_h$ in the function $\Phi_h \big( (z_{\underline{s}})_{\underline{s} \in S_h} \big)$.

\begin{proof}[Proof of Lemma~\ref{lemma:joint_gen_function_recursion}]
 First, note that each color string of length $h+1$ can be uniquely written as the concatenation of a color string of length $h$ and a color not appearing in this color string. Therefore, by the tower rule,
 \begin{multline*}
  \Phi_{h+1} \left( \left( z_{\underline{s}} \right)_{\underline{s} \in S_{h+1}} \right) = \Phi_{h+1} \left( \left( z_{\underline{s}i} \right)_{\underline{s} \in S_h, i \in [k] \setminus \mathrm{set}(\underline{s})} \right) \\
  = \mathbb{E} \left( \mathbb{E} \left(\rule{0cm}{25pt}\right. \prod_{\substack{\underline{s} \in S_h, \\ i \in [k] \setminus \mathrm{set}(\underline{s})}} z_{\underline{s}i}^{ \big| \widetilde{R}_{\underline{s}i}(r) \big|} \left.\rule{0cm}{25pt}\middle|\right. \Big( \widetilde{R}_{\underline{s}}(r) \Big)_{\underline{s} \in S_h} \left.\rule{0cm}{25pt}\right) \right) \text{.}
 \end{multline*}
 By the independence of the different branches of an ECBP tree,
 \[ \mathbb{E} \left(\rule{0cm}{25pt}\right. \prod_{\substack{\underline{s} \in S_h, \\ i \in [k] \setminus \mathrm{set}(\underline{s})}} z_{\underline{s}i}^{ \big| \widetilde{R}_{\underline{s}i}(r) \big|} \left.\rule{0cm}{25pt}\middle|\right. \Big( \widetilde{R}_{\underline{s}}(r) \Big)_{\underline{s} \in S_h} \left.\rule{0cm}{25pt}\right) = \prod_{\substack{\underline{s} \in S_h, \\ i \in [k] \setminus \mathrm{set}(\underline{s})}} \mathbb{E} \bigg( z_{\underline{s}i}^{ \big| \widetilde{R}_{\underline{s}i}(r) \big|} \biggm| \widetilde{R}_{\underline{s}}(r) \bigg) \]
 holds.
 
 Let $\underline{s} \in S_h$ and $i \in [k] \setminus \mathrm{set}(\underline{s})$. Using  that for any $v \in \widetilde{R}_{\underline{s}}(r)$, the function $z \mapsto \exp \big( \lambda_i \cdot \big( F_{\underline{s}i}(z) - 1 \big) \big)$ is the generating function of the cardinality of the set of descendants of $v$ belonging to $\widetilde{R}_{\underline{s}i}(r)$, we obtain
 \[ \mathbb{E} \bigg( z_{\underline{s}i}^{ \big| \widetilde{R}_{\underline{s}i}(r) \big|} \biggm| \widetilde{R}_{\underline{s}}(r) \bigg) = \bigg( \exp \Big( \lambda_i \cdot \big( F_{\underline{s}i}(z) - 1 \big) \Big) \bigg)^{\big| \widetilde{R}_{\underline{s}}(r) \big|} \text{.} \]
 
 Therefore and by the definition of the function $\Phi_h$, we obtain 
 \begin{multline*}
  \Phi_{h+1} \left( \left( z_{\underline{s}} \right)_{\underline{s} \in S_{h+1}} \right) = \mathbb{E} \left( \prod_{\substack{\underline{s} \in S_h, \\ i \in [k] \setminus \mathrm{set}(\underline{s})}} \bigg( \exp \Big( \lambda_i \cdot \big( F_{\underline{s}i}(z) - 1 \big) \Big) \bigg)^{\big| \widetilde{R}_{\underline{s}}(r) \big|} \right) \\
  = \mathbb{E} \left( \prod_{\underline{s} \in S_h} \left( \prod_{i \in [k] \setminus \mathrm{set}(\underline{s})} \exp \Big( \lambda_i \cdot \big( F_{\underline{s}i}(z) - 1 \big) \Big) \right)^{\big| \widetilde{R}_{\underline{s}}(r) \big|} \right) \\
  = \Phi_h \left( \left( \prod_{i \in [k] \setminus \mathrm{set}(\underline{s})} \exp \Big( \lambda_i \cdot \big( F_{ \underline{s}i} (z_{\underline{s}i}) -1 \big) \Big) \right)_{\underline{s} \in S_h} \right) \text{.}
 \end{multline*}
\end{proof}


\begin{definition}[Color strings of length $k-1$ avoiding color $i$] \label{def:color_strings_without_i}
 Let $k \in \mathbb{N}_+$. For any $i \in [k]$, let
 \[ S_{k-1}^{\setminus i} \colonequals \big\{ \underline{s} \in S_{k-1} : \mathrm{set}(\underline{s}) = [k] \setminus \{ i \} \big\} \text{.} \]
\end{definition}

\begin{theorem}[Explicit formula for $f^*_\infty$] \label{thm:explicit_formula}
 Let $k \in \mathbb{N}_+$, let $\underline{\lambda} \in \mathbb{R}^k_+$ for which Assumption~\ref{asm:simplifying_assumption} holds, and let $G_{\infty} \colonequals G_{\infty}(r) \sim \mathcal{G}_{\infty}(\underline{\lambda})$. Then
 \[ f^*_\infty = \sum_{J \subseteq [k]} (-1)^{|J|} \cdot \Phi_{k-2} \left( \left( \prod_{i \in [k] \setminus \mathrm{set}(\underline{s})} q_{L_J,\underline{s}i} \right)_{\underline{s} \in S_{k-2}} \right) \text{,} \]
 where we define
 \[ L_J \colonequals \bigcup_{j \in J} S_{k-1}^{\setminus j} \]
 for any $J \subseteq [k]$, and we define
 \[ q_{L,\underline{s}i} \colonequals
  \begin{cases}
   \exp \Big( \lambda_i \cdot \big( F_{\underline{s}i} (1_-) - 1 \big) \Big) & \text{if $\underline{s}i \in L$,} \\
   1 & \text{if $\underline{s}i \notin L$}
  \end{cases} \]
 for any $L \subseteq S_{k-1}$, $\underline{s} \in S_{k-2}$ and $i \in [k] \setminus \mathrm{set}(\underline{s})$.
\end{theorem}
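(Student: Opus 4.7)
The plan is to combine the inclusion--exclusion principle with a conditional expectation computation over the core $\widetilde{R}^{\le}_{k-2}(r)$, and then recognize the resulting moment as a value of $\Phi_{k-2}$.

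By Lemma~\ref{lemma:infinitely_many_friends},
\[ f^*_\infty = p^*(\underline{1}) = \mathbb{P}\!\left( \bigcap_{j \in [k]} E_j \right), \qquad E_j \colonequals \left\{ r \stackrel{G_\infty^{\setminus j}}{\longleftrightarrow} \infty \right\}, \]
so inclusion--exclusion on the complements gives
\[ f^*_\infty = \sum_{J \subseteq [k]} (-1)^{|J|} \, \mathbb{P}\!\left( \bigcap_{j \in J} E_j^c \right), \]
and it suffices to identify each inner probability with the corresponding summand of the theorem. To do so I would first apply Claim~\ref{claim:union_of_bags} to write $\mathcal{C}(r, G_\infty^{\setminus j}) = \bigcup_{\underline{s} \, : \, \mathrm{set}(\underline{s}) \subseteq [k] \setminus \{j\}} \widetilde{R}_{\underline{s}}(r)$, and then use Assumption~\ref{asm:simplifying_assumption} via Claim~\ref{claim:finiteness_of_rhor} to conclude that the pieces with $|\mathrm{set}(\underline{s})| \le k-2$ are almost surely finite. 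Since the only color strings of length $k-1$ whose color set is contained in $[k] \setminus \{j\}$ are exactly those in $S_{k-1}^{\setminus j}$, this gives
\[ \bigcap_{j \in J} E_j^c = \bigcap_{\underline{s} \in L_J} \left\{ |\widetilde{R}_{\underline{s}}(r)| < \infty \right\} \qquad \text{almost surely.} \]

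Next I would condition on the sigma-algebra $\mathcal{F}_{r, k-2}$. Given this data, $\widetilde{R}^{\le}_{k-2}(r)$ and the boundary sets $\widetilde{N}_{\underline{s}}(r)$ for $\underline{s} \in S_{k-1}$ are determined (Claim~\ref{claim:known_info_given_sigmaalg}), and the subtrees of $G_\infty$ rooted at the distinct boundary vertices are conditionally independent; the subtree rooted at $v \in \widetilde{N}_{\underline{s}}(r)$ restricted to $G_\infty^{\mathrm{set}(\underline{s})}$ is a $\mathrm{POI}(\lambda_{\mathrm{set}(\underline{s})})$-branching process whose total progeny has generating function $F_{\underline{s}}$ and extinction probability $F_{\underline{s}}(1_-)$. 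Therefore
\[ \mathbb{P}\!\left( \bigcap_{j \in J} E_j^c \,\middle|\, \mathcal{F}_{r, k-2} \right) = \prod_{\underline{s} \in L_J} F_{\underline{s}}(1_-)^{|\widetilde{N}_{\underline{s}}(r)|}. \]
Conditioning further on $\big( |\widetilde{R}_{\underline{s}}(r)| \big)_{\underline{s} \in S_{k-2}}$ and using that $|\widetilde{N}_{\underline{s}i}(r)|$ is $\mathrm{POI}\!\big( \lambda_i \, |\widetilde{R}_{\underline{s}}(r)| \big)$ together with the Poisson identity $\mathbb{E}\!\big( y^{\mathrm{POI}(\mu)} \big) = \exp\!\big( \mu(y-1) \big)$, I would reorganize the product by the parent $\underline{s} \in S_{k-2}$ to recast the conditional probability as
\[ \prod_{\underline{s} \in S_{k-2}} \left( \prod_{i \in [k] \setminus \mathrm{set}(\underline{s})} q_{L_J, \underline{s}i} \right)^{|\widetilde{R}_{\underline{s}}(r)|}, \]
where the convention $q_{L_J, \underline{s}i} = 1$ for $\underline{s}i \notin L_J$ makes the corresponding factors vacuous. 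Taking one final expectation and invoking Definition~\ref{def:joint_gen_function_of_Rs} yields
\[ \mathbb{P}\!\left( \bigcap_{j \in J} E_j^c \right) = \Phi_{k-2}\!\left( \left( \prod_{i \in [k] \setminus \mathrm{set}(\underline{s})} q_{L_J, \underline{s}i} \right)_{\underline{s} \in S_{k-2}} \right), \]
and substituting this back into the inclusion--exclusion formula completes the proof.

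The most delicate point is the almost sure identification of $E_j^c$ with finiteness of the layers indexed by $S_{k-1}^{\setminus j}$: Assumption~\ref{asm:simplifying_assumption} is essential here, and one has to use the tree structure of $G_\infty$ to verify that the subtrees rooted at distinct vertices of $\widetilde{N}_{\underline{s}}(r)$ are genuinely independent Galton--Watson trees with the correct offspring law. Once this geometric fact is in place, the remaining steps are Poisson--Galton--Watson bookkeeping that closely parallels the inductive argument behind Lemma~\ref{lemma:joint_gen_function_recursion}.
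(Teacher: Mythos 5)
Your proof is correct and follows essentially the same route as the paper's: both start from $f^*_\infty = \mathbb{P}\big(\bigcap_j\{r \stackrel{G_\infty^{\setminus j}}{\longleftrightarrow}\infty\}\big)$, use inclusion--exclusion and Assumption~\ref{asm:simplifying_assumption} to reduce each term to $\mathbb{P}\big(\bigcap_{\underline{s} \in L_J}\{|\widetilde{R}_{\underline{s}}(r)| < \infty\}\big)$, and then evaluate this by conditioning on $\mathcal{F}_{r,k-2}$ (so that the subtrees hanging off the distinct boundary vertices are independent Galton--Watson trees with extinction probability $F_{\underline{s}}(1_-)$) followed by the Poisson generating-function identity, recognizing the result as a value of $\Phi_{k-2}$. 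The only cosmetic difference is that you perform the Poisson step as an explicit intermediate conditioning on $\big(|\widetilde{R}_{\underline{s}}(r)|\big)_{\underline{s} \in S_{k-2}}$, whereas the paper folds it into one chain of equalities after the first conditioning; the bookkeeping is the same. Your remark about the almost sure identification of $E_j^c$ with $\bigcap_{\underline{s} \in S_{k-1}^{\setminus j}}\{|\widetilde{R}_{\underline{s}}(r)| < \infty\}$ via Claims~\ref{claim:union_of_bags} and~\ref{claim:finiteness_of_rhor} makes a step explicit that the paper treats more tersely, which is a small improvement in readability.
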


\begin{proof}
 If $v \in V(G_\infty)$ and $i \in [k]$, let us denote by $\widetilde{N}^*_{(i)}(v)$ the set of children of $v$ that are connected to $v$ by an edge of color $i$.

 By Claim~\ref{claim:GIr_is_BP} and using that the offsprings of any vertex in a branching process tree span a branching process tree with the same offspring distribution, and by the definition of $\Phi_{k-2}$, for any $L \subseteq S_{k-1}$, we obtain
 \begin{multline} \label{eq:extinct_packages_gen_function}
  \mathbb{P} \left( \bigcap_{\underline{s}' \in L} \bigg\{ \, \Big| \widetilde{R}_{\underline{s}'}(r) \Big| < + \infty \, \bigg\} \right) \\
  = \! \mathbb{E} \! \left(\rule{0cm}{30pt}\right. \!\!\! \mathbb{E} \! \left( \! \prod_{\underline{s} \in S_{k-2}} \, \prod_{v \in \widetilde{R}_{\underline{s}}(r)} \prod_{\substack{i \in [k] \setminus \mathrm{set}(\underline{s}): \\ \underline{s}i \in L}} \, \prod_{u \in \widetilde{N}^*_{(i)}(v)} \!\! \mathds{1} \bigg[ \, \Big| R \Big( u, G_{\infty}^{\mathrm{set}(\underline{s}i)} \! \Big) \Big| < + \infty \bigg] \middle| \, \mathcal{F}_{r,k-2} \! \right) \!\!\! \left.\rule{0cm}{30pt}\right) \\
  = \mathbb{E} \left( \prod_{\underline{s} \in S_{k-2}} \, \prod_{v \in \widetilde{R}_{\underline{s}}(r)} \prod_{\substack{i \in [k] \setminus \mathrm{set}(\underline{s}): \\ \underline{s}i \in L}} \, \prod_{u \in \widetilde{N}^*_{(i)}(v)} F_{\underline{s}i} (1_-) \, \right) \\
  = \mathbb{E} \left( \prod_{\underline{s} \in S_{k-2}} \, \prod_{v \in \widetilde{R}_{\underline{s}}(r)} \prod_{\substack{i \in [k] \setminus \mathrm{set}(\underline{s}): \\ \underline{s}i \in L}} \, \exp\left( \lambda_i (F_{\underline{s}i} (1_-) - 1) \right) \, \right) \\
  = \Phi_{k-2} \left( \left( \prod_{i \in [k] \setminus \mathrm{set}(\underline{s})} q_{L,\underline{s}i} \right)_{\underline{s} \in S_{k-2}} \right) \text{.}
 \end{multline}
 
 Now by~\eqref{eq:fstar_infty}, Assumption~\ref{asm:simplifying_assumption}, the inclusion-exclusion formula, the definition of $L_J$, and~\eqref{eq:extinct_packages_gen_function}, we obtain
 \begin{multline*}
  f^*_\infty = \mathbb{P} \left(\, \forall j \in [k] \, : \,  r \stackrel{G_{\infty}^{\setminus j} }{\longleftrightarrow} \infty \, \right) \\
  = \mathbb{P} \left(\rule{0cm}{27pt}\right. \, \bigcap_{j \in [k]} \, \left(\, \bigcap_{\underline{s}' \in S_{k-1}^{\setminus j}} \bigg\{ \, \Big| \widetilde{R}_{\underline{s}'}(r) \Big| < + \infty \, \bigg\} \right)^{\; c} \left.\rule{0cm}{27pt}\right) \\
  = \sum_{J \subseteq [k]} (-1)^{|J|} \cdot \mathbb{P} \left( \, \bigcap_{j \in J} \, \bigcap_{\underline{s}' \in S_{k-1}^{\setminus j}} \bigg\{ \, \Big| \widetilde{R}_{\underline{s}'}(r) \Big| < + \infty \, \bigg\} \right) \\
  = \sum_{J \subseteq [k]} (-1)^{|J|} \cdot \mathbb{P} \left( \, \bigcap_{\underline{s}' \in L_J} \bigg\{ \, \Big| \widetilde{R}_{\underline{s}'}(r) \Big| < + \infty \, \bigg\} \right) \\
  = \sum_{J \subseteq [k]} (-1)^{|J|} \cdot \Phi_{k-2} \left( \left( \prod_{i \in [k] \setminus \mathrm{set}(\underline{s})} q_{L_J,\underline{s}i} \right)_{\underline{s} \in S_{k-2}} \right) \text{.}
 \end{multline*}
\end{proof}

\begin{proof}[Proof of Theorem~\ref{thm:barely_supcrit}]
 Let $\varepsilon \ge 0$ and let $G_{\infty}^{\varepsilon} \colonequals G_{\infty}^{\varepsilon}(r) \sim \mathcal{G}_{\infty} \big( \underline{\lambda}(\varepsilon) \big)$, where $\underline{\lambda}(\varepsilon) = \left(\frac{1+\varepsilon}{k-1}, \ldots, \frac{1+\varepsilon}{k-1} \right) \in \mathbb{R}_+^k$. Since $\underline{\lambda}(\varepsilon)$ is homogeneous,
 \[ \theta^{\setminus k} (\varepsilon) \colonequals \mathbb{P} \left( r \xleftrightarrow{ \! \left(\rule{0cm}{6pt}\right. \! G_{\infty}^{\varepsilon} \!\! \left.\rule{0cm}{6pt}\right)^{ \! \setminus k} \! } \infty \right) = \mathbb{P} \left( r \xleftrightarrow{ \! \left(\rule{0cm}{6pt}\right. \! G_{\infty}^{\varepsilon} \!\! \left.\rule{0cm}{6pt}\right)^{ \! \setminus i} \! } \infty \right) \]
 holds for any $i \in [k]$.
 
 Note that if $\varepsilon < \frac{1}{k-2}$, then Assumption~\ref{asm:simplifying_assumption} holds, so by~\eqref{eq:fstar_infty} and the disjointness of the sets $\widetilde{N}_{k-1}^{\setminus i}(r, G_{\infty}^{\varepsilon})$ for all $i \in [k]$, we obtain
 \begin{multline} \label{eq:fstar_infty_epsilon}
  f^*_{\infty}(\varepsilon) = \mathbb{P} \left( \, \forall i \in [k] \; \exists v \in \widetilde{N}_{k-1}^{\setminus i}(r, G_{\infty}^{\varepsilon}): \; v \xleftrightarrow{ \! \left(\rule{0cm}{6pt}\right. \! G_{\infty}^{\varepsilon} \!\! \left.\rule{0cm}{6pt}\right)^{ \! \setminus i} \! } \infty \right) \\
  = \mathbb{E} \left( \prod_{i \in [k]} \left( 1 - \left( 1 - \theta^{\setminus k}(\varepsilon) \right)^{\left|\rule{0cm}{6pt}\right. \! \widetilde{N}_{k-1}^{\setminus i} \left(\rule{0cm}{6pt}\right. r, G_{\infty}^{\varepsilon} \left.\rule{0cm}{6pt}\right) \left.\rule{0cm}{6pt}\right|} \right) \right) \text{.}
 \end{multline}
 
 Clearly,
 \[ \lambda^{\setminus k}(\varepsilon) \colonequals \sum_{i \in [k-1]} \lambda_j (\varepsilon) = 1 + \varepsilon \]
 holds, thus by Corollary~3.19 of~\cite{vdH}, we have
 \[ \lim_{\varepsilon \to 0_+} \frac{\theta^{\setminus k}(\varepsilon)}{\varepsilon} = 2 \text{.} \]
 Hence,
 \begin{equation} \label{eq:fstar_infty_epsilon_over_epsilon_to_the_k}
  \lim_{\varepsilon \to 0_+} \frac{f^*_{\infty}(\varepsilon)}{\varepsilon^k} = 2^k \lim_{\varepsilon \to 0_+} \frac{f^*_{\infty}(\varepsilon)}{\big( \theta^{\setminus k}(\varepsilon) \big)^k} \text{,}
 \end{equation}
 and now we show
 \begin{equation} \label{eq:f_infty_derivative}
  \lim_{\varepsilon \to 0_+} \frac{f^*_{\infty}(\varepsilon)}{\big( \theta^{\setminus k}(\varepsilon) \big)^k} = \mathbb{E} \left( \prod_{i \in [k]} \left| \widetilde{N}_{k-1}^{\setminus i} (r, G_{\infty}^0) \right| \right) \text{.} 
 \end{equation}
 
 By Assumption~\ref{asm:simplifying_assumption}, there exists $\varepsilon_0 \ge 0$ such that
 \[ \mathbb{E} \left( \prod_{i \in [k]} \left| \widetilde{N}_{k-1}^{\setminus i} (r, G_{\infty}^{\varepsilon_0}) \right| \right) < + \infty \text{.} \]
 Then by the monotone coupling the random variables $\left| \widetilde{N}_{k-1}^{\setminus i} (r, G_{\infty}^{\varepsilon}) \right|$ for all $\varepsilon \in [0, \varepsilon_0]$, and by Bernoulli's inequality, we obtain
 \begin{multline*}
  \prod_{i \in [k]} \left( 1 - \left( 1 - \theta^{\setminus k}(\varepsilon) \right)^{\left|\rule{0cm}{6pt}\right. \! \widetilde{N}_{k-1}^{\setminus i} \left(\rule{0cm}{6pt}\right. r, G_{\infty}^{\varepsilon} \left.\rule{0cm}{6pt}\right) \left.\rule{0cm}{6pt}\right|} \right) \le \prod_{i \in [k]} \left| \widetilde{N}_{k-1}^{\setminus i} (r, G_{\infty}^{\varepsilon}) \right| \\
  \le \prod_{i \in [k]} \left| \widetilde{N}_{k-1}^{\setminus i} (r, G_{\infty}^{\varepsilon_0}) \right| \text{.}
 \end{multline*}
 Thus by the dominated convergence theorem, by the choice of $\varepsilon_0$, and by L'Hospital's rule,
 \begin{multline*}
  \lim_{\varepsilon \to 0_+} \mathbb{E} \left( \prod_{i \in [k]} \frac{1 - \left( 1 - \theta^{\setminus k}(\varepsilon) \right)^{\left|\rule{0cm}{6pt}\right. \! \widetilde{N}_{k-1}^{\setminus i} \left(\rule{0cm}{6pt}\right. r, G_{\infty}^{\varepsilon} \left.\rule{0cm}{6pt}\right) \left.\rule{0cm}{6pt}\right|}}{\theta^{\setminus k}(\varepsilon)} \right) \\
  = \mathbb{E} \left( \lim_{\varepsilon \to 0_+} \prod_{i \in [k]} \frac{1 - \left( 1 - \theta^{\setminus k}(\varepsilon) \right)^{\left|\rule{0cm}{6pt}\right. \! \widetilde{N}_{k-1}^{\setminus i} \left(\rule{0cm}{6pt}\right. r, G_{\infty}^{\varepsilon} \left.\rule{0cm}{6pt}\right) \left.\rule{0cm}{6pt}\right|}}{\theta^{\setminus k}(\varepsilon)} \right) \\
  = \mathbb{E} \left(\rule{0cm}{25pt}\right. \lim_{\varepsilon \to 0_+} \left(\rule{0cm}{20pt}\right. \mathds{1} \bigg[ \forall i \in [k]:~ \left| \widetilde{N}_{k-1}^{\setminus i} (r, G_{\infty}^{\varepsilon}) \right| = \left| \widetilde{N}_{k-1}^{\setminus i} (r, G_{\infty}^0) \right| \bigg] \\
  \cdot \prod_{i \in [k]} \frac{1 - \left( 1 - \theta^{\setminus k}(\varepsilon) \right)^{\left|\rule{0cm}{6pt}\right. \! \widetilde{N}_{k-1}^{\setminus i} \left(\rule{0cm}{6pt}\right. r, G_{\infty}^{\varepsilon} \left.\rule{0cm}{6pt}\right) \left.\rule{0cm}{6pt}\right|}}{\theta^{\setminus k}(\varepsilon)} \\
  + \mathds{1} \bigg[ \exists i \in [k]:~ \left| \widetilde{N}_{k-1}^{\setminus i} (r, G_{\infty}^{\varepsilon}) \right| \ne \left| \widetilde{N}_{k-1}^{\setminus i} (r, G_{\infty}^0) \right| \bigg] \\
  \cdot \prod_{i \in [k]} \frac{1 - \left( 1 - \theta^{\setminus k}(\varepsilon) \right)^{\left|\rule{0cm}{6pt}\right. \! \widetilde{N}_{k-1}^{\setminus i} \left(\rule{0cm}{6pt}\right. r, G_{\infty}^{\varepsilon} \left.\rule{0cm}{6pt}\right) \left.\rule{0cm}{6pt}\right|}}{\theta^{\setminus k}(\varepsilon)} \left.\rule{0cm}{20pt}\right) \left.\rule{0cm}{25pt}\right) \\
  = \mathbb{E} \left( \lim_{\varepsilon \to 0_+} \prod_{i \in [k]} \frac{1 - \left( 1 - \theta^{\setminus k}(\varepsilon) \right)^{\left|\rule{0cm}{6pt}\right. \! \widetilde{N}_{k-1}^{\setminus i} \left(\rule{0cm}{6pt}\right. r, G_{\infty}^0 \left.\rule{0cm}{6pt}\right) \left.\rule{0cm}{6pt}\right|}}{\theta^{\setminus k}(\varepsilon)} \right) \\
  = \mathbb{E} \left( \lim_{\theta^{\setminus k}(\varepsilon) \to 0_+} \prod_{i \in [k]} \frac{1 - \left( 1 - \theta^{\setminus k}(\varepsilon) \right)^{\left|\rule{0cm}{6pt}\right. \! \widetilde{N}_{k-1}^{\setminus i} \left(\rule{0cm}{6pt}\right. r, G_{\infty}^0 \left.\rule{0cm}{6pt}\right) \left.\rule{0cm}{6pt}\right|}}{\theta^{\setminus k}(\varepsilon)} \right) \\
  = \mathbb{E} \left( \prod_{i \in [k]} \left| \widetilde{N}_{k-1}^{\setminus i} (r, G_{\infty}^0) \right| \right) \text{.}
 \end{multline*}
 Thus by~\eqref{eq:fstar_infty_epsilon}, we obtain that \eqref{eq:f_infty_derivative} holds.
 
 Therefore, by~\eqref{eq:fstar_infty_epsilon_over_epsilon_to_the_k}, we obtain
 \[ \lim_{\varepsilon \to 0_+} \frac{f^*_{\infty}(\varepsilon)}{\varepsilon^k} = 2^k \cdot \mathbb{E} \left( \prod_{i \in [k]} \left| \widetilde{N}_{k-1}^{\setminus i} (r, G_{\infty}^0) \right| \right) \equalscolon C(k) \text{.} \]
 
 Now we want to express this constant $C(k)$ using the generating function $\Phi_{k-2}$,
 where we take the underlying parameter $\underline{\lambda}$ to be $\underline{\lambda} = \underline{\lambda}(0) = \left( \frac{1}{k-1}, \ldots, \frac{1}{k-1} \right)$.
 
 Since the sets $\widetilde{N}_{\underline{s}}(r, G_{\infty}^0)$ for all $\underline{s} \in S_{k-1}$ are disjoint, it follows that
 \begin{multline*}
  \mathbb{E} \left( \prod_{i \in [k]} \left| \widetilde{N}_{k-1}^{\setminus i} (r, G_{\infty}^0) \right| \right) = \mathbb{E} \left( \prod_{i \in [k]} \, \sum_{\underline{s} \in S_{k-1}^{\setminus i}} \left| \widetilde{N}_{\underline{s}} (r, G_{\infty}^0) \right| \right) \\
  = \sum_{\big( \underline{s}_i \in S_{k-1}^{\setminus i} \big)_{i \in [k]}} \mathbb{E} \left( \prod_{i \in [k]} \left| \widetilde{N}_{\underline{s}_i} (r, G_{\infty}^0) \right| \right)
 \end{multline*}
 holds.
 
 Let $\underline{s}_i \in S_{k-1}^{\setminus i}$ for all $i \in [k]$. Since the random variables $\big| \widetilde{N}_{\underline{s}_i} (r, G_{\infty}^0) \big|$ for all $i \in [k]$ are conditionally independent given $\big( \big| \widetilde{R}_{\underline{s}}(r, G_{\infty}^0) \big| \big)_{\underline{s} \in S_{k-2}}$, and their conditional distribution is Poisson with parameter $\big| \widetilde{R}_{\underline{s}}(r, G_{\infty}^0) \big|/(k-1)$, we obtain
 \[ \mathbb{E} \left( \prod_{i \in [k]} \left| \widetilde{N}_{\underline{s}_i} (r, G_{\infty}^0) \right| \right) = \frac{1}{(k-1)^k} \mathbb{E} \left( \prod_{i \in [k]} \left| \widetilde{R}_{\underline{s}_i^-} (r, G_{\infty}^0) \right| \right) \text{.} \]
 
 Let us define the multivariate moment generating function
 \[ \varphi \left( (x_{\underline{s}})_{\underline{s} \in S_{k-2}} \right) \colonequals \Phi_{k-2} \left( (\mathrm{e}^{x_{\underline{s}}})_{\underline{s} \in S_{k-2}} \right) = \mathbb{E} \left( \exp \left(\rule{0cm}{18pt}\right. \sum_{\underline{s} \in S_{k-2}} \left| \widetilde{R}_{\underline{s}} (r, G_{\infty}^0) \right| \cdot x_{\underline{s}} \left.\rule{0cm}{18pt}\right) \right) \text{.} \]
 Since $\varphi$ is the multivariate moment generating function of the random vector $\left( \left| \widetilde{R}_{\underline{s}} (r, G_{\infty}^0) \right|\right)_{\underline{s} \in S_{k-2}}$,  we obtain
 \[ \left. \frac{\partial^k}{\partial x_{{\underline{s}_1^-}} \ldots \partial x_{{\underline{s}_k^-}}} \varphi \left( (x_{\underline{s}} )_{\underline{s} \in S_{k-2}} \right) \right|_{(x_{\underline{s}})_{\underline{s} \in S_{k-2}} = \underline{0}} = \mathbb{E} \left( \prod_{i \in [k]} \left| \widetilde{R}_{\underline{s}_i^-} (r, G_{\infty}^0) \right| \right) \text{.} \]
 
 Hence,
 \begin{multline*}
  C(k) = 2^k \cdot \mathbb{E} \left( \prod_{i \in [k]} \left| \widetilde{N}_{k-1}^{\setminus i} (r, G_{\infty}^0) \right| \right) = 2^k \cdot \! \sum_{\big( \underline{s}_i \in S_{k-1}^{\setminus i} \big)_{i \in [k]}} \! \mathbb{E} \left( \prod_{i \in [k]} \left| \widetilde{N}_{\underline{s}_i} (r, G_{\infty}^0) \right| \right) \\
  = \frac{2^k}{(k-1)^k} \cdot \! \sum_{\big( \underline{s}_i \in S_{k-1}^{\setminus i} \big)_{i \in [k]}} \! \mathbb{E} \left( \prod_{i \in [k]} \left| \widetilde{R}_{\underline{s}_i^-} (r, G_{\infty}^0) \right| \right) \\
  = \frac{2^k}{(k-1)^k} \cdot \! \sum_{\big( \underline{s}_i \in S_{k-1}^{\setminus i} \big)_{i \in [k]}} \! \left. \frac{\partial^k}{\partial x_{{\underline{s}_1^-}} \ldots \partial x_{{\underline{s}_k^-}}} \Phi_{k-2} \left( (\mathrm{e}^{x_{\underline{s}}} )_{\underline{s} \in S_{k-2}} \right) \right|_{(x_{\underline{s}})_{\underline{s} \in S_{k-2}} = \underline{0}} \text{,}
 \end{multline*}
 which can be expressed explicitly using an iterated composition of the Lambert $W$ function and elementary functions since the same is true for $\Phi_{k-2}$ by Lemma~\ref{lemma:joint_gen_function_recursion}.
 
 \medskip
 
 In particular, when $k=2$, then $\Phi_0(\mathrm{e}^x) = \mathrm{e}^x$, thus $C(2)=4$.
 
 When $k=3$, then by Lemma~\ref{lemma:joint_gen_function_recursion},
 \[ \Phi_1(\mathrm{e}^{x_1}, \mathrm{e}^{x_2}, \mathrm{e}^{x_3}) = \prod_{i \in [3]} \exp \left( \frac{1+\varepsilon}{2} \big( F_{(i)}(\mathrm{e}^{x_i}) - 1 \big) \right) \text{,} \]
 where by~\eqref{eq:lambert_F_s},
 \[ F_{(i)}(z) = \frac{- W (- \frac{1}{2} \cdot \mathrm{e}^{- \frac{1}{2}} \cdot z)}{\frac{1}{2}} \]
 for any $i \in [3]$.
 Using the identities
 \[ \frac{\mathrm{d}}{\mathrm{d}z} W(z) = \frac{W(z)}{z \big( 1 + W(z) \big)} \qquad \text{if $z \notin \{ 0, -1/\mathrm{e} \}$} \]
 and
 \[ W(z \mathrm{e}^z) = z \qquad \text{if $z \ge 0$} \]
 we can obtain $C(3)=32$.
 
 Similarly (but with significantly more work), we obtain the value $C(4)=624$. Recall that we obtained the same values for $C(k)$ when $k \in \{ 2, 3, 4 \}$ using a different approach at the end of Section~\ref{subsection:f_infty_girls}, too.
 \end{proof}

\section{The case of two colors} \label{section:two_colors}

In Theorem~\ref{thm:conv_of_f_ell_G_n}, we already proved that the empirical color-avoiding connected component size densities converge. In this section, we give explicit formulas for their limit when there are only two colors.

\begin{theorem}
 Let $\lambda_{\text{red}}, \lambda_{\text{blue}} \in \mathbb{R}_+$, $\ell \in \mathbb{N}_+$, and let $G_{\infty} \colonequals G_{\infty}(r) \sim \mathcal{G}_{\infty}\big( (\lambda_{\text{red}}, \lambda_{\text{blue}}) \big)$ be a two-colored ECBP tree. Then
 \begin{multline} \label{eq:two_color_formula}
  f^*_{\ell} = \mathds{1}[\ell = 1] \cdot \widehat{p}^{\, *} \big( (0,0) \big) \\
  + \sum_{m \in \mathbb{N}_+} \Bigg( \binom{m-1}{\ell - 1} (\theta_{\text{blue}})^{\ell - 1} (1 - \theta_{\text{blue}})^{m - \ell} \\
  \cdot \frac{\exp \big( - \lambda_{\text{red}} \cdot (1 - \theta_{\text{red}}) \cdot m \big) \cdot \big(\lambda_{\text{red}} (1 - \theta_{\text{red}}) \cdot m \big)^{m-1}}{m!} \Bigg) \cdot \widehat{p}^{\, *} \big( (1,0) \big) \\
  + \sum_{m \in \mathbb{N}_+} \Bigg( \binom{m-1}{\ell - 1} (\theta_{\text{red}})^{\ell - 1} (1 - \theta_{\text{red}})^{m - \ell} \\
  \cdot \frac{\exp \big( - \lambda_{\text{blue}} \cdot (1 - \theta_{\text{blue}}) \cdot m \big) \cdot \big(\lambda_{\text{blue}} (1 - \theta_{\text{blue}}) \cdot m \big)^{m-1}}{m!} \Bigg) \cdot \widehat{p}^{\, *} \big( (0,1) \big) \text{,}
 \end{multline}
 where
 \begin{align*}
  \widehat{p}^{\, *} \big( (0,0) \big) &= \exp \Big( - \lambda_{\text{red}} - W \big( - \lambda_{\text{red}} \mathrm{e}^{- \lambda_{\text{red}}} \big) - \lambda_{\text{blue}} - W \big( - \lambda_{\text{blue}} \mathrm{e}^{- \lambda_{\text{blue}}} \big) \Big) \text{,} \\
  \widehat{p}^{\, *} \big( (1,0) \big) &= - \widehat{p}^{\, *} \big( (0,0) \big) - \frac{W \big( - \lambda_{\text{red}} \mathrm{e}^{- \lambda_{\text{red}}} \big)}{\lambda_{\text{red}}} \text{,} \\
  \widehat{p}^{\, *} \big( (0,1) \big) &= - \widehat{p}^{\, *} \big( (0,0) \big) - \frac{W \big( - \lambda_{\text{blue}} \mathrm{e}^{- \lambda_{\text{blue}}} \big)}{\lambda_{\text{blue}}} \text{.}
 \end{align*}
\end{theorem}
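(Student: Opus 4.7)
The plan is to decompose $f^*_\ell$ according to the extended color-avoiding type I vector $\underline{\widehat{t}}{}^*(r) \in \{0,1\}^2$ of the root $r$, writing
\[
 f^*_\ell = \sum_{\underline{\gamma} \in \{0,1\}^2} \mathbb{P}\!\left(\Big|\widetilde{\mathcal{C}}^*(r)\Big|=\ell,\ \underline{\widehat{t}}{}^*(r)=\underline{\gamma}\right).
\]
The case $\underline{\gamma}=(1,1)$ contributes $0$ to finite $\ell$ by Lemma~\ref{lemma:infinitely_many_friends}. The independence of the red and blue offspring Poisson variables at every level immediately gives
\[
 \widehat{p}^{\,*}\big((\gamma_1,\gamma_2)\big)=\mathbb{P}(\text{$r$'s blue subtree is infinite}=\gamma_1)\cdot \mathbb{P}(\text{$r$'s red subtree is infinite}=\gamma_2),
\]
and the claimed Lambert-$W$ formulas follow from the identity $W(-\lambda\mathrm{e}^{-\lambda})=-\lambda(1-\theta)$, where $\theta$ is the survival probability of a $\mathrm{POI}(\lambda)$ branching process, so that $-\lambda-W(-\lambda\mathrm{e}^{-\lambda})=-\lambda\theta$ and $1-\theta=\mathrm{e}^{-\lambda\theta}$; this identity holds on the principal branch because $-\lambda(1-\theta)\ge -1$ in both the subcritical and supercritical regimes.

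Next I identify the friend set $\widetilde{\mathcal{C}}^*(r)$ in each case using Definition~\ref{def:friends} and the fact that in a tree two vertices are joined by a unique path. In case $(0,0)$, neither ``connected through infinity'' event can occur, so $v$ must lie in both the blue and the red subtree of $r$; by uniqueness of paths this forces $v=r$, giving $\widetilde{\mathcal{C}}^*(r)=\{r\}$ and the indicator $\mathds{1}[\ell=1]$. In case $(1,0)$, being a blue-avoiding friend of $r$ reduces to lying in $r$'s red subtree (since $r$ is not blue-avoiding connected to infinity), while being a red-avoiding friend means either lying in $r$'s blue subtree or being red-avoiding connected to infinity through descendants; intersecting with the red subtree and using the tree property yields
\[
 \widetilde{\mathcal{C}}^*(r)=\Big\{v\text{ in the red GW subtree of }r\ :\ v\text{'s blue GW subtree is infinite}\Big\}.
\]
Case $(0,1)$ is symmetric.

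For the quantitative part, condition on being in case $(1,0)$. Standard duality for Poisson branching processes (following from $1-\theta_{\text{red}}=\mathrm{e}^{-\lambda_{\text{red}}\theta_{\text{red}}}$) tells us that, conditional on extinction, $r$'s red subtree has the law of a $\mathrm{POI}(\lambda_{\text{red}}(1-\theta_{\text{red}}))$ branching process, and hence the total progeny $m$ follows a Borel distribution with parameter $\mu=\lambda_{\text{red}}(1-\theta_{\text{red}})$, giving the weight $\mathrm{e}^{-\mu m}(\mu m)^{m-1}/m!$. Since the blue subtrees emanating from distinct vertices of the red subtree involve disjoint Poisson offspring variables, they are mutually independent and independent of the shape of the red subtree; conditional on case $(1,0)$ the root's own blue subtree is forced to be infinite, while each of the other $m-1$ red-subtree vertices has an unconditioned blue subtree which is infinite with probability $\theta_{\text{blue}}$. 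Thus the number of friends equals $1+\mathrm{BIN}(m-1,\theta_{\text{blue}})$, yielding the binomial factor $\binom{m-1}{\ell-1}(\theta_{\text{blue}})^{\ell-1}(1-\theta_{\text{blue}})^{m-\ell}$. Multiplying by $\widehat{p}^{\,*}((1,0))$ and summing over $m$ gives the $(1,0)$-contribution, and case $(0,1)$ is handled by swapping the roles of the two colors.

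The main obstacle is the careful identification of $\widetilde{\mathcal{C}}^*(r)$ in the mixed cases $(1,0)$ and $(0,1)$, together with the bookkeeping of what remains independent after conditioning on the extended type I vector of $r$ --- this is where the tree structure is crucial and where the duality between a supercritical Poisson branching process conditioned on extinction and a subcritical one of parameter $\lambda(1-\theta)$ is essential in order to produce the Borel distribution that appears in the formula.
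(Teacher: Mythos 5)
Your proposal is correct and takes essentially the same route as the paper: decompose $f^*_\ell$ by the extended type vector $\underline{\widehat{t}}{}^*(r)$, kill the $(1,1)$ case via Lemma~\ref{lemma:infinitely_many_friends}, identify $\widetilde{\mathcal{C}}^*(r)$ in the mixed cases via the tree structure, invoke Poisson GW duality (conditioning on extinction gives a $\mathrm{POI}(\lambda(1-\theta))$ process with Borel total progeny), and use independence of the blue subtrees attached to the $m-1$ non-root red-subtree vertices to get the binomial factor. The only cosmetic difference is in how you reach the Lambert-$W$ expressions for $\widehat{p}^{\,*}$: you derive them directly from the independence $\widehat{p}^{\,*}\big((\gamma_1,\gamma_2)\big)=\mathbb{P}(\text{blue subtree infinite}=\gamma_1)\cdot\mathbb{P}(\text{red subtree infinite}=\gamma_2)$ together with the identity $W(-\lambda\mathrm{e}^{-\lambda})=-\lambda(1-\theta)$, whereas the paper routes through~\eqref{eq:p_I_expressed_with_pstar} and Corollary~\ref{cor:recursive_solution}; both are correct and your route is, if anything, slightly more self-contained.
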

\begin{proof} 
 By the definition of $f^*_{\ell}$ and by the total law of probability, we obtain
 \[ f^*_{\ell} = \sum_{\underline{\gamma} \in \{0,1\}^2} \mathbb{P} \Big( \, \big| \widetilde{\mathcal{C}}^*(r) \big| = \ell \Bigm| \underline{\widehat{t}}{}^*(r) = \underline{\gamma} \, \Big) \cdot \widehat{p}^{\, *}(\underline{\gamma}) \text{.} \]
 
 Now let us look at the summands separately. Note that any red-avoiding path contains only blue edges, and similarly, any blue-avoiding path contains only red edges.
 
 If $\underline{\gamma} = (1,1)$, then by Lemma~\ref{lemma:infinitely_many_friends}, we obtain
 \[ \mathbb{P} \Big( \, \big| \widetilde{\mathcal{C}}^*(r) \big| = \ell \Bigm| \underline{\widehat{t}}{}^*(r) = (1,1) \, \Big) \le \mathbb{P} \Big( \, \big| \widetilde{\mathcal{C}}^*(r) \big| < \infty \Bigm| \underline{t}^*(r) = (1,1) \, \Big) = 0 \text{.} \]
 
 If $\underline{\gamma} = (0,0)$, then clearly,
 \[ \mathbb{P} \Big( \, \big| \widetilde{\mathcal{C}}^*(r) \big| = \ell \Bigm| \underline{\widehat{t}}{}^*(r) = (0,0) \, \Big) = \mathds{1}[\ell = 1] \text{.} \]
 
 If $\underline{\gamma} = (1,0)$, then conditional on $\underline{\widehat{t}}{}^*(r) = (1,0)$, the branching process $\big( \big| R_{\infty, d}^{\text{red}} \big| \big)_{d \in \mathbb{N}}$ is subcritical and has offspring distribution $\mathrm{POI} \big( \lambda_{\text{red}} (1-\theta_{\text{red}}) \big)$ by Theorem 3.15 of \cite{vdH}, where $\theta_{\text{red}} \colonequals \theta^{\setminus \text{blue}}$ and $\theta_{\text{blue}} \colonequals \theta^{\setminus \text{red}}$ (cf.\ Definition~\ref{def:theta_minus_i}). Thus the number of vertices of $\big( \big| R_{\infty, d}^{\text{red}} \big| \big)_{d \in \mathbb{N}}$ has distribution $\mathrm{BOREL} \big( \lambda_{\text{red}} (1-\theta_{\text{red}}) \big)$. In addition, each vertex of $\big( R_{\infty, d}^{\text{red}} \big)_{d \in \mathbb{N}}$ is of type either $(0,0)$ or $(1,0)$ independently of each other with probabilities 
 \[ \frac{\widehat{p}^{\, *} \big( (0,0) \big)}{\widehat{p}^{\, *} \big( (0,0) \big) + \widehat{p}^{\, *} \big( (1,0) \big)} = \frac{(1 - \theta_{\text{blue}})(1 - \theta_{\text{red}})}{(1 - \theta_{\text{red}})} = 1 - \theta_{\text{blue}} \]
 and
 \[ \frac{\widehat{p}^{\, *} \big( (1,0) \big)}{\widehat{p}^{\, *} \big( (0,0) \big) + \widehat{p}^{\, *} \big( (1,0) \big)} = \theta_{\text{blue}} \text{,} \]
 respectively. Thus by the total law of probability,
 \begin{multline*}
  \mathbb{P} \Big( \, \big| \widetilde{\mathcal{C}}^*(r) \big| = \ell \Bigm| \underline{\widehat{t}}{}^*(r) = (1,0) \, \Big) \\
  = \sum_{m \in \mathbb{N}_+} \bigg( \mathbb{P} \Big( \, \big| \widetilde{\mathcal{C}}^*(r) \big| = \ell \Bigm| \underline{\widehat{t}}{}^*(r) = (1,0), \, \big| R_{\infty}^{\text{red}} \big| = m \, \Big) \\
  \cdot \mathbb{P} \Big( \, \big| R_{\infty}^{\text{red}} \big| = m \Bigm| \, \underline{\widehat{t}}{}^*(r) = (1,0) \Big) \bigg) \\
  = \sum_{m \in \mathbb{N}_+} \Bigg( \binom{m-1}{\ell - 1} (\theta_{\text{blue}})^{\ell - 1} (1 - \theta_{\text{blue}})^{m - \ell} \\
  \cdot \frac{\exp \big( -\lambda_{\text{red}} \cdot (1 - \theta_{\text{red}}) \cdot m \big) \cdot \big(\lambda_{\text{red}} (1 - \theta_{\text{red}}) \cdot m \big)^{m-1}}{m!} \Bigg) \text{.}
 \end{multline*}
 
 If $\underline{\gamma} = (0,1)$, then similarly,
 \begin{multline*}
  \mathbb{P} \Big( \, \big| \widetilde{\mathcal{C}}^*(r) \big| = \ell \Bigm| \underline{\widehat{t}}{}^*(r) = (0,1) \, \Big) \\
  = \sum_{m \in \mathbb{N}_+} \Bigg( \binom{m-1}{\ell - 1} (\theta_{\text{red}})^{\ell - 1} (1 - \theta_{\text{red}})^{m - \ell} \\
  \cdot \frac{\exp \big( -\lambda_{\text{blue}} \cdot (1 - \theta_{\text{blue}}) \cdot m \big) \cdot \big(\lambda_{\text{blue}} (1 - \theta_{\text{blue}}) \cdot m \big)^{m-1}}{m!} \Bigg) \text{.}
 \end{multline*}
 
 Therefore \eqref{eq:two_color_formula} holds. By~\eqref{eq:p_I_expressed_with_pstar}, we obtain
 \begin{align*}
  \widehat{p}^{\, *} \big( (0,0) \big) &= 1 - p_{\{ \text{red}, \text{blue} \}} \text{,} \\
  \widehat{p}^{\, *} \big( (1,0) \big) &= p_{\{ \text{red}, \text{blue} \}} - p_{\{ \text{blue} \}} \text{,} \\
  \widehat{p}^{\, *} \big( (0,1) \big) &= p_{\{ \text{red}, \text{blue} \}} - p_{\{ \text{red} \}} \text{,}
 \end{align*}
 and by Corollary~\ref{cor:recursive_solution}, the stated result follows.
\end{proof}

\section{Open questions} \label{section_open_questions}

Let us conclude our paper with a list of open questions and conjectures that we propose for future research.


\begin{conjecture} \label{conjecture_no_assumption}
 Theorem~\ref{thm:conv_of_f_ell_G_n} holds without Assumption~\ref{asm:simplifying_assumption}.
\end{conjecture}

Let us note that the recent paper~\cite{lichevschapira} resolved this conjecture in the affirmative, but we decided to keep it in our paper for reference.

In Section~\ref{section:explicit_formulas}, we gave explicit, recursive formulas for the asymptotic density $f^*_{\infty}$ of the giant color-avoiding connected component, and in Section~\ref{section:two_colors}, we gave a formula for the asymptotic density $f^*_{\ell}$ of the set of vertices that belong to a color-avoiding connected component of size $\ell$ for any $\ell \in \mathbb{N}_+$ in the case of $k=2$ colors.

\begin{question}
 Is there an explicit formula for $f^*_{\ell}$ for any $\ell \in \mathbb{N}_+$ and for $k \ge 3$ colors?
\end{question}

It is well-known (see~\cite{aldous1997brownian} and the references therein) that the cardinality $\big| \mathcal{U}_{\text{max}}(G_n) \big|$ of the largest component in an (uncolored) critical Erd\H{o}s--R\'{e}nyi graph $G_n$ with $n$ vertices is of order $n^{2/3}$. It follows from Theorem~\ref{thm:conv_of_f_ell_G_n} and Proposition~\ref{prop:char_of_giant} that if $\lambda^{\setminus i} = 1$ for some $i \in [k]$, then
\[ \frac{1}{n} \max_{v \in V(G_n)} \Big| \widetilde{\mathcal{C}}(v, G_n) \Big| \; \stackrel{\mathbb{P}}{\longrightarrow} \; 0, \qquad n \to \infty \text{.} \]

\begin{question} \label{question_max_comp}
 Let $k \in \mathbb{N}_+$, $\underline{\lambda} \in \mathbb{R}_+^k$ and let $G_n \sim \mathcal{G}_n([n],\underline{\lambda})$ for all $n \in \mathbb{N}_+$. What is the order of magnitude of $\max_{v \in [n]} \big| \widetilde{\mathcal{C}}(v, G_n) \big|$ if $\lambda^{\setminus i} = 1$ for some $i \in [k]$, and in particular, if $\lambda^{\setminus i} = 1$ for all $i \in [k]$?
\end{question}

Let us note that the recent paper~\cite{lichevschapira} contains some results in the vein of our Question~\ref{question_max_comp}, see Theorem~1.1 and Proposition~1.3 of~\cite{lichevschapira}.

We know from Theorem~\ref{thm:barely_supcrit} that $C(k)$ is an integer if $k \in \{ 2, 3, 4 \}$.

\begin{question}
 Is $C(k)$ an integer for all $k \in \mathbb{N}_+$?
\end{question}

Our results pertain to one of the most fundamental edge-colored random graph models, i.e., the edge-colored Erd\H{o}s--R\'{e}nyi graph. A natural, more heterogeneous model of edge-colored random graphs is the following generalization of the well-known configuration model (see Section 1.3 in \cite{vdH2}). Let $k \in \mathbb{N}_+$ and let $p: \mathbb{N}_+^k \to [0,1]$ satisfying $\sum_{\underline{d} \in \mathbb{N}_+^k} p(\underline{d}) = 1$. Let us consider a sequence of edge-colored graphs $G_n$ where the edge-colored degree sequence satisfies
\begin{multline*}
 \hspace{-8pt} \lim_{n \to \infty} \frac{1}{n} \Big| \big\{ v \in V(G_n) : \text{the number of half-edges of color $i$ incident to $v$ is $d_i$} \big\} \Big| \\
 = p(d_1, \ldots, d_k)
\end{multline*}
and the half-edges of the same color are matched according to the rules of the configuration model.

\begin{question}
 Does Theorem~\ref{thm:conv_of_f_ell_G_n} have an analogue in the above described edge-colored configuration model? Is there an explicit formula for the asymptotic density of the largest color-avoiding connected component?
\end{question}

Color-avoiding percolation was first introduced for vertex-colored graphs, see~\cites{krause2016hidden,krause2017color}. The definition of color-avoiding connectivity in vertex-colored graphs is ambiguous. The two most interesting versions are as follows. In the first version, two vertices are color-avoiding connected in a vertex-colored graph if there exists a path between them whose internal vertices are not of color $i$ for any color $i$. Whereas in the second version, two vertices are color-avoiding connected if there exists a path between them using any colors and either at least one of the vertices is of color $i$ or there exists a path between them whose vertices are not of color $i$ for any color $i$. In~\cite{molontay2019complexity}, we showed that these two definitions can yield problems with highly different computational complexity.

\begin{question}
 Does Theorem~\ref{thm:conv_of_f_ell_G_n} have an analogue in a vertex-colored Erd\H{o}s--R\'{e}nyi random graph, i.e., in an Erd\H{o}s--R\'{e}nyi random graph whose vertices are randomly colored? Is there an explicit formula for the asymptotic density of the largest color-avoiding connected component?
\end{question}

\section*{Acknowledgement}
The research of Roland Molontay, Bal\'{a}zs R\'{a}th, and Kitti Varga was partially supported by grant NKFI-FK-123962 of NKFI (National Research, Development and Innovation Office). The research of Panna Fekete and Bal\'{a}zs R\'{a}th  was partially supported by the ERC Synergy under Grant No.\ 810115 - DYNASNET. The research of Panna T\'{i}mea Fekete was prepared with the professional support of the Doctoral Student Scholarship Program of the Co-operative Doctoral Program of the Ministry of Innovation and Technology financed from the National Research, Development and Innovation Fund and the National Research, Development and Innovation Office under Grant No.   2018-2.1.1.-UK\_GYAK-2018-00008.  The research of Roland Molontay was partially supported by NKFIH 142169 research grant. 

\appendix

\section*{Appendix}

\begin{proof}[Proof of~\ref{prop:benjamini_schramm}]
 Since Assumption~\ref{asm:simplifying_assumption} holds, Claim~\ref{claim:finiteness_of_rhor} implies that $\widetilde{G}_{\infty,k-2}(r)$ is almost surely finite.
 
 Now we show that for any $d \in \mathbb{N}$, the sequence $\big( G^{\text{uc}}_{n,d}(v_1,v_2) \big)_{n=1}^{\infty}$ converges in distribution to two i.i.d.\ copies of $G^{\text{uc}}_{\infty,d}(r)$ in the sense that
 \begin{equation} \label{eq:conv_of_uncolored_balls_of_radius_d}
  \lim_{n \to \infty} \mathbb{P} \left( G^{\text{uc}}_{n,d}(v_1,v_2) \simeq F^{\text{uc}}_1 \oplus F^{\text{uc}}_2 \right) = \prod_{j \in [2]} \mathbb{P} \left( G^{\text{uc}}_{\infty,d}(r) \simeq F^{\text{uc}}_j \right)
 \end{equation}
 for any pair of fixed finite, rooted, edge-colored (multi)graphs $F_1$ and $F_2$. By Claim~\ref{claim:GI_is_ER}, we have $G^{\text{uc}}_n \sim \mathcal{G}_n([n],\lambda^{\text{uc}})$. Thus by Exercise 2.14 of~\cite{vdHStF}, $G^{\text{uc}}_n$ converges in probability in the local weak sense (cf.\ Definition 2.9(b) of~\cite{vdHStF}) to an uncolored Poisson branching process tree in $\mathcal{G}_\infty(\lambda^{\text{uc}})$. This and Exercise 2.10 of~\cite{vdHStF} imply that the two-rooted graph sequence $\big( G^{\text{uc}}_{n}(v_1, v_2) \big)_{n=1}^{\infty}$ converges in distribution in the local weak sense to two i.i.d.\ copies of $G^{\text{uc}}_{\infty}(r)$, i.e.,~\eqref{eq:conv_of_uncolored_balls_of_radius_d} holds.

 Next, we show that for any $d \in \mathbb{N}$, the two-rooted edge-colored graph sequence $\big( G_{n,d}(v_1,v_2) \big)_{n=1}^{\infty}$ converges in distribution to two i.i.d.\ copies of $G_{\infty,d}(r)$ in the sense that
 \begin{equation} \label{eq:conv_of_edgecolored_balls_of_radius_d_again} 
  \lim_{n \to \infty} \mathbb{P} \big( G_{n,d}(v_1, v_2) \simeq F_1 \oplus F_2 \big) = \prod_{j \in [2]} \mathbb{P} \big( G_{\infty,d}(r) \simeq F_j \big) \text{,}
 \end{equation}
 for any pair of fixed finite, rooted, edge-colored graphs $F_1$ and $F_2$. It follows from the definitions of ECER graphs and ECBP trees (cf.\ Definitions~\ref{def:ECER} and~\ref{def:ECBP}) that
 \begin{multline*}
  \lim_{n \to \infty} \mathbb{P} \left( G_{n,d}(v_1, v_2) \simeq F_1 \oplus F_2 \, \middle| \, G^{\text{uc}}_{n,d}(v_1, v_2) \simeq F^{\text{uc}}_1 \oplus F^{\text{uc}}_2 \right) \\
  = \prod_{j \in [2]} \mathbb{P} \left( G_{\infty,d}(r) \simeq F_j \, \middle| \, G^{\text{uc}}_{\infty,d}(r) \simeq F^{\text{uc}}_j \right) \text{.}
 \end{multline*}
 This and~\eqref{eq:conv_of_uncolored_balls_of_radius_d} imply~\eqref{eq:conv_of_edgecolored_balls_of_radius_d_again}.
 
 For a number $d \in \mathbb{N}$ and for a finite, rooted, edge-colored graph $H$ with depth at most $d$ (i.e.\ the distance of each vertex from the root is at most $d$), let $\mathcal{B}_d(H)$ denote the set of possible pairwise non-isomorphic realizations of the graph $G_{\infty,d}(r)$, i.e.\ of the branching process tree $G_{\infty}(r)$ up to depth $d$, for which the event $\big\{ \widetilde{G}_{\infty,k-2}(r) \simeq H \big\}$ holds. Note that the event $\big\{ \widetilde{G}_{\infty,k-2}(r) \simeq H \big\}$ is measurable with respect to the sigma-algebra generated by $G_{\infty,d}(r)$, thus
 \[ \Big\{ \widetilde{G}_{\infty,k-2}(r) \simeq H \Big\} = \biguplus_{F \in \mathcal{B}_d(H)} \big\{ G_{\infty,d}(r) \simeq F \big\} \text{,} \]
 i.e., the event $\big\{ \widetilde{G}_{\infty,k-2}(r) \simeq H \big\}$ is the disjoint union of the events $\big\{ G_{\infty,d}(r) \simeq F \big\}$ for all $F \in \mathcal{B}_d(H)$. Therefore,
 \begin{equation} \label{eq:switching_from_colordist_to_edgedist_no1}
  \prod_{j \in [2]} \mathbb{P} \Big( \widetilde{G}_{\infty,k-2}(r) \simeq H_j \Big) = \sum_{\substack{F_1 \in \mathcal{B}_d(H_1) \\ F_2 \in \mathcal{B}_d(H_2)}} \, \prod_{j \in [2]} \mathbb{P} \big( G_{\infty,d}(r) \simeq F_j \big) \text{.}
 \end{equation}
 
 Let
 \[ E_d(v_1, v_2, G_n) \colonequals \big\{ \text{$G_{n,d}(v_1, v_2)$ is a forest with two components} \big\} \text{.} \]
 Then
 \begin{multline*}
  \Big\{ \widetilde{G}_{n,k-2}(v_1, v_2) \simeq H_1 \oplus H_2, ~ E_d(v_1, v_2, G_n) \Big\} \\
  = \biguplus_{\substack{F_1 \in \mathcal{B}_d(H_1) \\ F_2 \in \mathcal{B}_d(H_2)}} \big\{ G_{n,d}(v_1, v_2) \simeq F_1 \oplus F_2 \big\}
 \end{multline*}
 for any number $d \in \mathbb{N}$ and for any pair of fixed finite, rooted, edge-colored graphs $H_1$ and $H_2$ with depth at most $d$. Note that by~\eqref{eq:conv_of_edgecolored_balls_of_radius_d_again}, we have 
 \[ \lim_{n \to \infty} E_d(v_1, v_2, G_n) = 1 \text{.} \]
 Thus,
 \begin{multline} \label{eq:switching_from_colordist_to_edgedist_no2}
  \lim_{n \to \infty} \mathbb{P} \Big( \widetilde{G}_{n,k-2}(v_1, v_2) \simeq H_1 \oplus H_2 \Big) \\
  = \lim_{n \to \infty} \mathbb{P} \Big( \widetilde{G}_{n,k-2}(v_1, v_2) \simeq H_1 \oplus H_2, E_d(v_1, v_2, G_n) \Big) \\
  = \lim_{n \to \infty} \sum_{\substack{F_1 \in \mathcal{B}_d(H_1) \\ F_2 \in \mathcal{B}_d(H_2)}} \mathbb{P} \big( G_{n,d}(v_1, v_2) \simeq F_1 \oplus F_2 \big) \text{.}
 \end{multline}
 
 Let $\mathcal{B}_d$ denote the set of pairwise non-isomorphic possible realizations of $G_{\infty,d}(r)$. Clearly,
 \[ \lim_{n \to \infty} \sum_{F_1, F_2 \in \mathcal{B}_d} \mathbb{P} \big( G_{n,d}(v_1, v_2) \simeq F_1 \oplus F_2 \big) = \lim_{n \to \infty} E_d(v_1, v_2, G_n) = 1 \]
 and by~\eqref{eq:conv_of_edgecolored_balls_of_radius_d_again}, we have
 \begin{multline*}
  \sum_{F_1, F_2 \in \mathcal{B}_d} \lim_{n \to \infty} \mathbb{P} \big( G_{n,d}(v_1, v_2) \simeq F_1 \oplus F_2 \big) \\
  = \sum_{F_1, F_2 \in \mathcal{B}_d} \, \prod_{j \in [2]} \mathbb{P} \big( G_{\infty,d}(r) \simeq F_j \big) = 1 \text{,}
 \end{multline*}
 so Scheff\'{e}'s lemma \cite{scheffe} can be applied to obtain
 \begin{equation*}
  \lim_{n \to \infty} \sum_{F_1, F_2 \in \mathcal{B}_d} \left| \ \mathbb{P} \big( G_{n,d}(v_1, v_2) \simeq F_1 \oplus F_2 \big) - \prod_{j \in [2]} \mathbb{P} \big( G_{\infty,d}(r) \simeq F_j \big) \ \right| = 0 \text{.}
 \end{equation*}
 In particular, since $\mathcal{B}_d(H_1), \mathcal{B}_d(H_2) \subseteq \mathcal{B}_d$, we have
 \[ \lim_{n \to \infty} \sum_{\substack{F_1 \in \mathcal{B}_d(H_1), \\ F_2 \in \mathcal{B}_d(H_2)} } \left| \ \mathbb{P} \big( G_{n,d}(v_1, v_2) \simeq F_1 \oplus F_2 \big) - \prod_{j \in [2]} \mathbb{P} \big( G_{\infty,d}(r) \simeq F_j \big) \ \right| = 0 \text{.} \]
 Thus by~\eqref{eq:switching_from_colordist_to_edgedist_no2} and~\eqref{eq:switching_from_colordist_to_edgedist_no1}, we obtain
 \begin{multline*}
  \lim_{n \to \infty} \mathbb{P} \Big( \widetilde{G}_{n,k-2}(v_1, v_2) \simeq H_1 \oplus H_2 \Big) \\
  = \lim_{n \to \infty} \sum_{\substack{F_1 \in \mathcal{B}_d(H_1) \\ F_2 \in \mathcal{B}_d(H_2)}} \mathbb{P} \big( G_{n,d}(v_1, v_2) \simeq F_1 \oplus F_2 \big) \\
  = \sum_{\substack{F_1 \in \mathcal{B}_d(H_1) \\ F_2 \in \mathcal{B}_d(H_2)}} \prod_{j \in [2]} \mathbb{P} \big( G_{\infty,d}(r) \simeq F_j \big) = \prod_{j \in [2]} \mathbb{P} \Big( \widetilde{G}_{\infty,k-2}(r) \simeq H_j \Big) \text{.}
 \end{multline*}
\end{proof}

\label{list_of_symbols_starts_here}

\nomenclature[1A]{$A(v,w)$}{good event for ECER graphs: intersection of the events $A_1^{\setminus i}, A_2^{\setminus i}$, $A_3(v,w)$, $A_4(v,w)$, and all the events $A_5^{\setminus i}(v,w), \ldots, A_9^{\setminus i}(v,w)$ with a supercritical index $i$; see Definition~\ref{def:good_events}}

\nomenclature[1A]{$A_1^{\setminus i}$}{event of an ECER graph having a unique largest $i$-avoiding component; see Definition~\ref{def:good_events}}

\nomenclature[1A]{$A_2^{\setminus i}$}{event of the non-largest $i$-avoiding components of an ECER graph being small; see Definition~\ref{def:good_events}}

\nomenclature[1A]{$A_3(v,w)$}{event of the subgraph of an ECER graph explored from $v$ and $w$ until seeing $k-1$ colors for the first time being a forest with two components; see Definition~\ref{def:good_events}}

\nomenclature[1A]{$A_4(v,w)$}{event of the set of vertices reachable from $v$ or $w$ with at most $k-2$ colors being small in an ECER graph; see Definition~\ref{def:good_events}}

\nomenclature[1A]{$A_5^{\setminus i}(v,w)$}{event of the $i$-avoiding boundary of $\{ v,w \}$ being small in an ECER graph; see Definition~\ref{def:good_events}}

\nomenclature[1A]{$A_6^{\setminus i}(v,w)$}{event of the union of the largest $i$-avoiding components of the unexplored subgraph when exploring with $k-2$ colors from $v$ and $w$ being large in an ECER graph; see Definition~\ref{def:good_events}}

\nomenclature[1A]{$A_7^{\setminus i}(v,w)$}{event of the unexplored subgraph when exploring with $k-2$ colors from $v$ and $w$ having a unique largest $i$-avoiding component in an ECER graph; see Definition~\ref{def:good_events}}

\nomenclature[1A]{$A_8^{\setminus i}(v,w)$}{event of the non-largest $i$-avoiding components of the unexplored subgraph when exploring with $k-2$ colors from $v$ and $w$ being small in an ECER graph; see Definition~\ref{def:good_events}}

\nomenclature[1A]{$A_9^{\setminus i}(v,w)$}{event of the largest $i$-avoiding components of the unexplored subgraph when exploring with $k-2$ colors from $v$ and $w$ being contained in the union of the largest $i$-avoiding components of the original ECER graph; see Definition~\ref{def:good_events}}

\nomenclature[1B]{$\underline{b}(v)$}{boundary vector of $v$, i.e., the vector containing the sizes of $i$-avoiding boundaries $\widetilde{N}^{\setminus i}_{k-1}(v)$ of $v$ for every color $i \in [k]$; see Definition~\ref{def:outer_boundary}}

\nomenclature[1B]{$B(\underline{v})$}{bad event for in ECER graphs: union of the events $B_1(\underline{v})$ and $B_2(\underline{v})$; see Definition~\ref{def:bad_events}}

\nomenclature[1B]{$B_1(\underline{v})$}{union of the events of two vertices being $i$-avoiding connected for the non-supercritical indices $i \in [k] \setminus I_{\underline{\lambda}}$ and for any two different vertices in $\underline{v}$; see Definition~\ref{def:bad_events}}

\nomenclature[1B]{$B_2(\underline{v})$}{union of the events of two vertices being both in a largest $i$-avoiding component but not in the same, or being in the same non-largest $i$-avoiding component, for the supercritical indices $i \in I_{\underline{\lambda}}$ and for any two different vertices in $\underline{v}$; see Definition~\ref{def:bad_events}}

\nomenclature[1B]{$\mathcal{B}_d^{\tau_{\infty}}$}{set of pairwise non-isomorphic possible realizations of an ECBP tree up to depth $d$ with color-avoiding horizon $d-1$; see Definition~\ref{def:stopping_time}}

\nomenclature[1Cb]{$\mathcal{C}(v)$}{set of vertices in the connected component of $v$}

\nomenclature[1Cc]{$\widetilde{\mathcal{C}}(v)$}{color-avoiding connected component of $v$; see Definition~\ref{def:coloravoiding_component}}

\nomenclature[1Cd]{$\widetilde{\mathcal{C}}^*(r)$}{set of friends of the root $r$ (i.e., set of vertices that are $i$-avoiding connected to $r$ possibly through infinity); see Definition~\ref{def:friends}}

\nomenclature[1Da]{$\mathrm{dist}(v,w)$, $\mathrm{dist}(v,W)$}{$\empty$ \\ distance of the vertex $v$ from the vertex $w$ or from the vertex set $W$, respectively; see~\eqref{eq:dist_v_W}}

\nomenclature[1Db]{$D \big( (v_j, \underline{\beta}_j, m_j)_{j \in [2]} \big)$}{$\empty$ \\ data of the occurrence of the event $A_3(v_1,v_2)$ and the boundary vectors of $v_1$ and $v_2$ being $\underline{\beta}_1$ and $\underline{\beta}_2$, respectively, and the sets of vertices reachable from $v_1$ and $v_2$ with at most $k-2$ colors having size $m_1$ and $m_2$, respectively, in an ECER graph; see Definition~\ref{def:data_for_ECER}}

\nomenclature[1Dz]{$D^*(\underline{\beta},m)$}{data of the boundary vector of the root being $\underline{\beta}$ and the set of vertices reachable from the root with at most $k-2$ colors having size $m$ in an ECBP tree; see Definition~\ref{def:data_for_ECBP}}

\nomenclature[1Faa]{$f_{\ell}(G)$}{empirical distribution of vertices in color-avoiding connected components of size $\ell$ in an ECER graph $G$; see Definition~\ref{def:f_ell_G}}

\nomenclature[1Fab]{$f_{\ell,d,F}(G)$}{empirical distribution of vertices whose color-avoiding connected components are of size $\ell$ and whose explored subgraphs of depth $d$ are isomorphic to $F$ in an ECER graph $G$; see Definition~\ref{def:f_ell_d_F_G}}

\nomenclature[1Fb]{$F_{\underline{s}}$}{generating function of the total number of individuals in a branching process with offspring distribution $\mathrm{POI}(\lambda_{\mathrm{set}(\underline{s})})$; see Definition \ref{def:gen_function_of_Borel}}

\nomenclature[1Fc]{$\mathcal{F}_{v,h}$, $\mathcal{F}_{v,w,h}$}{$\empty$ \\ sigma-algebras generated by the edges and non-edges of the subgraphs $\widetilde{G}_h(v)$ or $\widetilde{G}_h(v,w)$; see Definition~\ref{def:exploring_up_to_colordist_h}}

\nomenclature[1Fda]{$f^*_{\ell}$}{probability of the root having exactly $\ell$ friends in an ECBP tree; see Definition~\ref{def:fstar_ell}}

\nomenclature[1Fdb]{$f^*_{\ell,d,F}$}{probability of the root having exactly $\ell$ friends and the ECBP tree up to depth $d$ being isomorphic to $F$; see Definition~\ref{def:stopping_time}}

\nomenclature[1Fe]{$f^*_{\infty}(\varepsilon)$}{the probability of the root having infinitely many friends in an ECBP tree where all $k$ colors have intensity $\frac{1+\varepsilon}{k-1}$; see Definition \ref{def:barely_supercrit}}

\nomenclature[1Ga]{$G \sim \mathcal{G}_n(V, \underline{\lambda})$}{$\empty$ \\ $G$ is an ECER graph on the vertex set $V$ with parameter $n$ and color density parameter vector $\underline{\lambda}$; see Definition~\ref{def:ECER}}

\nomenclature[1Ga]{$G-\underline{E}'$}{edge-colored graph obtained from $G$ by deleting the edges of $\underline{E}'$}

\nomenclature[1Gb]{$G-W$}{(edge-colored) graph obtained from $G$ by deleting the vertices of~$W$}

\nomenclature[1Gb]{$G[W]$}{(edge-colored) subgraph of $G$ induced by the vertex set $W$}

\nomenclature[1Gb]{$G^I$, $G^{\setminus i}$, $G^{\text{uc}}$}{simple, uncolored graphs obtained from the edge-colored multigraph $G$ by deleting all the edges of colors not in $I$ or of color $i$ or none of the edges, respectively, then ignoring the edge-coloring and keeping exactly one edge instead of parallel edges; see Definition~\ref{def:colorsubgraphs}}

\nomenclature[1Gc]{$G_d(v)$, $G_d(v,w)$, $G_{n,d}(v)$, $G_{n,d}(v,w)$}{$\empty$ \\ explored subgraphs of depth $d$ with root $v$ or with roots $v$ and $w$, i.e., rooted or two-rooted subgraphs of the graph $G$, and those of the ECER graph $G_n$, respectively, spanned by the vertices at distance at most $d-1$ from $v$ or from $\{ v, w \}$ plus the edges along with their endpoints that connect these spanned subgraphs to the rest of the graph; see Definition~\ref{def:exploring_up_to_dist_d}}

\nomenclature[1Gd]{$G_1 \oplus G_2$}{disjoint union of the (edge-colored) graphs $G_1$ and $G_2$}

\nomenclature[1Gd]{$G_1 \simeq G_2$}{$G_1$ and $G_2$ are isomorphic}

\nomenclature[1Ge]{$G_\infty(r) \sim \mathcal{G}_{\infty}(\underline{\lambda})$}{$\empty$ \\ $G_{\infty}(r)$ is an ECBP tree with root $r$ and with color intensity parameter vector $\underline{\lambda}$; see Definition~\ref{def:ECBP}}

\nomenclature[1Gf]{$G_{\infty,d}(r)$}{rooted subtree of the ECBP tree $G_{\infty}(r)$ spanned by the vertices at distance at most $d$ from $r$; see Definition~\ref{def:exploring_up_to_dist_d}}

\nomenclature[1Gh]{$G'_d(v)$, $G'_d(v,w)$, $G'_{n,d}(v)$, $G'_{n,d}(v,w)$}{$\empty$ \\ subgraphs of the graph $G$, and those of the ECER graph $G_n$, respectively, that can be obtained by deleting the vertices at distance at most $d-1$ from $v$ or from $\{ v, w \}$; see Definition~\ref{def:exploring_up_to_dist_d}}

\nomenclature[1Gh]{$\widetilde{G}_h(v)$, $\widetilde{G}_h(v,w)$, $\widetilde{G}_{n,h}(v)$, $\widetilde{G}_{n,h}(v,w)$}{$\empty$ \\ rooted or two-rooted subgraphs of the graph $G$, and those of the ECER graph $G_n$, respectively, obtained by exploring from $v$ or from $\{ v, w \}$ until seeing $h+1$ colors for the first time, i.e., the subgraphs spanned by the vertices reachable from $v$ or from $\{ v, w \}$ with at most $h$ colors plus the edges along with their endpoints that connect these spanned subgraphs to the rest of the graph; see Definition~\ref{def:exploring_up_to_colordist_h}}

\nomenclature[1Gi]{$\widetilde{G}_{\infty,h}(r)$}{rooted subtree of the ECBP tree $G_{\infty}(r)$ spanned by the vertices reachable from $r$ with at most $h$ colors plus one more edge; see Definition~\ref{def:exploring_up_to_colordist_h}}

\nomenclature[1Gj]{$\widetilde{G}'_h(v,w)$, $\widetilde{G}'_{n,h}(v,w)$}{$\empty$ \\ subgraph of the graph $G$, and that of the ECER graph $G_n$, respectively, that is left unexplored when exploring from $\{ v,w \}$ with at most $h$ colors; see Definition~\ref{def:exploring_up_to_colordist_h}}

\nomenclature[1I]{$I_{\underline{\lambda}}$}{set of supercritical indices (i.e. of those $i \in [k]$ for which $\lambda^{\setminus i} > 1$); see Definition~\ref{def:set_of_supercrit_indices}}

\nomenclature[1Na]{$N(v)$, $N(W)$}{$\empty$ \\ open neighborhood of the vertex $v$ or that of the vertex set $W$, respectively; see~\eqref{eq:N_W}}

\nomenclature[1Nb]{$\widetilde{N}_h(v)$, $\widetilde{N}_h(W)$}{$\empty$ \\ sets of vertices that can be reached from the vertex $v$ or from the vertex set $W$, respectively, with a path which uses exactly $h$ colors and the color of the last edge is new, but cannot be reached with a path of at most $h-1$ colors; see Definition~\ref{def:outer_boundary}}

\nomenclature[1Nc]{$\widetilde{N}_{k-1}^{\setminus i}(v)$}{$i$-avoiding boundary of $v$, i.e., the set of vertices that can be reached from $v$ with a path which uses all of the colors in $[k] \setminus \{ i \}$ and the color of the last edge does not appear elsewhere on this path, but cannot be reached with a path of at most $k-2$ colors; see Definition~\ref{def:outer_boundary}}

\nomenclature[1Nd]{$\widetilde{N}_{\underline{s}}(v)$}{set of vertices reachable from $v$ with a path on which new colors appear according to the color string $\underline{s}^-$ except the last edge, which is of the color of the last coordinate of $\underline{s}$, and the vertices are not reachable with a path of any shorter color string; see Definition~\ref{def:layers}}

\nomenclature[1Pa]{$p_I$}{probability of the root being $i$-avoiding connected to infinity for some $i \in I$ in an ECBP tree; see Definition~\ref{def:p_I}}

\nomenclature[1Pb]{$p \big( \underline{\gamma}_1, \underline{\gamma}_2 \ \big\vert \ D \big( (v_j, \underline{\beta}_j, m_j)_{j \in [2]} \big) \big)$}{$\empty$ \\ conditional probability of the occurrences of the type II events $T(v_1, v_2, \underline{\gamma}_1)$ and $T(v_2, v_1, \underline{\gamma}_2)$ given the data $D \big( (v_j, \underline{\beta}_j, m_j)_{j \in [2]} \big)$ in an ECER graph; see Definition~\ref{def:p_gamma1_gamma2_cond_D}}

\nomenclature[1Pc]{$p^*(\underline{\gamma})$}{probability of the root having type I vector $\underline{\gamma}$ in an ECBP tree; see Definition~\ref{def:pstar_gamma}}

\nomenclature[1Pd]{$p^* \big( \underline{\gamma} \ \big\vert \ D^*(\underline{\beta},m) \big)$}{$\empty$ \\ conditional probability of the root having type I vector $\underline{\gamma}$ given the data $D^*(\underline{\beta},m)$ in an ECBP tree; see Definition~\ref{def:pstar_gamma_cond_Dstar}}

\nomenclature[1Pe]{$\widehat{p}^{\, *}(\underline{\gamma})$}{probability of the root having extended type I vector $\underline{\gamma}$ in an ECBP tree; see Definition~\ref{def:pstarhat_gamma}}

\nomenclature[1Ra]{$R(W)$}{set of vertices reachable from the vertex set $W$; see~\eqref{eq:R_W}}

\nomenclature[1Rb]{$R_d(v)$, $R_d(W)$, $R_{n,d}(v)$, $R_{n,d}(W)$}{$\empty$ \\ sets of vertices at distance exactly $d$ from the vertex $v$ or from the vertex set $W$ in a graph $G$, and in an ECER graph $G_n$, respectively; see Definition~\ref{def:exploring_up_to_dist_d}}

\nomenclature[1Rc]{$R_{\infty,d}$}{set of vertices at distance exactly $d$ from the root in an ECBP tree $G_{\infty}(r)$; see Definition~\ref{def:exploring_up_to_dist_d}}

\nomenclature[1Rd]{$R_{\infty,d}^I$, $R_{\infty,d}^{\setminus i}$}{$\empty$ \\ sets of vertices at distance exactly $d$ from the root in an ECBP tree $G_{\infty}(r)$ after deleting all the edges of colors not in $I$ or of color $i$, respectively; see Definition~\ref{def:exploring_up_to_dist_d}}

\nomenclature[1Re]{$R^{\le}_d(v)$, $R^{\le}_d(W)$, $R^{\le}_{n,d}(v)$, $R^{\le}_{n,d}(W)$}{$\empty$ \\ sets of vertices at distance at most $d$ from the vertex $v$ or from the vertex set $W$ in a graph $G$, and in an ECER graph $G_n$, respectively; see Definition~\ref{def:exploring_up_to_dist_d}}

\nomenclature[1Rf]{$R^{\le}_{\infty,d}$}{set of vertices at distance at most $d$ from the root in an ECBP tree $G_{\infty}(r)$; see Definition~\ref{def:exploring_up_to_dist_d}}

\nomenclature[1Rg]{$\widetilde{R}_{\underline{s}}(v)$}{set of vertices reachable from $v$ with a path on which new colors appear according to the color string $\underline{s}$, but are not reachable with a path of a shorter color string; see Definition~\ref{def:layers}}

\nomenclature[1Rh]{$\widetilde{R}^{\le}_h(v)$, $\widetilde{R}^{\le}_h(W)$}{$\empty$ \\ sets of vertices reachable from the vertex $v$ or from the vertex set $W$, respectively, with a path of at most $h$ colors; see Definition~\ref{def:layers}}

\nomenclature[1Sa]{$\underline{s}^-$}{color string obtained from $\underline{s}$ by deleting its last coordinate; see Definition~\ref{def:color_string_operations}}

\nomenclature[1Sb]{$S_h$}{set of color strings of length $h$; see Definition~\ref{def:color_strings}}

\nomenclature[1Sb]{$S_{k-1}^{\setminus i}$}{set of color strings of length $k-1$ avoiding color $i$; see Definition~\ref{def:color_strings_without_i}}

\nomenclature[1Sc]{$\mathrm{set}(\underline{s})$}{set of colors appearing in the color string $\underline{s}$; see Definition~\ref{def:color_strings}}

\nomenclature[1Sd]{$\underline{s}i$}{concatenation of color string $\underline{s}$ and color $i$; see Definition~\ref{def:color_string_operations}}

\nomenclature[1Ta]{$\underline{t}(v)$}{color-avoiding type I vector of $v$ in an ECER graph: for each supercritical index $i \in I_{\underline{\lambda}}$ its $i$-th coordinate is 1 if $v$ belongs to a largest $i$-avoiding component and 0 otherwise; see Definition~\ref{def:typeI_in_ECER}}

\nomenclature[1Tb]{$\underline{t}^*(v)$}{color-avoiding type I vector of $v$ in an ECBP tree: for each supercritical index $i \in I_{\underline{\lambda}}$ its $i$-th coordinate is 1 if $v$ is $i$-avoiding connected to infinity and 0 otherwise; see Definition~\ref{def:typeI_in_ECBP}}

\nomenclature[1Tc]{$\underline{\widehat{t}}{}^*(v)$}{extended color-avoiding type I vector of $v$ in an ECBP tree: for each color $i \in [k]$ its $i$-th coordinate is 1 if $v$ is $i$-avoiding connected to infinity and 0 otherwise; see Definition~\ref{def:extended_typeI_in_ECBP}}

\nomenclature[1Td]{$T(v,w,\underline{\gamma})$}{color-avoiding type II event in an ECER graph: the intersection of the type II subevents or their complements according to $\underline{\gamma}$; see Definition~\ref{def:typeII_in_ECER}}

\nomenclature[1Te]{$T^{\setminus i}(v,w)$}{color-avoiding type II subevent in an ECER graph: the event of at least one vertex in the $i$-avoiding boundary belonging to a largest $i$-avoiding component of the unexplored subgraph when exploring with $k-2$ colors from $v$ and $w$; see Definition~\ref{def:typeII_in_ECER}}

\nomenclature[1Tf]{$\widehat{T}(\underline{v},\underline{\gamma},\underline{\beta},m)$}{$\empty$ \\ color-avoiding type III event in an ECER graph: the intersection of the type III subevents or their complements according to $\underline{\gamma}$; see Definition~\ref{def:typeIII_in_ECER}}

\nomenclature[1Tg]{$\widehat{T}^{\setminus i}(\underline{v},\underline{\beta},m)$}{$\empty$ \\ color-avoiding type III subevent in an ECER graph: the event of the existence of a vertex $v_{i,j}$ of $\underline{v}$ belonging to a largest $i$-avoiding component; see Definition~\ref{def:typeIII_in_ECER}}

\nomenclature[1U]{$\mathcal{U}_{\text{max}}(G)$}{vertices of the union of components with maximum cardinality in $G$; see~\eqref{eq:U_max}}

\nomenclature[1V]{$v \stackrel{G^{\setminus i}}{\longleftrightarrow} w$}{$v$ and $w$ are $i$-avoiding connected in $G$; see Definition~\ref{def:coloravoiding_connectivity}}

\nomenclature[1V]{$v \stackrel{G_{\infty}^{\setminus i}}{\longleftrightarrow} \infty$}{$v$ is $i$-avoiding connected to infinity in the ECBP tree $G_{\infty} \colonequals G_{\infty}(r)$; see Definition~\ref{def:friends}}

\nomenclature[2$\lambda$a]{$\underline{\lambda}$}{color density/intensity parameter vector; see Definitions~\ref{def:ECER} and \ref{def:ECBP}}

\nomenclature[2$\lambda$b]{$\lambda^{\setminus i}$}{sum of the coordinates of $\underline{\lambda}$ except the $i$-th one; see Definition~\ref{def:lambda_I}}

\nomenclature[2$\lambda$b]{$\lambda^{\text{uc}}$}{sum of the coordinates of $\underline{\lambda}$; see Definition~\ref{def:lambda_I}}

\nomenclature[2$\lambda$b]{$\lambda_I$}{sum of those coordinates of $\underline{\lambda}$ whose indices belong to $I$; see Definition~\ref{def:lambda_I}}

\nomenclature[2$\lambda$a]{$\underline{\lambda}(\varepsilon)$}{vector of length $k$ whose every coordinate is $\frac{1+\varepsilon}{k-1}$; see Definition \ref{def:barely_supercrit}}

\nomenclature[2$\Phi$]{$\Phi_h$}{joint generating function of the number of vertices reachable from the root by different color strings consisting of exactly $h$ colors in an ECBP tree; see Definition~\ref{def:joint_gen_function_of_Rs}}

\nomenclature[2$\rho$]{$\rho(v)$}{number of vertices reachable from $v$ with at most $k-2$ colors; see Definition~\ref{def:rho}}

\nomenclature[2$\tau$]{$\tau_{\infty}$}{color-avoiding horizon: the minimum of the $i$-avoiding horizons taken over each color $i \in [k]$; see Definition~\ref{def:stopping_time}}

\nomenclature[2$\tau$]{$\tau_{\infty}^{\setminus i}$}{$i$-avoiding horizon: the maximum distance from $r$ of the $i$-avoiding friends of the root $r$; see Definition~\ref{def:stopping_time}}

\nomenclature[2$\theta$]{$\theta^{\setminus i}$}{survival probability of the branching process with offspring distribution $\mathrm{POI}(\lambda^{\setminus i})$; see Definition~\ref{def:theta_minus_i}}

\printnomenclature

\bibliographystyle{abbrv}
\bibliography{coloravoiding}






\end{document}